\newcommand{\vertiii}[1]{{\left\vert\kern-0.25ex\left\vert\kern-0.25ex\left\vert #1 
    \right\vert\kern-0.25ex\right\vert\kern-0.25ex\right\vert}}
\theoremstyle{plain}
\newtheorem{teorema}{Theorem}[section]
\newtheorem{proposizione}[teorema]{Proposition}
\newtheorem{congettura}{Conjecture}[section]
\newtheorem{lemma}[teorema]{Lemma}
\newtheorem{corollario}[teorema]{Corollary}
\newtheorem*{theorem*}{Theorem}
\theoremstyle{definition}
\newtheorem{definizione}{Definition}[section]
\theoremstyle{remark}
\newtheorem{osservazione}{Remark}[section]
\newcommand{\N}{\mathbb{N}}
\newcommand{\Q}{\mathbb{Q}}
\newcommand{\R}{\mathbb{R}}
\newcommand{\res}
\newcounter{const}
\newcommand{\newC}{\refstepcounter{const}\ensuremath{C_{\theconst}}}
\newcommand{\oldC}[1]{\ensuremath{C_{\ref{#1}}}}
\newcounter{eps}
\newcommand{\newep}{\refstepcounter{eps}\ensuremath{\varepsilon_{\theeps}}}
\newcommand{\oldep}[1]{\ensuremath{\varepsilon_{\ref{#1}}}}
\DeclareMathOperator*{\diam}{diam}
\DeclareMathOperator*{\dist}{dist}
\title{\normalfont\spacedallcaps{Carnot rectifiability and Alberti representations}} 
\author{\spacedlowsmallcaps{G. Antonelli\textsuperscript{*} and E. Le Donne\textsuperscript{**} and A. Merlo\textsuperscript{***}}}
\date{} 
\begin{document}

\renewcommand{\sectionmark}[1]{\markright{\spacedlowsmallcaps{#1}}} 
\lehead{\mbox{\llap{\small\thepage\kern1em\color{halfgray} \vline}\color{halfgray}\hspace{0.5em}\rightmark\hfil}} 
\pagestyle{scrheadings}
\maketitle 
\setcounter{tocdepth}{2}

\paragraph*{Abstract}
A metric measure space is said to be Carnot-rectifiable if it can be covered up to a null set by countably many biLipschitz images of compact sets of a fixed Carnot group. In this paper, we give several characterisations of such notion of rectifiability both in terms of Alberti representations of the measure and in terms of differentiability of Lipschitz maps with values in Carnot groups.

In order to obtain this characterisation, we develop and study the analogue of the notion of Lipschitz differentiability space by Cheeger, using Carnot groups and Pansu derivatives as models. We call such metric measure spaces \textit{Pansu differentiability spaces} (PDS).

{\let\thefootnote\relax\footnotetext{* \textit{Courant Institute Of Mathematical Sciences (NYU), 251 Mercer Street, 10012, New York, USA,
\href{ga2434@nyu.edu}{ga2434@nyu.edu}}}}
{\let\thefootnote\relax\footnotetext{**
\textit{University of Fribourg, Chemin du Mus\'ee~23, 1700 Fribourg, Switzerland \&  University of Jyv\"askyl\"a, Department of Mathematics and Statistics, P.O. Box (MaD), FI-40014, Finland, \href{enrico.ledonne@unifr.ch}{enrico.ledonne@unifr.ch}}
}}
{\let\thefootnote\relax\footnotetext{*** \textit{Departamento de Matem\'aticas, Universidad del Pa\' is Vasco, Barrio Sarriena s/n 48940 Leioa, Spain, \href{andrea.merlo@ehu.eus}{andrea.merlo@ehu.eus}}}}

\paragraph*{Keywords} Lipschitz differentiability space, Carnot group, Rademacher theorem, Rectifiability, Alberti representation, Pansu differentiability theorem

\paragraph*{MSC (2020)} 53C17, 22E25, 28A75, 49Q15, 26A16.

\tableofcontents

\section{Introduction}

{
\subsection{Lipschitz differentiability spaces and rectifiability}
As of today, it is understood that there is a tight connection between the rectifiability of metric spaces and the differentiability of Lipschitz functions. 

In \cite{CheegerGAFA} J. Cheeger introduced and studied the notion of \textit{Lipschitz differentiability space}: namely, a metric measure space $(X,d,\mu)$ with countably many Lipschitz charts with values in Euclidean spaces such that every real-valued Lipschitz function on $X$ is $\mu$-almost everywhere differentiable with respect to every chart. In \cite{CheegerGAFA}, see also \cite{Keith, KleinerMackay}, it is proved that every metric measure space that is doubling and supports a Poincaré inequality is a Lipschitz differentiable space, and the dimension of the target of the Lipschitz charts is bounded from above by a constant only depending on the doubling and Poincaré constants. For a partial converse to the previous theorem; see \cite{ErikssonBiqueGAFA} and references therein. In the recent years there has been a flourishing literature on the subject, see, e.g.,  \cite{CheegerGAFA, Keith, CK9, KleinerMackay, BateJAMS, GuyCDavidLDS, Schioppa16, Schioppa16Bis, CKS16, bateli, BateLi2, ErikssonBiqueGAFA, KleinerDavid, BateOrp, BateActa, BateInv}.

One of the remarkable contributions to the study of Lipschitz differentiability spaces was recently given by D. Bate in \cite{BateJAMS}: Bate proved that a metric measure space $(X,d,\mu)$ is a Lipschitz differentiability space if and only if it can be written as a countable union of Borel sets $X=\cup U_i$ such that each $\mu\llcorner U_i$ possesses a finite collection of (universal) Alberti representations. Roughly speaking, an Alberti representation is a way to express the measure on the space as an integral of the 1-dimensional Hausdorff measure $\mathcal{H}^1$ on distinguished curves.

The first to realize the connection between the differentiability of Lipschitz functions and the decomposition of measures in rectifiable curves was Alberti in the early 2000s. Alberti introduced the notion of decomposability bundle\footnote{In \cite{alberti1993} the decomposition of the derivatives of BV functions was obtained in terms of decompositions with $1$-codimensional surfaces, which is a strictly stronger notion than that of Alberti representations.} in his celebrated paper \cite{alberti1993}, where he proved the Rank-One Property for the singular part of the derivative of vector-valued BV functions defined on an open subset of a Euclidean space conjectured by De Giorgi and Ambrosio \cite{DeGiorgiAmbrosio}. See also \cite{ACP10}, and \cite{AlbertiMarchese}. These ideas jump-started a collaboration with D. Preiss and M. Cs\"ornyei that led to the characterisations of non-differentiability sets of Lipschitz maps, see for instance \cite{ACP10} for the statement of the results, introducing many of the notions and techniques that are now considered standard in the study of non-differentiability properties of Lipschitz functions in metric spaces.

\smallskip

In \cite{bateli} Bate--Li exploited the aforementioned result by Bate in \cite{BateJAMS}, to characterize the $n$-rectifiability of a metric space. More precisely, they showed that a metric measure space $(X,d,\mu)$ is $n$-rectifiable, according to the classical Federer's definition, if and only if it can be $\mu$-almost everywhere decomposed as a countable union of Borel sets $U_i$, with $n$-density properties with respect to $\mu$, such that, equivalently, either $(U_i,d,\mu)$ is an $n$-Lipschitz differentiable space, or each $\mu\llcorner U_i$ has $n$ independent Alberti representations with respect to some Lipschitz map $\varphi_i:X\to\mathbb R^n$.
\smallskip

\subsection{Rectifiability in Carnot groups}

For surveys on Carnot groups, we refer the reader to \cite{SC16, LD17}. Carnot groups coincide with the Lie groups equipped with geodesic distances that admit dilations.
Carnot groups have a rich algebraic structure that interacts with the metric structure, see \cref{sec2}.

From the geometric viewpoint, Carnot groups represent a different world with respect to the Euclidean one since their Hausdorff dimension is strictly greater than their topological dimension unless they are Abelian. Anyway, Carnot groups are natural objects to consider not only because they arise as infinitesimal models in sub-Riemannian geometry \cite{Mitchell} but also because they are the asymptotic cones of connected Riemannian nilpotent Lie groups, see Pansu's work \cite{PansuCroissance}.  They are also infinitesimal models of doubling geodesic metric spaces with unique GH-tangent, see \cite{zbMATH05977088}

The study of Geometric Measure Theory and rectifiability in Carnot groups was pioneered by the works of Ambrosio--Kirchheim \cite{AK00} and Franchi--Serapioni--Serra Cassano \cite{Serapioni2001RectifiabilityGroup}. The research in this topic has been very active in the last two decades, see, e.g., \cite{MagnaniUnrect, step2, ambled, Mag13, MagnaniTowardArea, MatSerSC, JNGV20, DLDMV19, Vittone20, DDFO20, antonelli2020rectifiable2, antonelli2020rectifiableA, antonelli2020rectifiableB}, and the Introduction of \cite{PhDAntonelli} for further references.
In Carnot groups, the natural generalisation of Federer's notion of rectifiability using Carnot groups as models was first studied by Pauls and Cole--Pauls \cite{Pauls04, CP06}. See also \cite{ALD}. It is worth noting that, in general, Lipschitz and biLipschitz rectifiability with Carnot models might differ; see \cite{LeDonneLiRajala}.

\subsection{Main results}

In this paper, a {\em metric measure space} $(X,d,\mu)$ is a triple of a complete and separable metric space $(X,d)$ endowed with a Radon measure $\mu$. In order to state our main result, we need to introduce some notation. 
First, we define the analog, in the realm of Carnot groups, of the definition of Lipschitz differentiability spaces, see \cite{CheegerGAFA, Keith}, modeled on Pansu differentiability theorem for Lipschitz maps between Carnot groups \cite{Pansu}.

\begin{definizione}[Lipschitz chart]\label{def:Lipschitzchart}
Let $\mathbb G,\mathbb H$ be Carnot groups. Let $(X,d,\mu)$ be a metric measure space. Let $U\subset X$ be a Borel set, and let $\varphi: X\to\mathbb G$ be a Lipschitz function. 

We say that $(U,\varphi)$ is a \textit{Lipschitz chart with target $\mathbb G$ for $\mathbb H$-valued maps}, or simply \textit{chart}, when the following holds. 
Every Lipschitz function $f:X\to\mathbb H$ is {\em differentiable} $\mu$-almost everywhere in $(U,\varphi)$; i.e., for $\mu$-almost every $x_0\in U$ there exists a unique homogeneous homomorphism $Df(x_0):\mathbb G\to\mathbb H$ such that
    \begin{equation}\label{eqn:PANSUDIFF}
        \limsup_{X\ni x\to x_0}\frac{d_{\mathbb H}(f(x),f(x_0)\cdot Df(x_0)(\varphi(x_0)^{-1}\cdot\varphi(x)))}{d(x,x_0)}=0.
    \end{equation}
\end{definizione}

\begin{definizione}[Pansu differentiability space]\label{def:PansuDifferentiabilitySpaceINTRO}
Let $\mathbb H$ be a Carnot group. A metric measure space $(X,d,\mu)$ is said to be a \emph{Pansu differentiability space with respect to $\mathbb H$} if there exist Borel sets $U_i\subset X$ such that 
\[
\mu  (X\setminus\cup_{i\in\mathbb N}U_i)=0,
\]
and there exist Carnot groups $\mathbb G_i$, with homogeneous dimension uniformly bounded, and Lipschitz functions $\varphi_i:X\to\mathbb G_i$,  such that $(U_i,\varphi_i)$ is a chart for every $i\in\mathbb N$.
\end{definizione}

When in the previous definition $\mathbb G_i$ is constantly equal to some Carnot group $\mathbb G$, we say that $(X,d,\mu)$ is a \textit{$(\mathbb G,\mathbb H)$-(Pansu) differentiability space}. 
Notice that by definition, an $n$-Lipschitz differentiability space in the sense of \cite{CheegerGAFA} is a $(\mathbb R^n,\mathbb R^m)$-Lipschitz differentiability space for $m=1$, and consequently for every $m\geq 1$.
We are finally ready to state the main result of the paper.

\begin{teorema}\label{thm:SeanLiINTRO}
Let $\mathbb G$ be a Carnot group of homogeneous dimension $Q$. Let $(X,d,\mu)$ be a metric measure space. The following facts are equivalent:
\begin{itemize}
    \item[(i)] $(X,d)$ is biLipschitz $\mathbb{G}$-rectifiable, see \cref{def:GbiLipRect}, and $\mu$ is mutually absolutely continuous with respect to the $Q$-dimensional Hausdorff measure $\mathcal{H}^Q$ on $X$;
    \item[(ii)]
    For every Carnot group $\mathbb H$, the following holds. There is a countable collection $\{U_i\}_{i\in\mathbb N}$\footnote{Actually, the collection $\{U_i\}_{i\in\mathbb N}$ can be taken to be independent on $\mathbb H$.} of Borel sets of $X$ with
    \begin{equation}\label{eqn:munull}
    \mu  (X\setminus\cup_{i\in\mathbb N}U_i)=0, \end{equation}
    such that,
    \begin{equation}\label{eqn:Density}
    \forall i\in\mathbb N,\quad 0<\Theta^{Q}_*(\mu\llcorner U_i,x)\leq \Theta^{Q,*}(\mu\llcorner U_i,x)<\infty, \qquad \text{for $\mu$-almost every $x\in U_i$},
    \end{equation}
 and each $(U_i,d,\mu)$ is a $(\mathbb{G},\mathbb H)$-differentiability space. The densities $\Theta^{Q}_*(\mu,\cdot)$ and $\Theta^{Q,*}(\mu,\cdot)$ are introduced in \cref{def:density};
    \item[(iii)] There exists a Carnot group $\mathbb H$ for which item (ii) holds;
    \item[(iv)] There exists a countable collection $\{U_i\}_{i\in\mathbb N}$ of Borel sets of $X$ such that \eqref{eqn:munull} holds, \eqref{eqn:Density}
    holds, and, for every $i\in \N$, there exist a Lipschitz function $\varphi_i:U_i\to\mathbb G$ such that each $\mu\llcorner U_i$ has $n_1$ $\varphi_i$-independent Alberti representations, where $n_1$ is the rank of $\mathbb G$. See \cref{sec:Alberti} for relevant definitions about Alberti representations.
    \item[(v)] There exists a countable collection $\{U_i\}_{i\in\mathbb N}$ of Borel sets of $X$ such that \eqref{eqn:munull} holds, \eqref{eqn:Density}
    holds, and  each $U_i$ satisfies  David Condition with respect to a Lipschitz map $\varphi_i:U_i\to\mathbb G$, see \cref{def:DavidsCondition}.
\end{itemize}
\end{teorema}

Let us stress that, as shown by the example constructed in \cite{LeDonneLiRajala}, see \cref{rem:PerForzaBilip}, in the item (i) above, it is not possible to substitute biLipschitz $\mathbb G$-rectifiable with $\mathbb G$-Lipschitz rectifiable. We notice that a distinguished class of $\mathbb G$-biLipschitz rectifiable spaces is that of sub-Riemannian manifolds with constant nilpotentization $\mathbb G$, see \cite{LDY19}.

Notice that, up to a Borel decomposition, \cref{thm:SeanLiINTRO} implies that whenever the dimensionality condition \eqref{eqn:Density} holds, $X$ is a $(\mathbb G,\mathbb R)$-differentiability space if and only if it is a $(\mathbb G,\mathbb H)$-differentiability space for every Carnot group $\mathbb H$. 
However, the previous equivalence might not be true in general, as the following example shows. See \cref{rem:NotDropComplete} for more details. Take the first Heisenberg group $(\mathbb H^1,d,\mathcal{H}^4)$, where $d$ is a Carnot--Carathéodory distance. This is an $(\mathbb R^2,\mathbb R)$-differentiability space, but it is not an $(\mathbb R^2,\mathbb H^1)$-differentiability space. If it were, take $\varphi:\mathbb H^1\to\mathbb R^2$ one of the charts that makes it an $(\mathbb R^2,\mathbb H^1)$-differentiability space, and $U$ a domain of this $\varphi$, with $\mathcal{H}^4(U)>0$. Then for $\mathcal{H}^4$-a.e. $x\in U$, $\varphi$ is Pansu differentiable at $x$, and then, by homogeneity, there is a non-trivial direction along which the Pansu differential at $x$ of $\varphi$ vanishes. Then, along this direction, one cannot differentiate the identity map $\mathrm{id}_{\mathbb H^1}:\mathbb H^1\to \mathbb H^1$ in the chart $\varphi$, contradicting the fact that $(\mathbb H^1,d,\mathcal{H}^4)$ is an $(\mathbb R^2,\mathbb H^1)$-differentiability space.

In \cref{thm:SeanLiINTRO}, the implication (i)$\Rightarrow$(ii) is a consequence of the Pansu differentiability theorem in charts, while the implication (iii)$\Rightarrow$(iv) is a consequence of the following characterisation of PDS in terms of Alberti representations.

\begin{teorema}\label{chardiffspacesINTRO}
Let $(X,d,\mu)$ be a metric measure space. Let $\mathbb G$ and $\mathbb H$ be Carnot groups, and let $n_1$ denote the dimension of the first stratum of $\mathbb G$.  
Then the following are equivalent:
\begin{enumerate}
    \item $(X,d,\mu)$ is a $(\mathbb G,\mathbb H)$-differentiability space.
    \item There exist countably many Lipschitz functions $\varphi_i:X\to\mathbb G$, and countably many Borel subsets $U_i$ of $X$, for $i\in \N$, such that 
    \begin{itemize}
        \item $\mu(X\setminus \cup_{i\in\mathbb N} U_i)=0$;
        \item for every $i\in\mathbb N$ there exists $\rho_i>0$ and there exist $\varphi_i$-independent horizontally $\rho_i$-universal Alberti representations $\mathcal{A}_1,\dots,\mathcal{A}_{n_1}$ of $\mu\llcorner U_i$, see \cref{def:IndependentAlberti} and \cref{def:UniversalRep};
        \item for every $i\in\mathbb N$ and every Lipschitz function $f:X\to\mathbb H$, there exist Borel sets $U_i^j\subset U_i$ with $\mu(U_i\setminus \cup_{j\in\mathbb N}U_i^j)=0$ such that, for every $j\in\mathbb N$, the pair $(U_i^j,\varphi_i)$ is $(\mathbb G,\mathbb H)$-complete with respect to $f$, see \cref{def:GHcomplete}.
    \end{itemize}
\end{enumerate}
    Moreover, if $(X,d,\mu)$ is a $(\mathbb G,\mathbb H)$-differentiability space, then one can choose a set of charts $\{(U_i,\varphi_i)\}_{i\in\mathbb N}$ for $(X,d,\mu)$ such that the three bullets of item 2 hold.
\end{teorema}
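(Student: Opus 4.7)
The plan is to follow Bate's characterization \cite[Theorem~7.8]{BateJAMS}, adapting each step to the non-abelian Carnot target and inserting the $(\mathbb{G},\mathbb{H})$-completeness hypothesis at the point where the Euclidean argument implicitly relies on commutativity. For the direction (1) $\Rightarrow$ (2), given a chart $\varphi: X \to \mathbb{G}$ on $U$, I compose with the canonical projection $\pi_1: \mathbb{G} \to V_1(\mathbb{G}) \simeq \mathbb{R}^{n_1}$ onto the first stratum: by \cref{rem:Importante}, Pansu differentiability of real-valued Lipschitz maps with respect to $\varphi$ implies Cheeger differentiability with respect to $\pi_1 \circ \varphi$, so $(U,d,\mu)$ is a Lipschitz differentiability space of Cheeger dimension $n_1$. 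Bate's theorem then yields, up to a further countable Borel decomposition, $n_1$ independent universal Alberti representations of $\mu \llcorner U$ relative to $\pi_1 \circ \varphi$. Promoting these to horizontally $\rho$-universal representations with respect to the $\mathbb{G}$-valued chart $\varphi$ amounts to revisiting Bate's construction and checking that the quantitative non-degeneracy of the test fragments he produces, once transported by $\varphi$, becomes horizontal non-degeneracy in $\mathbb{G}$; this can be arranged on a Borel subset where the relevant pointwise Lipschitz constants are uniformly bounded. For the third bullet, fixing an arbitrary Lipschitz $f: X \to \mathbb{H}$, a standard Egorov/Lusin exhaustion applied to the $\mu$-a.e.\ vanishing limsup in \eqref{eqn:PANSUDIFF} produces the Borel sets $U^j$ on which the approximation becomes uniform, which is exactly the requirement of \cref{def:GHcomplete}.

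For the converse (2) $\Rightarrow$ (1), I fix a Lipschitz $f: X \to \mathbb{H}$ and work inside one of the sets $U_i^j$ supplied by the third bullet of (2). At $\mu$-a.e.\ $x_0 \in U_i^j$, the horizontally $\rho_i$-universal Alberti representations of $\mu \llcorner U_i$ produce rectifiable curves through $x_0$ along which both $\varphi_i$ and $f$ are metrically differentiable in $n_1$ linearly independent first-stratum directions; horizontal $\rho_i$-universality then synchronises the corresponding derivatives into a well-defined linear map $L(x_0): V_1(\mathbb{G}) \to V_1(\mathbb{H})$. The $(\mathbb{G},\mathbb{H})$-completeness assumption postulates, in turn, a Borel assignment $x_0 \mapsto D_{x_0}$ of homogeneous homomorphisms $\mathbb{G} \to \mathbb{H}$ such that
\[
d_{\mathbb{H}}\bigl(f(x), f(x_0) \cdot D_{x_0}(\varphi_i(x_0)^{-1} \cdot \varphi_i(x))\bigr) \leq \varepsilon \, d(x,x_0)
\]
for $x$ in a neighborhood of $x_0$, on a Borel subset of $U_i^j$ of measure arbitrarily close to full. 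The identification $D_{x_0}|_{V_1(\mathbb{G})} = L(x_0)$, forced by the Alberti representations and uniqueness of the horizontal derivative, combined with the completeness bound, yields \eqref{eqn:PANSUDIFF}. The \emph{Moreover} clause is then obtained by a diagonal refinement, intersecting any initially given family of charts with the Borel sets produced during the argument.

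The principal obstacle is the passage from directional control along horizontal curves to the full Pansu limit. In the Euclidean setting, $n$ linearly independent first-stratum directions span $\mathbb{R}^n$, so controlling $f$ along enough straight lines already forces full differentiability by linear algebra, and Bate's charts are accordingly automatically complete. In a general Carnot group, the first stratum generates $\mathbb{G}$ only via iterated Lie brackets, which metrically correspond to delicate commutator loops that need not be realisable as curves in $X$: horizontal Alberti representations therefore see only the restriction $L(x_0) = Df(x_0)|_{V_1(\mathbb{G})}$, and the remaining non-horizontal information has to be supplied externally. This is precisely why the $(\mathbb{G},\mathbb{H})$-completeness assumption must be imposed as an independent hypothesis, and why, as emphasised by \cref{rem:NotDropComplete}, it cannot be dropped outside the abelian case.
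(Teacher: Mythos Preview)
Your direction (1) $\Rightarrow$ (2) is essentially correct and matches the paper's argument: project to the first stratum via $\pi_1$, invoke Bate's characterization for the resulting $n_1$-dimensional Lipschitz differentiability space, and observe that universality for $\mathbb{R}$-valued maps translates verbatim into horizontal universality because $D(f\circ\gamma)(t_0)=(\pi_{\mathbb{H}}\circ f\circ\gamma)'(t_0)$. The completeness bullet is indeed obtained by the Egorov/Lusin argument you sketch; in the paper this is packaged as \cref{diffunif} and \cref{prop:GoingTo0InChart}.

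Your direction (2) $\Rightarrow$ (1), however, has a genuine gap: you have misread \cref{def:GHcomplete}. Completeness does \emph{not} postulate a Borel assignment $x_0\mapsto D_{x_0}$ of homogeneous homomorphisms satisfying an approximate differentiability inequality. It only says that whenever $\|\varphi(x_i)^{-1}\varphi(y_i)\|_{\mathbb G}/d(x_i,y_i)\to 0$ along some sequence with $x_i\in U$, one also has $\|f(x_i)^{-1}f(y_i)\|_{\mathbb H}/d(x_i,y_i)\to 0$. No candidate differential is handed to you; you must build it.

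The paper's argument for (2) $\Rightarrow$ (1) is the long \cref{thm:Fondamentale2}. One first selects, for each of the $n_1$ representations, a measurable fragment $\chi_{[j,x_0]}$ through $x_0$ along which both $\varphi$ and $f$ are Pansu differentiable (\cref{constructionvectorfields}), declares $Df(x_0)(\zeta_j(x_0)):=\partial_j f(x_0)$ on the resulting basis $\{\zeta_j(x_0)\}$ of $V_1(\mathbb G)$, and extends to all of $\mathbb G$ by writing an arbitrary $v$ as a product $\delta_{\lambda_1}(\zeta_{i_1}(x_0))\cdots\delta_{\lambda_M}(\zeta_{i_M}(x_0))$ via \cref{propdecomposizione}. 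The bulk of the work (Steps~3--5 of that proof) is to manufacture, for each $v$ and small $t$, a concatenated path $\gamma_{\delta_t(v)}:[0,M]\to U$ along which $\varphi$ moves approximately by $\delta_t(v)$ and $f$ moves approximately by $\delta_t(Df(x_0)(v))$. This requires extending the fragments to full curves in an ambient Banach space, projecting back onto $U$, and an inductive Egorov scheme to retain uniform control on a set of nearly full measure. Completeness enters only in Steps~6 and~7, and in the way your final paragraph correctly anticipates: once one has $\gamma_t(M)$ with $\varphi(x_0)^{-1}\varphi(\gamma_t(M))\approx\delta_t(v)$, a generic point $y$ near $x_0$ with $\delta_{1/d(y,x_0)}(\varphi(x_0)^{-1}\varphi(y))\to v$ need not lie on any such curve, so one must compare $f(\gamma_t(M))$ with $f(y)$; the chart sees these as negligibly separated at scale $d(x_0,y)$, and completeness is exactly what forces $f$ to agree. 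The same mechanism proves $Df(x_0)$ does not depend on the chosen decomposition of $v$.

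In short, your diagnosis of the obstacle is right, but the remedy is not that completeness supplies $D_{x_0}$; rather, the Alberti representations supply $D_{x_0}$ via Pansu-style concatenation, and completeness is the tool that upgrades directional differentiability along those concatenated curves to the full limit \eqref{eqn:PANSUDIFF}.
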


The challenging part of the proof of \cref{chardiffspacesINTRO} is the implication 2.$\Rightarrow$1., and in particular the step of constructing a differential defined on $\mathbb G$ when having at disposal only the \textbf{horizontal fragments} given by the Alberti representations $\mathcal{A}_1,\ldots,\mathcal{A}_{n_1}$. 
The requirement of completeness of charts in the statement of \cref{chardiffspacesINTRO} cannot be dropped in any non-Abelian Carnot group; see \cref{rem:NotDropComplete}. Completeness ensures that, at almost every point $x\in U$, if the infinitesimal Lipschitz constant of the chart $\varphi$ is $0$ along some directions, then so it is for the function $f$ with respect to which the chart is complete and this ensures certain compatibility properties of concatenations of curves with respect to charts.
See \cref{def:GHcomplete} for details. For a more thorough review of the proof, we refer the reader to the beginning of \cref{sec3}, and in particular to the proof of \cref{thm:Fondamentale2}.

\medskip

We shall remark that every Pansu differentiability space is a Lipschitz differentiability space in the sense of \cite{CheegerGAFA}. This can be seen by projecting a chart $\varphi: X\to\mathbb G$ on the first stratum of $\mathbb G$. For a more detailed explanation, see \cref{rem:Importante}. Thus, we are studying a distinguished sub-class of Lipschitz differentiability spaces. As it happens for Pansu differentiability theorem \cite{Pansu}, which gives more information with respect to Cheeger's differentiability theorem on PI spaces \cite{CheegerGAFA} when applied to Carnot groups, our results deal with a notion of differentiability that is stronger than the relative notion one has when one considers the Pansu differentiability spaces as a Lipschitz differentiability space.

Going back to the discussion of the proof of our main result, the most interesting and difficult implication of \cref{thm:SeanLiINTRO}
is (iv)$\Rightarrow$(v), proved in  \cref{sec6}. 
The key steps to achieve this are in contained
\cref{l:david-inclusion} and
\cref{l:induction-alternative}.
We shall remark that our construction is very delicate. 
In order to make the proof work we have to construct suitable tilings of Carnot groups. Building on \cite{MR2466427, tilings}, given $n_1$ narrow horizontal independent cones, we construct self-similar tilings such that the centers of the dyadic subcubes $p_j$ enjoy the following additional property: for every basis of the horizontal stratum in those narrow independent cones, from every $p_j$ we can reach points sufficiently close to the origin by flowing along the horizontal basis with uniform lower bounds on time, see \cref{sec82}. Under the hypothesis in item (iv) of \cref{thm:SeanLiINTRO} above, after refining the representations, we can assume to work with independent representations that go in the direction of narrow cones, to which it is attached a fixed cube as discussed before, see item (v) of \cref{cubofigo}. Then, we get the alternatives in \cref{l:induction-alternative}, using also the estimate from \cref{lemma:drift}. Finally, \cref{l:induction-alternative} is used to prove \cref{l:david-inclusion}, which is basically telling us in a quantitative way that having independent Alberti representations implies having big projections in charts.

Finally, let us highlight that in \cref{sec5} we prove that a $(\mathbb G,\mathbb H)$-differentiability space satisfies an analogue of De Philippis--Marchese--Rindler's resolution of Cheeger's conjecture.
The following \cref{cheegerpansuINTRO} extends \cite[Theorem~4.1.1]{DePhilippisMarcheseRindler} to the realm of Carnot groups. Our proof leverages on the deep results obtained in \cite{ReversePansu}, where the authors prove the analog of the reverse Rademacher theorem on Carnot groups. We stress that our result holds for every chart.

\begin{teorema}\label{cheegerpansuINTRO}
Let $(X,d,\mu)$ be a Pansu-differentiability space with respect to some Carnot group $\mathbb H$. Let $U\subset X$ such that $(U,\varphi)$ is a Lipschitz chart with target $\mathbb G$ for $\mathbb H$-valued maps, where $\mathbb G$ is a Carnot group. Hence $\varphi_\sharp(\mu\llcorner U)$ is absolutely continuous with respect to every Haar measure of $\mathbb{G}$.
\end{teorema}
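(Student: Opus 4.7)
The plan is a contradiction argument in the spirit of \cite{DePhilippisMarcheseRindler}, powered by the reverse Pansu--Rademacher theorem of \cite{ReversePansu}. Let $\lambda$ be a Haar measure on $\mathbb G$ and set $\nu := \varphi_\sharp(\mu\llcorner U)$. Assume toward a contradiction that $\nu$ is not absolutely continuous with respect to $\lambda$. The Lebesgue decomposition yields a non-zero singular part $\nu_s \perp \lambda$, concentrated on a Borel set $E \subset \mathbb G$ with $\lambda(E)=0$; note that $\mu(\varphi^{-1}(E)\cap U) = \nu(E) \ge \nu_s(E) > 0$.

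The reverse Pansu theorem of \cite{ReversePansu} extracts from the singularity of $\nu_s$ a Lipschitz map $g:\mathbb G \to \mathbb L$ into some Carnot target $\mathbb L$ that fails to admit a Pansu differential on a Borel subset $E' \subset E$ of positive $\nu_s$-measure. To match the target of the chart, I would reduce to $\mathbb L=\mathbb H$: since $\mathbb H$ is non-trivial, pick a horizontal one-parameter subgroup, which provides an isometric homogeneous embedding $\iota:\mathbb R \to \mathbb H$ with a $1$-Lipschitz homogeneous retraction $\pi:\mathbb H \to \mathbb R$; composing a real-valued projection of $g$ with $\iota$ produces an $\mathbb H$-valued Lipschitz map that still fails Pansu-differentiability on $E'$, because $\pi$ would transfer any Pansu differential backwards.

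Pull back through the chart to $f:= g\circ \varphi : X \to \mathbb H$, which is Lipschitz. By the chart hypothesis, $f$ admits a Pansu differential $Df(x_0):\mathbb G \to \mathbb H$ at $\mu$-a.e.\ $x_0 \in U$, hence at $\mu$-a.e.\ $x_0$ in the positive-$\mu$-measure set $\varphi^{-1}(E')\cap U$. The aim of the remainder of the proof is to show that this chart differential transfers to a genuine Pansu differential for $g$ at $y_0 := \varphi(x_0)$, contradicting the reverse Pansu conclusion at the points of $E'$.

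This transfer step is the main obstacle. The chart identity only provides
\[
\limsup_{X\ni x\to x_0}\frac{d_{\mathbb H}\bigl(g(\varphi(x)),\, g(\varphi(x_0))\cdot Df(x_0)(\varphi(x_0)^{-1}\varphi(x))\bigr)}{d(x,x_0)}=0,
\]
which is evaluated along the image of $\varphi$ and scaled by $d(x,x_0)$ rather than by $d_{\mathbb G}(\varphi(x),\varphi(x_0))$. I would upgrade this by pushing forward through $\varphi$ the $n_1$ independent horizontally universal Alberti representations of $\mu\llcorner U$ guaranteed by \cref{chardiffspacesINTRO}: these produce a family of horizontal curves in $\mathbb G$ through $y_0$ whose tangent vectors span the first stratum of $\mathbb G$ and along which the difference quotient of $g$ is controlled at the correct intrinsic scale $d_{\mathbb G}$. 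A concatenation argument in the spirit of Pansu's original proof of his Rademacher theorem then patches the curve-wise differentiability into a bona fide Pansu differential for $g$ at $y_0$, producing the required contradiction and forcing $\nu_s = 0$.
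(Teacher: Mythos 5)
Your reduction to the deep results of \cite{ReversePansu} is the right instinct, but the transfer step you flag as ``the main obstacle'' is a genuine gap, not a technicality, and the repair you sketch does not close it. First, the chart identity gives an error that is $o(d(x,x_0))$, whereas Pansu differentiability of $g$ at $y_0=\varphi(x_0)$ requires an error that is $o(d_{\mathbb G}(\varphi(x),\varphi(x_0)))$; since $\varphi$ is merely Lipschitz, $d_{\mathbb G}(\varphi(x),\varphi(x_0))$ can be much smaller than $d(x,x_0)$, so the chart estimate degrades rather than improves under this renormalization. Second, and more fundamentally, Pansu differentiability of $g$ at $y_0$ (and likewise the directional non-differentiability produced by the Alberti--Marchese-type part of \cite{ReversePansu}) constrains $g$ on full neighbourhoods of $y_0$ in $\mathbb G$ --- e.g.\ along cosets $y_0\cdot\delta_t v$ --- while the hypothesis only controls $g$ on $\varphi(U)$. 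When $\varphi_\sharp(\mu\llcorner U)$ is singular, $\varphi(U)$ has no reason to be metrically dense near $y_0$ at the relevant scales (that kind of ``big projections'' statement is exactly the David condition of Section~6 and needs density hypotheses absent here), so no contradiction with the non-differentiability of $g$ off the image can be extracted. Your proposed fix --- concatenating the pushed-forward Alberti curves in $\mathbb G$ --- would, if carried out, establish differentiability of $g$ purely from the structure of the measure $\nu=\varphi_\sharp(\mu\llcorner U)$, at which point the detour through $f=g\circ\varphi$ and its chart differential contributes nothing.

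The paper's proof shows what the correct object to transfer is: not differentials, but the Alberti representations themselves. In \cref{lem:Arepr_push-forward} one pushes forward, via $\Phi(\gamma)=\varphi\circ\gamma$ and a disintegration of the curve measure $\mathbb P$, the $n_1$ independent representations of $\mu\llcorner U$ (which come from Bate's theorem applied to the Lipschitz differentiability structure induced by $\pi_1\circ\varphi$, not from \cref{chardiffspacesINTRO}, since the statement must hold for \emph{every} chart), checking via the area formula that the fibre measures remain absolutely continuous with respect to $\mathcal H^1$ on the image curves. This gives $n_1$ independent Alberti representations of $\nu$ \emph{in} $\mathbb G$, hence a full decomposability bundle, hence the Pansu property by \cite[Theorem~6.5]{ReversePansu}, and then \cite[Theorem~7.5]{ReversePansu} yields absolute continuity directly --- no contradiction argument and no transfer of differentials is needed. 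I would rework your argument along these lines.
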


This work also lays the theoretical foundations to approach \cref{cong}. \cref{cong} would be a way of connecting the classical notion of rectifiability, i.e., in terms of images of (bi)Lipschitz maps, with the infinitesimal regularity given by prescribing the uniqueness of the pmGH-tangents. For the intrinsic version of this local regularity of measures in Carnot groups, we refer to \cite{antonelli2020rectifiableA,antonelli2020rectifiableB,antonelli2020rectifiable2}. 
The reason for which our main result is relevant for the solution of \cref{cong} is that the characterisation of rectifiability by means of Alberti representations, and thus in terms of differentiability of Lipschitz maps, is a fundamental step to understanding the structure of the images of metric spaces under Lipschitz maps. The next step towards the proof of \cref{cong} will be a Besicovitch-projection-type theorem for generic (typical) Lipschitz maps and the study of weak tangential properties of $\mathbb{G}$-purely unrectifiable metric spaces in a similar fashion to what has been done in \cite{BateActa}, with the huge obstacle that in our setting Lipschitz maps defined on subsets of Carnot groups might not be extended. 
Finally, besides the specific applications, we think the notions introduced here have an independent interest. Thus it would be interesting, for instance, to construct further examples of Pansu differentiability spaces modeled on Carnot groups, see \cref{rk:laakso}, and to better understand if the notion of Pansu differentiability space is related to some form of Poincaré inequality, or Lip-lip inequality, see \cref{rem:LipLip}.

\medskip

We finally remark here that the previous \cref{thm:SeanLiINTRO}, \cref{chardiffspacesINTRO}, and \cref{cheegerpansuINTRO} are stated with $\mathbb H$ being a Carnot group just for the sake of exposition. The analogous versions hold without any changes in the proofs with $\mathbb H$ being a Banach-homogeneous group with the Radon--Nikodym property \cite{MagnaniRajala}, or with more general structures like scalable groups with the Radon--Nikodym property, see \cite{Scalable}. Indeed, it is enough that the target metric group $\mathbb H$ has a homogeneous structure and that every Lipschitz fragment is almost everywhere differentiable with respect to this homogeneous structure. See \cref{rem:MoreGeneraltarget} for further details.

\medskip

\textbf{Acknowledgments}. 
A part of this work was done while the authors were at 
the University of Fribourg. The excellent work conditions are acknowledged.
During the writing of this work, all three authors were partially supported by the Swiss National Science Foundation
(grant 200021-204501 `\emph{Regularity of sub-Riemannian geodesics and applications}')
and by the European Research Council  (ERC Starting Grant 713998 GeoMeG `\emph{Geometry of Metric Groups}').  
E.L.D was also partially supported by the Academy of Finland 
 (grant 322898
`\emph{Sub-Riemannian Geometry via Metric-geometry and Lie-group Theory}').
 A.M. 
was also partially supported by the European Union’s Horizon Europe research and innovation programme under the Marie Sk\l odowska-Curie grant agreement no 101065346.

\section{Preliminaries}\label{sec2}
In this section, we discuss some preliminary facts about Carnot groups and differentiability spaces. 

\subsection{Carnot groups}

In this subsection, we set the basic definitions and notation about Carnot groups. Basic references on Carnot groups are \cite{MR2363343, LD17}. 

A {\em stratifiable group} $\mathbb{G}$ of step $s$\label{num:step} is a simply connected Lie group whose Lie algebra $\mathfrak g$ admits a stratification $\mathfrak g=V_1\, \oplus \, V_2 \, \oplus \dots \oplus \, V_s$. We say that $V_1\, \oplus \, V_2 \, \oplus \dots \oplus \, V_s$ is a {\em stratification} of $\mathfrak g$ if $\mathfrak g = V_1\, \oplus \, V_2 \, \oplus \dots \oplus \, V_s$ as vector spaces, and moreover
\[
[V_1,V_i]=V_{i+1}, \quad \text{for every $i=1,\dots,s-1$}, \quad  \quad [V_1,V_s]=\{0\}, \quad \text{and}\,\, V_s\neq\{0\},
\]
where $[A,B]:=\mathrm{span}\{[a,b]:a\in A,b\in B\}$. We call $V_1$ the \emph{horizontal stratum} of $\mathbb G$. We usually denote by $n$ the topological dimension of $\mathfrak g$, and by $n_j$ the dimension of $V_j$ for every $j=1,\dots,s$.
Furthermore, we define $\pi_i:\mathfrak g\to V_i$ to be the projection maps with respect to the direct sum decomposition on the $i$-th stratum. 
We will often shorten the notation to $v_i:=\pi_iv$. Sometimes, with an abuse of notation, we denote $\pi_1:=\pi_{\mathbb G}$.

A {\em Carnot group} $\mathbb G$, or {\em stratified group}, is a stratifiable group $\mathbb G$ on which we fix a stratification. The identity element of $\mathbb G$ will be denoted by $0$ when we are identifying $\mathbb G$ with $\mathbb R^n$ by means of exponential coordinates, which we are now going to introduce.

For a Carnot group $\mathbb G$, the exponential map $\exp:\mathfrak g \to \mathbb{G}$ is a global diffeomorphism from $\mathfrak g$ to $\mathbb{G}$.
Hence, if we choose a basis $\{X_1,\dots , X_n\}$ of $\mathfrak g$,  every $p\in \mathbb{G}$ can be written in a unique way as 
\begin{equation}\label{eqn:ExponentialCoordinates}
p=\exp (p_1X_1+\dots +p_nX_n).
\end{equation}
This means that we can identify $p\in \mathbb{G}$ with the $n$-tuple $(p_1,\dots , p_n)\in \R^n$ and the group $\mathbb{G}$ itself with $\R^n$ endowed with the group operation $\cdot$ determined by the Baker-Campbell-Hausdorff formula. When we say that $(X_1,\dots,X_n)$ is an \textit{adapted basis} of $\mathfrak g$ we mean that $\{X_1,\dots,X_n\}$ is a basis of $\mathfrak g$, and $\{X_{1+\sum_{i=1}^{j-1} n_i},\dots,X_{n_{j}+\sum_{i=1}^{j-1} n_i}\}$ is a basis of $V_j$ for every $j=1,\dots,s$ (where, by convention, $\sum_{i=1}^0 n_i:=0$).

\textbf{In what follows, we usually identify $\mathbb G$ with $\mathbb R^n$ by means of a choice of an adapted basis $(X_1,\ldots,X_n)$ of $\mathfrak g$}. After this identification, we endow $\mathbb R^n$ with the standard scalar product. Then, when we say that we chose an orthonormal basis, we mean that we are choosing an orthonormal basis with respect to this scalar product. 

For every $p\in \mathbb{G}$, we define the left translation $\tau _p:\mathbb{G} \to \mathbb{G}$ as the map
\begin{equation*}
q \mapsto \tau _p q := p\cdot q.
\end{equation*}

The stratification of $\mathfrak{g}$ carries with it a natural family of dilations $\delta_\lambda :\mathfrak{g}\to \mathfrak{g}$, that are Lie algebra automorphisms of $\mathfrak{g}$ and are defined by\label{intrdil}
\begin{equation}
     \delta_\lambda (v_1,\dots , v_s):=(\lambda v_1,\lambda^2 v_2,\dots , \lambda^s v_s), \quad \text{for every $\lambda\in\mathbb R\setminus{0}$}.
     \nonumber
\end{equation}
We will also denote with $\delta_\lambda$ the automorphism on $\mathbb G$ defined as $\exp\circ\delta_\lambda\circ\exp^{-1}$. A homomorphism $f:\mathbb G\to \mathbb H$ between two Carnot groups is said to be \textit{homogeneous} if it commutes with $\delta_\lambda$ for every $\lambda>0$.

As already remarked above, the group operation $\cdot$ is determined by the ~Baker-Campbell-Hausdorff formula, and, in exponential coordinates, it has the form (see \cite[Proposition~2.1]{step2})
\begin{equation*}
p\cdot q= p+q+\mathscr{Q}(p,q), \quad \mbox{for all }\, p,q \in  \R^n,
\end{equation*} 
where $\mathscr{Q}=(\mathscr{Q}_1,\dots , \mathscr{Q}_s):\R^n\times \R^n \to \R^n$, and the $\mathscr{Q}_i$'s have the following properties. For every $i=1,\ldots s$ and every $p,q\in \mathbb{G}$ we have\label{tran}
\begin{itemize}
    \item[(i)]$\mathscr{Q}_i(\delta_\lambda p,\delta_\lambda q)=\lambda^i\mathscr{Q}_i(p,q)$,
    \item[(ii)] $\mathscr{Q}_i(p,q)=-\mathscr{Q}_i(-q,-p)$,
    \item[(iii)] $\mathscr{Q}_1=0$ and $\mathscr{Q}_i(p,q)=\mathscr{Q}_i(p_1,\ldots,p_{i-1},q_1,\ldots,q_{i-1})$.
\end{itemize}
Thus, we can represent the product $\cdot$ as
\begin{equation}\label{opgr}
p\cdot q= (p_1+q_1,p_2+q_2+\mathscr{Q}_2(p_1,q_1),\dots ,p_s +q_s+\mathscr{Q}_s (p_1,\dots , p_{s-1} ,q_1,\dots ,q_{s-1})). 
\end{equation}

We recall that a sub-algebra $\mathfrak{h}$ of $\mathfrak{g}$ is said to be \textit{homogeneous} if it is $\delta_{\lambda}$-invariant for every $\lambda>0$. We recall that, given an algebra $\mathfrak{h}$, a direct-sum decomposition $\mathfrak h=W_1\oplus\dots\oplus W_{s}$ as vector spaces is a {\em grading} of $\mathfrak h$ if $[W_i,W_j]\subseteq W_{i+j}$ for every $1\leq i,j\leq s$, where we mean that $W_\ell:=\{0\}$ for every $\ell > s$. The stratification of the Lie algebra $\mathfrak{g}$  naturally induces a grading on each of its homogeneous Lie sub-algebras $\mathfrak{h}$, i.e.,
\begin{equation}
    \mathfrak{h}=V_1\cap \mathfrak{h}\oplus\ldots\oplus V_s\cap \mathfrak{h}.
    \label{eq:intr1}
\end{equation}

\begin{definizione}[Homogeneous subgroups]\label{homsub}
A subgroup $\mathbb V$ of $\mathbb{G}$ is said to be \emph{homogeneous} if it is a Lie subgroup of $\mathbb{G}$ that is invariant under the dilations $\delta_\lambda$. Given a homogeneous subgroup $\mathbb V$ of $\mathbb G$, we denote with $\mathrm{Lie}(\mathbb V)$ its Lie algebra.
\end{definizione}

Homogeneous Lie subgroups of $\mathbb{G}$ are in bijective correspondence through $\exp$ with the homogeneous Lie sub-algebras of $\mathfrak{g}$.
For every Lie algebra $\mathfrak{h}$ with grading $\mathfrak h= W_1\oplus\ldots\oplus W_{s}$, we define its \emph{homogeneous dimension} as
\[
\text{dim}_{\mathrm{hom}}(\mathfrak{h}):=\sum_{i=1}^{s} i\cdot\text{dim}(W_i).
\]
Thanks to \eqref{eq:intr1} we infer that, if $\mathfrak{h}$ is a homogeneous Lie sub-algebra of $\mathfrak{g}$, we have $\text{dim}_{\mathrm{hom}}(\mathfrak{h}):=\sum_{i=1}^{s} i\cdot\text{dim}(\mathfrak{h}\cap V_i)$. We now introduce the class of homogeneous and left-invariant distances.

\begin{definizione}[Homogeneous left-invariant distance]
A metric $d:\mathbb{G}\times \mathbb{G}\to \R$ is said to be homogeneous and left-invariant if for every $x,y\in \mathbb{G}$ we have
\begin{itemize}
    \item[(i)] $d(\delta_\lambda x,\delta_\lambda y)=\lambda d(x,y)$ for every $\lambda>0$,
    \item[(ii)] $d(\tau_z x,\tau_z y)=d(x,y)$ for every $z\in \mathbb{G}$.
\end{itemize}
\end{definizione}

We remark that two homogeneous left-invariant distances on a Carnot group are biLipschitz equivalent, and moreover, they induce the manifold topology on $\mathbb G$, see \cite{LDNG19}.
It is well-known that the Hausdorff dimension (for a definition of Hausdorff dimension see for instance \cite[Definition 4.8]{Mattila1995GeometrySpaces}) of a graded Lie group $\mathbb G$ with respect to an arbitrary left-invariant homogeneous distance coincides with the homogeneous dimension of its Lie algebra. For a reference for the latter statement, see \cite[Theorem~4.4]{LDNG19}. Given a Carnot group we will usually denote with $Q$ its homogeneous dimension.

We recall that a \emph{homogeneous norm} $\|\cdot\|$ on $\mathbb G$ is a function $\|\cdot\|:\mathbb G\to [0,+\infty)$ such that $\|x\|=\|x^{-1}\|$ for every $x\in\mathbb G$; $\|\delta_\lambda x\|=\lambda\|x\|$ for every $\lambda>0$ and $x\in\mathbb G$; $\|x\cdot y\|\leq \|x\|+\|y\|$ for every $x,y\in\mathbb G$; and $\|x\|=0$ if and only if $x=e$. Given an arbitrary homogeneous norm $\|\cdot\|$ on $\mathbb G$, the homogeneous left-invariant distance $d$ induced by $\|\cdot\|$ is defined as follows
$$
d(x,y):=\lVert x^{-1}\cdot y\rVert.
$$
Vice-versa, given a homogeneous left-invariant distance $d$, it induces a homogeneous norm through the equality $\|x\|:=d(x,e)$ for every $x\in\mathbb G$, where $e$ is the identity element of $\mathbb G$.
\medskip

We now introduce a distinguished homogeneous norm on $\mathbb G$.

\begin{definizione}[Box norm]\label{smoothnorm}
Let $\mathbb G$ be a Carnot group. Let $\mathcal{B}:=\{X_1,\dots,X_n\}$ be a basis of $\mathfrak g$, and, for every $i=1\dots,s$, let us identify each vector space $V_i$ with a vector subspace of $\mathbb R^n$ by means of the exponential map and the coordinates associated to $\mathcal{B}$. We will denote with $|\cdot|$ the standard Euclidean norms on such vector subspaces. Then there exist $\varepsilon_2,\ldots \varepsilon_{s}$ depending on the group $\mathbb{G}$ such that, if we define
\[
\lVert g\rVert:=\max\{\lvert g_1\rvert,\varepsilon_2\lvert g_2\rvert^{1/2},\ldots, \varepsilon_{s}\lvert g_s\rvert^{1/{s}}\} \qquad \text{for all $g\in\mathbb G$},
\]
then $\lVert\cdot\rVert$ is a homogeneous norm on $\mathbb{G}$ that induces a left-invariant homogeneous distance. We refer to \cite[Lemme II.1]{Guivarch1973} or \cite[Section 5]{step2} for a proof of this fact. 
\end{definizione}

There is a distinguished class of left-invariant homogeneous distances on Carnot groups, known as {\em Carnot-Carathéodory distances}. If we fix a norm $\|\cdot\|_1$ on the first stratum $V_1$ of the Lie algebra $\mathfrak g$ of $\mathbb G$, we can extend it left-invariantly to the horizontal bundle 
	\begin{equation}\label{eqn:HorizontalBundle}
	\mathbb V_1(x):=(\tau_x)_{*}V_1,
	\end{equation}
	for $x\in\mathbb G$, where $\tau_x$ is the left translation by $x$, and $V_1$ is seen as a subspace of $T_e\mathbb G\equiv \mathfrak g$. We say that an absolutely continuous curve $\gamma:[0,1]\to \mathbb G$ is {\em horizontal} if
	$$
	\gamma'(t)\in \mathbb V_1(\gamma(t)), \qquad \mbox{for almost every} \quad  t\in [0,1].
	$$
	We define
	\begin{equation}\label{eqn:CCDistance}
	d_{\mathrm{cc}}^{\|\cdot\|_1}(x,y):= \inf\left\{\int_0^1 \|\gamma'(t)\|_1dt: \quad \gamma(0)=x, \quad \gamma(1)=y, \quad \gamma \quad \mbox{horizontal}  \right\}.
	\end{equation}
	The Chow-Rashevskii theorem states that this distance is finite. It is clearly homogeneous and left-invariant. 

Let us recall some other definitions and Pansu--Rademacher theorem.
\begin{definizione}[Definition of $\mathrm{Lip}f$]
Given a Lipschitz $f:(X,d_X)\to (Y,d_Y)$ between metric spaces, we define 
\[
\mathrm{Lip}f(x):=\limsup_{r\to 0}\left\{\frac{d_Y(f(x),f(y))}{d_X(x,y)}:0<d_X(x,y)<r\right\},
\]
and
\[
\mathrm{Lip}f:=\sup\left\{\frac{d_Y(f(x),f(y))}{d_X(x,y)}:x,y\in X, x\neq y\right\}.
\]
\end{definizione}
\begin{definizione}[Norm of a homogeneous homomorphism]
Let $\mathbb G,\mathbb H$ be Carnot groups endowed with homogeneous norms $\|\cdot\|_{\mathbb G},\|\cdot\|_{\mathbb H}$. Let $L:\mathbb G\to\mathbb H$ be a homogeneous homomorphism. Then 
\[
\|L\|:=\sup_{\|v\|_{\mathbb G}=1}\|Lv\|_{\mathbb H}.
\]
\end{definizione}
\begin{teorema}[Pansu--Rademacher theorem, see \cite{Pansu, MagnaniPhD}]
Let $f:K\subset \mathbb G\to\mathbb H$ be a Lipschitz function, where $K$ is a measurable set, and $\mathbb G,\mathbb H$ are Carnot groups endowed with homogeneous norms $\|\cdot\|_{\mathbb G},\|\cdot\|_{\mathbb H}$. Then $f$ is Pansu differentiable almost everywhere, i.e., for almost every $x\in\mathbb G$ there is  a homogeneous homomorphism $Df(x):\mathbb G\to\mathbb H$ such that 
    \[
    0=\lim_{K\ni y\to x}\frac{\|(Df(x)(x^{-1}y))^{-1}f(x)^{-1}f(y)\|_{\mathbb H}}{\|x^{-1}y\|_{\mathbb G}}.
    \]
\end{teorema}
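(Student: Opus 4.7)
The plan is to follow Pansu's original blow-up strategy, adapted to a measurable domain $K$. First I would reduce to a ``global'' setting at a typical point $x \in K$: for $\mu$-a.e.\ $x \in K$ the point is of density one with respect to Haar measure and balls in any homogeneous left-invariant distance, so the dilated sets $\delta_{1/r}(x^{-1}\cdot K)$ converge locally in Hausdorff distance to $\mathbb G$ as $r \to 0$. Consider then the rescalings
\[
f_{x,r}(y) \;:=\; \delta_{1/r}\bigl(f(x)^{-1}\cdot f(x\cdot\delta_r y)\bigr),
\]
which are uniformly $\mathrm{Lip}(f)$-Lipschitz on larger and larger neighborhoods of $0$. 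By an Ascoli--Arzelà argument, from every $r_k \to 0$ one extracts a locally uniform limit $g:\mathbb G \to \mathbb H$ which is Lipschitz, satisfies $g(0)=0$, and (by a diagonal argument on nested rescalings) is \emph{homogeneous}: $g\circ\delta_\lambda = \delta_\lambda \circ g$ for every $\lambda>0$.

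The goal is to upgrade such blow-up limits to genuine homogeneous homomorphisms and to show they are unique. The first ingredient is the one-dimensional case: for any fixed $X \in V_1$, the curve $t \mapsto f(x\cdot\exp(tX))$ is Lipschitz from an interval in $\R$ into the Carnot group $\mathbb H$, and a direct application of classical Rademacher (using, e.g., Kirchheim's metric differentiability together with the Carnot structure on $\mathbb H$, or working in exponential coordinates) yields the horizontal derivative
\[
D_X f(x) \;:=\; \lim_{t\to 0}\delta_{1/t}\bigl(f(x)^{-1}\cdot f(x\exp(tX))\bigr) \;\in\; \mathbb H
\]
at a.e.\ $x$ on a.e.\ coset $x\exp(\R X)$. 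A Fubini argument over the foliation by cosets of $\exp(\R X)$, combined over a countable dense set of $X \in V_1$, produces a full-measure set of $x$ on which $D_X f(x)$ exists for every $X \in V_1$ and on which the assignment $X \mapsto L(X) := \log D_X f(x)$ is linear from $V_1$ into $\mathrm{Lie}(\mathbb H)$. At such a point every blow-up $g$ necessarily agrees with $\exp\circ L \circ \exp^{-1}$ on $\exp(V_1)$.

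The main step, and the technical heart of the argument, is to propagate this identification from horizontal one-parameter subgroups to the whole group $\mathbb G$ by concatenation of horizontal curves. Any $p\in\mathbb G$ can be expressed as a finite product $p = \exp(t_1 X^{(1)})\cdots\exp(t_N X^{(N)})$ with $X^{(j)}\in V_1$, and I would estimate, uniformly over such concatenations,
\[
f(x\cdot\delta_r p) \;=\; f(x)\cdot\prod_{j=1}^{N}\delta_r\bigl(\exp(t_j L(X^{(j)}))\bigr)\cdot \varepsilon(r,p),
\]
where the error $\varepsilon(r,p)$ is $o(r)$ as $r\to 0$ uniformly for $p$ in compact sets. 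This estimate is obtained by a telescoping sum along the concatenation, controlling each step via the one-dimensional derivative and bookkeeping higher-order commutators through the Baker--Campbell--Hausdorff formula in $\mathbb H$. Applying $\delta_{1/r}$ and passing to the limit yields $g(p) = \exp\bigl(\overline L(\log p)\bigr)$, where $\overline L$ is the (unique) extension of $L$ to a graded Lie algebra homomorphism $\mathfrak g \to \mathrm{Lie}(\mathbb H)$, using that $V_1$ generates $\mathfrak g$. Consequently every blow-up $g$ equals the single homogeneous homomorphism $Df(x) := \exp\circ\overline L\circ\exp^{-1}$, which gives independence from the subsequence and yields the desired Pansu differentiability.

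I expect the main obstacle to be precisely the concatenation estimate $\varepsilon(r,p)=o(r)$: it demands that the linear additivity of $L$ on $V_1$ survives the nonlinear BCH interactions at each junction of the piecewise-horizontal path. The standard way to control this, which I would adopt, is to work with a.e.\ $x$ at which all directional derivatives $D_X f(x)$ exist simultaneously for a countable dense set of $X \in V_1$ and at which the one-dimensional differentiability along the corresponding cosets is quantitative (e.g.\ Lebesgue-type), and then to exploit the uniform Lipschitz bound on $f$ to absorb the BCH remainders into $o(r)$.
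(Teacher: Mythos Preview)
The paper does not prove this theorem itself; it is stated with citation to Pansu and Magnani. Your proposal correctly sketches Pansu's original blow-up-and-concatenate strategy, and the paper explicitly acknowledges this in its later discussion comparing its main differentiability result with Pansu--Rademacher: it observes that its own theorem, specialized to $X=K\subset\mathbb G$ with chart $\varphi=\mathrm{id}_{\mathbb G}$, recovers the classical statement using only the almost-everywhere differentiability of Lipschitz curves into Carnot groups as input, and that this ``is precisely the original idea of the proof in [Pansu].'' So your approach and the one the paper alludes to are the same.

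One minor comment on your sketch: the linearity of $X\mapsto L(X):=\log D_X f(x)$ on $V_1$ does not follow from density plus existence of directional derivatives alone; it is itself the first instance of the concatenation estimate, applied to the product of two horizontal one-parameter subgroups. You correctly flag concatenation as the technical heart, but be aware it is already needed to obtain additivity on $V_1$, not only to propagate from $\exp(V_1)$ to the whole group.
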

{
Finally, let us record here the following result, which is classical in control theory. In the setting of Carnot groups, the following proposition is a direct consequence of \cite[Lemma~3.33]{ABB}, see also \cite[Lemma~3.10, and Lemma~4.5]{ALDPolynomial}. For an analogous statement, see \cite[Theorem~19.2.1]{MR2363343}. 
\begin{lemma}\label{propdecomposizione}
	Let $\mathbb G$ be a Carnot group of topological dimension $n$ and let $\mathfrak{V}:=\{v_1,\ldots,v_{n_1}\}$ be a basis of $V_1(\mathbb G)$. Then there exists an open bounded set $ V\subseteq (0,1)^n$ and $(\hat s_1,\dots,\hat s_n)\in (0,1)^n$ such that 
	\begin{equation}\label{eqn:HatPsi}
	F_{\mathfrak{V}}: V\to U, \qquad \text{where}\qquad F_{\mathfrak{V}}(s_1,\dots,s_n):= \delta_{s_1}(v_{i_1})\dots\delta_{s_{n}}(v_{i_{n}})\delta_{-\hat s_{n}}(v_{i_{n}})\dots\delta_{-\hat s_{1}}(v_{i_{1}}),
	\end{equation}
	is a diffeomorphism of class $C^\infty$ and $U$ is a neighourhood of $0$.
 
    In particular, there exists a constant $c_0$ depending on $\mathfrak{V}$ and on the homogeneous norm on $\mathbb G$ such that the following holds. Let $M:=2n$. For all $v\in \mathbb G$ there exist $s_1,\dots,s_{M}\in\mathbb R$ 
    such that 
	\begin{equation}\label{eqn:Writev}
	v=\delta_{s_1}(v_{i_1})\dots\delta_{s_{M}}(v_{i_{M}}),
	\end{equation}
with 
\[
|s_i|\|v_{i_1}\|\leq c_0\|v\|, \forall i=1,\ldots,M. 
\]
\end{lemma}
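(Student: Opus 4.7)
My plan is to reduce both claims to the existence of a ``Chow-type'' local diffeomorphism from an open set of times into $\mathbb G$ built out of horizontal flows, and then use the homogeneity of $\mathbb G$ under the dilations $\delta_\lambda$ to bootstrap from a local statement to the global one with quantitative bounds.

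\textbf{Step 1: producing the local diffeomorphism.} The starting point is the observation that, for a Carnot group, the basis $\mathfrak V$ of $V_1$ Lie-generates $\mathfrak g$. Combining the Baker--Campbell--Hausdorff expansion with iterated commutators of flows, one produces an ordered list of indices $i_1,\ldots,i_n\in\{1,\ldots,n_1\}$ (with repetitions, of length equal to the topological dimension $n$) and a base point $(\hat s_1,\ldots,\hat s_n)$ such that the iterated flow
\[
G(s_1,\ldots,s_n):=\delta_{s_1}(v_{i_1})\cdots\delta_{s_n}(v_{i_n})
\]
has surjective differential at $(\hat s_1,\ldots,\hat s_n)$, hence is a local diffeomorphism onto a neighborhood of $G(\hat s)$. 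This is precisely the content of \cite[Lemma~3.33]{ABB}, applied to the horizontal left-invariant vector fields $X_{v_i}$; by a rescaling argument one may arrange $\hat s\in(0,1)^n$ and the neighborhood of definition $V\subseteq(0,1)^n$. Right-translating by $G(\hat s)^{-1}=\delta_{-\hat s_n}(v_{i_n})\cdots\delta_{-\hat s_1}(v_{i_1})$ turns this diffeomorphism into the map $F_{\mathfrak V}$ in the statement, which now takes $\hat s$ to $0$ and $V$ onto a neighborhood $U$ of the identity. Being a right-translate of a smooth diffeomorphism, $F_{\mathfrak V}$ is itself a $C^\infty$ diffeomorphism from $V$ onto $U$, proving \eqref{eqn:HatPsi}.

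\textbf{Step 2: upgrading to arbitrary $v\in\mathbb G$ via dilations.} To produce the decomposition \eqref{eqn:Writev} for every $v\in\mathbb G$, I use the fact that $0\in U$ and $U$ is open in the manifold topology, so that one can fix $r>0$ with $\{\|w\|\leq r\}\subseteq U$ (the exact choice of homogeneous norm is irrelevant because of bi-Lipschitz equivalence). Given $v\in\mathbb G\setminus\{0\}$, set $\lambda:=\|v\|/r$ and $w:=\delta_{1/\lambda}v$, so that $\|w\|=r$, hence $w=F_{\mathfrak V}(s)$ for some $s\in V$. Now the key algebraic fact is that for $u\in V_1$ one has $\delta_\lambda\delta_t(u)=\delta_{\lambda t}(u)$, because $V_1$ is the first stratum. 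Since $\delta_\lambda$ is a group automorphism, applying $\delta_\lambda$ to the identity $w=F_{\mathfrak V}(s)$ gives
\[
v=\delta_\lambda w=\delta_{\lambda s_1}(v_{i_1})\cdots\delta_{\lambda s_n}(v_{i_n})\delta_{-\lambda\hat s_n}(v_{i_n})\cdots\delta_{-\lambda\hat s_1}(v_{i_1}),
\]
which is exactly a product of $M=2n$ horizontal flows as in \eqref{eqn:Writev}. Each new time parameter is of the form $\lambda s_i$ or $-\lambda\hat s_i$ with $|s_i|,|\hat s_i|\leq 1$, and hence is bounded in absolute value by $\lambda=\|v\|/r$; choosing $c_0$ to absorb $r^{-1}$ and the factors $\|v_{i_j}\|$ (all uniformly bounded in the finite basis $\mathfrak V$) yields the estimate $|s_i|\,\|v_{i_1}\|\leq c_0\|v\|$.

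\textbf{Main obstacle.} The only nontrivial ingredient is Step~1, i.e.\ choosing the indices $i_1,\ldots,i_n$ and the base point $\hat s$ so that $dG|_{\hat s}$ is surjective; all computations rely on the BCH formula and on the fact that iterated brackets of elements of $V_1$ exhaust $\mathfrak g$. Since this is already done cleanly in the control-theoretic literature (we cite \cite[Lemma~3.33]{ABB}, and \cite[Lemma~3.10 and Lemma~4.5]{ALDPolynomial} for the adaptation to Carnot groups, with an alternative viewpoint in \cite[Theorem~19.2.1]{MR2363343}), the remaining task in the proof is essentially bookkeeping: verifying that the relevant neighborhood can be chosen inside $(0,1)^n$, and tracking constants through the dilation argument in Step~2.
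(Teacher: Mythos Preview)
Your proposal is correct and follows exactly the route the paper indicates: the paper does not give a self-contained proof but states that the lemma is a direct consequence of \cite[Lemma~3.33]{ABB} (with \cite[Lemma~3.10, Lemma~4.5]{ALDPolynomial} and \cite[Theorem~19.2.1]{MR2363343} as alternatives), and your Step~1 invokes precisely these references for the local diffeomorphism while Step~2 carries out the standard dilation argument (also implicit in \cite[Lemma~4.5]{ALDPolynomial} and spelled out in the paper's \cref{FTV}) to obtain the global decomposition with the bound $|s_i|\lesssim \|v\|$.
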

}

\subsection{Differentiability spaces}
In this section, we will restate the definition of Pansu-differentiability space given in the introduction \cref{def:PansuDifferentiabilitySpaceINTRO} in slightly different terms. First, let us recall the classical definition of Lipschitz differentiability space \cite{CheegerGAFA, Keith}.

\begin{definizione}
A metric measure space $(X,d,\mu)$ is said to be a \emph{Lipschitz differentiability space} if, for every $i\in\mathbb N$, there exist Borel sets $U_i\subset X$, natural numbers $n_i$, and Lipschitz functions $\varphi_i:X\to\mathbb R^{n_i}$,  such that the three following items hold. 
\begin{enumerate}
    \item We have $\mu  (X\setminus\cup_{i\in\mathbb N}U_i)=0$;
    \item We have $n_i$ uniformly bounded;
     \item We have that $(U_i,\varphi_i)$ is a Lipschitz chart with target $\mathbb R^{n_i}$ for $\mathbb R$-valued maps, see \cref{def:Lipschitzchart}. I.e., for every Lipschitz function $f:X\to\mathbb R$, for every $i\in\mathbb N$, and for $\mu$-almost every $x_0\in U_i$ there exists a linear map $Df(x_0):\mathbb R^{n_i}\to\mathbb R$ such that
    \begin{equation}\label{eqn:FINE}
        \limsup_{X\ni x\to x_0}\frac{|f(x)-f(x_0)-Df(x_0)[\varphi_i(x)-\varphi_i(x_0)]|}{d(x_0,x)}=0.
    \end{equation}
\end{enumerate}
\end{definizione}
When in the previous definition $n_i$ is constantly equal to $n$ we say that $(X,d,\mu)$ is an \textit{$n$-Lipschitz differentiability space}.

\subsubsection{Pansu differentiability space: definition and basic properties}

In this section, after the definition of Pansu differentiability space, see \cref{def:PansuDifferentiabilitySpace}, we will show that in every big portion of a chart of a Pansu differentiability space, one has the strict differentiability of $f$, see \cref{diffunif}, and \cref{prop:GoingTo0InChart}. These last two propositions will eventually imply that, as a consequence of the very definition, we can always choose charts that are \textbf{complete} with respect to a given Lipschitz function, in the sense of \cref{def:GHcomplete}, see \cref{rem:GoingTo0}.

As we discussed in the previous section, every couple of homogeneous left-invariant norms are biLipschitz equivalent on $\mathbb G$. Without loss of generality, every time that a Carnot group is taken into account, we fix on it a homogeneous left-invariant norm that agrees with some Euclidean norm on the first stratum of the stratification, see \cref{smoothnorm}.

\begin{definizione}\label{def:PansuDifferentiabilitySpace}
Let $\mathbb H$ be a Carnot group. A metric measure space $(X,d,\mu)$ is said to be a \emph{Pansu differentiability space with respect to $\mathbb H$} if, for every $i\in\mathbb N$, there exist Borel sets $U_i\subset X$, Carnot groups $\mathbb G_i$, and Lipschitz functions $\varphi_i:X\to\mathbb G_i$,  such that the three following items hold.
\begin{enumerate}
    \item We have $\mu  (X\setminus\cup_{i\in\mathbb N}U_i)=0$;
    \item We have $\mathrm{dim}_{\mathrm{hom}}\mathbb G_i$ uniformly bounded;
     \item We have that $(U_i,\varphi_i)$ is a Lipschitz chart with target $\mathbb G_i$ for $\mathbb H$-valued maps, see \cref{def:Lipschitzchart}.
\end{enumerate}
\end{definizione}

When in the previous definition $\mathbb G_i$ is constantly equal to some Carnot group $\mathbb G$, we say that $(X,d,\mu)$ is a \textit{$(\mathbb G,\mathbb H)$-(Pansu) differentiability space}. 
Notice that an $n$-Lipschitz differentiability space is a $(\mathbb R^n,\mathbb R^m)$-Lipschitz differentiability space for every $m\geq 1$.

\begin{osservazione}[Pansu differentiability spaces and Lipschitz differentiability spaces]\label{rem:Importante}
If $(X,d,\mu)$ is a Pansu differentiability space with respect to some Carnot group $\mathbb H$, by composing the Lipschitz chart $\varphi:X\to \mathbb G$ with the projection $\pi_{1}:\mathbb G\to (V_{1},d_{\mathrm{eu}})$, where $V_{1}$ is the first stratum of the stratification of $\mathbb G$, we have that $(X,d,\mu)$ is a Lipschitz differentiability space. Indeed, if $f:X\to \mathbb R$ is Lipschitz, by isometrically embedding $\mathbb R$ into $\mathbb H$ (in the first stratum of $\mathbb H$), we have a map $\widetilde f:X\to \mathbb H$. 
Differentiating, we get the Pansu differential $D\widetilde f:\mathbb G\to \mathbb H$. We claim that applying \eqref{eqn:PANSUDIFF}, restricted to the first stratum of $\mathbb G$, and by using the fact that $Df(x_0)$ is a homogeneous homomorphism, one gets \eqref{eqn:FINE}. Indeed, let us explain this with a diagram and a computation.
\begin{center}
\begin{tikzcd} 
		X \arrow{r}{f}  
  \ar{d}{\varphi} 
  & \R \arrow[hookrightarrow]{r}  & \mathbb H \ar[two heads]{r}{\pi} & V_1(\mathbb H) \\
\mathbb G \ar{rru}{Df}   \arrow[two heads]{d}{\pi} &  & &
   \\
 \R^{n_1}  \simeq V_1(\mathbb G) 
 \qquad \ar{rrruu}{Df|_{V_1}}  &  &  &
   .
	\end{tikzcd}
\end{center}
{
Using that $ Df(x_0)$ is a group morphism and hence commutes with the abelianization $\pi_1$ on $\mathbb G$ and $\mathbb H$, and that $\pi_1: \mathbb H \to V_1(\mathbb H)$ is 1-Lipschitz, we get
 \begin{eqnarray}\label{0eqn:PANSUDIFF}
           d\left(f(x) ,f(x_0) \cdot Df(x_0)|_{V_1}[(\pi\circ\varphi)(x_0)^{-1}\cdot (\pi\circ\varphi)(x)]  \right) \\
           \qquad = 
  d\left(f(x) ,f(x_0) \cdot \pi (Df(x_0) [ \varphi (x_0)^{-1}\cdot  \varphi (x)] ) \right)\\
         \qquad   \qquad  \leq
        d\left(f(x) ,f(x_0)  \cdot Df(x_0)[\varphi(x_0)^{-1}\cdot \varphi(x)],  \right)
        \end{eqnarray} 
and then, \eqref{eqn:PANSUDIFF} implies \eqref{eqn:FINE}.
}
\end{osservazione}

Let us now show that on every big portion of a chart of a Pansu differentiability space, one has the strict differentiability of $f$, see \cref{diffunif}. Let us first notice, in the following \cref{propcartestrutturate}, that we can restrict to consider $\lambda$-structured chart in the horizontal directions (cf. \cite{BateJAMS}), and moreover, we have a uniform bound on $\|Df\|$. 

{
\begin{proposizione}\label{propcartestrutturate}
Suppose $(X,d,\mu)$ is a Pansu differentiability space with respect to $\mathbb H$. Then, we can find charts $(U_i,\varphi_i)$ for $X$ satisfying items
1, 2 and 3 of Definition~\ref{def:PansuDifferentiabilitySpace},
such that for every $i\in\N$ we have $\mu(U_i)<\infty$, and there exists a $\lambda_i>0$ such that the following holds. For every $R>0$ there exists an $r>0$ such that for every $x_0\in U_i$ there are points $x_1,\ldots,x_{n_1(\mathbb G_i)}\in U$ such that $r<d(x_j,x_0)<R$ and 
\begin{equation}
    \max_{1\leq j\leq n_1(\mathbb G_i)}\frac{\lvert \langle\pi_{1,i}\circ \varphi_i(x_j)-\pi_{1,i}\circ \varphi_i(x_0), v\rangle\rvert}{d(x_j,x_0)}\geq \lambda_i,
    \label{lambdastructuredchart}
\end{equation}
for every $v\in \mathbb{S}^{n_1(\mathbb{G}_i)-1}$, where $\mathbb S^{n_1(\mathbb G_i)-1}$ is the Euclidean sphere in the horizontal stratum of $\mathbb G_i$, and $\langle\cdot,\cdot\rangle$ is the standard scalar product.
In particular, for every Lipschitz function $f:X\to\mathbb H$ there exists a constant $C_i=C(f,\lambda_i)>0$ depending on $\mathrm{Lip}f$,  $\lambda_i$, $\mathbb{G}_i$, and its homogeneous norm $\lVert\cdot\rVert_i$ such that for $\mu$-almost every $x\in U_i$ we have $\lVert Df(x)\rVert\leq C_i$.
\end{proposizione}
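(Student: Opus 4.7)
The plan is to follow the strategy of the structured-chart lemma in \cite{BateJAMS} and adapt it to the Carnot setting by passing through the horizontal projection of the chart. First, by inner regularity of the Radon measure $\mu$ we split each original chart domain into countably many Borel subsets of finite measure, so $\mu(U_i)<\infty$ becomes automatic. By \cref{rem:Importante}, the composition $\pi_{1,i}\circ\varphi_i:X\to V_1(\mathbb G_i)\simeq\R^{n_1(\mathbb G_i)}$ is a chart for the underlying Lipschitz differentiability space structure. Applying the Euclidean structured-chart decomposition of \cite{BateJAMS} to this horizontal projection, together with a standard Egorov-style refinement that makes the parameter $r$ depend only on $R$ and not on the basepoint, produces a countable Borel partition of each original chart on which $\pi_{1,i}\circ\varphi_i$ satisfies \eqref{lambdastructuredchart} with a uniform $\lambda_i>0$. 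Since Pansu differentiability of Lipschitz maps with respect to $\varphi_i$ passes to Borel subsets, items 1, 2, and 3 of \cref{def:PansuDifferentiabilitySpace} continue to hold on the refined partition.

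To derive the uniform bound on $\|Df(x)\|$, we reduce it to a bound on the Euclidean operator norm of the horizontal restriction $Df(x_0)\vert_{V_1(\mathbb G_i)}:V_1(\mathbb G_i)\to V_1(\mathbb H)$. Since $Df(x_0)$ is a homogeneous homomorphism, it commutes with the dilations and therefore sends each stratum of $\mathbb G_i$ into the corresponding stratum of $\mathbb H$; combined with \cref{propdecomposizione}, every $v\in\mathbb G_i$ factors as a product of a bounded number of dilations along a fixed horizontal basis, with individual sizes controlled by $c_0\|v\|_i$. Pushing this factorization through $Df(x_0)$ and using the subadditivity of the homogeneous norm on $\mathbb H$ together with the fact that dilations act as scalar multiplication on $V_1(\mathbb H)$, one obtains
\[
\|Df(x_0)\|\;\leq\; C(\mathbb G_i,\|\cdot\|_i)\,\bigl\lVert Df(x_0)\vert_{V_1(\mathbb G_i)}\bigr\rVert_{\mathrm{op}},
\]
with $C$ depending only on $\mathbb G_i$ and its homogeneous norm.

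For the remaining Euclidean operator-norm bound, fix a Pansu-differentiability point $x_0\in U_i$, a small $R>0$ and choose $x_1,\ldots,x_{n_1(\mathbb G_i)}\in U_i$ as in \eqref{lambdastructuredchart}; set $u_j:=(\pi_{1,i}\varphi_i(x_j)-\pi_{1,i}\varphi_i(x_0))/d(x_j,x_0)$. The $\lambda_i$-structured condition, combined with the equivalence of the $\ell^\infty$ and $\ell^2$ norms on $\R^{n_1(\mathbb G_i)}$, shows that the matrix with columns $u_j$ has smallest singular value at least $\lambda_i/\sqrt{n_1(\mathbb G_i)}$, so every unit $v\in V_1(\mathbb G_i)$ admits an expansion $v=\sum_j c_j u_j$ with $|c_j|\leq\sqrt{n_1(\mathbb G_i)}/\lambda_i$. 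On the other hand, the Lipschitz estimate on $f$ and \eqref{eqn:PANSUDIFF} give
\[
\lVert Df(x_0)(\varphi_i(x_0)^{-1}\varphi_i(x_j))\rVert_{\mathbb H}\leq\mathrm{Lip}f\cdot d(x_j,x_0)+o(d(x_j,x_0))\quad\text{as }R\to 0,
\]
and since the projection $\pi_1^{\mathbb H}:\mathbb H\to V_1(\mathbb H)$ is $1$-Lipschitz for the homogeneous norm and commutes with the homogeneous homomorphism $Df(x_0)$, dividing by $d(x_j,x_0)$ yields $\bigl\lvert Df(x_0)\vert_{V_1}(u_j)\bigr\rvert_{V_1(\mathbb H)}\leq\mathrm{Lip}f+o(1)$. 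Expanding $v=\sum_j c_j u_j$ and letting $R\to 0$ produces the uniform bound $\bigl\lVert Df(x_0)\vert_{V_1}\bigr\rVert_{\mathrm{op}}\leq n_1(\mathbb G_i)^{3/2}\,\mathrm{Lip}f/\lambda_i$, which combined with the previous display finishes the argument.

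The main obstacle is the reduction in the second paragraph: controlling the full homogeneous operator norm of $Df(x_0)$ by the norm of its horizontal restriction. This is precisely where the Carnot structure of $\mathbb G_i$ enters essentially through \cref{propdecomposizione}, with no analogous shortcut available for general metric-group targets.
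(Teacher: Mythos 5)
Your proposal is correct and follows essentially the same route as the paper: pass to the horizontal projection $\pi_{1,i}\circ\varphi_i$ via \cref{rem:Importante}, invoke Bate's structured-chart decomposition to get \eqref{lambdastructuredchart}, bound the horizontal restriction $Df(x_0)\circ\pi_{1,i}$ by $\mathrm{Lip}f/\lambda_i$ (up to dimensional constants), and extend to all of $\mathbb G_i$ through the horizontal factorization of \cref{propdecomposizione}. The only cosmetic difference is that you re-derive the horizontal operator-norm bound directly from the $\lambda_i$-structured condition and \eqref{eqn:PANSUDIFF}, where the paper instead identifies $Df(x_0)\circ\pi_{1,i}$ with the Euclidean differential $L_{\pi_{1,\mathbb H}\circ f}(x_0)$ by uniqueness and cites \cite[Lemma~3.4]{BateJAMS}.
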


\begin{proof}
Let $(U_i,\varphi_i)$ be the charts of $(X,d,\mu)$ that make it a Pansu differentiability space. As remarked above, see \cref{rem:Importante}, the space $(X,d,\mu)$ with the charts $(U_i,\pi_{1,i}\circ \varphi_{1,i})$ is a Lipschitz differentiability space and hence, thanks to \cite[Lemma~3.7]{BateJAMS} we can decompose each $U_i$ into countably many Borel sets $U_i^j$ and a $\mu$-null set $N$ such that  $U_i=\bigcup_{j\in\N} U_i^j\cup N$ , the charts $\{(U_i^j,\varphi_i)\}_{i,j\in\N}$ clearly satisfy items 1, 2 and 3 of Definition~\ref{def:PansuDifferentiabilitySpace} and they satisfy \eqref{lambdastructuredchart} for some $\lambda_i>0$.

Moreover, since $(X,d,\mu)$ is a Lipschitz differentiability space with the charts $\{(U_i^j,\pi_{1,i}\circ\varphi_i)\}_{i,j\in\N}$, see \cref{rem:Importante}, we know that for every $i,j\in\mathbb N$, and for every Lipschitz function $g:X\to \R^{n_1(\mathbb H)}$ and $\mu$-almost every $x_0\in U_i^j$  we have there exists an $L_g(x_0):\mathbb{R}^{n_1(\mathbb{G}_i)}\to\R^{n_1(\mathbb{H})}$ such that 
\begin{equation}\label{eqn:lipPANSUDIFF}
        \limsup_{X\ni x\to x_0}\frac{\big |g(x)-g(x_0)- L_g(x_0)[\pi_{1,i}\circ\varphi_i(x)-\pi_{1,i}\circ\varphi_i(x_0)]\big |}{d(x_0,x)}=0.
    \end{equation}
and thanks to \cite[Lemma~3.4]{BateJAMS} we know that $\lVert L_g(x_0)\rVert \leq \mathrm{Lip}(g,x_0)/\lambda_i$. On the other hand for every $x_0\in U_i^j$ and every Lipschitz map $f:X\to\mathbb H$
\begin{equation}
\begin{split}
       0= &\limsup_{X\ni x\to x_0}\frac{\big\|\big(Df(x_0)(\varphi_i(x_0)^{-1}\ \varphi_i(x))\big)^{-1}\  f(x_0)^{-1}\  f(x)\big\|_{\mathbb H}}{d(x_0,x)}\\
       \geq &\limsup_{X\ni x\to x_0}\frac{\big |\pi_{1,\mathbb{H}}\circ f(x)-\pi_{1,\mathbb{H}}\circ f(x_0)-\pi_{1,\mathbb{H}}(Df(x_0)(\varphi_i(x_0)^{-1}\ \varphi_i(x)))\big |}{d(x_0,x)}\\
       =&\limsup_{X\ni x\to x_0}\frac{\big |\pi_{1,\mathbb{H}}\circ f(x)-\pi_{1,\mathbb{H}}\circ f(x_0)-\pi_{1,\mathbb{H}}\circ Df(x_0)(\pi_{1,i}\circ \varphi (x)-\pi_{1,i}\circ \varphi_i(x_0))\big |}{d(x_0,x)},
\end{split}
    \end{equation}
    where the last identity above is a consequence of the homogeneity of $Df(x_0)$. Finally, by the uniqueness of the differential, we infer that $L_{\pi_{1,\mathbb{H}}\circ f}(x_0)=\pi_{1,\mathbb{H}}\circ Df(x_0)\circ \pi_{1,i}=Df(x_0)\circ \pi_{1,i}$ and thus 
    $$
    \lVert Df(x_0)\circ \pi_{1,i}\rVert\leq \mathrm{Lip}(f,x_0)/\lambda_i.
    $$
    Thanks to Proposition~\ref{propdecomposizione}, there exist an absolute constant $M_i\in \N$ depending only on $\mathbb{G}_i$ and a constant $c_{0,i}>0$ depending only on $\mathbb{G}_i$ and on the norm $\lVert\cdot\rVert_i$ on it,
    such that for every $v\in\mathbb{G}$ we can find $\mu_1,\ldots,\mu_M>0$ such that $v=\delta_{\mu_1}(e_{i_1})\cdot\ldots\cdot\delta_{\mu_M}(e_{i_M})$ and $| \mu_{j}|\|e_{i_j}\|_i\leq c_{0,i}\lVert v\rVert_i$ for every $j=1,\dots,M$. Thus
    \begin{equation}
        \lVert Df(x_0)[v]\rVert\leq \sum_{\ell=1}^M\lVert Df(x_0)[\delta_{\mu_{\ell}}e_{i_\ell}]\rVert=\sum_{\ell=1}^M|\mu_{\ell}|\lVert Df(x_0)[\pi_{1,i}(e_{i_\ell})]\rVert\leq M_ic_{0,i}\|v\|_i\mathrm{Lip}(f,x_0)/\lambda_i.
    \end{equation}
\end{proof}
}

\begin{proposizione}\label{propmisuDf}
Suppose $(X,d,\mu)$ is a Pansu differentiability space with respect to $\mathbb{H}$ and suppose $(\varphi,U)$ is one of its charts with $\varphi:X\to \mathbb{G}$ satisfying the conclusion of Proposition~\ref{propcartestrutturate}. Let $f:X\to\mathbb H$ be Lipschitz.
Then, the map $x\mapsto Df(x)$ on $U$ is a $\mu$-measurable map, where we have endowed the space of homogeneous homomorphisms $\mathrm{HHom}(\mathbb{G},\mathbb{H})$ with the distance 
$$d_\mathrm{HHom}(L_1,L_2):=\sup_{\lVert g\rVert_{\mathbb G}= 1}d_{\mathbb H}(L_1(g),L_2(g)).$$
In addition, as $k\in\mathbb N$, the maps 
$$
U\ni x_0\mapsto R_k(x_0):=\sup_{d(x_0,x)\leq k^{-1}}\frac{\lVert Df(x_0)[\varphi(x_0)^{-1}\varphi(x)]^{-1}f(x_0)^{-1}f(x)\rVert_\mathbb{H}} {d(x,x_0)},
$$
are measurable and they pointwise converge to $0$ $\mu$-almost everywhere on $U$ as $k\to+\infty$.
\end{proposizione}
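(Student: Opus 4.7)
The plan is to derive both measurability claims from the separability of $X$ combined with a measurable selection argument for the differential $Df$, and then to obtain the pointwise convergence $R_k\to 0$ $\mu$-a.e.\ directly as a reformulation of \eqref{eqn:PANSUDIFF}. I expect the main obstacle to be the measurability of $x_0\mapsto Df(x_0)$, since $Df(x_0)$ is determined only implicitly by a uniqueness property and takes values in the non-linear space $\mathrm{HHom}(\mathbb G,\mathbb H)$.

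For the measurability of $Df$, let $K$ denote the compact metrizable subset of $\mathrm{HHom}(\mathbb G,\mathbb H)$ consisting of homogeneous homomorphisms of norm at most the constant $C$ provided by \cref{propcartestrutturate}, and define
$$\Phi(x_0,L):=\limsup_{X\ni x\to x_0}\frac{d_{\mathbb H}\bigl(f(x),\,f(x_0)\cdot L(\varphi(x_0)^{-1}\varphi(x))\bigr)}{d(x,x_0)}.$$
Fix a countable dense set $\{y_n\}_{n\in\N}\subset X$. Since the integrand is continuous in $x$ on $X\setminus\{x_0\}$, one may rewrite
$$\Phi(x_0,L)=\inf_{k\in\N}\,\sup_{\substack{n\in\N\\ 0<d(y_n,x_0)<1/k}}\frac{d_{\mathbb H}\bigl(f(y_n),\,f(x_0)\cdot L(\varphi(x_0)^{-1}\varphi(y_n))\bigr)}{d(y_n,x_0)},$$
which exhibits $\Phi$ as a countable inf of countable sup of $(x_0,L)$-Borel functions. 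Hence $\Phi$ is Borel, and $\Phi(x_0,\cdot)$ is lower semicontinuous on $K$ as a supremum of continuous functions in $L$. By the $(\mathbb G,\mathbb H)$-differentiability hypothesis and the uniqueness built into \cref{def:Lipschitzchart}, for $\mu$-a.e.\ $x_0\in U$ the unique zero of $\Phi(x_0,\cdot)$ in $K$ is $L=Df(x_0)$. A Jankov--von Neumann selection then yields, for each $m\in\N$, a $\mu$-measurable map $L_m:U\to K$ with $\Phi(x_0,L_m(x_0))<1/m$ at $\mu$-a.e.\ $x_0$; by compactness of $K$ and lower semicontinuity of $\Phi(x_0,\cdot)$, every accumulation point of $\{L_m(x_0)\}_m$ is a zero of $\Phi(x_0,\cdot)$ and thus equals $Df(x_0)$. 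Consequently $Df(x_0)=\lim_m L_m(x_0)$ $\mu$-a.e., which gives the desired $\mu$-measurability of $Df$.

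The second statement follows along the same lines. For each fixed $x_0$, the map
$$x\longmapsto \frac{\lVert Df(x_0)[\varphi(x_0)^{-1}\varphi(x)]^{-1}\,f(x_0)^{-1}\,f(x)\rVert_{\mathbb H}}{d(x,x_0)}$$
is continuous on $X\setminus\{x_0\}$, so by density of $\{y_n\}$ and continuity one can replace the supremum defining $R_k(x_0)$ by the supremum over those $y_n$ with $0<d(y_n,x_0)<1/k$ without changing its value. The already-established measurability of $Df$, together with the continuity of $\varphi$, $f$, and of the group operations, makes each of these countably many terms $\mu$-measurable in $x_0$, so $R_k$ is $\mu$-measurable as a countable supremum of measurable functions. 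Finally, \eqref{eqn:PANSUDIFF} applied to the chart $(U,\varphi)$ is exactly the statement $\lim_{k\to\infty}R_k(x_0)=0$, which holds at $\mu$-a.e.\ $x_0\in U$ by the $(\mathbb G,\mathbb H)$-differentiability hypothesis.
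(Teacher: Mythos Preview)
Your argument is correct and follows the same overall scheme as the paper: define the Borel function $\Phi(x_0,L)$, apply Jankov--von~Neumann to obtain $\mu$-measurable approximate minimizers $L_m$, and show $L_m\to Df$ pointwise $\mu$-a.e. The only substantive difference lies in this last convergence step. The paper uses the $\lambda$-structured chart from \cref{propcartestrutturate} directly: it picks the points $x_1,\dots,x_{n_1}$ satisfying \eqref{lambdastructuredchart}, extracts limiting horizontal directions $\omega_\ell$, and combines these with \cref{propdecomposizione} to obtain a quantitative estimate $d_{\mathrm{HHom}}(\mathfrak D_j(x_0),Df(x_0))\lesssim j^{-1/s^2}$. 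Your route is softer: you use only the norm bound $\lVert Df(x_0)\rVert\le C$ from \cref{propcartestrutturate} to confine the selections to a compact $K\subset\mathrm{HHom}(\mathbb G,\mathbb H)$, and then invoke lower semicontinuity of $\Phi(x_0,\cdot)$ together with uniqueness of the zero to force convergence. Both work; your version avoids the explicit algebra with \cref{propdecomposizione}, while the paper's yields an explicit rate. One small remark: your stated justification for lower semicontinuity (``as a supremum of continuous functions'') is not quite right, since $\Phi(x_0,\cdot)$ is an infimum of suprema; the correct observation is that each term is $\mathrm{Lip}(\varphi)$-Lipschitz in $L$, so $\Phi(x_0,\cdot)$ is in fact Lipschitz, hence continuous.
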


\begin{proof}
Denote by $\tilde{U}$ a Borel subset of $U$ of full $\mu$-measure where \eqref{eqn:PANSUDIFF} is satisfied. The map $\Xi:\tilde{U}\times \mathrm{HHom}(\mathbb{G},\mathbb{H})\to \tilde{U}\times \mathrm{HHom}(\mathbb{G},\mathbb{H})\times \R$ defined as 
$$
\Xi(x_0,L):=\Big(x_0,L,\lim_{k\to \infty}\sup_{d(x_0,x)\leq k^{-1}}\frac{\lVert L(\varphi(x_0)^{-1}\varphi(x))^{-1}f(x_0)^{-1}f(x)\rVert_\mathbb{H}} {d(x,x_0)}\Big),
$$
is Borel. Indeed, since for every $j,k\geq 1$ and for every $L\in \mathrm{HHom}(\mathbb G,\mathbb H)$ the function 
$$
\tilde U\ni x_0\mapsto \mathfrak{R}_{j,k}(x_0):=\sup_{j^{-1}k^{-1}\leq d(x_0,x)\leq k^{-1}}\frac{\lVert L(\varphi(x_0)^{-1}\varphi(x))^{-1}f(x_0)^{-1}f(x)\rVert_\mathbb{H}} {d(x,x_0)},$$
is lower semicontinuous on $X$, it is immediate to see that $\Xi$ is the pointwise limit of this sequence of Borel functions, and thus, it is Borel. In addition, its image is a Borel set since it is the graph of the Borel function $x_0\mapsto\lim_{k\to\infty}\sup_{j\in\N}\mathfrak{R}_{j,k}(x_0)$. Thanks to the choice of $\tilde{U}$, for every $j\in\N$ we see that the projection of $\mathrm{im}(\Xi)\cap \tilde{U}\times \mathrm{HHom}(\mathbb{G},\mathbb{H})\times [0,j^{-1})$ onto $\tilde{U}$ is a set of full $\mu$-measure and hence thanks to Jankov-von Neumann Selection Theorem, see
\cite[Theorem~18.1]{Kechris}, we conclude that, for every $j$, we can find a Borel subset $\tilde{\tilde{U}}$ of full $\mu$-measure in $\tilde U$, and a $\mu$-measurable map $\mathfrak{D}_j:\tilde{\tilde{U}}\to \mathrm{HHom}(\mathbb{G},\mathbb{H})$ such that for every $x_0\in\tilde{\tilde{U}}$ 
$$\lim_{k\to \infty}\sup_{d(x,x_0)\leq k^{-1}}\frac{\lVert \mathfrak{D}_j(x_0)[\varphi(x_0)^{-1}\varphi(x)]^{-1}f(x_0)^{-1}f(x)\rVert_\mathbb{H}} {d(x,x_0)}\leq j^{-1}.$$
Since we have assumed that \eqref{eqn:PANSUDIFF} holds at the points of $\tilde{\tilde{U}}$, we infer that 
\begin{equation}
    \begin{split}
      j^{-1}\geq &  \lim_{k\to \infty}\sup_{d(x,x_0)\leq k^{-1}}\frac{\lVert \mathfrak{D}_j(x_0)[\varphi(x_0)^{-1}\varphi(x)]^{-1}f(x_0)^{-1}f(x)\rVert_\mathbb{H}} {d(x,x_0)}\\
      \geq &\lim_{k\to \infty}\sup_{d(x,x_0)\leq k^{-1}}\frac{\lVert\mathfrak{D}_j(x_0)[\varphi(x_0)^{-1}\varphi(x)]^{-1}Df(x_0)[\varphi(x_0)^{-1}\varphi(x)]\rVert_{\mathbb H}} {d(x,x_0)}\\
      &\qquad-\lim_{k\to \infty}\sup_{d(x,x_0)\leq k^{-1}}\frac{\lVert Df(x_0)[\varphi(x_0)^{-1}\varphi(x)]^{-1}f(x_0)^{-1}f(x)\rVert_\mathbb{H}} {d(x,x_0)}\\
      \geq &\max_{1\leq \ell\leq n_1(\mathbb{G})}\limsup_{k\to \infty}\frac{\lVert\mathfrak{D}_j(x_0)[\varphi(x_0)^{-1}\varphi(x_\ell(k))]^{-1}Df(x_0)[\varphi(x_0)^{-1}\varphi(x_\ell(k))]\rVert}{d(x_\ell(k),x_0)}
    \end{split}
\end{equation}
where $\{x_\ell(k)\}_{\ell=1,\ldots,n_1(\mathbb{G})}$ is the sequence of points given by Proposition~\ref{propcartestrutturate}. Without loss of generality we can assume that the vectors $\delta_{1/d(x_\ell(k),x_0)}(\varphi(x_0)^{-1}\varphi(x_\ell(k)))$ converge to $\omega_\ell\in B(0,\mathrm{Lip}(\varphi)(x_0))$ for every $\ell\in\{1,\ldots,n_1(\mathbb{G})\}$ and 
\begin{equation}
    \max_{1\leq \ell\leq n_1(\mathbb{G})}\langle v,\pi_1(\omega_\ell)\rangle\geq \lambda\qquad\text{for every  }v\in\mathbb{S}^{n_1(\mathbb{G})-1}.
    \label{eq:basisgoodlambda}
\end{equation}
Hence 
$$j^{-1}\geq \max_{1\leq \ell\leq n_1(\mathbb{G})}\lVert\mathfrak{D}_j(x_0)[\omega_\ell]^{-1}Df(x_0)[\omega_\ell]\rVert\geq \mathfrak{C}^{-1}\max_{1\leq \ell\leq n_1(\mathbb{G})}\lvert \mathfrak{D}_j(x_0)[\pi_1(\omega_\ell)]-Df(x_0)[\pi_1(\omega_\ell)]\rvert,$$
where $\mathfrak{C}$ is the constant that gives the equivalence between $\lVert \cdot\rVert$ and the box norm, see \cref{smoothnorm}. Thanks to Proposition~\ref{propdecomposizione} and to \eqref{eq:basisgoodlambda} that there exists an absolute constant $M\in \N$ depending only on $\mathbb{G}$ and a constant $c_0>0$ depending only on $\mathbb{G}$, the norm $\lVert\cdot\rVert$ and the vectors $\{\pi_1(\omega_\ell)\}_{\ell=1,\ldots,n_1}$
    such that for every $v\in\mathbb{G}$ we can find $\lambda_1,\ldots,\lambda_M$ such that $$v=\delta_{\lambda_1}(v_1)\cdot\ldots\cdot\delta_{\lambda_M}(v_M),$$
    with $v_1,\ldots,v_M\in\{\pi_1(\omega_1),\ldots,\pi_1(\omega_{n_1(\mathbb{G})})\}$, and $| \lambda_{\ell}|\|v_\ell\|\leq c_0\lVert v\rVert$ for every $\ell=1,\ldots,M$. Notice that, for every $\|v\|=1$, we thus have $|\lambda_\ell|\leq c_0/\max\{\|v_\ell\|\}=:\tilde C$, where the constant $C$ depends on $c_0,x_0$.
     Thus if $L_1,L_2$ in the following denote respectively $Df(x_0),\mathfrak{D}_j(x_0)$, then for every $\lVert v\rVert=1$, we have
\begin{equation}
\begin{split}
     d(L_1(v),L_2(v))=\lVert\delta_{\lambda_M}(L_1(v_M))^{-1}\ldots\delta_{\lambda_1}(L_1(v_1))^{-1}\delta_{\lambda_1}(L_2(v_1))\ldots \delta_{\lambda_M}(L_2(v_M))\rVert\\
     \leq C\sum_{\ell=1}^M\lVert L_1(v_\ell)^{-1}L_2(v_\ell)\rVert^{1/s}\leq C\sum_{\ell=1}^M| L_2(v_\ell)-L_1(v_\ell)|^{1/s^2}\leq C M j^{-1/s^2},
\end{split}
\end{equation}
where the constant $C$ above may change from line to line, and it is a constant depending on $M$, $c_0$, $\lambda$, $\tilde C$, and $\|Df(x_0)\|$ given by repeatedly applying \cite[Lemma~2.1]{antonelli2020rectifiableA}.
Since the above bound does not depend on $v$ such that $\|v\|=1$ (but it might depends on $x_0$) we infer, thanks to this, that for $\mu$-almost every $x_0\in\tilde U$, $\mathfrak{D}_j(x_0)$ is defined for all $j$ and converge pointwise to $Df(x_0)$. This concludes the proof of the fact that $U\ni x\mapsto Df(x)$ is a $\mu$-measurable map. 

In addition, arguing as above, we know that the maps 
$$U\ni x_0\mapsto R_k(x_0)=\sup_{d(x,x_0)\leq k^{-1}}\frac{\lVert Df(x_0)[\varphi(x_0)^{-1}\varphi(x)]^{-1}f(x_0)^{-1}f(x)\rVert_\mathbb{H}} {d(x,x_0)},$$
are $\mu$-measurable since $x_0\mapsto Df(x_0)$ is $\mu$-measurable and they pointwise converge to $0$ for $\mu$-almost every $x_0\in U$ by the definition of Pansu differentiability space. 
\end{proof}

The following proposition is a standard use of Lusin and Severini--Egoroff's theorem.
\begin{proposizione}\label{diffunif}
Suppose $(X,d,\mu)$ is a Pansu differentiability space with respect to $\mathbb{H}$, and suppose $(\varphi,U)$ is one of its charts with $\varphi:X\to \mathbb{G}$ satisfying the conclusion of Proposition~\ref{propcartestrutturate}. Let $f:X\to\mathbb H$ be Lipschitz. Then, for every $\eta>0$ there exists a compact set $\tilde{U}\subseteq U$ such that $\mu(U\setminus \tilde{U})\leq \eta\mu(U)$ and the following holds. For every $\varepsilon>0$ there exists a $\delta>0$ such that 
$$\frac{\lVert Df(x_0)[\varphi(x)^{-1}\varphi(y)]^{-1}f(x)^{-1}f(y)\rVert_\mathbb{H}} {d(x,y)}\leq \varepsilon,$$
for every $x_0\in\tilde U$, $x\in \tilde{U}\cap B(x_0,\delta)$, and every $y\in B(x,\delta)$.
\end{proposizione}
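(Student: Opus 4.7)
The plan is to combine the two conclusions of Proposition~\ref{propmisuDf}---the $\mu$-measurability of $x\mapsto Df(x)$ and the pointwise a.e.\ vanishing of the remainders $R_k(x)$ on $U$---and upgrade them to uniform statements on a large subset via Lusin's theorem and Severini--Egoroff's theorem. Since by Proposition~\ref{propcartestrutturate} the map $x\mapsto Df(x)$ takes values in a bounded portion of the separable metric space $(\mathrm{HHom}(\mathbb G,\mathbb H),d_{\mathrm{HHom}})$, Lusin's theorem produces a compact $U_1\subseteq U$ with $\mu(U\setminus U_1)\leq \eta\mu(U)/2$ on which $Df$ is continuous, hence uniformly continuous. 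Severini--Egoroff applied to $R_k\to 0$ on $U_1$ then extracts a compact $\tilde U\subseteq U_1$ with $\mu(U_1\setminus\tilde U)\leq \eta\mu(U)/2$ on which $R_k\to 0$ uniformly. The resulting $\tilde U$ satisfies $\mu(U\setminus\tilde U)\leq\eta\mu(U)$, and it remains to exhibit the quantitative estimate.

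Given $\varepsilon>0$, uniform convergence of $R_k$ on $\tilde U$ furnishes $\delta_1>0$ such that, for every $x\in\tilde U$ and every $y\in X$ with $d(x,y)\leq\delta_1$,
\[
\lVert Df(x)[\varphi(x)^{-1}\varphi(y)]^{-1} f(x)^{-1}f(y)\rVert_{\mathbb H}\leq (\varepsilon/2)\, d(x,y),
\]
while uniform continuity of $Df$ on $\tilde U$ furnishes $\delta_2>0$ such that $d_{\mathrm{HHom}}(Df(x_0),Df(x))\leq \varepsilon/(2\,\mathrm{Lip}(\varphi))$ whenever $x_0,x\in\tilde U$ satisfy $d(x_0,x)\leq\delta_2$. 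I then set $\delta:=\min(\delta_1,\delta_2)$.

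For the final estimate, the key observation---the only genuinely non-routine step---is that the homogeneity of the differentials lets one transport the Egoroff bound anchored at $x$ to the bound anchored at the nearby $x_0$ required by the statement. Concretely, setting $v:=\varphi(x)^{-1}\varphi(y)$, inserting $Df(x)[v]\,Df(x)[v]^{-1}$ and applying the triangle inequality for the homogeneous norm reduces the task to controlling $\lVert Df(x_0)[v]^{-1} Df(x)[v]\rVert_{\mathbb H}$. Using that dilations are group automorphisms and that both $Df(x_0)$ and $Df(x)$ are homogeneous, one factors
\[
Df(x_0)[v]^{-1} Df(x)[v]=\delta_{\lVert v\rVert}\bigl(Df(x_0)[v/\lVert v\rVert]^{-1} Df(x)[v/\lVert v\rVert]\bigr),
\]
whose $\mathbb H$-norm equals $\lVert v\rVert\cdot \lVert Df(x_0)[v/\lVert v\rVert]^{-1} Df(x)[v/\lVert v\rVert]\rVert_{\mathbb H}\leq \mathrm{Lip}(\varphi)\,d(x,y)\cdot d_{\mathrm{HHom}}(Df(x_0),Df(x))$, which by the choice of $\delta_2$ is at most $(\varepsilon/2)d(x,y)$. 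Summing the two contributions yields $\varepsilon\, d(x,y)$, as desired. I do not expect any genuine obstacle beyond bookkeeping, consistently with the author's remark that the statement is a standard application of Lusin and Severini--Egoroff once the measurability and pointwise a.e.\ results of Proposition~\ref{propmisuDf} are in hand.
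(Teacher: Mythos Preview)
Your proposal is correct and follows essentially the same approach as the paper: apply Lusin's theorem to obtain uniform continuity of $x\mapsto Df(x)$ on a large compact subset, apply Severini--Egoroff to the remainders $R_k$ to make their vanishing uniform, and then combine the two bounds via the triangle inequality. Your final estimate, with the homogeneity factoring $Df(x_0)[v]^{-1}Df(x)[v]=\delta_{\lVert v\rVert}\bigl(Df(x_0)[v/\lVert v\rVert]^{-1}Df(x)[v/\lVert v\rVert]\bigr)$, spells out in detail exactly the inequality the paper writes in one line as $d_{\mathrm{HHom}}(Df(x),Df(x_0))\,\mathrm{Lip}(\varphi)$; the only cosmetic difference is that you arrange constants to reach $\varepsilon$ while the paper reaches $\varepsilon(1+\mathrm{Lip}(\varphi))$.
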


\begin{proof}
Since we have assumed that $\mu(U)<\infty$, thanks to Proposition~\ref{propmisuDf}, Severini-Egoroff's and Lusin's theorem we know that for every $\eta>0$ there exists a compact subset $\tilde{U}$ of $U$ such that $\mu(U\setminus \tilde U)\leq \eta\mu (U)$, and
\begin{itemize}
    \item[(i)] for every $\varepsilon>0$ there exists a $k_0$ such that 
$$\sup_{d(x_0,x)\leq k^{-1}}\frac{\lVert Df(x_0)[\varphi(x_0)^{-1}\varphi(x)]^{-1}f(x_0)^{-1}f(x)\rVert_\mathbb{H}} {d(x,x_0)}\leq \varepsilon$$ 
for every $k\geq k_0$ and every $x_0\in\tilde{U}$;
\item[(ii)] for every $\varepsilon>0$ there exists a  $\delta>0$ such that $$\text{$d_{\mathrm{HHom}}(Df(x_0),Df(x))\leq \varepsilon$ for every $x_0\in \tilde U$ and $x\in B(x_0,\delta)\cap \tilde{U}$. }$$
\end{itemize}
Let us fix $\varepsilon>0$ and define $\bar\delta:\min\{\delta,k_0^{-1},1\}/10$, where $\delta,k_0$ are chosen as above and depend on $\varepsilon$. Thus for every $x_0\in\tilde U$, for every $x\in B(x_0,\bar{\delta})\cap \tilde U$, and $y\in B(x,\bar{\delta})$ we have 
\begin{equation}
\begin{split}
      &\qquad\qquad\frac{\lVert Df(x_0)[\varphi(x)^{-1}\varphi(y)]^{-1}f(x)^{-1}f(y)\rVert_\mathbb{H}} {d(x,y)}\\
    &\leq d_{\mathrm{HHom}}(Df(x),Df(x_0))\mathrm{Lip}(\varphi)+\frac{\lVert Df(x)[\varphi(x)^{-1}\varphi(y)]^{-1}f(x)^{-1}f(y)\rVert_\mathbb{H}} {d(x,y)}\\
    &\leq \varepsilon(1+\mathrm{Lip}(\varphi)).
\end{split}
\end{equation}
This concludes the proof.
\end{proof}

The previous strict differentiability in \cref{diffunif} allows us to show that in a Pansu differentiability space with respect to $\mathbb H$ we can choose {\em good charts} $(U,\varphi)$, where  $\varphi: X\to\mathbb G$, in the sense that whenever $f: X\to\mathbb H$ is Lipschitz and $U\ni x_i,y_i\to x_0\in U$ is such that 
\[
\frac{\|\varphi(x_i)^{-1}\varphi(y_i)\|_{\mathbb G}}{d(x_i,y_i)}\to 0,
\]
hence 
\[
\frac{\|f(x_i)^{-1}f(y_i)\|_{\mathbb H}}{d(x_i,y_i)}\to 0.
\]
We give a more quantitative version of this statement in the following \cref{prop:GoingTo0InChart}, which leads to the definition of $(\mathbb G,\mathbb H)$-completeness, see \cref{def:GHcomplete}.

In the following proposition we prove that the charts of Pansu differentiability spaces have the following property. Whenever a chart $\varphi$ collapses around a point, the same happens for every Lipschitz function $X\to\mathbb H$ around the same point.

\begin{proposizione}\label{prop:GoingTo0InChart}
Suppose $(X,d,\mu)$ is a Pansu differentiability space with respect to $\mathbb{H}$. Let $(\varphi,U)$ be a chart as in Proposition~\ref{propcartestrutturate}. Let $f:X\to\mathbb H$ be Lipschitz. Then, there exists a constant $C>0$ such that for every $\eta>0$ there exists a compact set $\tilde{U}\subseteq U$
for which
\begin{itemize}
    \item[(i)]$\mu(U\setminus \tilde{U})\leq \eta\mu(U)$;
    \item[(ii)] for every $\varepsilon>0$ there exists a $\rho>0$ such that if $x_0\in\tilde U$, $x\in B(x_0,\rho)\cap \tilde{U}$ are such that $\lVert\varphi(x)^{-1}\varphi(y)\rVert_{\mathbb G} \leq \varepsilon d(x,y)$ for some $y\in B(x,\rho)$, then
    $$
    \lVert f(x)^{-1}f(y)\rVert_{\mathbb H}\leq C\varepsilon d(x,y).
    $$
\end{itemize}
\end{proposizione}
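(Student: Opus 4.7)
The plan is to combine two previously established facts about the chart $(U,\varphi)$: the strict differentiability of $f$ on a large compact subset of $U$, provided by \cref{diffunif}, and the uniform operator bound $\|Df(x)\| \leq C_1$ for $\mu$-almost every $x \in U$, provided by \cref{propcartestrutturate}. The key algebraic observation is the identity
\[
f(x)^{-1}f(y) \;=\; Df(x_0)\bigl[\varphi(x)^{-1}\varphi(y)\bigr] \cdot \Bigl(Df(x_0)\bigl[\varphi(x)^{-1}\varphi(y)\bigr]^{-1}\, f(x)^{-1}f(y)\Bigr),
\]
so that the sub-additivity of the homogeneous norm on $\mathbb H$ splits $\|f(x)^{-1}f(y)\|_{\mathbb H}$ into a \emph{linear term} controlled by $\|Df(x_0)\|\cdot\|\varphi(x)^{-1}\varphi(y)\|_{\mathbb G}$ and an \emph{error term} controlled by strict differentiability. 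Both pieces will be of size $O(\varepsilon\, d(x,y))$ under the hypothesis on $\varphi$.

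For the construction of $\tilde U$, denote by $N \subset U$ the $\mu$-null set where $\|Df(\cdot)\| > C_1$. Given $\eta > 0$, I first apply \cref{diffunif} with parameter $\eta/2$ to get a compact $\tilde U_0 \subseteq U$ with $\mu(U \setminus \tilde U_0) \leq (\eta/2)\mu(U)$ on which the strict differentiability estimate holds, and then use inner regularity of $\mu$ to choose a compact $\tilde U \subseteq \tilde U_0 \setminus N$ with $\mu(U \setminus \tilde U) \leq \eta\mu(U)$. On this $\tilde U$ the pointwise bound $\|Df(x)\| \leq C_1$ is valid for \emph{every} $x \in \tilde U$, and strict differentiability remains in force. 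This gives item (i).

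For item (ii), set $C := C_1 + 1$ and fix $\varepsilon > 0$. Applying \cref{diffunif} on $\tilde U$ with tolerance $\varepsilon$ yields $\rho > 0$ such that, for all $x_0 \in \tilde U$, $x \in B(x_0,\rho) \cap \tilde U$ and $y \in B(x,\rho)$,
\[
\bigl\|Df(x_0)[\varphi(x)^{-1}\varphi(y)]^{-1}\, f(x)^{-1}f(y)\bigr\|_{\mathbb H} \;\leq\; \varepsilon\, d(x,y).
\]
If in addition $\|\varphi(x)^{-1}\varphi(y)\|_{\mathbb G} \leq \varepsilon\, d(x,y)$, then using the decomposition above, the triangle inequality, and $\|Df(x_0)\| \leq C_1$ (valid because $x_0 \in \tilde U$),
\[
\|f(x)^{-1}f(y)\|_{\mathbb H} \;\leq\; \|Df(x_0)[\varphi(x)^{-1}\varphi(y)]\|_{\mathbb H} + \varepsilon\, d(x,y) \;\leq\; C_1 \varepsilon\, d(x,y) + \varepsilon\, d(x,y) \;=\; C \varepsilon\, d(x,y),
\]
as required. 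No real obstacle arises: the only point worth highlighting is that the constant $C$ must be independent of $\eta$ and of $\varepsilon$, which is precisely why the \emph{uniform} operator bound from \cref{propcartestrutturate} is essential and why $\tilde U$ must be taken inside the full-measure set on which this pointwise bound holds.
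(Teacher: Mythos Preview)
Your proof is correct and follows essentially the same approach as the paper: apply \cref{diffunif} to obtain strict differentiability on a large compact set, then split $\|f(x)^{-1}f(y)\|_{\mathbb H}$ via the triangle inequality into the linear term $\|Df(x_0)[\varphi(x)^{-1}\varphi(y)]\|_{\mathbb H}$ and the strict-differentiability remainder, and bound $\|Df(x_0)\|$ using \cref{propcartestrutturate}. Your explicit excision of the null set where the operator bound fails is a slightly more careful version of what the paper does implicitly.
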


\begin{proof}
Thanks to Proposition~\ref{diffunif}, we know that there exists a compact set $\tilde{U}\subseteq U$ such that $\mu(U\setminus \tilde{U})\leq \eta\mu(U)$ and for every $\varepsilon>0$ there exists a $\rho>0$ such that 
$$
\frac{\lVert Df(x_0)[\varphi(x)^{-1}\varphi(y)]^{-1}f(x)^{-1}f(y)\rVert_\mathbb{H}} {d(x,y)}\leq \varepsilon,
$$
for every $x_0\in\tilde U$, every $x\in \tilde{U}\cap B(x_0,\rho)$, and every $y\in B(x,\rho)$.
If however $\lVert\varphi(x)^{-1}\varphi(y)\rVert\leq \varepsilon d(x,y)$ for  some $y\in B(x,\rho)$, then 
\begin{equation}
    \lVert f(x)^{-1}f(y)\rVert\leq \lVert Df(x_0)\rVert\lVert \varphi(x)^{-1}\varphi(y)\rVert+ \varepsilon d(x,y)\leq \varepsilon(1+\mathrm{Lip}(\varphi)C(f,U))d(x,y),
\end{equation}
where here the constant $C(f,U)$ is that given by the second half of Proposition~\ref{propcartestrutturate}. This concludes the proof of the proposition.
\end{proof}

{
\begin{definizione}[$(\mathbb G,\mathbb H)$-completeness]\label{def:GHcomplete}
Let $\mathbb G,\mathbb H$ be Carnot groups. Let $\varphi:X\to\mathbb G$ be Lipschitz and let $U\subset X$ be Borel. Let $\mathcal{F}$ a collection of Lipschitz functions from $X$ to $\mathbb H$. In the following, we say that the pair $(U,\varphi)$ is  $(\mathbb{G},\mathbb{H})$-\emph{complete with respect to $\mathcal{F}$} if the following holds.
For every Lipschitz map $f\in \mathcal{F}$, if there exist $x_i,y_i\to x_0\in U$ such that $x_i\in U$ and
\[
\lim_{i\to +\infty}\frac{\|\varphi(x_i)^{-1}\varphi(y_i)\|_{\mathbb G}}{d(x_i,y_i)}= 0,
\]
then 
\[
\lim_{i\to +\infty}\frac{\|f(x_i)^{-1}f(y_i)\|_{\mathbb H}}{d(x_i,y_i)}= 0.
\]
\end{definizione}

In the following proposition, we show that one can choose the atlas of a Pansu differentiability space in such a way that every chart is complete with respect to a given Lipschitz function. Moreover, we remark here that one cannot take the atlas to be made of complete charts for \textit{every} Lipschitz function, see \cref{rem:NonTuttoCompleto}.

\begin{proposizione}\label{rem:GoingTo0}
Let $(X,d,\mu)$ be a Pansu differentiability space with respect to $\mathbb{H}$, and let $f: X\to\mathbb H$ be a Lipschitz function. Then, for some Carnot groups $\mathbb G_i$, $X$ admits $(\mathbb{G}_i,\mathbb{H})$-\emph{complete charts with respect to $f$} satisfying items 1, 2 and 3 of Definition~\ref{def:PansuDifferentiabilitySpace}.
\end{proposizione}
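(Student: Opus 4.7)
The plan is to directly invoke Proposition~\ref{prop:GoingTo0InChart} to refine the existing atlas. First, I would take an atlas $\{(U_i,\varphi_i)\}_{i\in\mathbb N}$ for $(X,d,\mu)$ satisfying the conclusion of Proposition~\ref{propcartestrutturate}, with $\varphi_i:X\to\mathbb G_i$ Lipschitz, $\mu(U_i)<\infty$, and the $\lambda_i$-structured condition. Note that any Borel subset of a chart is again a chart with the same target group, so the chart property and the uniform bound on $\dim_{\mathrm{hom}}\mathbb G_i$ are preserved under any Borel refinement.

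Next, for each fixed $i$, I would apply Proposition~\ref{prop:GoingTo0InChart} to the chart $(U_i,\varphi_i)$ with the given Lipschitz function $f$ and with parameter $\eta=1/n$ for each $n\in\mathbb N$. This yields a compact set $\tilde U_i^n\subseteq U_i$ with $\mu(U_i\setminus\tilde U_i^n)\leq (1/n)\mu(U_i)$, together with a constant $C_i>0$ (depending only on $f$ and the chart, in particular \emph{not} on $n$) such that for every $\varepsilon>0$ there exists $\rho>0$ with the following property: whenever $x_0\in\tilde U_i^n$, $x\in B(x_0,\rho)\cap\tilde U_i^n$, $y\in B(x,\rho)$, and $\|\varphi_i(x)^{-1}\varphi_i(y)\|_{\mathbb G_i}\leq\varepsilon\, d(x,y)$, then
\[
\|f(x)^{-1}f(y)\|_{\mathbb H}\leq C_i\varepsilon\, d(x,y).
\]

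Now form the refined atlas $\{(\tilde U_i^n,\varphi_i)\}_{i,n\in\mathbb N}$. Since $\mu(U_i\setminus\tilde U_i^n)\to 0$ as $n\to\infty$, the union $\bigcup_n\tilde U_i^n$ has full $\mu$-measure in $U_i$, and hence $\bigcup_{i,n}\tilde U_i^n$ covers $X$ up to a $\mu$-null set, giving item~1 of Definition~\ref{def:PansuDifferentiabilitySpace}; items~2 and 3 are inherited from the original atlas.

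It remains to verify that each $(\tilde U_i^n,\varphi_i)$ is $(\mathbb G_i,\mathbb H)$-complete with respect to $f$ in the sense of Definition~\ref{def:GHcomplete}. Suppose $x_k,y_k\to x_0\in\tilde U_i^n$ with $x_k\in\tilde U_i^n$ and
\[
\lim_{k\to\infty}\frac{\|\varphi_i(x_k)^{-1}\varphi_i(y_k)\|_{\mathbb G_i}}{d(x_k,y_k)}=0.
\]
Fix $\varepsilon>0$ and let $\rho>0$ be the radius provided by the property above for this $\varepsilon$. For all $k$ sufficiently large one has $d(x_k,x_0)<\rho$, $d(x_k,y_k)<\rho$, and $\|\varphi_i(x_k)^{-1}\varphi_i(y_k)\|_{\mathbb G_i}\leq\varepsilon\, d(x_k,y_k)$, so applying the property to $x=x_k$ and $y=y_k$ yields $\|f(x_k)^{-1}f(y_k)\|_{\mathbb H}\leq C_i\varepsilon\, d(x_k,y_k)$. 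Since $\varepsilon>0$ is arbitrary, the corresponding limit for $f$ is zero, which is exactly the completeness condition. The main work has already been carried out in Proposition~\ref{prop:GoingTo0InChart}; the present statement is essentially a repackaging of that quantitative estimate into the qualitative form of Definition~\ref{def:GHcomplete}, so no further obstacle appears.
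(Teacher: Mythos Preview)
Your proof is correct and follows essentially the same approach as the paper: start from an atlas satisfying Proposition~\ref{propcartestrutturate}, apply Proposition~\ref{prop:GoingTo0InChart} with $\eta=1/n$ to refine each chart into compact pieces covering it up to a null set, and observe that item~(ii) of Proposition~\ref{prop:GoingTo0InChart} directly yields $(\mathbb G_i,\mathbb H)$-completeness with respect to $f$. The paper's proof is more terse, but your explicit verification of the completeness condition from the quantitative estimate is exactly what is meant by ``directly implies''.
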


\begin{proof}
 Let us take $\{(U_i,\varphi_i)\}_{i\in\mathbb N}$ as in \cref{def:PansuDifferentiabilitySpace}. By subsequently using \cref{propcartestrutturate}, and \cref{prop:GoingTo0InChart} we can find, for every $i\in\mathbb N$, compact sets $\{U_i^j\}_{j\in\mathbb N}$ such that $\mu(U_i\setminus\cup_{j\in\mathbb N} U_i^j)=0$, item (ii) of \cref{prop:GoingTo0InChart} holds with $U_i^j$, and items 1, 2, 3 of \cref{def:PansuDifferentiabilitySpace} hold with charts $\{(U_i^j,\varphi_i)\}_{i,j}$. Item (ii) of \cref{prop:GoingTo0InChart} directly implies $(\mathbb G_i,\mathbb H)$-completeness of $(U_i^j,\varphi_i)$ with respect to $f$.
\end{proof}
}

\subsubsection{Alberti representations}\label{sec:Alberti}

In this section, we recall the basic definitions about Alberti representations on a metric measure space $(X,d,\mu)$. We define the \textit{horizontal} speed of a representation, the notion of independent and universal representations, and the notion of separated cones.

\begin{definizione}[The space $\Gamma(X)$ of fragments]\label{lipcurvez}
Let $(X,d)$ be a metric space. Let $\Gamma(X)$ be the set of biLipschitz embeddings $\gamma:\mathrm{Dom}(\gamma)\to X$, with $\mathrm{Dom}(\gamma)\subset \mathbb R$ non-empty and compact such that 
$$
2^{-1}\lvert s-t\rvert\leq d(\gamma(s),\gamma(t))\leq 2\lvert s-t\rvert\qquad\text{for every $s,t\in \mathrm{Dom}(\gamma)$}.
$$
\end{definizione}

\begin{definizione}[Definition of $A(X)$ and $\mathfrak{D}(K)$]\label{def:curvespace}
For a metric space $(X,d)$ we define $H(X)$ to be
the collection of non-empty compact subsets of the metric space $\mathbb R\times X$
with the Hausdorff metric induced by the Euclidean product distance, so that $H(X)$ is complete and separable. 
In the following we shall identify every element $\gamma\in \Gamma(X)$
with its graph
$$
\mathrm{gr}(\gamma):=\{(t,\gamma(t))\in \R\times X: t\in \mathrm{Dom}(\gamma)\}.
$$
We
also identify $\Gamma(X)$ with its isometric image in $H(X)$ via
$\gamma\mapsto\mathrm{gr}(\gamma)$ and set
\[A(X)=\{(x,\gamma)\in X\times\Gamma(X):\exists\ t\in\mathrm{Dom}(\gamma),\ \gamma(t)=x\}.\]
Finally, for every $K\subset X$, we define the set
\[\mathfrak{D}(K):=\{(x,\gamma)\in A(X): \gamma^{-1}(x) \text{ is a density
  point of } \gamma^{-1}(K)\}.\]
\end{definizione}

\begin{definizione}[Alberti representation]\label{def:AlbertiRepresentation}
Let $(X,d,\mu)$ be a metric measure space. For every measurable set $A\subset X$ we say that $ \mathcal{A}=(\{\mu_\gamma\}_{\gamma\in\Gamma(X)},\mathbb P)$ is an {\em Alberti representation of} $\mu\llcorner A$ if $\mathbb P$ is a probability measure on $\Gamma(X)$ and
\begin{itemize}
    \item[(i)] for every $\gamma\in \Gamma(X)$ the measure $\mu_\gamma$ is absolutely continuous with respect to $\mathcal{H}^1\llcorner \gamma$;
    \item[(ii)] for every Borel set $Y\subset A$, the curve $\gamma\to \mu_\gamma(Y)$ is Borel measurable and
\[
\mu(Y)=\int_{\Gamma(X)}\mu_\gamma(Y)d\mathbb P.
\]
\end{itemize}
\end{definizione}

Let us fix now a Carnot group $\mathbb G$ with stratification
\[
\mathfrak g=V_1\oplus\dots\oplus V_s,
\]
and recall that $\pi_1$ is the projection of $\mathfrak g$ onto $V_1$. After choosing an adapted basis for $\mathfrak g$, and up to identifying $\mathbb G$ with $\mathbb R^n$ through this basis, we denote with $\langle\cdot,\cdot\rangle$ the standard Euclidean scalar product, and with $|\cdot|$ the standard Euclidean norm.

\begin{definizione}[$C$-curves]
\label{C-curves}
Let $e\in V_1$ be a unit vector and $\sigma\in (0,1)$. We denote by $C(e,\sigma)$ the one-sided, closed, convex cone with axis $e$ and opening $\sigma$, namely\label{s-cones}
$$C(e,\sigma):=\{x\in V_1:\langle x,e\rangle\geq(1-\sigma^2)|x|\}.$$
Let $B$ be a bounded Borel subset of the real line. A Lipschitz curve $\gamma:B\to \mathbb{G}$, is said to be a {\em $C(e,\sigma)$-curve} if
   \begin{equation}
       \text{ $\pi_1(\gamma(s))-\pi_1(\gamma(t))\in C(e,\sigma)\setminus\{0\}$ for every $t,s\in B$ with $t<s$.}
       \label{conecondition}
   \end{equation}
\end{definizione}

\begin{definizione}[Independent Alberti representations]\label{def:IndependentAlberti}
Let $\gamma\in\Gamma(X)$, and let $\varphi:X\to\mathbb G$ be Lipschitz. Let $C(e,\sigma)$ be as in \cref{C-curves}. We say that {\em $\gamma$ goes in the $\varphi$-direction of $C(e,\sigma)$} if $\varphi\circ\gamma$ is a $C(e,\sigma)$-curve.

We say that an Alberti representation $\mathcal{A}:=(\{\mu_\gamma\}_{\gamma\in\Gamma(X)},\mathbb P)$ {\em is in the $\varphi$-direction of $C(e,\sigma)$} if $\mathbb P$-almost every $\gamma\in\Gamma(X)$ goes in the $\varphi$-direction of $C(e,\sigma)$.

We say that the cones $C(e_1,\sigma_1),\dots,C(e_\ell,\sigma_\ell)$ are {\em independent} if every $v_1,\dots,v_\ell\in V_1$ such that $v_i\in C(e_i,\sigma_i)\setminus\{0\}$ for every $i=1,\dots,\ell$ are linearly independent. 

We say that $\ell$ Alberti representations $\mathcal{A}_1,\dots,\mathcal{A}_\ell$ are {\em $\varphi$-independent} if there exist $\ell$ independent cones $C(e_1,\sigma_1),\dots,C(e_\ell,\sigma_\ell)$ such that for every $i=1,\dots,\ell$ we have that $\mathcal{A}_i$ goes in the $\varphi$-direction of $C(e_i,\sigma_i)$.
\end{definizione}

\begin{definizione}[Horizontal speed of a representation]\label{def:horizontalspeed}
Let $(X,d)$ be a metric space and $\varphi:X\to\mathbb G$ be Lipschitz. Let $\delta>0$. We say that $\gamma\in \Gamma(X)$ has {\em horizontal $\varphi$-speed $\delta$} if for almost every $t_0\in\mathrm{Dom}(\gamma)$ we have that
\[
\|D(\varphi\circ\gamma)(t_0)\|_{\mathbb G} \geq \delta \mathrm{Lip}(\pi_{H,\mathbb G}\circ\varphi,\gamma(t_0))\mathrm{Lip}(\gamma,t_0).
\]
Recall that $Df$ stands for the Pansu differential of the function $f$, which exists for almost every $t_0\in\mathrm{Dom}(\gamma)$ due to the Pansu--Rademacher theorem. We say that an Alberti representation $\mathcal{A}=(\{\mu_\gamma\}_{\gamma\in\Gamma(X)},\mathbb P)$ {\em has horizontal $\varphi$-speed $\delta$} if for $\mathbb P$-almost every $\gamma\in \Gamma(X)$, $\gamma$ has horizontal $\varphi$-speed $\delta$.
\end{definizione}

\begin{definizione}[Universal representation]\label{def:UniversalRep}
Let $(X,d)$ be a metric space with $U\subset X$. Let $\varphi:X\to\mathbb G$ be Lipschitz. Let $\mathcal{A}_1,\dots,\mathcal{A}_n$ be $\varphi$-independent Alberti representations of $\mu\llcorner U$, each with strictly positive horizontal $\varphi$-speed. 
Fix $\rho>0$. We say that $\{\mathcal{A}_1,\dots,\mathcal{A}_n\}$ is {\em horizontally $\rho$-universal with respect to} $\mathbb H$ if for every Lipschitz $f:X\to\mathbb H$ there exists a decomposition $X=X_1\cup\dots\cup X_n$ in Borel sets such that the Alberti representation of $\mu\llcorner (U\cap X_i)$ induced by $\mathcal{A}_i$ has horizontal $f$-speed $\rho$. Notice that if $\mathbb G=\mathbb R^n$, $\mathbb H=\mathbb R$ this definition coincides with \cite[Definition 7.1]{BateJAMS}.
\end{definizione}

We recall the following, which is \cite[Definition 7.3]{BateJAMS}.
\begin{definizione}[Separated cones]\label{coniseparati}
For $\xi>0$, we say that $v_1,\dots,v_m\in \mathbb R^n$ are {\em $\xi$-separated} if for every $\lambda\in\mathbb R^m\setminus\{0\}$ we have 
\[
\left|\sum_{i=1}^m\lambda_i v_i\right| > \xi \max_{1\leq i\leq m} |\lambda_i v_i|.
\]
We say that cones $C_1,\dots,C_\ell$ are $\xi$-separated if every $v_i\in C_i\setminus\{0\}$ are $\xi$-separated. A finite collection of Alberti representation $\mathcal{A}_i$ is {\em $\xi$-separated} with respect to $\varphi$ if there are $\xi$-separated cones $C_i$ such that $\mathcal{A}_i$ is in the $\varphi$-direction of $C_i$.
\end{definizione}

\subsubsection{Horizontal differentiability}

This section aims to prove that whenever we have $n_1(\mathbb G)$ horizontally $\rho$-universal (with respect to $\mathbb H$) independent Alberti representations on $(X,d,\mu)$, then $X$ is a Euclidean Lipschitz differentiable space, and we can define a good notion of horizontal differential almost everywhere in chart, see \cref{prophordiff}. Moreover, we can relate the derivative of a Lipschitz function $f:X\to\mathbb H$ along a curve $\gamma$ with the derivative of the chart $\varphi:X\to\mathbb G$ along $\gamma$ by means of this horizontal differential. The result of this section relies in a fundamental way on the main result in \cite[Proposition~7.8]{BateJAMS} by D. Bate. 

In this section $\mathbb G,\mathbb H$ will be  Carnot groups endowed with homogeneous norms $\|\cdot\|_{\mathbb G}$, $\|\cdot\|_{\mathbb H}$ in such a way that they agree with some Euclidean norm on the first stratum and the horizontal projections $\pi_\mathbb G,\pi_\mathbb H$ are $1$-Lipschitz, see, e.g., \cref{smoothnorm}. 

\begin{definizione}[Horizontal differential $D_Hf$]\label{def:HorDiff}
Suppose $(X,d,\mu)$ is a metric measure space. Suppose $\varphi:X\to \mathbb{G}$ is Lipschitz.
For every $f\in \mathrm{Lip}(X,\mathbb{H})$ we say that {a linear map $D_Hf(x):V_1(\mathbb{G})\to V_1(\mathbb H)$} is the \emph{horizontal Pansu differential} of $f$ at $x\in X$  with respect to $\varphi$, if for every $\gamma\in\Gamma(X)$ for which $t_0:=\gamma^{-1}(x)$ is a density point for $\mathrm{Dom}(\gamma)$, and for which $\varphi\circ \gamma$ and $f\circ \gamma$ are Pansu differentiable at $t_0$, we have
\begin{equation}\label{eqn:HorizDiff}
    0=\lim_{t\to 0}\frac{\lVert \pi_{\mathbb H}  ( (D_Hf(x)[\pi_{\mathbb{G}}(\varphi(\gamma(t_0+t)))-\pi_{\mathbb{G}}(\varphi(x))])^{-1}f(x)^{-1}f(\gamma(t_0+t)))\rVert_{\mathbb{H}}}{d(x,\gamma(t_0+t))}.
    \nonumber
\end{equation}
\end{definizione}

\begin{proposizione}\label{prophordiff}
Let $(X,d,\mu)$ be a metric measure space and let $U$ be a Borel subset of $X$. 
Suppose that $\varphi:X\to \mathbb{G}$ is a Lipschitz map and that $\mu\llcorner U$ has a horizontally $\rho$-universal family of $\varphi$-independent representations $\{\mathcal{A}_1,\ldots,\mathcal{A}_{n_1}\}$ with respect to $\mathbb{H}$. Then, the following hold.
\begin{itemize}
    \item[(i)] Every Lipschitz map $f\in \mathrm{Lip}(X,\mathbb{H})$ has a horizontal Pansu differential {with respect to $\varphi$} at $\mu$-almost every $x\in U$.
    \item[(ii)] $(U,d,\mu)$ is a $n_1(\mathbb G)$-Lipschitz differentiability space, 
    and in particular $\mu$ is asymptotically doubling.
    \item[(iii)] For $\mu$-almost every $x\in U$ the following holds. If $\gamma\in\Gamma(X)$, $t_0:=\gamma^{-1}(x)$ is a density point of $\mathrm{Dom}(\gamma)$, and the values $D(f\circ\gamma)(t_0)$ and $D(\varphi\circ\gamma)(t_0)$ exist, then we have 
\begin{equation}\label{eqn:LAltraEquazione}
    D( f\circ\gamma)(t_0) = D_Hf(x)\big(D(\varphi\circ \gamma)(t_0)\big).
\end{equation} 
\end{itemize} 
\end{proposizione}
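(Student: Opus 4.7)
The strategy is to reduce the three items to D. Bate's theorem \cite[Proposition~7.8]{BateJAMS} in the Euclidean setting, via the horizontal projections. Set $\tilde\varphi := \pi_{\mathbb G}\circ\varphi\colon X\to V_1(\mathbb G)\cong\R^{n_1}$ and, given $f\in \mathrm{Lip}(X,\mathbb H)$, set $\tilde f := \pi_{\mathbb H}\circ f\colon X\to V_1(\mathbb H)$; both are Lipschitz. Because the fixed homogeneous norms agree with the Euclidean ones on the first strata, for any $\gamma\in\Gamma(X)$ where $\varphi\circ\gamma$ (resp.\ $f\circ\gamma$) is Pansu differentiable at $t_0$, the Pansu derivative is a homogeneous homomorphism $\R\to\mathbb G$ (resp.\ $\R\to\mathbb H$) encoded by the horizontal vector $(\tilde\varphi\circ\gamma)'(t_0)$ (resp.\ $(\tilde f\circ\gamma)'(t_0)$), and its operator norm equals $|(\tilde\varphi\circ\gamma)'(t_0)|$ (resp.\ $|(\tilde f\circ\gamma)'(t_0)|$). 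Consequently, $\varphi$-independence of $\mathcal A_1,\ldots,\mathcal A_{n_1}$ through cones $C(e_i,\sigma_i)\subset V_1(\mathbb G)$ is the same as Euclidean $\tilde\varphi$-independence, and horizontal $\rho$-universality with respect to $\mathbb H$ translates, for our specific $\tilde f$, into the Euclidean $\rho$-universality hypothesis needed for Bate's theorem.

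Applying \cite[Proposition~7.8]{BateJAMS} componentwise in the target to the chart $\tilde\varphi$ and the function $\tilde f$ yields that $(U,d,\mu)$ is an $n_1(\mathbb G)$-Lipschitz differentiability space and produces, for $\mu$-a.e.\ $x\in U$, a linear map $L(x)\colon V_1(\mathbb G)\to V_1(\mathbb H)$ with
\[
\lim_{X\ni y\to x}\frac{|\tilde f(y)-\tilde f(x)-L(x)[\tilde\varphi(y)-\tilde\varphi(x)]|}{d(x,y)}=0.
\]
Asymptotic doubling of $\mu\llcorner U$, completing (ii), then follows from the standard theory of Lipschitz differentiability spaces, cf.\ \cite{CheegerGAFA, Keith}. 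Set $D_Hf(x):=L(x)$. Since $\pi_{\mathbb H}\colon\mathbb H\to V_1(\mathbb H)$ is a group homomorphism sending inverses to sign changes and the product to the sum, the numerator in the expression from Definition~\ref{def:HorDiff} simplifies to $|\tilde f(\gamma(t_0+t))-\tilde f(x)-L(x)[\tilde\varphi(\gamma(t_0+t))-\tilde\varphi(x)]|$, which is $o(d(x,\gamma(t_0+t)))$ by the previous display specialized to $y=\gamma(t_0+t)$. This proves (i).

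For (iii), fix $x$ in the full-measure set where (i) holds and let $\gamma\in\Gamma(X)$ be as in the statement. Set $v:=D(\varphi\circ\gamma)(t_0)(1)\in V_1(\mathbb G)$ and $w:=D(f\circ\gamma)(t_0)(1)\in V_1(\mathbb H)$, so that $v=(\tilde\varphi\circ\gamma)'(t_0)$ and $w=(\tilde f\circ\gamma)'(t_0)$. Substituting the first-order expansions $\tilde\varphi(\gamma(t_0+t))-\tilde\varphi(x)=tv+o(t)$ and $\tilde f(\gamma(t_0+t))-\tilde f(x)=tw+o(t)$ into the above Euclidean differentiability at $y=\gamma(t_0+t)$ and dividing by $|t|$ yields $w=L(x)v=D_Hf(x)(v)$. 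Since both sides of \eqref{eqn:LAltraEquazione} are homogeneous homomorphisms $\R\to\mathbb H$ determined by their value at $1$, this vector identity is exactly \eqref{eqn:LAltraEquazione}. The main obstacle in the whole argument is pinning down the correspondence between the Carnot-valued hypotheses and their horizontal Euclidean counterparts so that the hypothesis of \cite[Proposition~7.8]{BateJAMS} is verified on the nose; once this identification is in place, the rest reduces to elementary chain-rule manipulations.
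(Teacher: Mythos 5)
Your proposal is correct and follows essentially the same route as the paper: both proofs rest on the identification $D(f\circ\gamma)(t)=(\pi_{\mathbb H}\circ f\circ\gamma)'(t)$ to transfer $\varphi$-independence and horizontal universality to the projected chart $\pi_{\mathbb G}\circ\varphi$ and then invoke \cite[Proposition~7.8]{BateJAMS}, after which (i) and (iii) follow from the resulting Euclidean differential exactly as you describe. The only point worth phrasing more carefully is that Bate's theorem needs universality for \emph{all} real-valued Lipschitz functions (obtained by embedding $\mathbb R$ isometrically into $V_1(\mathbb H)$ and using horizontal universality with respect to $\mathbb H$), not just for the specific $\tilde f$ at hand.
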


\begin{proof}
Let us first prove that the representations $\{\mathcal{A}_1,\ldots,\mathcal{A}_{n_1}\}$ are $(\pi_{\mathbb{G}}\circ \varphi)$-independent  and $\rho$-universal with respect to $\mathbb R$. Let us fix $f:X\to\mathbb H$. The preliminary fact to prove is the identity 
$$
(\pi_{\mathbb{H}}\circ f\circ\gamma)'(s)=D(f\circ \gamma)(s),
$$
at all the points $s$ where the Pansu derivative $D(f\circ\gamma)(s)$ exists.
In order to see why this is the case, notice that, by definition of Pansu differential, there exists a $v\in V_1(\mathbb{H})$ such that
$$v=\lim_{t\to 0}\delta_{1/t}((f\circ\gamma)(s)^{-1}(f\circ\gamma)(s+t)).$$
This shows in particular by the continuity of $\pi_{\mathbb{H}}$ that 
\begin{equation}
    \begin{split}
        D(f\circ \gamma)(s)&=v=\pi_{\mathbb{H}}(v)\\
        &=\lim_{t\to 0}\frac{(\pi_{\mathbb{H}}\circ f\circ\gamma)(s+t)-(\pi_{\mathbb{H}}\circ f\circ\gamma)(s)}{t}=(\pi_{\mathbb{H}}\circ f\circ\gamma)'(s),
        \label{pansudifflungocurve}
    \end{split}
\end{equation}
where in the second equality above we are using that $D(f\circ\gamma)(s)$ is a horizontal vector. The above computation, together with the horizontal universality, shows therefore that, up to possibly decompose the space on pieces on which the induced Alberti representations have horizontal $f$-speed $\rho$, we have that for every $i$, $\mathbb P_i$-a.e. $\gamma\in\Gamma(X)$, where $\mathbb P_i$ is the probability measure associated to $\mathcal{A}_i$, and almost every $t\in\mathrm{Dom}(\gamma)$ we have
$$
\lvert (\pi_{\mathbb{H}}\circ f\circ\gamma)'(t)\rvert = \lVert D(f\circ \gamma)(t)\rVert_{\mathbb{H}}
\geq \rho\mathrm{Lip}(\pi_{\mathbb{H}}\circ f,\gamma(t))\mathrm{Lip}(\gamma,t),
$$
and hence the representations $\{\mathcal{A}_1,\ldots,\mathcal{A}_{n_1}\}$ are $\rho$-universal with respect to $\R$, and are $\pi_\mathbb{G}\circ \varphi$-independent by hypothesis. By \cite[Proposition~7.8]{BateJAMS} this implies that $(U,d,\mu)$ is an $n_1(\mathbb{G})$-Lipschitz differentiability space and hence by \cite[Corollary 8.4]{BateJAMS} the measure $\mu$ is asymptotically doubling. Thus item (ii) is proved. Now by definition of Lipschitz differentiability space, for $\mu\llcorner U$-almost every $x\in U$ there exists a linear map $L_x:\R^{n_1(\mathbb{G})}\to \R^{n_1(\mathbb{H})}$ such that for every $\gamma\in\Gamma(X)$ for which $t_0$ is a density point of $\mathrm{Dom}(\gamma)$, we have
\begin{equation}
    \begin{split}
           & 0=\lim_{t\to 0}\frac{\lvert -L_x[\pi_{\mathbb{G}}(\varphi(\gamma(t_0+t)))-\pi_{\mathbb{G}}(\varphi(x))]-\pi_{\mathbb{H}}\circ f(x)+\pi_{\mathbb{H}}\circ f(\gamma(t_0+t))\rvert}{d(x,\gamma(t_0+t))}\\
    =&\lim_{t\to 0}\frac{\lVert\pi_{\mathbb{H}}\Big( (L_x[\pi_{\mathbb{G}}(\varphi(\gamma(t_0+t)))-\pi_{\mathbb{G}}(\varphi(x))])^{-1}f(x)^{-1} f(\gamma(t_0+t))\Big)\rVert_{\mathbb{H}}}{d(x,\gamma(t_0+t))}.
    \end{split}
    \nonumber
\end{equation}
Thus item (1) is proved.

Let us now fix $x,\gamma$ as in item (iii).
Thanks to the above discussion, by exploiting the triangle inequality, and by exploiting the existence of $D(f\circ\gamma)(t_0)$, and the fact that $\gamma$ is $2$ biLipschitz, we have
\begin{equation}
\begin{split}
    0&=\lim_{t\to 0}\frac{\lVert\pi_{\mathbb{H}}\Big( (L_x[\pi_{\mathbb{G}}(\varphi(\gamma(t_0+t)))-\pi_{\mathbb{G}}(\varphi(x))])^{-1}f(x)^{-1} f(\gamma(t_0+t))\Big)\rVert_{\mathbb{H}}}{d(x,\gamma(t_0+t))}\\
    &\geq \lim_{t\to 0}\frac{\lVert\pi_{\mathbb{H}}\Big( (L_x[\pi_{\mathbb{G}}(\varphi(\gamma(t_0+t)))-\pi_{\mathbb{G}}(\varphi(x))])^{-1}\delta_t(D(f\circ \gamma)(t_0))\Big)\rVert_{\mathbb{H}}}{d(x,\gamma(t_0+t))}\\
    &\qquad\qquad\qquad-\lim_{t\to0}\frac{\lVert \pi_{\mathbb H}\left(  \delta_t(D(f\circ \gamma)(t_0))^{-1}f(x)^{-1} f(\gamma(t_0+t)  \right)\rVert_\mathbb{H}}{d(x,\gamma(t_0+t))}\\
     &\geq \frac{1}{2}\lim_{t\to 0}\frac{\lVert\pi_{\mathbb{H}}\Big( (L_x[\pi_{\mathbb{G}}(\varphi(\gamma(t_0+t)))-\pi_{\mathbb{G}}(\varphi(x))])^{-1}\delta_t(D(f\circ \gamma)(t_0))\Big)\rVert_{\mathbb{H}}}{t}.
    \nonumber
\end{split}
\end{equation}
The above computation implies, in particular, that
\begin{equation}\label{eqn:RightBefore}
    \begin{split}
         \lim_{t\to 0}L_x\left[\frac{\pi_{\mathbb{G}}(\varphi(\gamma(t_0+t)))-\pi_{\mathbb{G}}(\varphi(x))}{t}\right]&=\pi_{\mathbb H}[D( f\circ\gamma)(t_0)]\\
         &=D( f\circ\gamma)(t_0),
    \end{split}
\end{equation}
and this, together with the fact that 
\[
\lim_{t\to 0}\left[\frac{\pi_{\mathbb{G}}(\varphi(\gamma(t_0+t)))-\pi_{\mathbb{G}}(\varphi(x))}{t}\right]=D(\varphi\circ\gamma)(t_0)
\]
concludes the proof of \eqref{eqn:LAltraEquazione}.
\end{proof}
{
\begin{osservazione}\label{rem:NotReallyDerivative}
Assume we are in the setting of \cref{prophordiff}. Let $x\in U$, and take $\gamma\in\Gamma(X)$ for which $\gamma^{-1}(x)$ is a density point of $\mathrm{Dom}(\gamma)$. If the quantity $D(\varphi\circ\gamma)(\gamma^{-1}(x))=(\pi_{\mathbb G}\circ \varphi\circ\gamma)'(\gamma^{-1}(x))$ exists, we cannot infer immediately that $D(f\circ\gamma)(\gamma^{-1}(x))$ exists. Indeed, in \cref{prophordiff} we have only proved the {\em horizontal} differentiability of $f$ in chart, which only implies that $(\pi_{\mathbb H}\circ f\circ \gamma)'(\gamma^{-1}(x))$ exists.

Nevertheless, we proved that $(U,d,\mu)$ is a $n_1(\mathbb G)$-Lipschitz differentiability space, where the charts can be taken to be $(U_i,\pi_{\mathbb G}\circ\varphi)$, where $U_i\subset U$. Hence, if we take an arbitrary Lipschitz function $g:X\to\mathbb R^\ell$, and we assume that $D(\varphi\circ\gamma)(\gamma^{-1}(x))=(\pi_{\mathbb G}\circ \varphi\circ\gamma)'(\gamma^{-1}(x))$ exists, the same computations right before \eqref{eqn:RightBefore} give that $(g\circ\gamma)'(\gamma^{-1}(x))$ exists and it is equal to $Dg(x)(\pi_{\mathbb G}\circ \varphi\circ\gamma)'(\gamma^{-1}(x))$.
\end{osservazione}
}

\subsection{Alberti representations and Borel selections}

This section aims to use measurable selection arguments to prove the following result. Whenever there are $n_1(\mathbb G)$ horizontally $\rho$-universal (with respect to $\mathbb H$) independent Alberti representations on a Borel subset $U$ of $(X,d,\mu)$, and a Lipschitz function $f:X\to\mathbb H$ is given, then for $\mu$-almost every point $x$ in the domain $U$ of the chart $\varphi$ of the representation we can choose curves $\gamma_i$ starting from $x$ for which $D(f\circ\gamma_i)(\gamma_i^{-1}(x)), D(\varphi\circ\gamma_i)(\gamma_i^{-1}(x))$ are defined. Moreover, all the choices can be made in a measurable way; see \cref{constructionvectorfields} for a detailed statement. 

The forthcoming results are generalizations of results by D. Bate for $\R$-valued maps in  \cite{BateJAMS}. 
The proofs are verbatim adaptations, thus we omit them.

The following \cref{lem:measurability} extends \cite[Lemma~2.8]{BateJAMS}.
For the notation $A(X),H(X),\mathfrak{D}(K)$, recall \cref{def:curvespace}. 

\begin{proposizione}\label{lem:measurability}
Let $(X,d,\mu)$ be a metric measure space. Let $\mathbb{H}$ be a Carnot group, and let $e_1\in\mathbb H$ be of norm $1$. Let $f\in \mathrm{Lip}(X,\mathbb{H})$ and let $\mathfrak{G}:A(X)\subset H(X)\to \mathbb{H}$ be the map
\begin{equation}
    \mathfrak{G}_f(x,\gamma):=\begin{cases}
    D(f\circ \gamma)(\gamma^{-1}(x))&\text{if it exists,}\\
    \delta_{4\mathrm{Lip}(f)}(e_1)&\text{otherwise.}
    \end{cases}
    \label{eq:Gf}
\end{equation}
Then $\mathfrak{G}_f$ is Borel measurable.
In addition, for every compact set $K\subseteq X$, we have that $\mathfrak{D}(K)$ is a Borel subset of $A(X)$. Notice that, since $\gamma$ is $2$-Lipschitz, if $D(f\circ\gamma)(\gamma^{-1}(x))$ exists, then it cannot be equal to $\delta_{4\mathrm{Lip}(f)}(e_1)$.
\end{proposizione}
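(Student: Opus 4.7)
The plan is to adapt Bate's argument for \cite[Lemma~2.8]{BateJAMS} to the Pansu-valued setting, by expressing both claims as countable Boolean combinations of Borel sets built from semicontinuous parametric expressions on $A(X)\subseteq H(X)$. The key preliminary observation is that, by the uniform bi-Lipschitz bound in \cref{lipcurvez}, the map $A(X)\ni (x,\gamma)\mapsto\gamma^{-1}(x)\in\mathbb{R}$ is continuous: Hausdorff convergence of graphs together with the injectivity of each $\gamma$ forces $\gamma_i^{-1}(x_i)\to\gamma^{-1}(x)$ whenever $(x_i,\gamma_i)\to(x,\gamma)$ in $X\times H(X)$.

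For the Borel measurability of $\mathfrak{G}_f$, I would first exhibit the parametric increment
\[
\Psi(x,\gamma,t):=\delta_{1/t}\bigl((f\circ\gamma)(\gamma^{-1}(x))^{-1}(f\circ\gamma)(\gamma^{-1}(x)+t)\bigr),
\]
defined whenever $t\ne 0$ and $\gamma^{-1}(x)+t\in\mathrm{Dom}(\gamma)$ and extended by a fixed default value otherwise, as a Borel function of $(x,\gamma,t)$. The Pansu derivative of the Lipschitz curve $f\circ\gamma:\mathrm{Dom}(\gamma)\to\mathbb{H}$ at the density point $\gamma^{-1}(x)$ exists precisely when $\Psi(x,\gamma,\cdot)$ satisfies a Cauchy condition at $t=0$, so setting
\[
R_n(x,\gamma):=\sup\bigl\{\|\Psi(x,\gamma,t)^{-1}\Psi(x,\gamma,t')\|_{\mathbb{H}}:t,t'\in(\mathrm{Dom}(\gamma)-\gamma^{-1}(x))\cap(-1/n,1/n)\setminus\{0\}\bigr\}
\]
the existence set equals
\[
E=\bigcap_{k\in\mathbb{N}}\bigcup_{n\in\mathbb{N}}\{(x,\gamma)\in A(X):R_n(x,\gamma)<1/k\}.
\]
On $E$ the value $\mathfrak{G}_f$ is the pointwise limit of $\Psi$ along any Borel selector of admissible shifts converging to $0$, hence Borel; on $A(X)\setminus E$ it is the constant $\delta_{4\mathrm{Lip}(f)}(e_1)$. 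These two prescriptions are consistent: the Lipschitz bound $\|D(f\circ\gamma)(\gamma^{-1}(x))\|_{\mathbb{H}}\le 2\mathrm{Lip}(f)$ rules out the default value on $E$, as already noted in the statement.

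For the Borel character of $\mathfrak{D}(K)$ with $K\subseteq X$ compact, for each $r>0$ I consider the density ratio
\[
D_r(x,\gamma):=\frac{\mathcal{L}^1\bigl(\gamma^{-1}(K)\cap[\gamma^{-1}(x)-r,\gamma^{-1}(x)+r]\bigr)}{2r},
\]
and verify that $D_r$ is upper semicontinuous on $A(X)$. Indeed, if $(x_i,\gamma_i)\to(x,\gamma)$, closedness of $K$ and Hausdorff convergence of the graphs force the upper Kuratowski limit of $\gamma_i^{-1}(K)$ to lie in $\gamma^{-1}(K)$, whence a Fatou-type bound yields $\limsup_i\mathcal{L}^1(\gamma_i^{-1}(K)\cap I_i)\le\mathcal{L}^1(\gamma^{-1}(K)\cap I)$ for the corresponding shrinking intervals. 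Since $(x,\gamma)\in\mathfrak{D}(K)$ if and only if $\lim_{r\to 0^+}D_r(x,\gamma)=1$, we may write
\[
\mathfrak{D}(K)=\bigcap_{k\in\mathbb{N}}\bigcup_{n\in\mathbb{N}}\bigcap_{q\in\mathbb{Q}\cap(0,1/n)}\{D_q\ge 1-1/k\},
\]
which is Borel.

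The main obstacle is the Borel measurability of the oscillation $R_n$, whose supremum runs over the moving set $\mathrm{Dom}(\gamma)\cap(\gamma^{-1}(x)-1/n,\gamma^{-1}(x)+1/n)$. The argument hinges on the lower Kuratowski convergence of this set under $(x,\gamma)$-perturbation, which, combined with the joint continuity of $\Psi$ away from $t=0$, yields lower semicontinuity (and hence Borel measurability) of $R_n$. This reduction transfers verbatim from Bate's Euclidean proof in \cite{BateJAMS} once classical difference quotients are replaced by the dilation-conjugated group increments $\Psi$, the only genuinely new input being the continuity of $\delta_{1/t}$ on $\mathbb{H}$ for $t\ne 0$ and the choice of a homogeneous norm as in \cref{smoothnorm}.
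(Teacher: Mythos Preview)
Your proposal is correct and follows exactly the approach the paper intends: the paper omits the proof entirely, stating that it is a ``verbatim adaptation'' of \cite[Lemma~2.8]{BateJAMS}, and your sketch is precisely such an adaptation, with the Euclidean difference quotients replaced by the dilation-conjugated increments $\Psi$. There is nothing further to compare.
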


The following \cref{prop:curvefullmeas} is a corollary of \cite[Proposition~2.9]{BateJAMS}.

\begin{proposizione}\label{prop:curvefullmeas}
Let $(X,d,\mu)$ be a metric measure space and $M\subset X$ measurable
such that $\mu\llcorner M$ has an Alberti representation $\mathcal A$ and let $\mathbb{H}$ be a Carnot group. 

Let $B\subset \mathbb{H}$ be Borel and $\mathfrak{G}\colon A(X)\to \mathbb{H}$ be a Borel function such
that, for almost every $\gamma\in\Gamma(X)$ and almost every
$t\in\mathrm{Dom}\gamma$, we have $\mathfrak{G}(\gamma(t),\gamma)\in B$.  Then the set
\begin{equation}\label{eqn:PM}
P(M):=\{x\in M:\exists\ \gamma\in\Gamma(X),\ (x,\gamma)\in \mathfrak{D}(M),\ \mathfrak{G}(x,\gamma)\in B\},
\end{equation}
is a set of full measure in $M$ and for each $x\in P(M)$ there exists a
$\gamma^x\in\Gamma(X)$ that satisfies the conditions given in the
definition of $P(M)$ such that $x\mapsto \gamma^x$ is {$(\mu,\mathfrak{Q}(H(X)))$-measurable, where $\mathfrak{Q}(H(X))$ is the $\sigma$-algebra of Borel sets in $H(X)$, and we recall that we are identifying $\Gamma(X)$ with its isometric image in $H(X)$, see \cref{def:curvespace}.}
\end{proposizione}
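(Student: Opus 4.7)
The plan is to split the argument into two independent parts: first showing that $P(M)$ has full $\mu$-measure in $M$, then extracting a $(\mu,\mathfrak{Q}(H(X)))$-measurable section $x\mapsto \gamma^x$. Since $(X,d,\mu)$ is a complete separable metric space with $\mu$ Radon, by inner regularity I can exhaust $M$ up to a $\mu$-null set by a countable increasing union of compact subsets $K_n\subset M$. Proving the statement for each compact $K_n$ and then taking a countable union of the resulting selections reduces matters to the case where $M$ is compact, which is needed to invoke \cref{lem:measurability}.

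For the full-measure statement, I would disintegrate $\mu\llcorner M=\int_{\Gamma(X)}\mu_\gamma\,d\mathbb{P}(\gamma)$ via the Alberti representation $\mathcal{A}$. By hypothesis on $\mathfrak{G}$, for $\mathbb{P}$-a.e.\ $\gamma$ the set $T_\gamma:=\{t\in\mathrm{Dom}(\gamma):\mathfrak{G}(\gamma(t),\gamma)\in B\}$ has full $\mathcal{L}^1$-measure in $\mathrm{Dom}(\gamma)$. Since $\mu_\gamma\ll\mathcal{H}^1\llcorner\gamma$ and $\gamma$ is bi-Lipschitz, for $\mu_\gamma$-a.e.\ $x$ the preimage $t_x:=\gamma^{-1}(x)$ is a density point of $\gamma^{-1}(M)$ (Lebesgue density theorem applied on $\mathbb{R}$) and additionally lies in $T_\gamma$. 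For every such pair $(x,\gamma)$ we have $(x,\gamma)\in\mathfrak{D}(M)$ and $\mathfrak{G}(x,\gamma)\in B$, hence $x\in P(M)$. Integrating against $\mathbb{P}$ gives $\mu(M\setminus P(M))=0$.

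For the selection, consider the set
\[
E:=\{(x,\gamma)\in A(X):(x,\gamma)\in\mathfrak{D}(M),\ \mathfrak{G}(x,\gamma)\in B\}.
\]
By \cref{lem:measurability} the set $\mathfrak{D}(M)$ is Borel in $A(X)$ and the map $\mathfrak{G}$ is Borel, so $E$ is a Borel subset of the Polish space $X\times H(X)$. By construction $\pi_X(E)\supseteq P(M)$. The Jankov--von Neumann uniformization theorem (as cited in \cite[Theorem~18.1]{Kechris}, and already used in the proof of \cref{propmisuDf}) then produces a universally measurable section $x\mapsto \gamma^x$ of $\pi_X$ on $\pi_X(E)$, with $(x,\gamma^x)\in E$. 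Universal measurability implies in particular $(\mu,\mathfrak{Q}(H(X)))$-measurability, and restricting to $P(M)$ gives the claimed assignment.

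The main obstacle is largely bookkeeping: checking that the compact exhaustion is compatible with \cref{lem:measurability} (which only provides Borelness of $\mathfrak{D}(K)$ for compact $K$), and that the density-point argument genuinely puts $(x,\gamma)$ inside $\mathfrak{D}(M)$ rather than merely inside $A(X)$. Both are routine once the Alberti disintegration and Lebesgue density theorem are in hand; the rest is the verbatim adaptation of \cite[Proposition~2.9]{BateJAMS} alluded to in the paper.
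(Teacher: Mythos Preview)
Your proposal is correct and is precisely the verbatim adaptation of \cite[Proposition~2.9]{BateJAMS} that the paper alludes to without writing out: the full-measure claim via the disintegration $\mu\llcorner M=\int\mu_\gamma\,d\mathbb P$ combined with Lebesgue density along each fragment, and the measurable selection via Jankov--von Neumann applied to the Borel set $E=\mathfrak{D}(M)\cap\mathfrak{G}^{-1}(B)$. The reduction to compact $M$ to match the hypothesis of \cref{lem:measurability} is exactly the right bookkeeping step.
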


\begin{proposizione}\label{constructionvectorfields}
Let $(X,d,\mu)$ be a metric measure space, suppose $\varphi:X\to \mathbb{G}$ and $f:X\to \mathbb{H}$ are Lipschitz maps, and let $U\subseteq X$ be a Borel subset of $X$. Suppose that $\mathcal{A}_1,\ldots,\mathcal{A}_{n_1}$ are a family of horizontally $\rho$-universal and $\varphi$-independent representations of $\mu\llcorner U$ such that each $\mathcal{A}_i$ goes in the $\varphi$-direction of a cone $C(e_i,\sigma_i)$, and these cones are independent. We denote with $\mathfrak{Q}(H(X))$ the $\sigma$-algebra of Borel sets in $H(X)$, and we are identifying $\Gamma(X)$ with its isometric image in $H(X)$.

Then, we can find $(\mu,\mathfrak{Q}(H(X)))$-measurable maps $\chi_1,\ldots,\chi_{n_1}:U\to \Gamma(X)$, defined $\mu$-almost everywhere on $U$, such that for $\mu$-almost every $x\in U$ and every $i=1,\ldots,n_1(\mathbb{G})$ we have, denoting $\chi_{[i,x]}:=\chi_i(x)$, that 
\begin{itemize}
    \item[(i)] the Pansu derivatives $D(\varphi\circ \chi_{[i,x]})(\chi_{[i,x]}^{-1}(x))$ and $D(f\circ \chi_{[i,x]})(\chi_{[i,x]}^{-1}(x))$ exist;
    \item[(ii)] $(x,\chi_{[i,x]})\in \mathfrak{D}(U)$;
        \item[(iii)]$D(\varphi\circ \chi_{[i,x]})(\chi_{[i,x]}^{-1}(x))\in C(e_i,\sigma_i)$;
     \item[(iv)]$D(f\circ\chi_{[i,x]})(\chi_{[i,x]}^{-1}(x))=D_Hf(x)[D(\varphi\circ \chi_{[i,x]})(\chi_{[i,x]}^{-1}(x))]$, where {$D_Hf(x):V_1(\mathbb G)\to V_1(\mathbb H)$ is the horizontal differential of $f$}, see \cref{def:HorDiff}, which exists at $\mu$-a.e. $x\in U$ thanks to \cref{prophordiff}.
\end{itemize}
In addition 
\begin{itemize}
    \item[(v)] {the maps $U\ni x\mapsto D(\varphi\circ \chi_{[i,x]})(\chi_{[i,x]}^{-1}(x))$, $U\ni x\mapsto D(f\circ \chi_{[i,x]})(\chi_{[i,x]}^{-1}(x))$ are $(\mu,\mathfrak{Q}(\mathbb G))$-measurable, and $(\mu,\mathfrak{Q}(\mathbb H))$-measurable, respectively, where $\mathfrak{Q}(\mathbb G),\mathfrak{Q}(\mathbb H)$ are the $\sigma$-algebras of Borel sets in $\mathbb G,\mathbb H$, respectively}. 
\end{itemize}
\end{proposizione}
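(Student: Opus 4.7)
The plan is to apply the measurable selection result for Alberti representations, \cref{prop:curvefullmeas}, once for each index $i=1,\ldots,n_1$, using the representation $\mathcal{A}_i$ together with a carefully chosen pair $(\mathfrak{G},B_i)$ that simultaneously encodes the existence of the two Pansu derivatives along the selected curve, the cone constraint on the $\varphi$-derivative, and the measurability of both derivatives as functions of $x$.

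First I would assemble the Borel function $\mathfrak{G}:A(X)\to\mathbb{G}\times\mathbb{H}$ by setting $\mathfrak{G}(x,\gamma):=(\mathfrak{G}_\varphi(x,\gamma),\mathfrak{G}_f(x,\gamma))$, where $\mathfrak{G}_\varphi$ and $\mathfrak{G}_f$ are the Borel maps provided by \cref{lem:measurability} (applied with an auxiliary unit vector in the first stratum of $\mathbb{G}$ and of $\mathbb{H}$, respectively). For every $i=1,\ldots,n_1$ I would then take the Borel set
\[
B_i:=\bigl(C(e_i,\sigma_i)\setminus\{\delta_{4\mathrm{Lip}(\varphi)}(e_1)\}\bigr)\times\bigl(\mathbb{H}\setminus\{\delta_{4\mathrm{Lip}(f)}(e_1)\}\bigr).
\]
The key observation is that for $\mathbb{P}_i$-almost every $\gamma\in\Gamma(X)$ and almost every $t\in\mathrm{Dom}(\gamma)$ one has $\mathfrak{G}(\gamma(t),\gamma)\in B_i$: indeed, Pansu--Rademacher guarantees almost-everywhere existence of $D(\varphi\circ\gamma)(t)$ and $D(f\circ\gamma)(t)$, which then agree with $\mathfrak{G}_\varphi$ and $\mathfrak{G}_f$ (and differ from their dummy values, by the final remark in \cref{lem:measurability}); furthermore, since $\varphi\circ\gamma$ is a $C(e_i,\sigma_i)$-curve by hypothesis, passing to the limit in the difference quotient of the horizontal components shows that $D(\varphi\circ\gamma)(t)\in C(e_i,\sigma_i)$ for almost every $t$.

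Applying \cref{prop:curvefullmeas} with $M=U$, $\mathcal{A}=\mathcal{A}_i$, and the above $\mathfrak{G}$ and $B_i$, I obtain, for each $i$, a set $P_i(U)\subset U$ of full $\mu$-measure together with a $(\mu,\mathfrak{Q}(H(X)))$-measurable map $\chi_i:P_i(U)\to\Gamma(X)$ such that, writing $\chi_{[i,x]}=\chi_i(x)$, one has $(x,\chi_{[i,x]})\in\mathfrak{D}(U)$ and $\mathfrak{G}(x,\chi_{[i,x]})\in B_i$. These three conditions immediately yield items (i), (ii) and (iii). Intersecting the sets $P_i(U)$ with the full-measure subset of $U$ where the horizontal Pansu differential $D_Hf(x)$ from \cref{prophordiff} exists, I restrict to a common good set on which every hypothesis of item (iii) of \cref{prophordiff} is met for each $\gamma=\chi_{[i,x]}$: the point $\chi_{[i,x]}^{-1}(x)$ is a density point of $\mathrm{Dom}(\chi_{[i,x]})$ because it is a density point of $\chi_{[i,x]}^{-1}(U)\subset\mathrm{Dom}(\chi_{[i,x]})$, and both Pansu derivatives exist by (i). This directly gives (iv).

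Finally, item (v) follows by composition: the map $x\mapsto(x,\chi_i(x))$ is $(\mu,\mathfrak{Q}(X\times H(X)))$-measurable by measurability of $\chi_i$, and $\mathfrak{G}_\varphi,\mathfrak{G}_f$ are Borel by \cref{lem:measurability}, so $x\mapsto D(\varphi\circ\chi_{[i,x]})(\chi_{[i,x]}^{-1}(x))$ and $x\mapsto D(f\circ\chi_{[i,x]})(\chi_{[i,x]}^{-1}(x))$ are $(\mu,\mathfrak{Q}(\mathbb{G}))$- and $(\mu,\mathfrak{Q}(\mathbb{H}))$-measurable, respectively. The only genuinely technical step of the argument is the preliminary packaging of the simultaneous ``existence of both derivatives plus cone constraint on the $\varphi$-derivative'' condition into a single Borel event $B_i$ accessible to \cref{prop:curvefullmeas}; once this is accomplished, all remaining assertions are formal consequences of \cref{prop:curvefullmeas} and \cref{prophordiff}.
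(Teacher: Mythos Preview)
Your proposal is correct and follows essentially the same approach as the paper: both assemble the product map $\mathfrak{G}=(\mathfrak{G}_\varphi,\mathfrak{G}_f)$ from \cref{lem:measurability}, apply \cref{prop:curvefullmeas} to each representation $\mathcal{A}_i$ with a Borel target encoding the cone constraint, and then invoke item (iii) of \cref{prophordiff} for (iv). Your write-up is in fact more explicit than the paper's about the choice of $B_i$ (excluding the dummy values to force existence of the derivatives) and about the composition argument for (v), but the underlying ideas are identical.
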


\begin{proof}
The Alberti representations $\mathcal{A}_1,\ldots,\mathcal{A}_{n_1}$ are by definition $(\pi_{\mathbb{G}}\circ\varphi)$-independent representations such that the $\mathcal{A}_i$'s go in the $\varphi$-direction of some independent cones $C(e_i,\sigma_i)$.
Defined $\mathfrak{G}:A(X)\to V_1(\mathbb H)\times V_1(\mathbb{G})$ the map
$$
\mathfrak{G}(x,\gamma):=(\mathfrak{G}_f(x,\gamma),\mathfrak{G}_{\varphi}(x,\gamma)),
$$
where $\mathfrak{G}_f$ and $\mathfrak{G}_{\varphi}$ are the maps defined in \eqref{eq:Gf} relative to $f$ and $\varphi$.

Thanks to \cref{lem:measurability} and Proposition~\ref{prop:curvefullmeas}, we infer that for every $i=1,\ldots, n_1(\mathbb{G})$ there exists a {$(\mu,\mathfrak{Q}(H(x)))$}-measurable  map $\chi_i:U\to \Gamma(X)$ such that, for $\mu$-almost every $x\in U$, $D(\varphi\circ \chi_{[i,x]})(\chi_{[i,x]}^{-1}(x))$ and $D(f\circ \chi_{[i,x]})(\chi_{[i,x]}^{-1}(x))$ exist, $D(\varphi\circ \chi_{[i,x]})(\chi_{[i,x]}^{-1}(x))$ belongs to $C(e_i,\sigma_i)$, the maps $U\ni x\mapsto D(\varphi\circ \chi_{[i,x]})(\chi_{[i,x]}^{-1}(x))$,  $U\ni x\mapsto D(f\circ \chi_{[i,x]})(\chi_{[i,x]}^{-1}(x))$ {are $(\mu,\mathfrak{Q}(\mathbb G))$-measurable, and $(\mu,\mathfrak{Q}(\mathbb H)$-measurable, respectively}, and $(x,\chi_{[i,x]})\in \mathfrak{D}(U)$. The last identity in item (iv) directly follows from item (iii) of \cref{prophordiff}.
\end{proof}

\section{Independent Alberti representations implies differentiability}\label{sec3}

{In this long section, we aim at proving our first main result, see \cref{thm:Fondamentale2}. This will give as a direct corollary the proof of \cref{chardiffspacesINTRO}, see \cref{sec4}.

In \cref{thm:Fondamentale2} we prove that when there are $n_1(\mathbb G)$ horizontally $\rho$-universal (with respect to $\mathbb H$) independent Alberti representations $(X,d,\mu)$, and a Lipschitz function $f:X\to\mathbb H$ is given such that $(U,\varphi)$ is $(\mathbb G,\mathbb H)$-complete with respect to $f$, then for $\mu$-almost every point $x\in U$ we can find a homogeneous homomorphism $Df(x)$ acting as the differential in the chart, see the third item of \cref{def:PansuDifferentiabilitySpace}. The rough idea of the proof is to concatenate the curves obtained in \cref{constructionvectorfields} to give a good definition of $Df(x)$. 

Since the proof is long and technical, we streamline the main steps here. We aim to concatenate the fragments obtained in \cref{constructionvectorfields}. Since those are just fragments defined on compact sets of $\mathbb R$, we have to construct, starting from \cref{constructionvectorfields}, a good family of \textit{full} fragments in the space, i.e., curves defined on compact intervals of $\mathbb R$, that are close to the original curves.

Restricting the attention on a compact set $U$, we construct such curves in a way that they approximate, at the first order, the original fragments, see \cref{prop:diff_impl_direct}. This is done by embedding the space in $L^\infty$, then by filling linearly the curves in the immersion, and finally projecting on the compact set $U$, see
\cref{prop:misurbillity}. Along the way, we do our construction in such a way that all the relevant functions are measurable, for which we need the technical \cref{lemmamisurabilitaproiezione}, and \cref{lemmadominimobili}.

Then, venturing in the proof of \cref{thm:Fondamentale2}, by a careful use of Lusin and Severini--Egoroff theorem, we show that on a compact subset of $U$ of almost full measure, the function $\varphi$ along the concatenation of the curves is differentiable, see items (a), (b), and (c) of Step 3 of \cref{thm:Fondamentale2}. By exploiting the latter, and the fact that also $f$ is differentiable along the concatenation of the curves, see Step 4 and 5 of \cref{thm:Fondamentale2}, we conclude the proof in Step 6 of \cref{thm:Fondamentale2}. Notice that the completeness is used in Step 6 to prove the existence of the differential, and in Step 7 to prove that the differential is well-defined.

}
\subsection{Lemmata}

{
\begin{lemma}\label{lemmamisurabilitaproiezione}
Le  t $(X,d)$ be a metric space,
and let $K$ be a compact subset of $X$. Then there exists a Borel map $\Pi_{K}:X\mapsto K$ such that 
$$
d(\Pi_{K}(x),x)=\mathrm{dist}(x,K).
$$
\end{lemma}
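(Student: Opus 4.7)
My plan is to realize $\Pi_K$ as a Borel selector of the nearest-point multifunction
\[
F : X \rightrightarrows K, \qquad F(x) := \{k \in K : d(x,k) = \dist(x,K)\}.
\]
By compactness of $K$ and continuity of $d(x,\cdot)$, every $F(x)$ is non-empty and compact. Since $(K,d)$ is a compact metric space, it is Polish, so the Kuratowski--Ryll-Nardzewski selection theorem produces a Borel selector $\Pi_K\colon X\to K$ of $F$ — which is exactly the conclusion we want — provided we verify that $F$ is \emph{weakly measurable}, i.e., that $\{x\in X: F(x)\cap U\neq\emptyset\}$ is Borel in $X$ for every open set $U\subseteq K$.

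The bulk of the work is therefore this weak-measurability check, and this is where I expect the only genuine subtlety. The naive guess $\{x: F(x)\cap U\neq\emptyset\} = \{x:\dist(x,U)=\dist(x,K)\}$ is false in general, because the infimum over an open set $U$ can be realised only in $\partial U$ (e.g., $K=[0,1]\subset\R$, $U=(0,1)$, $x=-1$). To remedy this I will exhaust $U$ from the inside by the closed (hence compact) sets
\[
W_n := \{k\in K : d(k,K\setminus U)\geq 1/n\},\qquad n\in\mathbb N,
\]
noting that $W_n\subseteq U$ and $\bigcup_n W_n = U$. Consequently $F(x)\cap U\neq\emptyset$ if and only if $F(x)\cap W_n\neq\emptyset$ for some $n$, and because each $W_n$ is compact this last condition is equivalent to $\dist(x,W_n)=\dist(x,K)$.

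Putting it together,
\[
\{x\in X: F(x)\cap U\neq\emptyset\} \;=\; \bigcup_{n\in\mathbb N}\{x\in X:\dist(x,W_n)=\dist(x,K)\},
\]
which is an $F_\sigma$ set, since both $\dist(\cdot,W_n)$ and $\dist(\cdot,K)$ are $1$-Lipschitz. Hence $F$ is weakly measurable, Kuratowski--Ryll-Nardzewski applies, and the resulting Borel selector $\Pi_K$ satisfies $d(\Pi_K(x),x)=\dist(x,K)$ by construction. The only step that is not formal is the exhaustion argument in the middle paragraph; everything else is a direct application of a standard selection theorem.
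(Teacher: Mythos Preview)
Your proof is correct and follows the same overall strategy as the paper: both apply the Kuratowski--Ryll-Nardzewski selection theorem to the nearest-point multifunction. The only difference is in the measurability check: the paper verifies it against \emph{closed} sets $C$ rather than open ones, which sidesteps your exhaustion argument entirely, since for closed $C$ the set $K\cap C$ is compact and the identity
\[
\{x\in X:\mathfrak{P}(x)\cap C\neq\emptyset\}=\{x\in X:\dist(x,K)=\dist(x,K\cap C)\}
\]
holds on the nose (and is visibly closed). Your detour through $W_n$ is needed precisely because, as you note, this identity fails for open $U$; working with closed test sets from the start makes the argument a line shorter.
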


\begin{proof}
For every $x\in X$ let us denote with $\mathfrak{P}(x)$ the set of those $u\in K$ such that $d(x,u)=\mathrm{dist}(x,K)$. Clearly, for every $x$ the set $\mathfrak{P}(x)$ is closed.  In order to conclude the proof of the existence of the map $\Pi_{K}$ by means of the Kuratowski-Ryll-Nardzewski selection theorem, see \cite[Theorem~5.2.1]{Srivastava}, we just need to show that the multimap $x\mapsto \mathfrak{P}(x)$ is Borel measurable, see \cite[Section 5.1]{Srivastava}. In order to prove this, we just need to show that for every closed set $C$ in $X$ we have that the set $\{x\in X:\mathfrak{P}(x)\cap C\neq \emptyset\}$ is Borel, cf. \cite[Lemma~5.1.2]{Srivastava}, and \cite[Theorem~5.2.1]{Srivastava}. Let us note that 
$$\{x\in X:\mathfrak{P}(x)\cap C\neq\emptyset\}=\{x\in\ X:\mathrm{dist}(x,K)=\mathrm{dist}(x,K\cap C)\},$$
and thus $\{x\in X:\mathfrak{P}(x)\cap C\neq \emptyset\}$ is closed. This concludes the proof.
\end{proof}
}

{
\begin{lemma}\label{lemmadominimobili}
    Let $(X,d,\mu)$ be a complete metric space such that all closed balls are compact. 
Denote by    $\mathfrak{K}(X)$   the family of compact subsets of $X$ endowed with the Hausdorff distance,  by $\mathfrak{Q}(\mathfrak{K}(X))$   the $\sigma$-algebra of Borel sets in $\mathfrak{K}(X)$, and by
$\mathfrak{Q}(X)$   the $\sigma$-algebra of Borel sets of $X$.
    Let $\Psi:X\to \mathfrak{K}(X)$ be a $(\mu,\mathfrak{Q}(
    X))$-measurable map. 
     Then, for every $x\in X$ there exists a selection $\Pi_{\Psi(x)}$ of the minimal-distance projection onto $\Psi(x)$ such that the map $x\mapsto \Pi_{\Psi(x)}(x)$ is $(\mu,\mathfrak{Q}(X))$-measurable. 
\end{lemma}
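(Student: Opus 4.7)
The plan is to exhibit a $(\mu,\mathfrak{Q}(X))$-measurable selection $\sigma$ of the nearest-point multifunction
\[
F(x):=\{y\in\Psi(x):d(x,y)=\mathrm{dist}(x,\Psi(x))\},\qquad x\in X,
\]
and then set $\Pi_{\Psi(x)}(x):=\sigma(x)$. The values of $\Pi_{\Psi(x)}$ at points $x'\neq x$ can be defined arbitrarily, for instance via \cref{lemmamisurabilitaproiezione} applied to the fixed compact set $\Psi(x)$. Note that each $F(x)$ is non-empty and compact, by compactness of $\Psi(x)$ and continuity of $y\mapsto d(x,y)$.

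First, I would observe that $X$ is Polish, being complete by hypothesis and $\sigma$-compact (hence separable) because closed balls are compact; consequently the hyperspace $(\mathfrak{K}(X),d_H)$ is also Polish. Using Lusin's theorem applied to the $\mu$-measurable map $\Psi$ together with the inner regularity of $\mu$, I would write $X$ up to a $\mu$-null set as an increasing union $\bigcup_n K_n$ of compact sets on which $\Psi|_{K_n}$ is continuous. Concretely, this is obtained inductively: given $K_n$, pick a compact $L_{n+1}\subset X\setminus K_n$ with $\mu((X\setminus K_n)\setminus L_{n+1})<2^{-(n+1)}$ and $\Psi|_{L_{n+1}}$ continuous, and set $K_{n+1}:=K_n\cup L_{n+1}$. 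Since in a metric space disjoint compact sets are at positive distance, continuity on $K_n$ and on $L_{n+1}$ yields continuity on $K_{n+1}$.

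The key technical step is to show that the graph of $F|_{K_n}$ is closed in $K_n\times X$. For this, the incidence set $E:=\{(z,C)\in X\times\mathfrak{K}(X):z\in C\}$ is closed in $X\times\mathfrak{K}(X)$: if $C_j\to C$ in Hausdorff distance and $z_j\in C_j$ converges to $z$, then $\mathrm{dist}(z,C)\leq d(z,z_j)+d_H(C_j,C)\to 0$, so $z\in C$. The continuous map $K_n\times X\ni(x,y)\mapsto(y,\Psi(x))$ then has preimage of $E$ equal to $\mathrm{gr}(\Psi|_{K_n})$, which is therefore closed. Combined with the continuity of $g(x):=\mathrm{dist}(x,\Psi(x))$ on $K_n$ (composition of $x\mapsto(x,\Psi(x))$ with the continuous $(z,C)\mapsto\mathrm{dist}(z,C)$), this shows that
\[
\mathrm{gr}(F|_{K_n})=\{(x,y)\in\mathrm{gr}(\Psi|_{K_n}):d(x,y)=g(x)\}
\]
is closed in the Polish space $K_n\times X$.

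Finally, since $\mathrm{gr}(F|_{K_n})$ is closed, hence analytic, and projects onto all of $K_n$, the Jankov--von Neumann uniformization theorem \cite[Theorem~18.1]{Kechris}, as already invoked in the proof of \cref{propmisuDf}, yields a universally measurable, and in particular $\mu$-measurable, selection $\sigma_n:K_n\to X$ of $F|_{K_n}$. Setting $\sigma(x):=\sigma_n(x)$ for $x\in K_n$ defines $\sigma$ on the full-measure set $\bigcup_n K_n$, and extending arbitrarily to the $\mu$-null complement produces the required map. The main technical point is the Lusin-reduction to the continuous regime, where the Hausdorff closedness of the incidence relation $E$ turns the problem into a closed-graph measurable selection accessible to classical tools.
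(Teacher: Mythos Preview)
Your argument is correct. The Lusin reduction to compacts $K_n$ on which $\Psi$ is continuous, followed by the closed-graph verification and Jankov--von Neumann uniformization, yields the desired $(\mu,\mathfrak{Q}(X))$-measurable selection. One small technical point: your inductive construction of the $K_n$'s, as written, tacitly assumes $\mu(X\setminus K_n)<\infty$; if $\mu$ is merely Radon and $\sigma$-finite (which it is here, since $X=\bigcup_m \overline B(x_0,m)$ is $\sigma$-compact), you should first exhaust $X$ by sets of finite measure and run Lusin on each piece.

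The paper's proof takes a genuinely different route: rather than reducing to continuous $\Psi$, it works directly on the product $X\times\mathfrak K(X)$ and shows that the multimap $(x,K)\mapsto\mathfrak P(x,K):=\{y\in K:d(x,y)=\mathrm{dist}(x,K)\}$ admits a \emph{Borel} selection $\varphi:X\times\mathfrak K(X)\to X$ via Kuratowski--Ryll--Nardzewski. The technical core there is showing that for every closed $C\subset X$ the set $\{(x,K):\mathfrak P(x,K)\cap C\neq\emptyset\}=\{(x,K):\mathrm{dist}(x,K)=\mathrm{dist}(x,K\cap C)\}$ is Borel, which reduces to the lower semicontinuity of $(x,K)\mapsto\mathrm{dist}(x,K\cap C)$. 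The desired map is then the composition $x\mapsto\varphi(x,\Psi(x))$. This approach is slightly more work up front but yields a strictly stronger intermediate statement: a single Borel selector on all of $X\times\mathfrak K(X)$, independent of $\Psi$ and of $\mu$. The paper actually \emph{uses} this stronger fact later (in Step~3 of the proof of \cref{thm:Fondamentale2}, where the existence of a Borel selection $\mathbb R\times\mathfrak K(\mathbb R)\ni(t,K)\mapsto\Pi_K(t)$ is invoked to prove measurability of $\tau(t,x)=\Pi_{\mathrm{Dom}(\chi_{[j,x]})}(t)$). Your proof, while perfectly adequate for the lemma as stated, does not produce this reusable Borel selector.
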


\begin{proof}
It is well known that the metric space $\mathfrak{K}(X)$ is complete. In addition, since the balls are compact, we have that $\mathfrak{K}(B)$ is compact too for every ball $B$. 

Let us define $\mathfrak{P}(x,K)$ to be the set of those $y\in K$ such that $\mathrm{dist}(x,K)=d(y,x)$. For every couple $(x,K)$ the set $\mathfrak{P}(x,K)$ is closed. Let us now show that the multimap $(x,K)\mapsto \mathfrak{P}(x,K)$ has a Borel selection. In order to do so, we will apply the Kuratowski-Ryll-Nardzewski selection theorem, see \cite[Theorem~5.2.1]{Srivastava}, and thus we just need to show that the multimap $(x,K)\mapsto \mathfrak{P}(x,K)$ is Borel. This means that, taking \cite[Lemma~5.1.2]{Srivastava} into account, we need to show that for every closed set $C\neq \emptyset$ we have that 
\begin{equation}\label{eqn:ToCheck}
\{(x,K)\in X\times \mathfrak{K}(X):\mathfrak{P}(x,K)\cap C\neq \emptyset\}\qquad\text{is Borel.}
\end{equation}
{
However,
by the definition of the various objects, we infer that 
$$
\{(x,K)\in X\times \mathfrak{K}(X):\mathfrak{P}(x,K)\cap C\neq\emptyset\}=\{(x,K)\in X\times \mathfrak{K}(X):\mathrm{dist}(x,K)=\mathrm{dist}(x,K\cap C)\},
$$
and thus, in order to check \eqref{eqn:ToCheck}, it is enough to prove that the map 
\[
(x,K)\mapsto \mathrm{dist}(x,K\cap C)-\mathrm{dist}(x,K),
\]
is lower semicontinuous.
Indeed, note that for every $(x_1,K_1),(x_2,K_2)\in X\times \mathfrak{K}(X)$  we have that 
$$
\lvert \mathrm{dist}(x_1,K_1)-\mathrm{dist}(x_2,K_2)\rvert \leq d(x_1,x_2)+d_H(K_1,K_2),
$$
and thus the map $(x,K)\mapsto \dist(x,K)$ is continuous. Let us now finally prove that the map $(x,K)\mapsto \mathrm{dist}(x,K\cap C)$ is lower semicontinuous. Let $x_i\to x\in X$, and let $\{K_i\}_{i\in\N}\subseteq \mathfrak{K}(X)$ be a sequence of compact sets converging to some $K\in  \mathfrak{K}(X)$ in the Hausdorff metric. Since we are checking the lower semicontinuity, we can assume without loss of generality, up to possibly passing to subsequences, that
\[
\liminf_{i\to +\infty}\mathrm{dist}(x_i,K_i\cap C)=\lim_{i\to +\infty}\mathrm{dist}(x_i,K_i\cap C)<+\infty.
\]
Take $z_i\in K_i\cap C$ such that $\mathrm{dist}(x_i,K_i\cap C)=d(x_i,z_i)$.  Since the compact sets $K_i$ are converging to some compact set $K$, we know that all the sets $K_i$ are contained in some closed ball $B$ and this also implies that up to subsequences we can assume without loss of generality that the sequence $z_i$ converges to some $z\in  C\cap K$. Thus
$$
\mathrm{dist}(x,C\cap K)\leq d(x,z)=\lim_{i\to \infty} d(x_i,z_i)=\lim_{i\to\infty}\mathrm{dist}(x_i,K_i\cap C),
$$
thus completing the proof of the lower semicontinuity, which was sufficient to conclude \eqref{eqn:ToCheck}.

Hence, we can find a Borel map $\varphi:(x,K)\mapsto \Pi_K(x)$ such that $d(x,\Pi_K(x))=\mathrm{dist}(x,K)$. Finally considering the map $x\mapsto \varphi(x,\Psi(x))$ we obtain the sought measurable map in the assertion.}
\end{proof}
}

{
\begin{lemma}\label{prop:misurbillity}
Let $(X,d,\mu)$ be a metric measure space, suppose $\varphi:X\to \mathbb{G}$ and $f:X\to \mathbb{H}$ are Lipschitz maps, and let $U\subseteq X$ be a compact subset of $X$. Let $n_1$ be the dimension of the horizontal stratum of $\mathbb G$. Let $C(e_1,\sigma_1),\ldots, C(e_{n_1},\sigma_{n_1})$ be independent cones in the horizontal stratum $V_1(\mathbb G)$. Suppose that $\mathcal{A}_1,\ldots,\mathcal{A}_{n_1}$ are $\varphi$-independent horizontally $\rho$-universal representations of $\mu\llcorner U$ such that each $\mathcal{A}_j$ goes in the $\varphi$-direction of $C(e_j,\sigma_j)$. Let $\mathfrak{Q}$ be the $\sigma$-algebra of Borel sets in $X$.

If we denote with $\chi_{[i,x]}$ the fragments in $\Gamma(X)$ given by Proposition~\ref{constructionvectorfields}, for every $j=1,\ldots,n_1$ there exists $(\mathcal{L}^1\otimes\mu,\mathfrak{Q})$-measurable maps $\mathcal{X}_j:[-1,1]\times U\to U$ such that
\begin{equation}
    d(\mathcal{X}_j(t,x),\chi_{[j,x]}(t+\tau_x))=\mathrm{dist}(U,\chi_{[j,x]}(t+\tau_x))\quad\text{for every $t+\tau_x\in \mathrm{Dom}(\chi_{[j,x]})$.}
    \label{eq:distanza}
\end{equation}
\end{lemma}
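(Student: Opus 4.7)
\emph{Plan.} The map $\mathcal{X}_j(t,x)$ is constructed as the composition of a Borel evaluation map $\Phi_j:[-1,1]\times U\to X$, which equals $\chi_{[j,x]}(t+\tau_x)$ whenever $t+\tau_x\in\mathrm{Dom}(\chi_{[j,x]})$, with a Borel nearest-point projection onto $U$. Here, consistently with the statement, $\tau_x:=\chi_{[j,x]}^{-1}(x)$, which is well-defined by item (ii) of \cref{constructionvectorfields} together with the biLipschitz property of $\chi_{[j,x]}$. The starting input is the $(\mu,\mathfrak{Q}(H(X)))$-measurability of $x\mapsto\chi_{[j,x]}$ from \cref{constructionvectorfields}, which is equivalent to saying that $x\mapsto\mathrm{gr}(\chi_{[j,x]})\in\mathfrak{K}(\mathbb{R}\times X)$ is Borel in the Hausdorff distance.

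\emph{Measurability of $\tau_x$.} Consider the singleton-valued multifunction $\Psi(x):=\{s\in\mathbb{R}:(s,x)\in\mathrm{gr}(\chi_{[j,x]})\}=\{\tau_x\}$. It is Borel-measurable: for every closed $C\subset\mathbb{R}$ one has $\{x:\Psi(x)\cap C\neq\emptyset\}=\{x:\mathrm{dist}(C\times\{x\},\mathrm{gr}(\chi_{[j,x]}))=0\}$, and the right-hand side is Borel by continuity of point-to-set distance in the Hausdorff metric. Applying \cref{lemmadominimobili} in the ambient space $\mathbb{R}$ yields a Borel selection, which by uniqueness equals $\tau_x$.

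\emph{Borel evaluation.} Let $D_x:=\mathrm{Dom}(\chi_{[j,x]})$; this is the projection of $\mathrm{gr}(\chi_{[j,x]})$ onto $\mathbb{R}$ and therefore depends Borel-measurably on $x$ in $\mathfrak{K}(\mathbb{R})$. Applying \cref{lemmadominimobili} in $\mathbb{R}$ to the Borel map $(t,x)\mapsto t+\tau_x$ and the multifunction $(t,x)\mapsto D_x$, we obtain a Borel function $s^*:[-1,1]\times U\to\mathbb{R}$ with $s^*(t,x)\in D_x$ and $|s^*(t,x)-(t+\tau_x)|=\mathrm{dist}(t+\tau_x,D_x)$; in particular $s^*(t,x)=t+\tau_x$ whenever $t+\tau_x\in D_x$. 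We then define $\Phi_j(t,x)$ to be the unique $y\in X$ such that $(s^*(t,x),y)\in\mathrm{gr}(\chi_{[j,x]})$. Borel measurability of $\Phi_j$ follows from the identity
\[
\{(t,x):\Phi_j(t,x)\in C\}=\{(t,x):\mathrm{dist}((\{s^*(t,x)\}\times C),\mathrm{gr}(\chi_{[j,x]}))=0\},
\]
valid for every closed $C\subset X$, whose right-hand side is Borel by the same argument used for $\tau_x$ above (Hausdorff-continuity of the point-to-set distance and measurability of $s^*$, taking the infimum of distances to $(s^*(t,x),y)$ over a countable dense subset of $C$).

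\emph{Conclusion and main obstacle.} \cref{lemmamisurabilitaproiezione} applied to the compact set $U$ yields a Borel map $\Pi_U:X\to U$ with $d(\Pi_U(y),y)=\mathrm{dist}(y,U)$ for every $y\in X$. The map $\mathcal{X}_j:=\Pi_U\circ\Phi_j$ is $(\mathcal{L}^1\otimes\mu,\mathfrak{Q})$-measurable as a composition of Borel maps, is $U$-valued, and satisfies \eqref{eq:distanza} whenever $t+\tau_x\in D_x$, because in that case $\Phi_j(t,x)=\chi_{[j,x]}(t+\tau_x)$ and hence $d(\mathcal{X}_j(t,x),\chi_{[j,x]}(t+\tau_x))=\mathrm{dist}(\chi_{[j,x]}(t+\tau_x),U)$. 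The main technical difficulty is precisely the joint Borel measurability of the evaluation map $\Phi_j$, which is resolved by combining the Hausdorff-measurability of the graphs $\mathrm{gr}(\chi_{[j,x]})$ with the parameter-selection step given by \cref{lemmadominimobili}.
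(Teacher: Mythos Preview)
Your argument is essentially correct and constitutes a genuinely different route from the paper's proof, but there is one recurring slip you should fix: the map $x\mapsto\chi_{[j,x]}$ furnished by \cref{constructionvectorfields} is only $(\mu,\mathfrak{Q}(H(X)))$-measurable, not Borel. Your opening sentence ``is equivalent to saying that $x\mapsto\mathrm{gr}(\chi_{[j,x]})$ is Borel'' is therefore wrong as stated, and every subsequent claim that $\tau_x$, $D_x$, $s^*$, $\Phi_j$ are \emph{Borel} should be downgraded to $\mu$-measurable (respectively $(\mathcal{L}^1\otimes\mu)$-measurable). The argument survives this downgrade verbatim: each step composes a $\mu$-measurable map into a Hausdorff space with a continuous or Borel map out of it, which is exactly what is needed for the stated conclusion. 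Note also that your two invocations of \cref{lemmadominimobili} are really invocations of the Borel selection $(p,K)\mapsto\Pi_K(p)$ established in its \emph{proof}, not of the lemma as literally stated (which projects $x$ onto $\Psi(x)$ in the same space); this is how the paper itself uses that lemma elsewhere, so it is fine, but you should say so.

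\medskip
\noindent\textbf{Comparison with the paper.} The paper takes a rather different path: it isometrically embeds $X$ into $\mathcal{B}=L^\infty(X)$, extends each fragment $\iota\circ\chi_{[j,x]}$ to a $2$-Lipschitz curve $\tilde\chi_{[j,x]}:\mathbb{R}\to\mathcal{B}$ by linear interpolation across the gaps of $\mathrm{Dom}(\chi_{[j,x]})$, proves $(\mu,\mathfrak{Q})$-measurability of $x\mapsto\tilde\chi_{[j,x]}$ and of $\tau_x$ via Lusin's theorem, and then sets $\mathcal{X}_j(t,x):=\iota^{-1}\big(\Pi_{\iota(U)}(\tilde\chi_{[j,x]}(t+\tau_x))\big)$ using continuity of the evaluation map on $\mathbb{R}\times\Gamma(\mathcal{B})$. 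Your approach avoids the Banach-space embedding and the interpolation entirely: you first project $t+\tau_x$ onto the compact domain $D_x$ to get $s^*(t,x)$, then evaluate the fragment there and project onto $U$. Both constructions agree on the set $\{t+\tau_x\in D_x\}$, which is all that \eqref{eq:distanza} requires. Your route is more elementary for the lemma itself. The paper's route, however, produces as a by-product the globally defined $2$-Lipschitz curves $\tilde\chi_{[j,x]}$ in $\mathcal{B}$, and these are used substantively downstream---see the estimate \eqref{eqn:3} in Step~2 of \cref{thm:Fondamentale2} and item~(iv) of Step~3---so the extra machinery is not wasted.
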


\begin{proof}
Let
$\mathcal{B}$ be a Banach space in which the metric space $X$ is isometrically embedded and denote  $\iota:X\to \mathcal{B}$ the isometric embedding of $X$ into $\mathcal{B}$. It is classical, by Kuratowski embedding,  that one can take $\mathcal{B}=(L^\infty(X),\|\cdot\|_\infty)$.

The first step in the proof is to show that, for every $j=1,\dots,n_1(\mathbb G)$, there exists a $(\mu,\mathfrak{Q}(\mathcal{B}))$-measurable map $\tilde{J}_j:U\ni x\mapsto \tilde{\chi}_{[j,x]} $ taking values in the set of $2$-Lipschitz curves on $\mathcal{B}$ defined on $\R$ (which we denote $\Gamma(\mathcal{B})$) such that 
$$
\tilde{J}_j(x,t)=\iota(\chi_{[j,x]}(t))\qquad\text{ for every }t\in \mathrm{Dom}(\chi_{[j,x]}).
$$
In order to define $\tilde J_j$, for every $w\in X$ let $(a_j,b_j)$, with ${j\in \N}$, be open intervals such that 
\begin{equation}
    \mathrm{Conv}(\mathrm{Dom}(\chi_{[j,w]}))=\mathrm{Dom}(\chi_{[j,w]})\cup \bigcup_{j\in\N}(a_j,b_j),
    \nonumber
\end{equation}
where all the above unions are disjoint, and where for every $S\subseteq \R$ we denote with $\mathrm{conv}(S)$ the convex hull of $S$. Let us define on $\R$ the linear interpolation of $\iota(\chi_{[j,w]}(t))$, i.e.,
\begin{equation}
    \tilde{\chi}_{[j,w]}(t):=\begin{cases}
    \iota(\chi_{[j,w]}(\min(\mathrm{Dom}(\chi_{[j,w]})))) &\text{if }t< \min(\mathrm{Dom}(\chi_{[j,w]})),\\
    \iota(\chi_{[j,w]}(t)) &\text{if }t\in \mathrm{Dom}(\chi_{[j,w]}),\\
    \frac{b_j-t}{b_j-a_j}\iota(\chi_{[j,w]}(a_j))+\frac{t-a_j}{b_j-a_j}\iota(\chi_{[j,w]}(b_j))&\text{if }t\in(a_j,b_j)\text{ for some }j\in\N,\\
    \iota(\chi_{[j,w]}(\max(\mathrm{Dom}(\chi_{[j,w]})))) &\text{if }t> \max(\mathrm{Dom}(\chi_{[j,w]})).
    \end{cases}
\end{equation}
We define $\tilde J_j(w):=\tilde\chi_{[j,w]}$ for every $j=1,\dots,n_1(\mathbb G)$ and $w\in U$ such that $\chi_{[j,w]}$ exists (which is a subset of full measure in $U$).  Notice that $\tilde\chi_{[j,w]}$ is $2$-Lipschitz because it is defined as the linear interpolation of $\iota(\chi_{[j,w]})$. 

We aim now at showing that $\tilde J_j$ is $(\mu,\mathfrak{Q}(\mathcal{B}))$-measurable. Namely, we aim at proving that for every open set $\Omega$ of $2$-Lipschitz curves on $\mathcal{B}$ defined on $\mathbb R$ we have that $\tilde J_j^{-1}(\Omega)$ is $\mu$-measurable.
In order to do so, by Lusin theorem, for every $k\in\N$ there exists a compact set $C(k)$ such that $\mu(U\setminus C(k))\leq 1/k$ and on $C(k)$ the map $x\mapsto \chi_{[j,x]}$ is continuous. This is a consequence of Lusin's theorem and the fact that the maps $x\mapsto \chi_{[j,x]}$ are measurable, as a consequence of \cref{constructionvectorfields}.
In the following, for every $k\in\N$ we denote with $A(k)$ the set $A(k):=C(k)\setminus \bigcup_{j=0}^{k-1}A(j)$ and we note that if for every $k\in\N$ the set $\mathcal{A}(k):=A(k)\cap \tilde J_j^{-1}(\Omega)$  
is {Borel} then
$$
\mu  (\tilde J_j^{-1}(\Omega)\setminus \bigcup_{k\in\N} \mathcal{A}(k))=0,
$$
and hence $\tilde J_j^{-1}(\Omega)$ is $\mu$-measurable, which is what we wanted. Therefore, in order to conclude our argument, it is sufficient to show that $\mathcal{A}(k)$ is Borel.

We now claim that  $\mathcal{A}(k)$ is a relatively open set in $A(k)$, i.e., for every $x\in\mathcal{A}(k)$ there exists a $\delta>0$ such that $ B(x,\delta)\cap A(k)\subseteq \mathcal{A}(k)$. This would conclude that $\mathcal{A}(k)$ is  Borel. 

In order to prove the claim above, we start observing that, since the map $y\mapsto \chi_{[j,y]}$ is continuous on $A(k)$, for every $x\in\mathcal{A}(k)$ and every $\varepsilon>0$ there exists a $\delta>0$ such that 
\begin{equation}
    \textrm{Hausdorff distance}(\mathrm{graph}(\chi_{[j,x]}),\mathrm{graph}(\chi_{[j,z]}))<\varepsilon,
    \label{hausdorffgraphdistance}
\end{equation}
    whenever $z\in B(x,\delta)\cap A(k)$. 
Finally, since $\tilde \chi_{[j,\cdot]}$ above is defined by linear interpolation, it is readily seen that for every $\varepsilon>0$ there exists $\delta>0$ such that whenever $x,z\in A(k)$ satisfy $\textrm{Hausdorff dist}(\mathrm{graph}(\chi_{[j,x]}),\mathrm{graph}(\chi_{[j,z]}))<\delta$, then
\[
\textrm{Hausdorff dist}(\mathrm{graph}(\tilde\chi_{[j,x]}),\mathrm{graph}(\tilde\chi_{[j,z]}))< \varepsilon.
\]
The last inequality together with \eqref{hausdorffgraphdistance} proves that for every $x\in\mathcal{A}(k)$ and every $\varepsilon>0$, there exists $\delta>0$ small enough such that whenever $z\in B(x,\delta)\cap A(k)$ we have
\[
\textrm{Hausdorff distance}(\mathrm{graph}(\tilde J_j(x)),\mathrm{graph}(\tilde J_j(z))) <\varepsilon
\]
Hence, as a direct consequence of the previous inequality, since $x\in \mathcal{A}(k)\subset \tilde J_j^{-1}(\Omega)$, there is $\varepsilon>0$ small enough such that one can find $\delta>0$ for which whenever $z\in B(x,\delta)\cap A(k)$ we have that $\tilde J_j(z)\in\Omega$, and hence $z\in\mathcal{A}(k)$ from how $\mathcal{A}(k)$ is defined. Therefore, the claim of $\mathcal{A}(k)$ being relatively open in $A(k)$ is proved, and this concludes the measurability of the map $\tilde J_j$ as discussed before.

The second step is to show that the map 
\begin{equation}\label{eqn:MapMeasurable}
    \chi:U\ni x\mapsto \chi_{[j,x]}^{-1}(x)\in\mathbb R,
\end{equation} 
is $(\mu,\mathfrak{Q}(\mathbb R))$-measurable, where $\mathfrak{Q}(\mathbb R)$ are the Borel sets of $\mathbb R$. Recall that the map $U\ni x\mapsto \chi_{[j,x]}$ is {$(\mu,\mathfrak{Q}(H(X)))$-measurable} and thus, by Lusin theorem, we can find compact sets $C(m)$ such that $\mu(U\setminus\bigcup_{m\in\N} C(m))=0$ and such that $\chi_{[j,\cdot]}$ is continuous on $C(m)$ for every $m\in\mathbb N$.  Therefore, for every $m\in\mathbb N$, and for every $x,y\in C(m),$ we have that for every $\varepsilon>0$ there exists a $\delta>0$ such that if $d(x,y)<\delta$ then 
\[
    \textrm{Hausdorff distance}(\mathrm{graph}(\chi_{[j,x]}),\mathrm{graph}(\chi_{[j,y]}))<\varepsilon.
\]
From the latter inequality, and by exploiting the fact that the curves $\chi_{[j,\cdot]}$ are $2$-biLipschitz, we get that $|\chi_{[j,x]}^{-1}(x)-\chi_{[j,y]}^{-1}(y)|\leq 2\delta+3\varepsilon$.
Thus, in particular $x\mapsto \chi_{[j,x]}^{-1}(x)$ is continuous on $C(m)$ for every $m\in\mathbb N$, and hence $\chi$ is $(\mu,\mathfrak{Q}(\mathbb R))$-measurable.

Let us now finally define $\tilde{\mathcal{X}}_j(\cdot,\cdot)$. Thanks to Proposition~\ref{lemmamisurabilitaproiezione} and since the evaluation map $\mathfrak{e}:(t,\gamma)\mapsto \gamma(t)$ is continuous when the space
$\R\times \Gamma(\mathcal{B})$ is endowed with the product metric, we infer that the composition 
$$\tilde{\mathcal{X}}_j:[-1,1]\times U\ni (t,x)\mapsto\Pi_{\iota(U)}\mathfrak{e}(t+\chi_{[j,x]}^{-1}(x),\tilde{J}_j(x)),$$
is $(\mathcal{L}^1\otimes\iota_\#\mu,\mathfrak{Q}(\mathcal{B}))$-measurable. Clearly denoting $\pi_\iota$ the inverse of $\iota$ on $\iota(X)$, we immediately see that the map $\mathcal{X}_j(t,x):=\pi_\iota(\tilde{\mathcal{X}}_j(t,x))$ is $(\mathcal{L}^1\otimes\mu,\mathfrak{Q})$-measurable.

Let us check that \eqref{eq:distanza} holds. Let $\tilde t:=t+\chi_{[j,x]}^{-1}(x)\in \mathrm{Dom}( \chi_{[j,x]}(t))$ and note that
\begin{equation}
    \begin{split}
       d(\mathcal{X}_j(t,x),\chi_{[j,x]}(\tilde t))&=d(\iota(\mathcal{X}_j(t,x)),\iota(\chi_{[j,x]}(\tilde t)))=d(\tilde{\mathcal{X}}_j(t,x),\iota(\chi_{[j,x]}(\tilde t)))\\
       &=d(\Pi_{\iota(U)}\mathfrak{e}(\tilde t,\tilde{J}_j(x)),\iota(\chi_{[j,x]}(\tilde t)))=d(\Pi_{\iota(U)}(\iota(\chi_{[j,x]}(\tilde t))),\iota(\chi_{[j,x]}(\tilde t)))\\
       &=\mathrm{dist}(\iota(U),\iota(\chi_{[j,x]}(\tilde t)))=\mathrm{dist}(U,\chi_{[j,x]}(\tilde t)).
    \end{split}
\end{equation}
This concludes the proof of \cref{prop:misurbillity}.
\end{proof}
}

\begin{proposizione}\label{prop:diff_impl_direct}
Let $(X,d,\mu)$ be a metric measure space, suppose $\varphi:X\to \mathbb{G}$ is a Lipschitz map, and let $U\subseteq X$ be a compact subset of $X$. Let $n_1$ be the dimension of the horizontal stratum of $\mathbb G$. Let $C(e_1,\sigma_1),\ldots, C(e_{n_1},\sigma_{n_1})$ be independent cones in the horizontal stratum $V_1(\mathbb G)$. Suppose that $\mathcal{A}_1,\ldots,\mathcal{A}_{n_1}$ are $\varphi$-independent horizontally $\rho$-universal representations of $\mu\llcorner U$ such that each $\mathcal{A}_j$ goes in the $\varphi$-direction of $C(e_j,\sigma_j)$. Let $\chi_{[j,x]}$ and $ \mathcal{X}_j(t,x)$ be the maps constructed in \cref{constructionvectorfields}, and
\cref{prop:misurbillity}, respectively

Then for $\mu$-almost every $x\in U$ and for every $j=1,\ldots,n_1$ we have 
\begin{equation}
    \lim_{\mathrm{Dom}(\chi_{[j,x]})-\chi_{[j,x]}^{-1}(x)\ni t\to 0\\
    } \frac{d(\mathcal{X}_j(t,x),\chi_{[j,x]}(  \chi_{[j,x]}^{-1}(x)+ t))}{t}=0.
\end{equation}
\end{proposizione}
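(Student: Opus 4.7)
The plan is to combine the identity from Lemma~\ref{prop:misurbillity}, namely
\[
d(\mathcal{X}_j(t,x),\chi_{[j,x]}(t_0+t))=\mathrm{dist}(U,\chi_{[j,x]}(t_0+t)),\qquad t_0:=\chi_{[j,x]}^{-1}(x),
\]
with the Lebesgue-density information encoded in the condition $(x,\chi_{[j,x]})\in\mathfrak{D}(U)$, which by Proposition~\ref{constructionvectorfields}(ii) holds for $\mu$-almost every $x\in U$. By the very Definition~\ref{def:curvespace} of $\mathfrak{D}(U)$, for such $x$ the point $t_0$ is a density point of $E:=\chi_{[j,x]}^{-1}(U)\subseteq \mathrm{Dom}(\chi_{[j,x]})$, so
\[
\lim_{r\to 0^+}\frac{\mathcal H^1(E\cap [t_0-r,t_0+r])}{2r}=1.
\]

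Next I would transfer the problem from $X$ back to $\mathbb{R}$. Since $\chi_{[j,x]}$ is $2$-Lipschitz (Definition~\ref{lipcurvez}), picking any nearest point $s^{*}\in E$ to $t_0+t$ gives $\chi_{[j,x]}(s^{*})\in U$ and
\[
\mathrm{dist}(U,\chi_{[j,x]}(t_0+t))\le d(\chi_{[j,x]}(t_0+t),\chi_{[j,x]}(s^{*}))\le 2\,\mathrm{dist}(t_0+t,E).
\]
Thus it is enough to show that $\delta(t):=\mathrm{dist}(t_0+t,E)$ satisfies $\delta(t)/|t|\to 0$ as $\mathrm{Dom}(\chi_{[j,x]})-t_0\ni t\to 0$.

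The density argument is then elementary. Note that $\delta(t)\le |t|$, because $t_0\in E$ (indeed $\chi_{[j,x]}(t_0)=x\in U$). If $\delta(t)>0$, then the open interval $(t_0+t-\delta(t),t_0+t+\delta(t))$ is disjoint from $E$ and is contained in $[t_0-r,t_0+r]$ for $r:=|t|+\delta(t)$. Therefore
\[
\mathcal H^1(E\cap [t_0-r,t_0+r])\le 2r-2\delta(t)=2|t|,
\]
so
\[
\frac{\mathcal H^1(E\cap [t_0-r,t_0+r])}{2r}\le \frac{|t|}{|t|+\delta(t)}.
\]
Since $r\le 2|t|\to 0$, the left-hand side tends to $1$ by the density property, forcing $|t|/(|t|+\delta(t))\to 1$, i.e., $\delta(t)/|t|\to 0$. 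Combined with the Lipschitz estimate this yields the claim.

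I expect no real obstacle in this proof: the genuinely technical work (construction of the measurable map $\mathcal{X}_j$, the identity \eqref{eq:distanza}, and the selection of fragments $\chi_{[j,x]}$ with $(x,\chi_{[j,x]})\in\mathfrak{D}(U)$) has already been carried out in Lemma~\ref{prop:misurbillity} and Proposition~\ref{constructionvectorfields}. The present statement is essentially a pointwise density-of-preimage lemma transported along a bi-Lipschitz fragment; the only mild point to keep track of is that $t$ is restricted to $\mathrm{Dom}(\chi_{[j,x]})-t_0$, which is needed only so that $\chi_{[j,x]}(t_0+t)$ makes sense and does not affect the density estimate in $\mathbb{R}$.
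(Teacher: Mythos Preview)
Your argument is correct and considerably more direct than the paper's. Both proofs start from the same reduction, via \eqref{eq:distanza}, to showing that $\mathrm{dist}(U,\chi_{[j,x]}(t_0+t))/|t|\to 0$. From there, however, the paper takes a detour: it introduces the auxiliary $1$-Lipschitz function $g(y)=\inf\{r>0:\mu(B(y,r)\cap U)>0\}$, shows $g=0$ $\mu$-a.e.\ on $U$, and then argues by contradiction using the differentiability of $g$ along $\chi_{[j,x]}$, which is available because $(U,d,\mu)$ is an $n_1$-Lipschitz differentiability space (Proposition~\ref{prophordiff}). Your route avoids all of this: you use only that $(x,\chi_{[j,x]})\in\mathfrak D(U)$, i.e., that $t_0$ is an $\mathcal L^1$-density point of $E=\chi_{[j,x]}^{-1}(U)$, together with the $2$-Lipschitz bound on the fragment, to obtain $\mathrm{dist}(U,\chi_{[j,x]}(t_0+t))\le 2\,\mathrm{dist}(t_0+t,E)=o(|t|)$. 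The density computation you give (the ``gap interval'' of length $2\delta(t)$ inside $[t_0-r,t_0+r]$ with $r=|t|+\delta(t)$) is clean and correct; note that $t_0\in E$ since $\chi_{[j,x]}(t_0)=x\in U$, which justifies $\delta(t)\le |t|$. Your approach is thus both more elementary---it does not invoke the Lipschitz differentiability structure at all---and arguably more transparent; the paper's argument, on the other hand, illustrates how the differentiability space framework can be brought to bear even on such pointwise statements.
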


\begin{proof}
{Thanks to Proposition~\ref{constructionvectorfields}, and from Theorem~\ref{prophordiff}, see in particular the last part of \cref{rem:NotReallyDerivative},
we know that for $\mu$-almost every $x\in U$ the following hold}
\begin{itemize}
    \item[(i)] for every $j=1,\ldots,n_1(\mathbb{G})$ the derivative $D(\varphi\circ \chi_{[j,x]})(\chi_{[j,x]}^{-1}(x))$ exists, the map $x\mapsto D(\varphi\circ \chi_{[j,x]})(\chi_{[j,x]}^{-1}(x))$ {is $(\mu,\mathfrak{Q}(\mathbb G))$-measurable, where $\mathfrak{Q}(\mathbb G)$ is the $\sigma$-algebra of Borel sets of $\mathbb G$},  $(x,\chi_{[j,x]})\in \mathfrak{D}(U)$ and 
    $D(\varphi\circ \chi_{[j,x]})(\chi_{[j,x]}^{-1}(x))\in C(e_j,\sigma_j)$;
    \item[(ii)] for every Lipschitz function $f:X\to \R$ we have that $(f\circ \chi_{[j,x]})'(\chi_{[j,x]}^{-1}(x))$ exists and for every $j=1,\ldots,n_1$ we have
$$(f\circ\chi_{[j,x]})'(\chi_{[j,x]}^{-1}(x))=Df(x)[D(\varphi\circ \chi_{[j,x]})((\chi_{[j,x]})^{-1}(x))],$$
where $Df(x):V_1(\mathbb G)\to\mathbb R$ is the horizontal differential of $f$, see \cref{prophordiff}.
\end{itemize}
The first step of the proof is to show that the function
$$g(x):=\inf\{r>0:\mu(B(x,r)\cap U)>0\},$$
is a non-negative real-valued $1$-Lipschitz function. Let $x,y\in X$ and note that $B(y,r)\subseteq B(x,r+d(x,y))$. Therefore, for every $\varepsilon>0$ we have
\begin{equation}
\mu(B(x,g(y)+d(x,y)+\varepsilon)\cap U)\geq \mu(B(y,g(y)+\varepsilon)\cap U)>0.
\label{eq:mer1}
\end{equation}
Inequality \eqref{eq:mer1} implies that $g(x)\leq g(y)+d(x,y)$ and thus, interchanging $x$ and $y$, the claim is proved.
Suppose by contradiction that there is a $\mu$-positive compact set $K\subseteq U$ for which 
\begin{equation}
 \lim_{\mathrm{Dom}(\chi_{[j,x]})-\chi_{[j,x]}^{-1}(x)\ni t\to 0}\frac{\mathrm{dist}(U,\chi_{[j,x]}(\chi_{[j,x]}^{-1}(x)+t))}{t}=0.
 \label{eq:mer2}
\end{equation}
fails $\mu$-almost everywhere on $K$ and such that items (i) and (ii) of this proof hold at every point of $K$.
This means that for $\mu$-almost every $x\in K$ there is an infinitesimal sequence $s_i(x)$ and a $\lambda(x)>0$ such that
\begin{equation}
\dist(\chi_{[j,x]}(\chi_{[j,x]}^{-1}(x)+s_i(x)),U)\geq \lambda(x) \lvert s_i(x)\rvert,\qquad \text{ for every }i\in\mathbb{N}.
\label{eq:mer5}
\end{equation}

Let us now show that $g(z)=0$ for $\mu$-almost every $z\in K$. If this is not the case, we can find a $\mu$-positive subcompact $\tilde K$ such that $g>0$ on $\tilde K$. Then, by definition of $g$, for every $z\in \tilde K$ we can find an $r(z)>0$ such that $\mu( B(z,r(z))\cap U)=0$. However, since $\tilde K$ is compact, we can find  finitely many $z_i\in K$, with $i=1,\ldots,N$ such that 
$$
\tilde{K}\subseteq \bigcup_{i=1}^N B(z_i,r_i).
$$
This implies that
$$
\mu(\tilde{K})=\mu(\tilde{K}\cap U)\leq \sum_{i=1}^N\mu(B(z_i,r(z_i))\cap U)=0,
$$
which is in contradiction with the fact that $\tilde{K}$ had positive measure. This concludes the proof of the fact that $g(z)=0$ for $\mu$-almost every $z\in K$.

In order to discuss why \eqref{eq:mer5} is false, we shall fix a $z\in K$ where $g(z)=0$. Throughout the rest of the proof, we assume without loss of generality that $\chi_{[j,z]}^{-1}(z)=0$ for every $j=1,\ldots,n_1$.
Let us first derive a contradiction if we also have the following assumption, which strengthens \eqref{eq:mer5},
\begin{equation}
\liminf_{\mathrm{Dom}(\chi_{[j,z]})\ni r\to 0}\frac{\dist\big(\chi_{[j,z]}(r),U\big)}{\lvert r\rvert }\geq \eta(z),
\label{eq:mer3}
\end{equation}
for some positive $\eta(z)>0$ depending on $z$. {If the latter holds, we must conclude that
\begin{equation}
    \liminf_{\mathrm{Dom}(\chi_{[j,z]})\ni r\to 0}\frac{g(\chi_{[j,z]}(r))-g(z)}{\lvert r\rvert}=\liminf_{\mathrm{Dom}(\chi_{[j,z]})\ni r\to 0}\frac{g(\chi_{[j,z]}(r))}{\rvert r\rvert}\geq \eta(z).
    \label{eq:conseguenzaassenzacono}
\end{equation}
}
Since $X$ is in particular a Lipschitz differentiability space, see \cref{prophordiff}, we infer that 
\begin{equation}
    \begin{split}
        0&=\lim_{\mathrm{Dom}(\chi_{[j,z]})\ni t\to 0}\frac{\lvert g(\chi_{[j,z]}(t))- dg(z)[\pi_{\mathbb G}\varphi(\chi_{[j,z]}(t))-\pi_{\mathbb G}\varphi(z)]\rvert}{d(\chi_{[j,z]}(t),z)}\\&=\lim_{\mathrm{Dom}(\chi_{[j,z]})\ni t\to 0}\frac{\lvert g(\chi_{[j,z]}(t))-t dg(z)[\zeta(z)]+o(t)\rvert}{d(\chi_{[j,z]}(t),z)}\\
        &\geq  \lim_{\mathrm{Dom}(\chi_{[j,z]})\ni t\to 0}\frac{\lvert g(\chi_{[j,z]}(t))-t dg(z)[\zeta(z)]+o(t)\rvert}{2\lvert t\rvert},
        \label{formuladiffspaces}
    \end{split}
\end{equation}
where $\zeta(z):=D(\varphi\circ \chi_{[j,z]})(\chi_{[j,z]}^{-1}(z))$ and where the last inequality comes from the assumption of the $2$-biLipschitzianity of $\gamma$, see Definition~\ref{lipcurvez}.
However, this implies that $g(\chi_{[j,z]}(t))$ is differentiable at $t=0$, and this comes in contradiction with \eqref{eq:conseguenzaassenzacono}.
 Since this would contradict the $\mu$-almost sure differentiability on $K$ of the function $g$, we conclude that 
\begin{equation}
\liminf_{\mathrm{Dom}(\chi_{[j,z]})\ni r\to 0}\frac{\dist\big(\chi_{[j,z]}(r),U\big)}{\lvert r\rvert }=0,
    \label{eq:mer4}
\end{equation}
for $\mu$-almost every $z\in K$. However, \eqref{eq:mer5} and  \eqref{eq:mer4} imply in particular that
\begin{equation}
\liminf_{\mathrm{Dom}(\chi_{[j,z]})\ni r\to 0}\frac{g(\chi_{[j,z]}(r))}{\lvert r\rvert}=0,\qquad\text{and}\qquad\limsup_{\mathrm{Dom}(\chi_{[j,z]})\ni r\to 0}\frac{g(\chi_{[j,z]}(r))}{\lvert r\rvert}\geq\lambda(z),
\nonumber
\end{equation}
which contradicts \eqref{formuladiffspaces}, since $g(z)=0$. 
This shows in particular that for $\mu$-almost every $x\in U$ and every $j=1,\ldots,n_1(\mathbb{G})$ we have
\begin{align*}
   &\lim_{\mathrm{Dom}(\chi_{[j,x]})-\chi_{[j,x]}^{-1}(x)\ni t\to 0
    } \frac{d(\mathcal{X}_j(t,x),\chi_{[j,x]}(  \chi_{[j,x]}^{-1}(x)+ t))}{t}\\ &=\lim_{\mathrm{Dom}(\chi_{[j,x]})-\chi_{[j,x]}^{-1}(x)\ni t\to 0}\frac{\mathrm{dist}(U,\chi_{[j,x]}(\chi_{[j,x]}^{-1}(x)+t))}{t}=0,
\end{align*}
where the last inequality is true thanks to Proposition~\ref{prop:diff_impl_direct}. This concludes the proof.
\end{proof}

\subsection{Proof of \cref{thm:Fondamentale2}}
We next prove the main result of this section.
Starting with independent and horizontally universal Alberti representations in complete charts,
we construct differentials of Lipschitz functions. 
{
\begin{teorema}\label{thm:Fondamentale2}
Let $\mathbb G,\mathbb H$ be Carnot groups, and denote with $n_1$ the dimension of the horizontal stratum of $\mathbb G$. Let $(X,d,\mu)$ be a metric measure space, and let $f:X\to\mathbb H$, $\varphi:X\to \mathbb{G}$ be Lipschitz. Let $U\subseteq X$ be a  Borel subset of $X$, and assume $(U,\varphi)$ is $(\mathbb{G},\mathbb{H})$-complete with respect to $f$, see \cref{def:GHcomplete}. 

Suppose that there exist  $\varphi$-independent horizontally $\rho$-universal representations $\mathcal{A}_1,\ldots,\mathcal{A}_{n_1}$of $\mu\llcorner U$, for some $\rho>0$. Then for $\mu$-almost every $x_0\in U$, there exists a unique homogeneous homomorphism $Df(x_0):\mathbb G\to\mathbb H$ such that \eqref{eqn:PANSUDIFF} holds with $f,\varphi$.
\end{teorema}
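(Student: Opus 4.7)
The plan is to build the candidate differential $Df(x_0)$ by extending the horizontal Pansu differential $D_H f(x_0)\colon V_1(\mathbb G)\to V_1(\mathbb H)$ given by Proposition \ref{prophordiff}, and then to verify \eqref{eqn:PANSUDIFF} by tracing $x_0$ to a nearby point $x$ via a concatenation of the horizontal curves selected in Proposition \ref{constructionvectorfields}. The $(\mathbb G,\mathbb H)$-completeness hypothesis is precisely what is needed to close the gap between the endpoint of such a concatenation and an arbitrary target point $x$, and this is the reason completeness cannot be dropped.

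First, I would restrict to a compact set $U'\subseteq U$ of almost full measure, obtained via Lusin and Severini--Egoroff, on which: (a) the horizontal differential $x\mapsto D_H f(x)$ is continuous and uniformly bounded (Proposition \ref{propcartestrutturate}); (b) the selection $x\mapsto \chi_{[i,x]}$ of Proposition \ref{constructionvectorfields} is continuous, the Pansu derivatives $v_i(x):=D(\varphi\circ\chi_{[i,x]})(\chi_{[i,x]}^{-1}(x))\in C(e_i,\sigma_i)$ and $w_i(x):=D(f\circ\chi_{[i,x]})(\chi_{[i,x]}^{-1}(x))=D_H f(x)(v_i(x))$ exist and vary continuously; (c) the first-order approximation of the filled curves $\mathcal X_i(t,x)$ in Proposition \ref{prop:diff_impl_direct} holds uniformly; (d) the quantitative completeness of Proposition \ref{prop:GoingTo0InChart} holds uniformly. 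At any density point $x_0\in U'$ the vectors $v_1(x_0),\dots,v_{n_1}(x_0)$ form a basis of $V_1(\mathbb G)$ since they lie in independent cones.

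Next, I would define $Df(x_0)$ as the (only possible) homogeneous homomorphism $\mathbb G\to\mathbb H$ whose restriction to $V_1(\mathbb G)$ sends $v_i(x_0)\mapsto w_i(x_0)$; concretely, given $g\in\mathbb G$, Lemma \ref{propdecomposizione} produces a word $g=\delta_{s_1}(v_{i_1})\cdots\delta_{s_M}(v_{i_M})$ with $M=2n$ and $|s_j|\le c_0\|g\|/\|v_{i_j}\|$, and I set $Df(x_0)(g):=\delta_{s_1}(w_{i_1})\cdots\delta_{s_M}(w_{i_M})$. To verify \eqref{eqn:PANSUDIFF} at $x_0$, given $x\to x_0$ in $X$ I decompose $\varphi(x_0)^{-1}\varphi(x)$ in this basis with parameters $s_j(x)$ satisfying $\sum_j|s_j(x)|\le C d(x,x_0)$, and I construct an auxiliary point $\widetilde x\in U$ by iterated concatenation: starting from $y_0:=x_0$, set $y_j:=\mathcal X_{i_j}(s_j(x),y_{j-1})$ and $\widetilde x:=y_M$. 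On each leg the horizontal derivative of $f$ along $\chi_{[i_j,y_{j-1}]}$ is $D_H f(y_{j-1})(v_{i_j}(y_{j-1}))$, which by continuity of $D_H f$ on $U'$ equals $w_{i_j}(x_0)+o(1)$, and the approximation of Proposition \ref{prop:diff_impl_direct} controls the error between $\chi_{[i_j,y_{j-1}]}(\cdot)$ and $\mathcal X_{i_j}(\cdot,y_{j-1})$. Telescoping through the $M$ legs via the Baker--Campbell--Hausdorff formula \eqref{opgr} yields
\[
\|f(x_0)^{-1}f(\widetilde x)\cdot Df(x_0)(\varphi(x_0)^{-1}\varphi(\widetilde x))^{-1}\|_{\mathbb H}=o(d(x,x_0)),
\]
which in particular establishes the estimate with $\widetilde x$ in place of $x$ and, taking $g=0$ along single legs, shows that the definition of $Df(x_0)$ is independent of the word decomposition (hence indeed a homogeneous group homomorphism).

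Finally, and this is the main obstacle, one must pass from $\widetilde x$ to the original $x$. By construction $\|\varphi(\widetilde x)^{-1}\varphi(x)\|_{\mathbb G}=o(d(x,x_0))$ because the word for $\varphi(x_0)^{-1}\varphi(x)$ in the basis $\{v_i(x_0)\}$ has been matched exactly by the concatenation; here $(\mathbb G,\mathbb H)$-completeness of $(U,\varphi)$ with respect to $f$ gives $\|f(\widetilde x)^{-1}f(x)\|_{\mathbb H}=o(d(x,x_0))$, and a triangle inequality yields \eqref{eqn:PANSUDIFF}. The delicate bookkeeping lies in ensuring the intermediate points $y_j$ remain in the good set $U'$ for $\mu$-a.e.\ $x_0$ (handled by Lebesgue density and the smallness of the times $s_j(x)$) and in aggregating the $M=2n$ error terms; the completeness inequality from Proposition \ref{prop:GoingTo0InChart} is exactly the tool that absorbs both the curve-to-set projection error from Proposition \ref{prop:diff_impl_direct} and the final discrepancy between $\widetilde x$ and $x$. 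Uniqueness follows directly: if two homogeneous homomorphisms satisfy \eqref{eqn:PANSUDIFF}, specializing to $x=\chi_{[i,x_0]}(t)$ shows they agree on each $v_i(x_0)$, hence on $V_1(\mathbb G)$, hence on all of $\mathbb G$.
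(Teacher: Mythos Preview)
Your overall plan is the paper's: extend the horizontal differential $D_Hf(x_0)$ via word decompositions, build concatenations of the selected fragments, telescope, and use completeness to bridge the endpoint $\widetilde x$ and the target $x$. But two steps are not under control.

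\textbf{Well-definedness of $Df(x_0)$ needs completeness, and you do not invoke it there.} You write ``the (only possible) homogeneous homomorphism $\mathbb G\to\mathbb H$ whose restriction to $V_1(\mathbb G)$ sends $v_i(x_0)\mapsto w_i(x_0)$'', but a linear map $V_1(\mathbb G)\to V_1(\mathbb H)$ need not extend to a homogeneous homomorphism at all; this is exactly what fails in the counterexample of Remark~\ref{rem:NotDropComplete}. Your actual definition is by word, and well-definedness means: any word in the $v_i$ multiplying to $0$ yields a word in the $w_i$ multiplying to $0$. Telescoping alone only gives $f(x_0)^{-1}f(\widetilde x)=\delta_t\bigl(\prod\delta_{s_j}w_{i_j}\bigr)\cdot o(t)$ and $\varphi(x_0)^{-1}\varphi(\widetilde x)=o(t)$; to conclude $\prod\delta_{s_j}w_{i_j}=0$ you must get $f(x_0)^{-1}f(\widetilde x)=o(t)$ from $\varphi(x_0)^{-1}\varphi(\widetilde x)=o(t)$ while $d(x_0,\widetilde x)\asymp t$, and that is precisely the completeness hypothesis. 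The paper isolates this as Step~7 and uses completeness explicitly there; your ``taking $g=0$ along single legs'' does not. Relatedly, your displayed estimate already contains $Df(x_0)$ before it is known to be well-defined, so the order of the argument must be: telescope to get $f$ versus $w$-product and $\varphi$ versus $v$-product separately, use completeness to prove word-independence, \emph{then} write the estimate with $Df(x_0)$.

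\textbf{Keeping intermediate points in the good set.} The maps $\mathcal X_j$ land in $U$, not in $U'$, so your $y_j$'s need not lie where the uniform estimates hold; ``Lebesgue density and smallness of $s_j(x)$'' does not fix this, since $y_j$ is a specific point. The paper handles this by projecting each intermediate point onto $\tilde U$ via $\Pi_{\tilde U}$ (see \eqref{definizionegammatv}), proving inductively that $\mathrm{dist}(\gamma_{\delta_t(v)}(k),\tilde U)=o(t)$ uniformly on a slightly smaller set $E_k$ (item~(c) of Step~3), and absorbing the resulting projection error into the telescoping. This is the ``delicate bookkeeping'' you allude to, but it requires an explicit construction, not a density argument.

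A minor point: invoking Proposition~\ref{prop:GoingTo0InChart} in your item~(d) is circular, since that proposition assumes $(X,d,\mu)$ is already a Pansu differentiability space. You should use the completeness hypothesis (Definition~\ref{def:GHcomplete}) directly; it is pointwise, which is all that is needed in Step~6.
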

}

\begin{proof}
{Without loss of generality, by inner regularity, we can assume that $U$ is compact.} We first need to define a candidate differential. In order to do so, we let $\chi_1,\ldots,\chi_{n_1}:U\to \Gamma(X)$ be the {$(\mu,\mathfrak{Q}(H(X)))$-measurable} map constructed in Proposition~\ref{constructionvectorfields} relative to the maps $\varphi$ and $f$. Having this at our disposal, first, we give a definition of the homogeneous homomorphism, which a priori might be ill-given. Then, we proceed to prove that if $Df(x)$ is well defined, it is a homogeneous homomorphism and that \eqref{eqn:PANSUDIFF} holds. Finally,
we prove that $Df(x)$ is well-defined for $\mu$-almost every $x\in U$. The uniqueness part is remarked, as an appendix of this proof, in \cref{rk_identity}. 

Throughout the proof for every $j=1,\ldots,n_1(\mathbb{G})$ and for $\mu\llcorner U$-almost every $x\in U$ we let, whenever they are defined,
\begin{itemize}
    \item[($\alpha$)] $\zeta_j(x):=D(\varphi\circ \chi_{[j,x]})(\chi_{[j,x]}^{-1}(x))$,
\item[($\beta$)] 
$\partial_j f(x):=D(f\circ \chi_{[j,x]})(\chi_{[j,x]}^{-1}(x))$.
\end{itemize}
Thanks to Proposition~\ref{constructionvectorfields}, recall that the maps $U\ni x\mapsto \zeta_j(x)$ and $U\ni x\mapsto \partial_j f(x)$ are {$(\mu,\mathfrak{Q}(\mathbb G))$-measurable, and $(\mu,\mathfrak{Q}(\mathbb H))$-measurable, respectively}.

\paragraph{Step 1.}
We define the Pansu differential of $f$ at $x$ with respect to $\varphi$ and the curves $\{\chi_{[j,\cdot]}\}_{j=1,\ldots,n_1(\mathbb{G})}$ to be the homogeneous homomorphism $Df(x):\mathbb G\to\mathbb H$ that acts as follows. First, recall that $\{\zeta_1(x),\ldots,\zeta_{n_1}(x)\}$ is a basis of $V_1(\mathbb G)$ for every $x\in U$. Then, for every $v\in\mathbb{G}$ we can find $M\in\N$, $\lambda_1,\ldots,\lambda_M\in\R$, and $i_1,\ldots,i_M\in\{1,\ldots,n_1(\mathbb{G})\}$, all depending on $x$ and $v$, such that 
\begin{equation}
    v=\delta_{\lambda_1}(\zeta_{i_1}(x))\cdot\ldots\cdot\delta_{\lambda_M}(\zeta_{i_M}(x)),
    \label{espressioneperv}
\end{equation}
see \cref{propdecomposizione}. Notice also that if $\|v\|$ is uniformly bounded, also the $|\lambda_i|$ are uniformly bounded as a consequence of \cref{propdecomposizione}. Now, we define the action of $Df(x)$ on $v$ as
\[
Df(x)[v]:=\delta_{\lambda_1}(\partial_{i_1} f(x))\cdot\ldots\cdot\delta_{\lambda_M}(\partial_{i_M} f(x)).
\]
Clearly, at this stage, it is not clear whether $Df(x)$ is well-defined $\mu$-almost everywhere or not because it might depend on the choices of the $\lambda_j$'s. However, it is clear that $Df(x)$ is definable on the whole $\mathbb G$, as a consequence of \cref{propdecomposizione}, and that \emph{if} $Df(x)$ is well defined, it must be a homogeneous homomorphism.

\paragraph{Step 2.} By a direct application of \cref{prop:misurbillity}, for every $j=1,\dots,n_1$, the maps 
\begin{equation}
    \begin{split}
       (t,x)\mapsto  \delta_{1/t}\big(\varphi(x)^{-1}\varphi(\mathcal{X}_j(t,x))\big),\\
    (t,x)\mapsto  \delta_{1/t}\big(f(x)^{-1}f(\mathcal{X}_j(t,x))\big),
        \label{eq:rapportiincrementalimisurabili}
    \end{split}
    \end{equation}
    are {$(\mu\otimes\mathcal{L}^1,\mathfrak{Q}(\mathbb G))$-measurable, and $(\mu\otimes\mathcal{L}^1,\mathfrak{Q}(\mathbb H))$-measurable, respectively, both on $[-1,0)\times U$, and $(0,1]\times U$.}
    
    Let us fix a $j=1,\dots,n_1$ from now on. 
    As an aim of this step,
    we claim that the maps in \eqref{eq:rapportiincrementalimisurabili} converge, for $\mu$-a.e. $x\in U$, as $t\to 0$, to $\zeta_j(x)$ and $\partial_j f(x)$ respectively. Namely, we shall prove that 
 \begin{equation}\label{eqn:SoughtConvergence}
     \lim_{t\to 0}\delta_{1/t}\big(\varphi(x)^{-1}\varphi(\mathcal{X}_j(t,x))\big)=\zeta_j(x),
 \end{equation}
 for $\mu$-almost every $x\in U$, the other equality being analogous. However, as a direct consequence of \cref{prop:diff_impl_direct}, for every $t\in [-1,1]$ such that $t+\chi_{[j,x]}^{-1}(x)\in \mathrm{Dom}(\chi_{[j,x]})$, for the $\Delta_x(t)$ defined by 
 \begin{equation}
     \begin{split}
        \delta_{1/t}\big(\varphi(x)^{-1}\varphi(\mathcal{X}_j(t,x))\big)=\zeta_j(x)\delta_{1/t}(\Delta_x(t)),
     \end{split}
 \end{equation}
 we have 
 \begin{equation}\label{eqn:Final0}
 \lim_{t\to 0, t+\chi_{[j,x]}^{-1}(x)\in \mathrm{Dom}(\chi_{[j,x]})}\lVert\Delta_x(t)\rVert/t=0.
 \end{equation}
 Since by construction $\chi_{[j,x]}^{-1}(x)$ is a density point of $\mathrm{Dom}(\chi_{[j,x]})$, {for every $\varepsilon>0$ there exists an $r(\varepsilon)>0$ such that for every $t$ with $\lvert t\rvert<r(\varepsilon)$ there exists a $\tau(t)+\chi_{[j,x]}^{-1}(x)\in \mathrm{Dom}(\chi_{[j,x]})$ such that $\lvert \tau(t)-t\rvert\leq \varepsilon \lvert t\rvert$} and thus
 \begin{equation}\label{eqn:Final}
     \begin{split}
         &\zeta_j(x)\delta_{1/\tau(t)}(\Delta_x(\tau(t)))=\delta_{1/\tau(t)}\big(\varphi(x)^{-1}\varphi(\mathcal{X}_j(\tau(t),x))\big)\\
         &=\delta_{t/\tau(t)}(\delta_{1/t}(\varphi(x)^{-1}\varphi(\mathcal{X}_j(t,x))))\cdot\\
         &\qquad\qquad\qquad\qquad\delta_{1/\tau(t)}(\varphi(\mathcal{X}_j(t,x))^{-1}\varphi(\mathcal{X}_j(\tau(t),x))).
     \end{split}
 \end{equation}
 However, let us note that 
 \begin{equation}\label{eqn:1}
         \lVert\delta_{1/\tau(t)}(\varphi(\mathcal{X}_j(t,x))^{-1}\varphi(\mathcal{X}_j(\tau(t),x)))\rVert\leq \mathrm{Lip}(\varphi)\cdot d(\mathcal{X}_j(t,x),\mathcal{X}_j(\tau(t),x))/\tau(t).
 \end{equation}
 For the ease of notation, let us rename $\chi_{[j,x]}^{-1}(x):=\tau_x$ Thanks to the triangle inequality we know that 
 \begin{equation}\label{eqn:2}
   d(\mathcal{X}_j(t,x),\mathcal{X}_j(\tau(t),x))
         \leq d(\mathcal{X}_j(t,x),\chi_{[j,x]}(\tau_x+\tau(t)))+d(\mathcal{X}_j(\tau(t),x),\chi_{[j,x]}(\tau_x+\tau(t))).
 \end{equation}
Thus thanks to Proposition~\ref{prop:diff_impl_direct}, and using the notation in the statement and the proof of \cref{prop:misurbillity} (e.g., $\mathfrak{e}$ is the evaluation map, $\iota$ is an isometric embedding into a Banach space, and $\Pi$ is the minimal-distance projection, $\tilde J_j$ is curve associated to $\chi_{[j,\cdot]}$ in the Banach space), we see that we just need to estimate 
\begin{equation}\label{eqn:3}
    \begin{split}
        &d(\mathcal{X}_j(t,x),\chi_{[j,x]}(\tau_x+\tau(t)))\\
        &=d(\Pi_{\iota(U)}\mathfrak{e}(t+\tau_x,\tilde{J}_j(x)),\iota\circ\chi_{[j,x]}(\tau_x+\tau(t)))\\
        &=d(\Pi_{\iota(U)}\tilde\chi_{[j,x]}(t+\tau_x),\tilde\chi_{[j,x]}(\tau_x+\tau(t)))\\
        &\leq d(\Pi_{\iota(U)}\tilde\chi_{[j,x]}(t+\tau_x),\tilde\chi_{[j,x]}(\tau_x+t))\\
        &\qquad\qquad\qquad\qquad+d(\tilde\chi_{[j,x]}(\tau_x+t),\tilde\chi_{[j,x]}(\tau_x+\tau(t)))\\
        &\leq d(\Pi_{\iota(U)}\tilde\chi_{[j,x]}(t+\tau_x),\tilde\chi_{[j,x]}(\tau_x+t))+2\varepsilon\lvert t\rvert\\
        &\leq \mathrm{dist}(\iota(U),\tilde\chi_{[j,x]}(\tau_x+\tau(t)))+d(\tilde\chi_{[j,x]}(\tau_x+\tau(t)),\tilde\chi_{[j,x]}(\tau_x+t))+2\varepsilon\lvert t\rvert\\
        &\leq \mathrm{dist}(\iota(U),\tilde\chi_{[j,x]}(\tau_x+\tau(t)))+4\varepsilon\lvert t\rvert=\mathrm{dist}(U,\chi_{[j,x]}(\tau_x+\tau(t)))+4\varepsilon\lvert t\rvert\\
     &=d(\mathcal{X}_j(\tau(t),x),\chi_{[j,x]}(\tau_x+\tau(t)))+4\varepsilon\lvert t\rvert.
    \end{split}
\end{equation}
We point out that in the previous inequalities, sometimes we are denoting with $d$ also the distance in the space in which $X$ is embedded in \cref{prop:misurbillity}. As a consequence of \eqref{eqn:1}, \eqref{eqn:2}, \eqref{eqn:3}, and \cref{prop:diff_impl_direct}, we have that for every $\varepsilon>0$ there exists $\delta>0$ small enough such that whenever $|t|<\delta$ and $\tau(t)$ is as above, we have 
\[
\lVert\delta_{1/\tau(t)}(\varphi(\mathcal{X}_j(t,x))^{-1}\varphi(\mathcal{X}_j(\tau(t),x))\rVert \leq \mathrm{Lip}(\varphi)  \left(\varepsilon+4\varepsilon\frac{|t|}{|\tau(t)|}+\varepsilon\right),
\]
and since by the definition of $\tau(t)$ we have that $1/(1+\varepsilon)\leq |t/\tau(t)|\leq 1/(1-\varepsilon)$, we conclude that, as $t\to 0$, $\delta_{1/\tau(t)}(\varphi(\mathcal{X}_j(t,x))^{-1}\varphi(\mathcal{X}_j(\tau(t),x))\to 0$. Thus the last convergence, together with \eqref{eqn:Final0}, \eqref{eqn:Final}, and the fact that $|t/\tau(t)|$ can be taken to be bounded from above and below as $t\to 0$, we get the sought claim \eqref{eqn:SoughtConvergence}.
 
\paragraph{Step 3.} Let us fix $\varepsilon>0$ for this step.  By Lusin's theorem and Severini--Egoroff's theorem, we claim that we can find a compact subset $\tilde{U}$  of the compact set   $U$ such that 
\begin{itemize}
    \item[(i)] $\mu(U\setminus \tilde{U})\leq \varepsilon\mu(U)<+\infty$;
    \item[(ii)] for every $j=1,\ldots,n_1$ the restriction of the maps $\zeta_j$ and $\partial_jf$ are uniformly continuous on $\tilde{U}$;
    \item[(iii)] for every $j=1,\ldots,n_1$, as $t$ goes to $0$, the quantities
    \begin{equation}
    \begin{split}
          \delta_{1/t}\big(\varphi(x)^{-1}\varphi(\mathcal{X}_j(t,x))\big)\\
        \delta_{1/t}\big(f(x)^{-1}f(\mathcal{X}_j(t,x))\big)
        \label{apprxderivate}
    \end{split}
    \end{equation}
    converge uniformly on $\tilde{U}$ to $\zeta_j(x)$ and $\partial_jf(x)$, respectively;
    \item[(\hypertarget{iv}{iv})] for every $j=1,\ldots,n_1(\mathbb{G})$, the following limits
    \begin{equation}\lim_{t\to 0}\frac{\tau(t,x)}{t}=1,\qquad\text{and}\qquad
    \lim_{t\to 0\\
    } \frac{d(\mathcal{X}_j(t,x),\chi_{[j,x]}(  \tau_x+ \tau(t,x)))}{t}=0,
    \label{eq:unifdelicati}
\end{equation}
where $\tau(t,x):=\Pi_{\mathrm{Dom}(\chi_{[j,x]})}(t)$, hold uniformly in $\tilde{U}$. Recall that with $\Pi_{\mathrm{Dom}(\chi_{[j,x]})}(t)$ we mean a measurable choice of a minimal-distance projection of $t$ onto $\mathrm{Dom}(\chi_{[j,x]})$; see \cref{lemmadominimobili}.
\end{itemize}
The existence of a $\tilde{U}$ satisfying items (i)-(iii) above is a consequence of the measurability of the maps $x\mapsto \zeta_j(x)$, $x\mapsto \partial_j f(x)$, and of the maps in \eqref{apprxderivate} for every $t$, which has been proved in Step 2. Let us check that $\tilde U$ can be chosen in such a way that also (iv) holds. 

{Let us fix $j=1,\dots,n_1$. Let us first check that the map $\tau:(t,x)\mapsto \Pi_{\mathrm{Dom}(\chi_{[j,x]})}(t)$ is $\mathcal{L}^1\otimes\mu$-measurable. Indeed, this comes from \cref{lemmadominimobili}, in which proof we showed that there exists a Borel selection $\mathbb R\times \mathfrak{K}(\mathbb R)\ni (t,K)\to \Pi_K(t)$. Hence, since $(t,x)\mapsto (t,\mathrm{Dom}(\chi_{[j,x]}))\subset \mathbb R\times\mathfrak{K}(\mathbb R)$ is $(\mathcal{L}^1\otimes\mu,\mathfrak{Q}(\mathbb R\times\mathfrak{K}(\mathbb R)))$-measurable, we get the conclusion by composing the previous two maps.
This implies by Severini-Egoroff's theorem and the fact that by construction $\tau(0,x)$ is a density point for $\mathrm{Dom}(\chi_{[j,x]})$, that $\tau(t,x)/t$ can be chosen to converge uniformly to $1$ on $\tilde{U}$.}

The latter measurability of $\tau(t,x)$, together with the fact that $x\mapsto \tau_x$ is $\mu$-measurable - see the proof of \cref{prop:misurbillity}, in particular \eqref{eqn:MapMeasurable} - and the measurability of $\mathcal{X}_j$, see \cref{prop:misurbillity}, readily imply {that the map $(t,x)\mapsto d(\mathcal{X}_j(t,x),\chi_{[j,x]}(  \tau_x+ \tau(t,x)))$ is $\mathcal{L}^1\otimes\mu$-measurable}. 
Hence, it is thus immediate to see that the map
$$(t,x)\mapsto \frac{d(\mathcal{X}_j(t,x),\chi_{[j,x]}(  \tau_x+ \tau(t,x)))}{t},$$
is $\mathcal{L}^1\otimes\mu$-measurable. The estimate in \eqref{eqn:3}, together with Proposition~\ref{prop:diff_impl_direct}, readily implies by Severini-Egoroff's theorem that item (iv) can be satisfied too. 

\medskip

In this third step, we show that for every $\alpha>0$ there exists a compact set $E$ of $\tilde{U}$ with  $\mu(\tilde{U}\setminus E)\leq \alpha\mu(\tilde{U})$ which satisfies the following properties. If we choose $v\in\mathbb{G}$ and we write it as in \eqref{espressioneperv}, we have that for every 
$x\in E$ and every $t\in[-\min_i 1/|\lambda_i|,\min_i 1/|\lambda_i|]$ we construct a Borel curve $\gamma_{\delta_t(v)}:[0,M]\to U$ starting from $x$, for which
\begin{itemize}
    \item[(a)] For $\Delta_{M,x}(t)$ such that $\varphi(x)^{-1}\varphi(\gamma_{\delta_t(v)}(M))=\delta_t(v)\Delta_{M,x}(t)$, we have $\lVert\Delta_{M,x}(t)\rVert/t$ converges uniformly to $0$ on $x\in E$ as $t$ goes to $0$;
    \item[(b)] There is a function $\mathcal{R}_{1,x}$ of $t$ such that $d(x,\gamma_{\delta_t(v)}(M))\leq  2(\sum_{i=1}^M|\lambda_i|)|t|+\mathcal{R}_{1,x}(t)$, and $\mathcal{R}_{1,x}(t)/t$ converges uniformly to $0$ on $x\in E$ as $t$ goes to $0$;
    \item[(c)] $\lim_{t\to 0}t^{-1}\mathrm{dist}(\gamma_{\delta_t(v)}(M), \tilde{U})=0$ uniformly on $x\in E$.
\end{itemize}

\smallskip

Notice that by a countable covering argument, we may restrict to prove the property for $\|v\|$ uniformly bounded, for which we also get a uniform upper bound on $\|\lambda_i\|$ from \cref{propdecomposizione}. Let us deal with the base step. 
If $M=1$, we let, for  $t\in[-\min_i 1/|\lambda_i|,\min_i 1/|\lambda_i|]$ and $s\in [0,1]$, $x\in\tilde U$,
$$
\gamma_{\delta_t(v)}(s):=\mathcal{X}_{i_1}(\lambda_1ts,x).
$$
Let us see that with this definition, and by how $\tilde{U}$ is defined, the curve $\gamma_{\delta_t(v)}$ satisfies the items (a) and (b) with $\tilde U$ instead of $E$ in them. 
Thanks to the work done above, cf. \eqref{eqn:SoughtConvergence} and \eqref{apprxderivate}, which comes from the choice of $\tilde{U}$, we know that, whenever $s\in[0,1]$,  $t\in[-\min_i 1/|\lambda_i|,\min_i 1/|\lambda_i|]$, $x\in \tilde U$, $i\in \{1,\dots,n_1\}$, and $\lambda\in[-\max_i|\lambda_i|,\max_i|\lambda_i|]$, we have that, for $\Delta_{\lambda,i}(x,t,s)$ defined as follows
\begin{equation}\label{eqn:SomeUniformity}
\delta_{1/s}\big(\varphi(x)^{-1}\varphi(\mathcal{X}_{i}(\lambda ts,x))\big)=\delta_{\lambda t}(\zeta_{i}(x))\Delta_{\lambda,i}(x,t,s),
\end{equation}
$\lVert\Delta_{\lambda,i}(x,t,1)\rVert/t$ converges to $0$ uniformly on $\tilde{U}$ as $t$ goes to $0$. This directly implies item (a) with $\tilde U$ instead of $E$ in the base step, simply taking $\lambda=\lambda_1$, $i=i_1$, and $s_1$ in \eqref{eqn:SomeUniformity}. 

Let us now check item (b). For every $i\in\{1,\dots,n_1\}$, every $\lambda\in[\max_i|\lambda_i|,\max_i|\lambda_i|]$, every $x\in\tilde U$, and every $t\in[-\min_i 1/|\lambda_i|,\min_i 1/|\lambda_i|]$, we have
\begin{equation}
\begin{split}
    & d(x,\mathcal{X}_{i}(\lambda t,x))\\
     &\leq  d(x,\chi_{[i,x]}(  \tau_x+ \lambda\tau(t,x)))+d(\chi_{[i,x]}(  \tau_x+ \lambda\tau(t,x)),\mathcal{X}_{i}(\lambda t,x))\\
     &\leq 2|\lambda\tau(t,x)|+d(\chi_{[i,x]}(  \tau_x+ \lambda\tau(t,x)),\mathcal{X}_{i}(\lambda t,x)),
     \nonumber
\end{split}
\end{equation}
where $\tau(t,x):=\Pi_{\mathrm{Dom}(\chi_{[i,x]})}(t)$ is a measurable selection of the metric projection of $t$ as defined above. Dividing both sides by $|t|$, 
and recalling item (\hyperlink{iv}{iv}), we infer thanks to the following inequality,
 \begin{equation}\label{eqn:SomeUniformity2}
 \begin{split}
      \frac{d(x,\mathcal{X}_{i}(\lambda t,x))}{|t|}&\leq 2|\lambda|+ \frac{\lvert t-\tau(t,x)\rvert}{|t|}+\frac{d(\chi_{[i,x]}(  \tau_x+ \lambda\tau(t,x)),\mathcal{X}_{i}(\lambda t,x))}{|t|}\\
      &=:2|\lambda|+\tilde{\mathcal{R}}_{1,x}^{\lambda,i}(t),
 \end{split}
 \end{equation}
that, for $\lambda,i$ fixed in the ranges as above, the remainder $\tilde{\mathcal{R}}_{1,x}^{\lambda,i}(t)$ goes uniformly to zero, in $x\in \tilde U$, as $t\to 0$. Hence, taking $\lambda=\lambda_1,i=i_1$ in \eqref{eqn:SomeUniformity2}, we get that also item (b) above is verified in the base step $M=1$ with $\tilde U$ instead of $E$. 

Let us now prove property (c) in the base step with $E$ a proper subset of $\tilde U$, which will be called $E_1$. In the following, denote for every $j\in\{1,\dots,n_1\}$, by $\tilde{\mathcal{X}}_j:[-1,1]\times  \tilde{U}\to \tilde{U}$ the map given by \cref{prop:misurbillity} applied to $\tilde U$. Therefore, since
\begin{equation}
    \begin{split}
        &\qquad\qquad\frac{\mathrm{dist}(\tilde{U},\mathcal{X}_j(t,x))}{\lvert t\rvert}\leq \frac{d(\tilde{\mathcal{X}}_j(t,x),\mathcal{X}_j(t,x))}{\lvert t\rvert}\\
        \leq& \frac{d(\tilde{\mathcal{X}}_j(t,x),\chi_{[j,x]}(\tau_x+\tau(t,x)))}{\lvert t\rvert}+\frac{d(\mathcal{X}_j(t,x),\chi_{[j,x]}(\tau_x+\tau(t,x)))}{\lvert t\rvert},
    \end{split}
\end{equation}
then Step 2, and particularly the computations in \eqref{eqn:3}, implies $t^{-1}\mathrm{dist}(\tilde{U},\mathcal{X}_j(t,x))$ converges pointwise to $0$ at $\mu$-almost every point on $\tilde{U}$. Denote by $\Pi_{\tilde{U}}$ be the map given by Proposition~\ref{lemmamisurabilitaproiezione} and note that $\mathrm{dist}(\tilde{U},\mathcal{X}_j(t,x))=d(\mathcal{X}_j(t,x),\Pi_{\tilde{U}}(\mathcal{X}_j(t,x)))$. Since the map $(t,x)\mapsto \Pi_{\tilde{U}}(\mathcal{X}_j(t,x))$ is $(\mathcal{L}^1\otimes\mu,\mathfrak{Q}(X))$-measurable, thanks to Severini-Egoroff's theorem there exists a set $E_1\subseteq \tilde{U}$ such that $\mu(\tilde{U}\setminus E_1)\leq \alpha\mu(\tilde{U})/2$ and the convergence to $0$ of $t^{-1}\mathrm{dist}(\tilde{U},\mathcal{X}_j(t,x))$ is uniform on $E_1$.
\vspace{0.3cm}

Let us now suppose that we have constructed the curves $\gamma_{\delta_t(v)}$ where $M=k$. Let us construct $\gamma_{\delta_t(v)}$ when $M=k+1$. We let $\tilde{v}:=\delta_{\lambda_1}(\zeta_{i_1}(x))\ldots\delta_{\lambda_k}(\zeta_{i_k}(x))$. We are now assuming, as we indeed proved in the base step $M=1$, that the inductive process yields $\mu$-measurable subsets $E_k\subset E_{k-1}\subset \tilde U$, and for every  $t\in[-\min_i 1/|\lambda_i|,\min_i 1/|\lambda_i|]$, and $x\in E_k$, a Borel curve $\gamma_{\delta_t(\tilde{v})}:[0,k]\to U$ starting from $x$ such that
\begin{itemize}
    \item[(a')] For $\Delta_{k,x}(t)$ such that $\varphi(x)^{-1}\varphi(\gamma_{\delta_t(\tilde v)}(k))=\delta_t(\tilde{v})\Delta_{k,x}(t)$, we have $\lVert\Delta_{k,x}(t)\rVert/t$ converges uniformly to $0$ on {$x\in E_k$}, as $t$ goes to $0$;
    \item[(b')] There exists a function $\mathcal{R}_{k,x}$ of $t$ such that $d(x,\gamma_{\delta_t(\tilde{v})}(k))\leq 2(\sum_{i=1}^k|\lambda_i|)|t|+\mathcal{R}_{k,x}(t)$ where $\mathcal{R}_{k,x}(t)/t$ converges uniformly to $0$ on $x\in E_k$ as $t$ goes to $0$;
    \item[(c')] We have $\mu(E_{k-1}\setminus E_{k})\leq \alpha2^{-k}\mu(E_{k-1})$ and 
   $\lim_{t\to 0}t^{-1}\mathrm{dist}(\gamma_{\delta_t(\tilde v)}(k), \tilde{U})=0$ uniformly on $x\in E_{k}$. 
\end{itemize}
So, let us denote, for every $s\in [0,k+1]$, every $x\in E_{k}$, and every $t\in[-\min_i 1/|\lambda_i|,\min_i 1/|\lambda_i|]$,  
\begin{equation}
    \gamma_{\delta_t(v)}(s):=\begin{cases}
    \gamma_{\delta_t(\tilde{v})}(s) & \text{if $s\in[0,k]$}\\
   \mathcal{X}_{i_{k+1}}(\lambda_{k+1}t(s-k),\Pi_{{\tilde{U}}}(\gamma_{\delta_t(\tilde{v})}(k)))&\text{if $s\in[k,k+1]$,}
    \end{cases}
    \label{definizionegammatv}
\end{equation}
where $\Pi_{{\tilde{U}}}$ is the Borel metric projection onto ${\tilde{U}}$ given by Proposition~\ref{lemmamisurabilitaproiezione}. Note that the map $\gamma_{\delta_t(v)}$ is Borel since it is Borel on a Borel partition of $[0,k+1]$, and it starts from $x$.
 We now prove that the conditions (a') and (b') are satisfied for $\gamma_{\delta_t(v)}$ and with $k+1$ instead of $k$.
 
 Let us first verify (a'). Thanks to the definition of the curve, to the inductive hypothesis, and to \eqref{eqn:SomeUniformity}, we have that for all $x\in E_k$, and $s,t$ in the ranges above, the following equality holds
 \begin{equation}
 \begin{split}
      &\qquad\qquad\varphi(x)^{-1}\varphi(\gamma_{\delta_t(v)}(k+1))=\\
      &=\varphi(x)^{-1}\varphi(\gamma_{\delta_t(\tilde{v})}(k))\cdot\underbrace{\varphi(\gamma_{\delta_t(\tilde{v})}(k))^{-1} \varphi(\Pi_{{\tilde{U}}}(\gamma_{\delta_t(\tilde{v})}(k)))}_{\Theta_{k,x}(t)}
      \cdot\varphi(\Pi_{{\tilde{U}}}(\gamma_{\delta_t(\tilde{v})}(k)))^{-1}\varphi(\gamma_{\delta_t(v)}(k+1))\\
      &=\delta_t(\tilde{v})\Delta_{k,x}(t)\cdot \Theta_{k,x}(t)\cdot
      \delta_{\lambda_{k+1}t}(\zeta_{i_{k+1}}(\Pi_{{\tilde{U}}}(\gamma_{\delta_t(\tilde{v})}(k))))
      \Delta_{\lambda_{k+1},i_{k+1}}(\Pi_{{\tilde{U}}}(\gamma_{\delta_t(\tilde v)}(k)),t,1)\\
      &=\delta_t(\tilde{v})\delta_{\lambda_{k+1}t}(\zeta_{i_{k+1}}(x))\cdot\underbrace{  (\delta_{\lambda_{k+1}t}(\zeta_{i_{k+1}}(x)))^{-1}\delta_{\lambda_{k+1}t}(\zeta_{i_{k+1}}(\Pi_{{\tilde{U}}}(\gamma_{\delta_t(\tilde{v})}(k))))}_{\Delta_{k+1,x}^a(t)}\\
      &\cdot\underbrace{  (\delta_{\lambda_{k+1}t}(\zeta_{i_{k+1}}(\Pi_{{\tilde{U}}}(\gamma_{\delta_t(\tilde{v})}(k)))))^{-1}\Delta_{k,x}(t)\Theta_{k,x}(t)\delta_{\lambda_{k+1}t}(\zeta_{i_{k+1}}(\Pi_{{\tilde{U}}}(\gamma_{\delta_t(\tilde{v})}(k))))}_{\Delta_{k+1,x}^b(t)}\\
      &\cdot\underbrace{\Delta_{\lambda_{k+1},i_{k+1}}(\Pi_{{\tilde{U}}}(\gamma_{\delta_t(\tilde v)}(k)),t,1)}_{\Delta_{k+1,x}^c(t)}=\delta_t(v)\cdot\Delta_{k+1,x}^a(t)\Delta_{k+1,x}^b(t)\Delta_{k+1,x}^c(t).
      \nonumber
 \end{split}
 \end{equation}
 
 {Let us define $\Delta_{k+1,x}(t):=\Delta_{k+1,x}^a(t)\Delta_{k+1,x}^b(t)\Delta_{k+1,x}^c(t)$. The item (a') with $k+1$ will be verified if we show that $\|\Delta_{k+1,x}(t)\|/t\to 0$ as $t\to 0$ uniformly in $x\in E_k$. The latter uniform convergence holds true because 
 \begin{itemize}
     \item $\|\Delta_{k+1,x}^a(t)\|/t\to 0$ as $t\to 0$ uniformly in $x\in E_k$ because $\zeta_{i_{k+1}}$ is uniformly continuous on $\tilde U$ and $\Pi_{{\tilde{U}}}(\gamma_{\delta_t(\tilde v)}(k))\to x$ as $t\to 0$ uniformly on $x\in E_k$ as a consequence of the inductive step in item (c') and the estimate in item (b');
     \item $\|\Delta_{k+1,x}^b(t)\|/t\to 0$ as $t\to 0$ uniformly in $x\in E_k$ because $\|\Delta_{k,x}(t)\|/t\to 0$ as $t\to 0$ uniformly in $x\in E_k$ due to the inductive step in item (a'); $\|\Theta_{k,x}(t)\|/t\to 0$ as $t\to 0$ uniformly in $x\in E_k$ because $\varphi$ is Lipschitz and $t^{-1}\mathrm{dist}(\gamma_{\delta_t(\tilde v)}(k),{\tilde{U}})\to 0$ as $t\to 0$ uniformly in $x\in E_k$ due to the inductive step in item (c'); and because $\zeta_{i_{k+1}}$ is uniformly bounded on $\tilde U$;
     \item $\|\Delta_{k+1,x}^c(t)\|/t\to 0$ as $t\to 0$ uniformly in $x\in E_k$ as a consequence of \eqref{eqn:SomeUniformity}.
 \end{itemize}
 }

Hence item (a') is verified with $k+1$ instead of $k$, recalling that we will construct $E_{k+1}$ in such a way that $E_{k+1}\subset E_k$. Finally, let us check item (b') with $k+1$ instead of $k$. Thanks to the inductive step in item (b'), and \eqref{eqn:SomeUniformity2} we know that 
\begin{equation}
\begin{split}
        d(x_0,&\gamma_{\delta_t(v)}(k+1))\leq   d(x_0,\gamma_{\delta_t(\tilde{v})}(k))+d(\gamma_{\delta_t(\tilde{v})}(k),\gamma_{\delta_t(v)}(k+1))\\
    &\leq  2(\sum_{i=1}^{k}|\lambda_i|)|t|+\mathcal{R}_{k,x}(t)+\dist(\gamma_{\delta_t(\tilde v)}(k),{\tilde{U}})+2|\lambda_{k+1}t|+|t|\tilde{\mathcal{R}}_{1,\Pi_{{\tilde{U}}}(\gamma_{\delta_t(\tilde{v})}(k))}^{\lambda_{k+1},i_{k+1}}(t).
\end{split}
\end{equation}
Defined $$\mathcal{R}_{k+1,x}(t):=\mathcal{R}_{k,x}(t)+\dist(\gamma_{\delta_t(\tilde v)}(k),{\tilde{U}})+|t|\tilde{\mathcal{R}}_{1,\Pi_{{\tilde{U}}}(\gamma_{\delta_t(\tilde{v})}(k))}^{\lambda_{k+1},i_{k+1}}(t),$$ we see that $\mathcal{R}_{k+1,x}(t)/t\to 0$ as $t\to 0$ uniformly as $x\in E_k$, by also using the inductive step in item (c'). Hence item (b') is verified with $k+1$ instead of $k$, recalling that we will construct $E_{k+1}$ in such a way that $E_{k+1}\subset E_k$.

Finally, let us check item (c') with $k+1$ instead of $k$. For every $j\in\{1,\dots,n_1\}$, denote by $\tilde{\mathcal{X}}_j:[-1,1]\times  {\tilde{U}}\to {\tilde{U}}$ the map given by Proposition~\ref{prop:diff_impl_direct} applied to $\tilde U$. Therefore, for every $(t,x)\in [-1,1]\times\tilde U$,
\begin{equation}\label{eqn:DAJE}
    \begin{split}
        &\qquad\qquad\frac{\mathrm{dist}({\tilde{U}},\mathcal{X}_j(t,x))}{\lvert t\rvert}\leq \frac{d(\tilde{\mathcal{X}}_j(t,x),\mathcal{X}_j(t,x))}{\lvert t\rvert}\\
        &\leq\frac{d(\tilde{\mathcal{X}}_j(t,x),\chi_{[j,x]}(\tau_x+\tau(t,x)))}{\lvert t\rvert}+\frac{d(\mathcal{X}_j(t,x),\chi_{[j,x]}(\tau_x+\tau(t,x)))}{\lvert t\rvert}.
    \end{split}
\end{equation}
Now, recall that a function $f:X\to Y$ between metric spaces is called \textit{universally measurable}, if the preimage of every Borel set in $Y$ is measurable with respect to every Radon measure on $X$. Recall that the composition of universally measurable functions is universally measurable; see \cite[434D(f)]{FremlinVol4}.
Let us notice that $[-1,1]\times U\ni (t,x)\mapsto f(t,x):=\Pi_{{\tilde{U}}}(\gamma_{\delta_t(\tilde{v})}(k))$ is universally measurable on $P(U)$.

Indeed, by the inductive definition of the curve $\delta_t(\tilde v)$, $f(t,x)$ is the composition of several $\Pi_{\tilde U}$ and several $\mathcal{X}_j$. Then, notice that $\Pi_{\tilde U}$ is universally measurable since it is Borel, while $\mathcal{X}_j$ is universally measurable on $P(U)$. The last assertion comes by inspecting the proof of \cref{prop:curvefullmeas} (see \cite[Proposition~2.9]{BateJAMS}) from which we infer that $x\mapsto \gamma^x$ is universally measurable on $P(U)$, and by how we construct $\mathcal{X}_j$ in \cref{prop:misurbillity}. 

Reasoning analogously, see also the few lines after item (iv) above, we can also ensure that $(t,x)\mapsto \tau(t,f(t,x))$ is universally measurable. We can thus ensure that on a set $\tilde E\subset\tilde U$ with sufficiently big $\mu$-measure the pointwise convergences in \eqref{eq:unifdelicati} hold for every $x\in\tilde E$, where $x$ there is substituted with $f(t,x)$. Moreover the same estimate in \eqref{eq:unifdelicati}, \eqref{apprxderivate} holds with $\tilde{\mathcal{X}}_j$ instead of $\mathcal{X}_j$. We skip the details since they are similar to what we have done in other parts of the paper.

Thus, by using the latter information and \eqref{eqn:DAJE}, we have that $t^{-1}\mathrm{dist}({\tilde{U}},\mathcal{X}_j(t,\Pi_{{\tilde{U}}}(\gamma_{\delta_t(\tilde{v})}(k))))$ converges pointwise to $0$ at $\mu$-almost every point $x\in \tilde E$, where $x$ is the starting point of the curve $\gamma_{\delta_t(\tilde{v})}(k)$. 

Since the maps considered are universally measurable, as noticed above,  thanks to Severini-Egoroff's theorem, there exists a set $E_{k+1}\subseteq E_{k}\cap\tilde E$ such that $\mu(E_{k}\setminus E_{k+1})\leq \alpha\mu(E_k)/2^{k+1}$, and 
 $t^{-1}\mathrm{dist}({\tilde{U}},\mathcal{X}_j(t,\Pi_{{\tilde{U}}}(\gamma_{\delta_t(\tilde{v})}(k))))$ converges to $0$ uniformly on $x\in E_{k+1}$, where $x$ is the starting point of the curve $\gamma_{\delta_t(\tilde v)}$ as constructed in the inductive step. Thus, item (c') with $k+1$ instead of $k$ is verified by the very definition of the curve $\gamma_{\delta_t}(v)$.

\vspace{0.5cm}

Let us now conclude the proof of step 3, i.e., let us verify how items (a), (b), (c) are satisfied after we have proved the items (a'), (b'), and (c') for all $k=1,\dots,M$. Indeed, let us define $\tilde E:=\cap_{k=1}^M E_k=E_M$, which is $\mu$-measurable. As a consequence of the items (c') for every $k=1,\dots, M$ we conclude that 
\[
\mu(\tilde U\setminus \tilde E)\leq \sum_{k=1}^M \mu(E_{k-1}\setminus E_k) \leq \sum_{k=1}^M\alpha 2^{-k} \mu(E_{k-1})\leq \alpha\mu(\tilde U),
\]
and moreover items (a), (b), (c) with $\tilde E$ instead of $E$ directly follow from items (a'), (b'), (c') at $k=M$. Finally, the Borel regularity of $\mu$ immediately yields the desired compact set $E$.

\paragraph{Step 4.} Let us work with the notation in the previous steps. We have proved that, fixed $\alpha,\varepsilon>0$, there exist $E\subset U$ such that 
\[
\mu( U\setminus E)\leq (\alpha+\varepsilon)\mu(U),
\]
and all the assertions in items (ii), (iii), (iv), (a), (b), (c) in the previous step hold uniformly on $x\in E$.
In this step, renamed $\gamma_t:=\gamma_{\delta_t(v)}:[0,M]\to U$, we claim that for $\Sigma_{M,x}(t)$ defined by
\begin{equation}
    f(x)^{-1}f(\gamma_t(M))=\delta_t  (\delta_{\lambda_{1}}(\partial_{j_1}f(x))\cdot\ldots \cdot\delta_{\lambda_{M}}(\partial_{j_M}f(x)))\Sigma_{M,x}(t),
    \label{derivatadirezionale1}
\end{equation}
we have $\|\Sigma_{M,x}(t)\|/t\to 0$ as $t\to 0$ uniformly on $x\in E$. Let us prove it by induction on $M$. If $M=1$, there is nothing to prove since the claim directly follows from the second of \eqref{apprxderivate}. Let us now assume \eqref{derivatadirezionale1} holds for $M=k$, and let us prove it for $M=k+1$. 
{
Denoting $\tilde{v}:=\delta_{\lambda_1}(\zeta_{i_1}(x))\ldots\delta_{\lambda_k}(\zeta_{i_k}(x))$ as in Step 3, we
have, by using the inductive hypothesis, the very definition of $\gamma_{\delta_t(\tilde v)}$, and \eqref{apprxderivate} when we introduce $\tilde \Delta_{k,x}(t)$ in the following computations,
\begin{equation*}
    \begin{split}
        f(x)^{-1}&f(\gamma_t(k+1)) = f(x)^{-1}f(\gamma_t(k))\cdot f(\gamma_t(k))^{-1}f(\gamma_t(k+1))\\
        &=\delta_t  (\delta_{\lambda_{1}}(\partial_{j_1}f(x))\cdot\ldots \cdot\delta_{\lambda_{k}}(\partial_{j_k}f(x)))\Sigma_{k,x}(t)\cdot f(\gamma_{\delta_t(\tilde v)}(k))^{-1}f(\mathcal{X}_{i_{k+1}}(\lambda_{k+1}t,\Pi_{\tilde U}(\gamma_{\delta_t(\tilde v)}(k))))\\
        &=\delta_t  (\delta_{\lambda_{1}}(\partial_{j_1}f(x))\cdot\ldots \cdot\delta_{\lambda_{k}}(\partial_{j_k}f(x))) \delta_t(\delta_{\lambda_{k+1}}(\partial_{j_{k+1}}f(x))) \\
        &\qquad\qquad  \cdot (\delta_{t\lambda_{k+1}}(\partial_{j_{k+1}}f(x)))^{-1}\Sigma_{k,x}(t)\cdot \underbrace{f(\gamma_{\delta_t(\tilde v)}(k))^{-1} f(\Pi_{\tilde U}(\gamma_{\delta_t(\tilde v)}(k)))}_{\Theta_{k+1,x}^a(t)}\\
        &\qquad \qquad \qquad\qquad \cdot\underbrace{f(\Pi_{\tilde U}(\gamma_{\delta_t(\tilde v)}(k)))^{-1}f(\mathcal{X}_{i_{k+1}}(\lambda_{k+1}t,\Pi_{\tilde U}(\gamma_{\delta_t(\tilde v)}(k))))}_{\delta_{t\lambda_{k+1}}(\partial_{j_{k+1}}f(\Pi_{\tilde U}(\gamma_{\delta_t(\tilde v)}(k))))\tilde\Delta_{k,x}(t)}\\
        &=\delta_t  (\delta_{\lambda_{1}}(\partial_{j_1}f(x))\cdot\ldots \cdot\delta_{\lambda_{k}}(\partial_{j_k}f(x))\delta_t(\delta_{\lambda_{k+1}}(\partial_{j_{k+1}}f(x))))\\
        &\qquad\cdot \underbrace{  (\delta_{t\lambda_{k+1}}(\partial_{j_{k+1}}f(x)))^{-1}\delta_{t\lambda_{k+1}}(\partial_{j_{k+1}}f(\Pi_{\tilde U}(\gamma_{\delta_t(\tilde v)}(k))))}_{\Sigma_{k+1,x}^a(t)}\\
        &\,\qquad\cdot \underbrace{\delta_{t\lambda_{k+1}}(\partial_{j_{k+1}}f(\Pi_{\tilde U}(\gamma_{\delta_t(\tilde v)}(k))))^{-1}\Sigma_{k,x}(t)\Theta_{k+1,x}^a(t)\delta_{t\lambda_{k+1}}(\partial_{j_{k+1}}f(\Pi_{\tilde U}(\gamma_{\delta_t(\tilde v)}(k))))}_{\Sigma_{k+1,x}^b(t)}\cdot \tilde\Delta_{k,x}(t),
    \end{split}
\end{equation*}
and thus the inductive process is concluded if we show that $\|\Sigma_{k+1,x}^a(t)\Sigma_{k+1,x}^b(t)\tilde\Delta_{k,x}(t)\|/t\to 0$ as $t\to 0$ uniformly on $x\in E$. The latter assertion is true because:
\begin{itemize}
    \item $\|\Sigma_{k+1,x}^a(t)\|/t\to 0$ as $t\to 0$ uniformly on $x\in E$, due to the fact that $x\mapsto \partial_{j_{k+1}}f(x)$ is uniformly continuous on $\tilde U\supset E$, and $\Pi_{\tilde U}(\gamma_{\delta_t(\tilde v)}(k))\to x$ as $t\to 0$ uniformly on $x\in E$, as a consequence of the estimate in item (b') of the induction in Step 3, and of the fact that $t^{-1}\mathrm{dist}(\gamma_{\delta_t(\tilde v)}(k),\tilde U)\to 0$ as $t\to 0$ uniformly on $x\in E_k\supset E$, due to item (c') of the induction in Step 3.
    \item $\|\Sigma_{k+1,x}^b(t)\|/t\to 0$ as $t\to 0$ due to the following three facts. The map $x\mapsto \partial_{j_{k+1}}f(x)$ is uniformly bounded on $x\in \tilde U\supset E$; $\|\Sigma_{k,x}(t)\|/t\to 0$ as $t\to 0$ uniformly on $x\in E$ by the inductive hypothesis; $\|\Theta_{k+1,x}^a(t)\|/t\to 0$ as $t\to 0$ uniformly on $x\in E$ because $f$ is Lipschitz and $t^{-1}\mathrm{dist}(\gamma_{\delta_t(\tilde v)}(k),\tilde U)\to 0$ as $t\to 0$ uniformly on $x\in E_k\supset E$, due to item (c') of the induction in Step 3.
    \item $\|\tilde\Delta_{k,x}(t)\|/t\to 0$ as $t\to 0$ uniformly on $x\in E$ due to the uniform convergence in the second line of \eqref{apprxderivate}.
\end{itemize}
}
For some benefit towards subsequent computations, let us register a consequence of \eqref{derivatadirezionale1}. Thanks to item (a) proved in step 3 and the Lipschitzianity of $\varphi$, we infer  that
\begin{equation}
    t\lVert v\rVert/2\leq t\lVert v\rVert-\lVert\Delta_{M,x}(t)\rVert\leq \lVert \varphi(x)^{-1}\varphi(\gamma_t(M))\rVert\leq \mathrm{Lip}(\varphi)d(x,\gamma_t(M)), 
    \label{costruzionefattabene}
\end{equation}
where the first inequality follows provided $t$ is small enough and note that such smallness can be chosen to be uniformly on $E$ thanks to item (a) in Step 3 Thus, as a direct consequence of \eqref{costruzionefattabene} and \eqref{derivatadirezionale1} we get that
\begin{equation}
    \lim_{t\to 0}\delta_{1/d(x,\gamma_t(M))}
    \left(\delta_t(\delta_{\lambda_{1}}(\partial_{j_1}f(x))\ldots \delta_{\lambda_{M}}(\partial_{j_M}f(x)))^{-1}f(x)^{-1}f(\gamma_t(M))\right)=0,
    \label{derivatadirezionale}
\end{equation}
 uniformly in $x\in E$. The previous equality will be exploited in the forthcoming steps.

\paragraph{Step 5.} In the computations of this step, we already assume that $Df(x_0)$ is a homogeneous homomorphism and that it is well-defined. This will be proved in the last step of the proof.

Let $v\in \mathbb{G}$. For $t>0$ let $\gamma_t:=\gamma_{\delta_t(v)}$ be the curve constructed in 
Step 3. Then, as a consequence of \eqref{derivatadirezionale}, we have that, uniformly in $x_0\in E$,
\begin{equation}
    \begin{split}
        \lim_{t\to 0}\delta_{1/d(x_0,\gamma_t(M))}  ((Df(x_0)(\varphi(x_0)^{-1}\varphi(\gamma_t(M))))^{-1}\cdot f(x_0)^{-1}f(\gamma_t(M)))=0.
        \label{difflungov}
    \end{split}
\end{equation}
 Indeed, by using item (a) of Step 3, we have that, uniformly in $x_0\in E$,
\begin{equation*}
    \begin{split}
         &\lim_{t\to 0}\delta_{1/d(x_0,\gamma_t(M))}  ((Df(x_0)(\varphi(x_0)^{-1}\varphi(\gamma_t(M))))^{-1}\cdot f(x_0)^{-1}f(\gamma_t(M)))\\
         &= \lim_{t\to 0}\delta_{1/d(x_0,\gamma_t(M))}(Df(x_0)(\Delta_{M,x_0}(t)))^{-1}\lim_{t\to 0}\delta_{1/d(x_0,\gamma_t(M))}(\delta_t(Df(x_0)(v))^{-1}f(x_0)^{-1}f(\gamma_t(M)))\\
         &=\lim_{t\to 0}\delta_{t/d(x_0,\gamma_t(M))}(Df(x_0)(v)^{-1}\delta_{1/t}(f(x_0)^{-1}f(\gamma_t(M))))=0,
    \end{split}
\end{equation*}
where in the second equality above we used that $\|\Delta_{M,x_0}(t)\|/t\to 0$ as $t\to 0$ uniformly in $x\in E$ by item (a) of Step 3, and \eqref{costruzionefattabene}; and in the last equality we used \eqref{derivatadirezionale}.

\paragraph{Step 6.} Let us prove that if $Df(x_0)$ is well defined for $\mu$-almost every $x_0\in E$, then \eqref{eqn:PANSUDIFF} holds. Hence, taking $\alpha,\varepsilon\to 0$ in Step 3, see the beginning of Step 4, we conclude that \eqref{eqn:PANSUDIFF} holds at $\mu$-almost every $x_0\in U$. 

Let us assume by contradiction that there exists a $\delta>0$ such that on a compact subset $K\ni x_0$ of $\mu$-positive measure of $E$ we have 
\begin{equation}
\limsup_{X\ni x\to x_0}\frac{\Big\|\big(Df(x_0)(\varphi(x_0)^{-1}\cdot\varphi(x))\big)^{-1}\cdot f(x_0)^{-1}\cdot f(x)\Big\|_{\mathbb H}}{d(x_0,x)}>\delta,
\label{equationabsurd}
\end{equation}

We thus can pick a sequence $\{y_k\}_{k\in\N}\subseteq X$ such that $\lim_{k\to\infty} d(y_k,x_0)=0$ for which 
\begin{equation}
    \lim_{k\to \infty}\delta_{1/d(y_k,x_0)}(\varphi(x_0)^{-1}\varphi(y_k))=v\in\mathbb{G},
    \label{eq:vsceltay_k}
\end{equation}
and 
\begin{equation}
\lim_{k\to\infty}\frac{\Big\|\big(Df(x_0)(\varphi(x_0)^{-1}\cdot\varphi(y_k))\big)^{-1}\cdot f(x_0)^{-1}\cdot f(y_k)\Big\|_{\mathbb H}}{d(x_0,y_k)}>\delta.
\label{equationabsurd2}
\end{equation}
Thus, for every $k\in\N$ sufficiently large, let $\gamma_k:[0,M]\to U$ be the curve relative to $\delta_{d(x_0,y_k)}(v)$ constructed in Step 3, starting from $x_0$. This implies, by using \eqref{eq:vsceltay_k}, that
\begin{equation}\label{eqn:EquazioneMaledetta}
\begin{split}
     &Df(x_0)(v)^{-1}\lim_{k\to\infty}\delta_{1/d(x_0,y_k)}(f(x_0)^{-1} f(y_k))\\
     =&\lim_{k\to\infty}\delta_{1/d(x_0,y_k)}(\big(Df(x_0)(\varphi(x_0)^{-1}\varphi(y_k))\big)^{-1} f(x_0)^{-1} f(y_k))\\
    =&Df(x_0)\Big(\lim_{k\to\infty}\delta_{1/d(x_0,y_k)}(\varphi(\gamma_k(M))^{-1}\varphi(y_k))\Big)^{-1}\\
    &\qquad\qquad\qquad\lim_{k\to\infty}\delta_{1/d(x_0,y_k)}(\big(Df(x_0)(\varphi(x_0)^{-1}\varphi(\gamma_k(M)))\big)^{-1} f(x_0)^{-1} f(\gamma_k(M)))       \\
    &\qquad\qquad\qquad\qquad\qquad\qquad\lim_{k\to\infty}\delta_{1/d(x_0,y_k)}(f(\gamma_k(M))^{-1}f(y_k)).
\end{split}
\end{equation}
Notice that by means of item (b) of Step 3 and \eqref{costruzionefattabene}, we get that there exists a constant $C>1$ such that  for $k$ large enough we have
\begin{equation}\label{eqn:OrderMagnitude}
C^{-1}\leq d(x_0,\gamma_k(M))/d(x_0,y_k)\leq C.
\end{equation}
Thanks to item (a) in step 3 and to \eqref{eq:vsceltay_k}, we infer
\begin{equation}
    \begin{split}
    &\lim_{k\to\infty}\delta_{1/d(x_0,y_k)}(\varphi(\gamma_k(M))^{-1}\varphi(y_k))\\
    &=\lim_{k\to\infty} \delta_{1/d(x_0,y_k)}(\varphi(x_0)^{-1}\varphi(\gamma_k(M)))^{-1}\delta_{1/d(x_0,y_k)}(\varphi(x_0)^{-1}\varphi(y_k))\\
    &=\lim_{k\to\infty} (v\cdot\delta_{1/d(x_0,y_k)}(\Delta_{M,x_0}(d(x_0,y_k))))^{-1}\cdot v=0.
    \label{equationsmallphi}
    \end{split}
\end{equation}

On the other hand, let us estimate
\begin{equation}\label{eqn:DiamoleUnNome}
\begin{split}
      \lim_{k\to \infty}\Big\lVert \delta_{1/d(x_0,y_k)}(f(\gamma_k(M))^{-1}f(y_k))\Big\rVert&= \lim_{k\to \infty}\frac{d(\gamma_k(M),y_k)}{d(x_0,y_k)}\Big\lVert \delta_{1/d(\gamma_k(M),y_k)}(f(\gamma_k(M))^{-1}f(y_k))\Big\rVert\\
      \leq &(1+C)\lim_{k\to \infty}\Big\lVert \delta_{1/d(\gamma_k(M),y_k)}(f(\gamma_k(M))^{-1}f(y_k))\Big\rVert.
\end{split}
\end{equation}
Our aim is now to show that the limit in \eqref{eqn:EquazioneMaledetta} is equal to $0$. In order to do so, we distinguish two cases. First, assume that $\lim_{k\to\infty}d(y_k,\gamma_k(M))/d(x_0,y_k)=0$. If this is the case, this would imply that 
\begin{equation}
\begin{split}
     &\lim_{k\to\infty}\delta_{1/d(x_0,y_k)}(\big(Df(x_0)(\varphi(x_0)^{-1}\varphi(y_k))\big)^{-1} f(x_0)^{-1} f(y_k))\\
     &=\lim_{k\to\infty}\delta_{1/d(x_0,y_k)}(Df(x_0)(\varphi(\gamma_k(M))^{-1}\varphi(y_k)))^{-1}\\
     &\qquad\qquad\cdot\delta_{1/d(x_0,y_k)}(\big(Df(x_0)(\varphi(x_0)^{-1}\varphi(\gamma_k(M)))\big)^{-1} f(x_0)^{-1} f(\gamma_k(M)))\\
     &\qquad\qquad\qquad\qquad\cdot\delta_{1/d(x_0,y_k)}(f(\gamma_k(M))^{-1}f(y_k))\\
     &\leq \lVert Df(x_0)\rVert\mathrm{Lip}(\varphi)\lim_{k\to\infty}d(y_k,\gamma_k(M))/d(x_0,y_k)\\
      &\qquad\qquad\cdot\lim_{k\to\infty}\delta_{1/d(x_0,y_k)}(\big(Df(x_0)(\varphi(x_0)^{-1}\varphi(\gamma_k(M)))\big)^{-1} f(x_0)^{-1} f(\gamma_k(M)))\\
      &\qquad\qquad\qquad\qquad\cdot\mathrm{Lip}(f)\lim_{k\to\infty}d(y_k,\gamma_k(M))/d(x_0,y_k)=0,
\end{split}
\nonumber
\end{equation}
where the last identity follows from \eqref{difflungov}, \eqref{eqn:OrderMagnitude}, our hypothesis in this first case \[\lim_{k\to\infty}d(y_k,\gamma_k(M))/d(x_0,y_k)=0,\] and the Lipschitzianity of $\varphi$ and $f$.
On the other hand, in the second case, along every subsequence $y_k$ for which $\lim_{k\to\infty}d(y_k,\gamma_k(M))/d(x_0,y_k)>\delta>0$, we have
\begin{equation*}
    \begin{split}
        \lim_{k\to\infty}\|\delta_{1/d(\gamma_k(M),y_k)}(\varphi(\gamma_k(M))^{-1}\varphi(y_k))\|&=\lim_{k\to\infty}\frac{d(x_0,y_k)}{d(\gamma_k(M),y_k)}\|\delta_{1/d(x_0,y_k)}(\varphi(\gamma_k(M))^{-1}\varphi(y_k))\|\\
        &\leq \delta^{-1}\lim_{k\to+\infty}\|\delta_{1/d(x_0,y_k)}(\varphi(\gamma_k(M))^{-1}\varphi(y_k))\|=0,
    \end{split}
\end{equation*}
where in the last equality we are using \eqref{equationsmallphi}. Hence, in this second case, thanks to the completeness of the chart $(U,\varphi)$ see \cref{def:GHcomplete},  and to the previous equality we infer that also
\begin{equation*}
    \lim_{k\to \infty}\Big\lVert \delta_{1/d(\gamma_k(M),y_k)}(f(\gamma_k(M))^{-1}f(y_k))\Big\rVert=0,
    \nonumber
\end{equation*}
and then, by \eqref{eqn:DiamoleUnNome}, we conclude that
\begin{equation}\label{eqn:PrevPrev}
    \lim_{k\to \infty}\Big\lVert \delta_{1/d(x_0,y_k)}(f(\gamma_k(M))^{-1}f(y_k))\Big\rVert=0.
\end{equation}
From \eqref{eqn:PrevPrev}, together with \eqref{difflungov} in step 5, \eqref{equationsmallphi}, \eqref{eqn:OrderMagnitude}, and \eqref{eqn:EquazioneMaledetta}, we finally conclude that also in this case
\begin{equation}
    \lim_{k\to\infty}\delta_{1/d(x_0,y_k)}(\big(Df(x_0)(\varphi(x_0)^{-1}\varphi(y_k))\big)^{-1} f(x_0)^{-1} f(y_k))=0,
\end{equation}
and this comes in contradiction with \eqref{equationabsurd2}. Notice that the uniqueness of $Df(x_0)$ follows from the forthcoming \cref{rk_identity}. This concludes the proof.

\paragraph{Step 7.} We are left to show that $Df(x_0)$ is well defined. In order to prove this, we write $v$ in two different ways
$$\delta_{\eta_1}(\zeta_{j_1}(x_0))\cdot\ldots\cdot\delta_{\eta_N}(\zeta_{j_N}(x_0))=v=\delta_{\lambda_1}(\zeta_{i_1}(x_0))\cdot\ldots\cdot\delta_{\lambda_M}(\zeta_{i_M}(x_0)),$$
and we let 
\begin{equation}
    \begin{split}
        \partial_1&:=\delta_{\lambda_1}(\partial_{i_1} f(x_0))\cdot\ldots\cdot\delta_{\lambda_M}(\partial_{i_M} f(x_0)).\\
        \partial_2&:=\delta_{\eta_1}(\partial_{j_1} f(x_0))\cdot\ldots\cdot\delta_{\eta_N}(\partial_{j_N} f(x_0)).
        \nonumber
    \end{split}
\end{equation}

We associate to each expression for $v$ the Lipschitz curves $\gamma_{t}^1$ and $\gamma_{t}^2$ yielded by Step 3. {We have that
\begin{equation}\label{eqn:Lunga}
    \begin{split}
       &\qquad\qquad \qquad 0= \limsup_{t\to 0}\delta_{1/d(x_0,\gamma_t^1(M))}(\delta_t(\partial_1)^{-1}f(x_0)^{-1}f(\gamma_t^1(M)))=\\
        =&\limsup_{t\to 0}\delta_{t/d(x_0,\gamma_t^1(M))}(\partial_1^{-1}\partial_2)\delta_{d(x_0,\gamma_t^2(M))/d(x_0,\gamma_t^1(M))}  (\delta_{1/d(x_0,\gamma_t^2(M))}(\delta_t(\partial_2)^{-1}f(x_0)^{-1}f(\gamma_t^2(M))))\\
         &\qquad\qquad\qquad\qquad\cdot \delta_{d(x_0,\gamma_t^2(M))/d(x_0,\gamma_t^1(M))}  (\delta_{1/d(x_0,\gamma_t^2(M))}(f(\gamma_t^2(M))^{-1}f(\gamma_t^1(M))))\\
         &=\limsup_{t\to 0}\delta_{t/d(x_0,\gamma_t^1(M))}(\partial_1^{-1}\partial_2)\cdot \\
         &\qquad\qquad\qquad\cdot \delta_{d(x_0,\gamma_t^2(M))/d(x_0,\gamma_t^1(M))}  (\delta_{1/d(x_0,\gamma_t^2(M))}(f(\gamma_t^2(M))^{-1}f(\gamma_t^1(M)))).
    \end{split}
\end{equation}
where in the first equality we are using \eqref{derivatadirezionale} for $\gamma_t^1$, the second equality is just an algebraic re-writing of the terms, and in the third equality we are using \eqref{derivatadirezionale} for $\gamma_t^2$ together with the fact that, as a consequence of the analog of \eqref{eqn:OrderMagnitude} applied to $\gamma_t^1,\gamma_t^2$, for $t$ small enough, there exists a constant $\tilde C>1$ such that 
\begin{equation}\label{eqn:OrderMagnitude2}
\tilde C^{-1}\leq\frac{d(x_0,\gamma_t^2(M))}{d(x_0,\gamma_t^1(M))}\leq \tilde C.
\end{equation}
Our aim is now to show that 
\begin{equation}\label{eqn:TS}
\limsup_{t\to 0} \delta_{1/d(x_0,\gamma_t^1(M))}(f(\gamma_t^2(M))^{-1}f(\gamma_t^1(M)))=0,
\end{equation}
and, in order to do so, we will distinguish two cases as above.
First, let us assume that $\lim_{t\to 0} d(\gamma_t^1(M),\gamma_t^2(M))/d(x_0,\gamma_t^1(M))=0$. In this case
\[
\limsup_{t\to 0}\|\delta_{1/d(x_0,\gamma_t^1(M))}(f(\gamma_t^2(M))^{-1}f(\gamma_t^1(M)))\|\leq \mathrm{Lip}(f)\lim_{t\to 0}\frac{d(\gamma_t^1(M),\gamma_t^2(M))}{d(x_0,\gamma_t^1(M))}=0,
\]
and then, from the previous inequality we get the sought \eqref{eqn:TS} in the first case. In the second case, we have, along a subsequence $t_k\to 0$, that 
\begin{equation}\label{eqn:AssuMpt}
\lim_{k\to +\infty}\frac{d(\gamma_{t_k}^1(M),\gamma_{t_k}^2(M))}{d(x_0,\gamma_{t_k}^1(M))}>\delta>0.
\end{equation}
First, let us notice that, as a consequence of item (a) in Step 3, we have 
\begin{equation}\label{eqn:Deduci}
\begin{split}
\varphi(\gamma^2_{t_k}(M))^{-1}\varphi(\gamma_{t_k}^1(M)) &=   (\varphi(x_0)^{-1}\varphi(\gamma^2_{t_k}(M)))^{-1}  (\varphi(x_0)^{-1}\varphi(\gamma_{t_k}^1(M)))\\
&=  (\delta_{t_k}(v)\Delta^2_{M,x_0}(t_k))^{-1}  (\delta_{t_k}(v)\Delta^1_{M,x_0}(t_k))=\Delta^2_{M,x_0}(t_k)^{-1}\Delta^1_{M,x_0}(t_k).
\end{split}
\end{equation}
Moreover, since from \eqref{costruzionefattabene} we have that $t_k/d(x_0,\gamma_{t_k}^1(M))$ is uniformly bounded above for $k$ big enough, and since we also have \eqref{eqn:AssuMpt} and that $\|\Delta^i_{M,x_0}(t_k)\|/t_k\to 0$ for $i=1,2$ as $k\to +\infty$, from the previous \eqref{eqn:Deduci} we get that
\begin{equation}\label{eqn:Deduci2}
\begin{split}
\lim_{k\to +\infty}\delta_{1/d(\gamma_{t_k}^2(M),\gamma_{t_k}^1(M))}  (\varphi(\gamma^2_{t_k}(M))^{-1}\varphi(\gamma_{t_k}^1(M))) = 0.
\end{split}
\end{equation}
From \eqref{eqn:Deduci2} and the fact that the chart $(U,\varphi)$ is complete, see \cref{def:GHcomplete}, we deduce that 
\begin{equation}\label{eqn:Deduci3}
\begin{split}
\lim_{k\to +\infty}\delta_{1/d(\gamma_{t_k}^2(M),\gamma_{t_k}^1(M))}  (f(\gamma^2_{t_k}(M))^{-1}f(\gamma_{t_k}^1(M))) = 0,
\end{split}
\end{equation}
and finally, since as a consequence of \eqref{eqn:OrderMagnitude2} and the triangle inequality we have that \[d(\gamma_{t_k}^1(M),\gamma_{t_k}^2(M))/d(x_0,\gamma_{t_k}^1(M))\] is uniformly bounded above for $k$ big enough, we finally deduce also in this case, from \eqref{eqn:Deduci3}, the sought \eqref{eqn:TS}.

From \eqref{eqn:Lunga} and \eqref{eqn:TS} we finally get
\begin{equation}
    0=\lim_{t\to 0}\delta_{t/d(x_0,\gamma_t^1(M))}(\partial_1^{-1}\partial_2),
\end{equation}
which concludes that $\partial_1^{-1}\partial_2=0$ since $t/d(x_0,\gamma_t^1(M))$ is uniformly bounded below as $t\to 0$ as a consequence of item (b) in Step 3. This finally proves the well-posedness of the definition of $Df(x_0)$.}
 \end{proof}

{
\begin{osservazione}\label{rk_identity}
In this remark, we prove, as anticipated in Step 6 of \cref{thm:Fondamentale2}, that in the hypotheses of \cref{thm:Fondamentale2} the homogeneous homomorphism $Df(x_0)$ for which \eqref{eqn:PANSUDIFF} holds is unique.

By using the Step 3 in \cref{thm:Fondamentale2}, see in particular item (a) and item (b) in there, we have that, for every $\varepsilon>0$, there exists $\tilde U\subset U$ with $\mu(U\setminus\tilde U)\leq \varepsilon\mu (U)$ such that the following holds. For every $x\in \tilde U$, every $v\in\mathbb G$, and every $t>0$ small enough there is $x_t\in X$ such that $x_t\to x$ as $t\to 0$, and, for every $t>0$ small enough, being $\Delta_x(t)$ defined by
\begin{equation}
\varphi(x)^{-1}\varphi(x_t)=\delta_t(v)\Delta_x(t),
\end{equation}
we have $\|\Delta_x(t)\|/t\to 0$ as $t\to 0$.

 Let now $L_1,L_2$ be two homogeneous homomorphisms satisfying \eqref{eqn:PANSUDIFF} at $x\in \tilde U$. Let us fix $v>0$.
Then 
\begin{equation}
    \begin{split}
        \lVert L_1(v)^{-1}&L_2(v)\rVert=\lim_{t\to 0}\frac{ \lVert L_1(\Delta_x(t))\cdot L_1(\delta_{t}(v)\Delta_x(t))^{-1}\cdot L_2(\delta_{t}(v)\Delta_x(t))\cdot L_2(\Delta_x(t))^{-1}\rVert}{t}\\
        &\qquad\qquad\leq\lim_{t\to 0}\frac{\lVert L_1(\varphi(x)^{-1}\varphi(x_t))^{-1}L_2(\varphi(x)^{-1}\varphi(x_t))\rVert}{t}\\
        \leq &\lim_{t\to 0}\frac{\lVert L_1(\varphi(x)^{-1}\varphi(x_t))^{-1}f(x)^{-1}f(x_t)\rVert+\lVert L_2(\varphi(x)^{-1}\varphi(x_t))^{-1}f(x)^{-1}f(x_t)\rVert}{d(x,x_t)}\frac{d(x,x_t)}{t}=0,
    \end{split}
\end{equation}
where in the last equality we exploited \eqref{eqn:PANSUDIFF} and the estimate in item (b) of Step 3 of \cref{thm:Fondamentale2}. Hence $L_1(v)=L_2(v)$. Since this holds for every $v\in\mathbb G$ we get that at $\mu$-almost every $x\in\tilde U$ the homogeneous differential is unique. Taking $\varepsilon\to 0$, we get that the differential is unique for $\mu$-almost every $x\in U$.
\end{osservazione}

}

\section{Pansu differentiability spaces iff horizontally independent Alberti representations}\label{sec4}

We are now ready to state the following equivalence theorem, which is the analog of \cite[Theorem~7.8]{BateJAMS} in the setting of Pansu differentiability spaces. We stress that as soon as we drop the third bullet in the second item of \cref{chardiffspaces}, the result is no longer true, see \cref{rem:NotDropComplete}. Notice that in the case of Lipschitz differentiability spaces, the third bullet in item 2 below is not needed as a consequence of the results of Bate \cite{BateJAMS}. The condition on the completeness of the chart is controlling, in some sense, the $f$-speed of the Alberti representations in the vertical directions. Up to restriction, the horizontal $f$-speed is controlled by the universality assumption, but this gives no control on the total speed, a priori. See also the discussion in the Introduction right after \cref{chardiffspacesINTRO}.

\begin{teorema}\label{chardiffspaces}
Let $(X,d,\mu)$ be a metric measure space. Let $\mathbb G$ and $\mathbb H$ be Carnot groups, and let us denote with $n_1$ the dimension of the first stratum of $\mathbb G$.  
Then the following are equivalent:
\begin{enumerate}
    \item $(X,d,\mu)$ is a $(\mathbb G,\mathbb H)$-differentiability space, see \cref{def:PansuDifferentiabilitySpace}.
    \item There exist countably many Lipschitz functions $\varphi_i:X\to\mathbb G$, and countably many Borel subsets $U_i$ of $X$ such that 
    \begin{itemize}
        \item $\mu(U\setminus \cup_{i\in\mathbb N} U_i)=0$;
        \item for every $i\in\mathbb N$ there exists $\rho_i>0$ and $\mathcal{A}_1,\dots,\mathcal{A}_{n_1}$ $\varphi_i$-independent, horizontally $\rho_i$-universal Alberti representations of $\mu\llcorner U_i$;
        \item for every $i\in\mathbb N$ and every Lipschitz function $f:X\to\mathbb H$, there exist Borel sets $U_i^j\subset U_i$ with $\mu(U_i\setminus \cup_{j\in\mathbb N}U_i^j)=0$ such that, for every $j\in\mathbb N$, the pair $(U_i^j,\varphi_i)$ is $(\mathbb G,\mathbb H)$-complete with respect to $f$, see \cref{def:GHcomplete}.
    \end{itemize}
\end{enumerate}
    Moreover, if $(X,d,\mu)$ is a $(\mathbb G,\mathbb H)$-differentiability space, then one can choose a system of charts $\{(U_i,\varphi_i)\}_{i\in\mathbb N}$ for $(X,d,\mu)$ such that the three bullets of item 2 hold.
\end{teorema}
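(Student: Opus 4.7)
The plan is to establish the two implications separately and then deduce the ``moreover'' part.

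For the implication $(2) \Rightarrow (1)$, the plan is to invoke \cref{thm:Fondamentale2} directly. Fix an arbitrary Lipschitz $f\colon X\to \mathbb H$. For each $i\in\mathbb N$, by the third bullet of (2) applied to $f$, one obtains Borel sets $U_i^j\subseteq U_i$ with $\mu(U_i\setminus\bigcup_j U_i^j)=0$ such that each pair $(U_i^j,\varphi_i)$ is $(\mathbb G,\mathbb H)$-complete with respect to $f$. The Alberti representations $\mathcal A_1,\ldots,\mathcal A_{n_1}$ from the second bullet restrict, by taking induced representations of $\mu\llcorner U_i^j$, to $\varphi_i$-independent horizontally $\rho_i$-universal representations of $\mu\llcorner U_i^j$. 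One then applies \cref{thm:Fondamentale2} on each $U_i^j$ to obtain a homogeneous homomorphism $Df(x)\colon\mathbb G\to\mathbb H$ satisfying \eqref{eqn:PANSUDIFF} at $\mu$-almost every $x\in U_i^j$; uniqueness is provided by \cref{rk_identity}. Since $\mu(X\setminus\bigcup_{i,j}U_i^j)=0$, this shows that $(X,d,\mu)$ is a $(\mathbb G,\mathbb H)$-differentiability space.

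For the implication $(1)\Rightarrow(2)$, start from an atlas $\{(U_i,\varphi_i)\}_{i\in\mathbb N}$ making $(X,d,\mu)$ a $(\mathbb G,\mathbb H)$-differentiability space, which up to refinement we may take to be $\lambda$-structured with $\mathrm{Lip}$-bounded differentials by \cref{propcartestrutturate}. To produce the third bullet we invoke \cref{rem:GoingTo0}, which, applied to the fixed Lipschitz $f\colon X\to\mathbb H$, produces exactly the required compact subdecomposition $U_i^j\subseteq U_i$ on which $(U_i^j,\varphi_i)$ is $(\mathbb G,\mathbb H)$-complete with respect to $f$. For the second bullet, one exploits \cref{rem:Importante}: the composition with the horizontal projection yields Lipschitz differentiability charts $(U_i,\pi_{1,i}\circ\varphi_i)$ with values in $\mathbb R^{n_1(\mathbb G_i)}$, so that by \cite[Theorem~7.8]{BateJAMS} each $\mu\llcorner U_i$ admits $n_1$ $(\pi_{1,i}\circ\varphi_i)$-independent universal Alberti representations. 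These are automatically $\varphi_i$-independent since independence is a first-stratum notion (\cref{def:IndependentAlberti}).

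The upgrade from ``universal in the sense of Bate'' (controlling Lipschitz $\mathbb R$-valued targets) to ``horizontally $\rho$-universal with respect to $\mathbb H$'' (\cref{def:UniversalRep}) proceeds via the key identity \eqref{pansudifflungocurve}, namely $|(\pi_{\mathbb H}\circ f\circ\gamma)'(s)| = \|D(f\circ\gamma)(s)\|_{\mathbb H}$ whenever the Pansu derivative exists. Applying Bate's universality componentwise to the Lipschitz map $\pi_{\mathbb H}\circ f\colon X\to V_1(\mathbb H)\cong\mathbb R^{n_1(\mathbb H)}$, after a finite decomposition $X=X_1\cup\cdots\cup X_{n_1}$, each induced representation on $\mu\llcorner(U_i\cap X_\ell)$ has $(\pi_{\mathbb H}\circ f)$-speed $\rho_i$, which by the above identity coincides with horizontal $f$-speed $\rho_i$; note that $\mathrm{Lip}(\pi_{\mathbb H}\circ f,x)$ is the quantity appearing in the horizontal speed bound, so no loss is incurred. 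This establishes the second bullet. The ``moreover'' claim is then simply the content of the implication $(1)\Rightarrow(2)$.

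The main obstacle is the careful verification of the second bullet: one must check that Bate's notion of universality, which was designed for $\mathbb R^n$-valued Lipschitz maps, passes through the horizontal projection $\pi_{\mathbb H}$ and yields control on the full Pansu derivative of $\mathbb H$-valued maps along the curves. This hinges on the fact that the Pansu derivative of a curve lies in the horizontal stratum of $\mathbb H$, so its norm equals the modulus of the derivative of its horizontal projection; this is precisely \eqref{pansudifflungocurve} and is also the reason why the completeness assumption (rather than universality alone) is needed to upgrade from horizontal differentiability to full Pansu differentiability in the converse direction of \cref{thm:Fondamentale2}.
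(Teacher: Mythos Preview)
Your proposal is correct and follows essentially the same approach as the paper's proof: for $(2)\Rightarrow(1)$ you restrict the representations to the complete sub-pieces $U_i^j$ and invoke \cref{thm:Fondamentale2}, and for $(1)\Rightarrow(2)$ you pass through \cref{propcartestrutturate}, use \cref{rem:Importante} together with \cite[Theorem~7.8]{BateJAMS} to obtain the representations, upgrade universality via the identity \eqref{pansudifflungocurve}, and obtain completeness from \cref{rem:GoingTo0}. The paper phrases $(2)\Rightarrow(1)$ as a contradiction argument rather than a direct one, but the content is identical.
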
 

\begin{proof}
Let us first prove at once that 1 $\Rightarrow$ 2 and that the last part of the statement holds. Recall that $(X,d,\mu)$ is an $n_1$-Lipschitz differentiability space by considering the projections of the charts on the first stratum, see \cref{rem:Importante}. Then, let us take first a system of charts $\{(U_i,\pi_{\mathbb G}\circ\varphi_i)\}_{i\in\mathbb N}$ as in \cref{propcartestrutturate}, which means that $\pi_{\mathbb G}\circ\varphi_i$ is a structured chart in the terminology of \cite{BateJAMS}, when $(X,d,\mu)$ is seen an $n_1$-Lipschitz differentiability space. Now we can argue as in the second part of \cite[Theorem~7.8]{BateJAMS} to obtain that, up to possibly refining the covering, there exist charts $\{(U_i,\pi_{\mathbb G}\circ\varphi_i)\}_{i\in\mathbb N}$ such that, for every $i\in\mathbb N$, there exist $\mathcal{A}_1,\dots,\mathcal{A}_{n_1}$ $(\pi_{\mathbb G}\circ\varphi_i)$-independent Alberti representations of $\mu\llcorner U_i$ that are $\rho_i$-universal when $(X,d,\mu)$ is seen an $n_1$-Lipschitz differentiability space.

Now, recall that if $f:X\to\mathbb H$ is Lipschitz, hence $\pi_{\mathbb H}\circ f:X\to \mathbb V_1(\mathbb H)$ is Lipschitz, and if $\gamma\in \Gamma(X)$ is such that $D(f\circ\gamma)(t_0)$ exists for $t_0\in\mathrm{Dom}(\gamma)$, hence $D(f\circ\gamma)(t_0)=(\pi_{\mathbb H}\circ f\circ \gamma)'(t_0)$. This means, by exploiting the very definitions, that $\{(U_i,\varphi_i)\}_{i\in\mathbb N}$ is a system of charts such that for every $i\in\mathbb N$ there exists $\rho_i>0$ and $\mathcal{A}_1,\dots,\mathcal{A}_{n_1}$ $\varphi_i$-independent, horizontally $\rho_i$-universal Alberti representations of $\mu\llcorner U_i$. 

Finally, since the charts are chosen as in \cref{propcartestrutturate}, a direct application of \cref{prop:GoingTo0InChart}, see \cref{rem:GoingTo0}, gives the last bullet of item 2.
\medskip

Let us finally prove that 2 $\Rightarrow$ 1. Let us show that $\{(U_i,\varphi_i)\}$ is a system of charts that makes $(X,d,\mu)$ a $(\mathbb G,\mathbb H)$-differentiability space. If it is not so, there exist $f$ and $i\in\mathbb N$ such that item 3 of \cref{def:PansuDifferentiabilitySpace} does not hold. Then, let us take $U_i^j\subset U_i$ as in the assumptions such that $\{(U_i^j,\varphi_i)\}_{j\in\mathbb N}$ is a collection of $(\mathbb G,\mathbb H)$-complete pairs with respect to $f$. We have that for every $j\in\mathbb N$, by restriction, $\mathcal{A}_1,\dots,\mathcal{A}_{n_1}$ give raise to $\varphi_i$-independent, horizontally $\rho_i$-universal Alberti representations of $\mu\llcorner U_i^{j}$. Then by applying \cref{thm:Fondamentale2} we deduce that \eqref{eqn:PANSUDIFF} holds $\mu$-almost everywhere on $U_i^j$ for every $j$, and hence $\mu$-almost everywhere on $U_i$, which is a contradiction.
\end{proof}

\section{Cheeger property on Pansu differentiability spaces}\label{sec5}

This section is devoted to proving the following result, which is a more refined version of Cheeger's conjecture \cite[Conjecture 4.63]{CheegerGAFA} whose proof was obtained in \cite{DePhilippisMarcheseRindler}.

\begin{teorema}\label{cheegerpansu}
Let $(X,d,\mu)$ be a Pansu-differentiability space with respect to $\mathbb H$. Let $U\subset X$ such that $(U,\varphi)$ is a Lipschitz chart with target $\mathbb G$ for $\mathbb H$-valued maps, where $\mathbb G$ is a Carnot group. Hence $\varphi_\sharp(\mu\llcorner U)$ is absolutely continuous with respect to every Haar measure of $\mathbb{G}$.
\end{teorema}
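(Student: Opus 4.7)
The plan is to follow the scheme of \cite[Theorem~4.1.1]{DePhilippisMarcheseRindler}, replacing the reverse Rademacher theorem of \cite{DPRAnnals} with its Carnot-group analogue proved in \cite{ReversePansu}. The argument proceeds by contradiction.

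First, I would suppose that $\nu := \varphi_\sharp(\mu\llcorner U)$ is not absolutely continuous with respect to a Haar measure on $\mathbb{G}$, and extract the nonzero singular part $\nu_s$ from its Lebesgue decomposition. By the reverse Pansu--Rademacher theorem of \cite{ReversePansu}, the singularity of $\nu_s$ is detected by the existence of a Lipschitz function $g : \mathbb{G} \to \mathbb{H}$ that fails to be Pansu differentiable on some Borel set $B \subset \mathbb{G}$ with $\nu_s(B) > 0$, together with quantitative information on the tangent structure of $\nu_s$ at $\nu_s$-a.e. point of $B$.

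Next, I would form $f := g\circ\varphi : X \to \mathbb{H}$, which is Lipschitz with $\mathrm{Lip}(f) \le \mathrm{Lip}(g)\mathrm{Lip}(\varphi)$. The chart hypothesis applied to $f$ produces, for $\mu$-almost every $x_0 \in U$, a homogeneous homomorphism $L = L_{x_0} : \mathbb{G} \to \mathbb{H}$ with
\[
d_{\mathbb{H}}\bigl(g(\varphi(x)),\, g(\varphi(x_0)) \cdot L(\varphi(x_0)^{-1}\varphi(x))\bigr) = o(d(x,x_0)) \qquad \text{as } X \ni x \to x_0.
\]
Since $\mu(U \cap \varphi^{-1}(B)) = \nu(B) \ge \nu_s(B) > 0$, this in-chart approximation is available at a $\mu$-positive set of points $x_0 \in U \cap \varphi^{-1}(B)$, which by the pushforward definition corresponds to a $\nu_s$-positive set of basepoints $y_0 := \varphi(x_0) \in B$.

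The main obstacle is to upgrade this in-chart approximation of $g$ along the possibly Haar-null image $\varphi(U)$ into a genuine Pansu differentiability of $g$ at $y_0$, which would contradict the choice of $y_0 \in B$. This step is where the quantitative content of \cite{ReversePansu} is essential: the tangent measures of $\nu_s$ at $\nu_s$-a.e. $y_0 \in B$ concentrate on proper homogeneous subgroups of $\mathbb{G}$ aligned with the non-differentiability of $g$, while the homogeneous homomorphism $L_{x_0}$, being defined on all of $\mathbb{G}$, is forced by the above estimate to approximate $g$ along the directions realized by $\varphi(U)$ near $y_0$. Combining the two, the homogeneity of $L_{x_0}$ propagates the approximation to all directions of $\mathbb{G}$ relevant for Pansu differentiability at $y_0$, contradicting the choice of $y_0$ and proving that $\varphi_\sharp(\mu\llcorner U)$ is absolutely continuous with respect to any Haar measure on $\mathbb{G}$.
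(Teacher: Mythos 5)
Your plan has a genuine gap precisely at the step you yourself flag as ``the main obstacle,'' and the vague appeal to homogeneity and tangent structure does not close it. Two distinct things go wrong. First, the chart hypothesis applied to $f=g\circ\varphi$ gives an error of size $o(d(x,x_0))$ measured in the metric of $X$; since $\varphi$ is only Lipschitz (not biLipschitz), $d_{\mathbb G}(\varphi(x),\varphi(x_0))$ can be arbitrarily smaller than $d(x,x_0)$, so this is \emph{not} $o(d_{\mathbb G}(\varphi(x),\varphi(x_0)))$ and does not yield even restricted Pansu differentiability of $g$ along $\varphi(U)$ at $y_0=\varphi(x_0)$. Second, and more fundamentally, even granting such a restricted differentiability, the non-differentiability produced by the converse statements in \cite{ReversePansu} is a failure of the limit over \emph{all} of $\mathbb G$ near $y_0$; when $\nu_s\neq 0$ the set $\varphi(U)$ is essentially Haar-null, and the failure of differentiability of $g$ at $y_0$ can be witnessed entirely by points outside $\varphi(U)$, about which the chart gives no information whatsoever. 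Knowing that a homogeneous homomorphism approximates $g$ on a null set does not ``propagate to all directions.''

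The missing ingredient is the Alberti representations, which your proposal never uses. The paper's proof (following De Philippis--Marchese--Rindler) goes in the opposite direction: since a Pansu differentiability space is in particular a Lipschitz differentiability space (\cref{rem:Importante}), Bate's theorem provides, on countably many pieces $U_i$, $n_1$ $\varphi$-independent Alberti representations of $\mu\llcorner U_i$; \cref{lem:Arepr_push-forward} pushes these forward via a disintegration argument to obtain $n_1$ independent Alberti representations of $\varphi_\#(\mu\llcorner U_i)$ as a measure on $\mathbb G$. This forces the decomposability bundle of the pushforward to equal $\mathbb G$ almost everywhere, hence the pushforward has the Pansu property, and only then does \cite[Theorem~7.5]{ReversePansu} yield absolute continuity with respect to the Haar measure. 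In short: one transports the curve structure forward to $\mathbb G$ rather than pulling the bad Lipschitz function back to $X$; the latter route founders on the two points above.
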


The proof of Theorem~\ref{cheegerpansu} is mainly a consequence of the results of \cite{ReversePansu} with the additional ingredient given by the following lemma.

\begin{lemma} \label{lem:Arepr_push-forward}
Let $(X,\rho,\mu)$ be a Pansu differentiability space with respect to $\mathbb H$, let $\varphi:X\to \mathbb{G}$ be a Lipschitz map, and $U$ a Borel set such that $(\varphi,U)$ is a chart. Then, there are countably many disjoint Borel sets $U_i$ such that $\mu(U\setminus \cup_{i\in\N}U_i)=0$ and 
$\varphi_\# (\mu\llcorner U_i)$ has $n_1$-independent Alberti representations in $\mathbb{G}$.
\end{lemma}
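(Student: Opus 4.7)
The plan is to reduce to Alberti-representation data on $X$ via Theorem~\ref{chardiffspaces} and then transfer this data to $\mathbb{G}$ through the chart $\varphi$.

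First, I would apply the implication (1)$\Rightarrow$(2) of \cref{chardiffspaces}. Since $(X,d,\mu)$ is a Pansu differentiability space with respect to $\mathbb H$ and $(U,\varphi)$ is a chart, the theorem yields a countable Borel decomposition $U=\bigcup_{i\in\N}U_i$ (modulo a $\mu$-null set), positive numbers $\rho_i>0$, and, for each $i\in\N$, $n_1$ representations $\mathcal A^i_1,\dots,\mathcal A^i_{n_1}$ of $\mu\llcorner U_i$ which are $\varphi$-independent and horizontally $\rho_i$-universal; moreover each $\mathcal A^i_j$ goes in the $\varphi$-direction of a cone $C(e^i_j,\sigma^i_j)\subset V_1(\mathbb G)$, with $C(e^i_1,\sigma^i_1),\dots,C(e^i_{n_1},\sigma^i_{n_1})$ independent. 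Up to further refining the Borel decomposition by Lusin-type arguments (using \cref{propcartestrutturate} and the arguments in \cref{propmisuDf}), I can assume that on each $U_i$ the map $\varphi$ is injective, $\mathrm{Lip}(\pi_{\mathbb G}\circ\varphi,\cdot)$ is bounded below by a constant $c_i>0$, and $\mu\llcorner U_i$ is finite. By intersecting the $U_i$ and taking complements one may also assume they are pairwise disjoint.

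Next, I would push each representation $\mathcal A^i_j$ forward through $\varphi$. Concretely, for $\mathbb P^i_j$-almost every fragment $\gamma\in\Gamma(X)$, the composition $\varphi\circ\gamma$ is Lipschitz, and, using (i) the cone condition $\pi_1(\varphi\circ\gamma(s))-\pi_1(\varphi\circ\gamma(t))\in C(e^i_j,\sigma^i_j)$, (ii) the horizontal $\rho_i$-speed together with the lower bound $\mathrm{Lip}(\pi_{\mathbb G}\circ\varphi,\cdot)\geq c_i$, and (iii) the $2$-biLipschitz property of $\gamma$, one obtains a quantitative two-sided estimate
\[
\tfrac{1}{C_i}|s-t|\leq \|(\varphi\circ\gamma)(s)^{-1}(\varphi\circ\gamma)(t)\|_{\mathbb G}\leq C_i|s-t|,\qquad s,t\in\mathrm{Dom}(\gamma),
\]
so that, after an affine rescaling of the parametrization, $\varphi\circ\gamma$ belongs to $\Gamma(\mathbb G)$ (possibly after a further countable decomposition to make the constants uniform and the rescaling measurable). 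I then set $\tilde\gamma:=\varphi\circ\gamma$ (with the rescaled parametrization), $\tilde\mu_{\tilde\gamma}:=\varphi_\#\mu_\gamma$ and let $\tilde{\mathbb P}^i_j$ be the pushforward of $\mathbb P^i_j$ under $\gamma\mapsto \tilde\gamma$. Injectivity of $\varphi|_{U_i}$ combined with the above biLipschitz estimate guarantees that $\tilde\mu_{\tilde\gamma}\ll\mathcal H^1\llcorner \tilde\gamma$, while the disintegration formula passes to the pushforward, yielding
\[
\varphi_\#(\mu\llcorner U_i)(B)=\int_{\Gamma(\mathbb G)}\tilde\mu_{\tilde\gamma}(B)\,d\tilde{\mathbb P}^i_j(\tilde\gamma)\qquad\text{for every Borel }B\subset\mathbb G.
\]
Hence $\tilde{\mathcal A}^i_j:=(\{\tilde\mu_{\tilde\gamma}\}_{\tilde\gamma},\tilde{\mathbb P}^i_j)$ is an Alberti representation of $\varphi_\#(\mu\llcorner U_i)$.

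Finally, I would verify independence in $\mathbb G$. Since $\tilde{\mathbb P}^i_j$-a.e. $\tilde\gamma$ is a $C(e^i_j,\sigma^i_j)$-curve (this is exactly the cone condition for $\mathcal A^i_j$, with $\varphi=\mathrm{id}$ now), and the cones $C(e^i_1,\sigma^i_1),\dots,C(e^i_{n_1},\sigma^i_{n_1})$ are independent in $V_1(\mathbb G)$, the representations $\tilde{\mathcal A}^i_1,\dots,\tilde{\mathcal A}^i_{n_1}$ are $\mathrm{id}_\mathbb G$-independent, i.e.\ independent on $\mathbb G$ in the sense of \cref{def:IndependentAlberti}. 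This proves the lemma.

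I expect the main obstacle to be Step~2, namely the measurable decomposition ensuring that $\varphi\circ\gamma$ is a genuine fragment of $\mathbb G$ with uniform biLipschitz constants. One has to combine the cone condition, the horizontal $\rho_i$-speed, and a lower bound on $\mathrm{Lip}(\pi_{\mathbb G}\circ\varphi,\cdot)$ in a way that keeps all parametrizations measurable in $\gamma$ and compatible with the disintegration; this is in spirit analogous to \cite[Lemma~3.3]{DePhilippisMarcheseRindler}, but requires some extra care because in the Carnot setting the norm $\|\cdot\|_{\mathbb G}$ only controls, and is controlled by, the horizontal projection up to factors that depend on $\gamma$.
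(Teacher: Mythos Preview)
Your overall strategy---produce independent Alberti representations of $\mu\llcorner U_i$ on $X$ and push them forward through $\varphi$---is the paper's strategy too, but the paper executes both steps more directly. For the first step it does not invoke \cref{chardiffspaces}; instead it notes via \cref{rem:Importante} that $(U,\pi_1\circ\varphi)$ is an $n_1$-dimensional Lipschitz-differentiability chart and applies \cite[Theorem~6.6]{BateJAMS} to obtain the decomposition and the $(\pi_1\circ\varphi)$-independent (hence $\varphi$-independent) representations. This is preferable because \cref{chardiffspaces} as stated produces a \emph{system} of charts, so applying it to the single given chart $(U,\varphi)$ would require reopening its proof. For the push-forward step the paper bypasses all the biLipschitz and reparametrization work you anticipate: setting $\Phi(\gamma):=\varphi\circ\gamma$ and $\bar{\mathbb P}:=\Phi_\#\mathbb P$, it disintegrates $\mathbb P$ over the fibres $\Phi^{-1}(\bar\gamma)$ into probabilities $\eta_{\bar\gamma}$, defines $\nu_{\bar\gamma}:=\int_{\Phi^{-1}(\bar\gamma)}\varphi_\#\mu_\gamma\,d\eta_{\bar\gamma}(\gamma)$, and checks $\nu_{\bar\gamma}\ll\mathcal H^1\llcorner\mathrm{im}\,\bar\gamma$ via the area formula, using only that $\bar\gamma'\neq 0$ almost everywhere (which is immediate from the cone condition). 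No speed lower bounds, no reparametrization into $\Gamma(\mathbb G)$, and no injectivity of $\varphi$ are needed.

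Two specific points in your plan would not go through as written. First, the claim that $\varphi|_{U_i}$ can be made injective ``by Lusin-type arguments'' citing \cref{propcartestrutturate} and \cref{propmisuDf} is unsupported: those results give structured-chart inequalities and measurability of the differential, not injectivity of $\varphi$. Fortunately you do not actually need it---your own biLipschitz estimate for $\varphi\circ\gamma$ already yields injectivity of $\varphi$ along each curve image, which suffices for $\varphi_\#\mu_\gamma\ll\mathcal H^1\llcorner\mathrm{im}(\varphi\circ\gamma)$. Second, your definition $\tilde\mu_{\tilde\gamma}:=\varphi_\#\mu_\gamma$ tacitly assumes that $\gamma\mapsto\tilde\gamma$ is injective; when several fragments share the same image curve this assignment is ill-posed, and resolving it leads one back precisely to the disintegration step the paper uses.
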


\begin{proof}
\spacedlowsmallcaps{Step 1.} Let us prove that if there exists a representation of the form   $\mu\llcorner U = \int \mu_\gamma {d} \mathbb{P}(\gamma)$ where $\mu_\gamma\ll \mathcal{H}^1\llcorner \mathrm{im}\gamma$ and $\varphi\circ\gamma$ is a $C(e,\sigma)$-curve in the sense of \cref{C-curves} for $\mathbb{P}$-almost every $\gamma\in\Gamma(X)$, then we can build an Alberti representation for $\varphi_\#(\mu\llcorner U)$ as
\[
\varphi_\#(\mu\llcorner U)=\int \nu_{\bar \gamma}{d}\bar{ \mathbb{P}}(\bar \gamma)\qquad
\text{where $\bar{\mathbb{P}}$ is a probability measure on $\Gamma(\mathbb{G})$,}
\]
where $\nu_{\bar \gamma}\ll \mathcal{H}^1\llcorner \mathrm{im}\bar\gamma$ and $\bar\gamma$ is a $C(e,\sigma)$-curve for $\bar{\mathbb{P}}$-almost every $\bar\gamma\in\Gamma(\mathbb{G})$. 

In order to do this, let us define the map $\Phi \colon \Gamma(X)\to \Gamma (\mathbb{G})$ given by $\Phi(\gamma) := \varphi \circ \gamma$ and let 
$$\bar{\mathbb{P}}:=\Phi_\#\mathbb{P},$$
which is a probability measure on $\Gamma(\mathbb{G})$. 
Note that by definition, for $\mathbb{P}$-almost every \(\bar \gamma\),  it holds that \(\bar \gamma=\varphi\circ \gamma\) for some \(\gamma\in \Gamma (X)\).  

By considering \(\mathbb{P}\) as a probability measure defined on the complete metric space \(H(X)\) defined in Definition~\ref{lipcurvez}, and noting that $\mathbb{P}$ is concentrated on \(\Gamma(X)\), we can  apply the disintegration theorem for measures ~\cite[Theorem~5.3.1]{MR2129498} to show that for \(\bar{\mathbb P}\)-almost every \(\bar \gamma\), there exists a Borel probability measure   \(\eta_{\bar \gamma}\)  concentrated on 
\(\Phi^{-1}(\bar\gamma)\) and such that 
\[
\mathbb P(A) =\int  \eta_{\bar \gamma}(A) {d}\bar{\mathbb{P}}(\bar \gamma)\qquad\textrm{for all Borel sets \(A\subset\Gamma(X)\).}
\]
Note also that, by the disintegration theorem, the map \(\bar \gamma \mapsto \eta_{\bar \gamma}\) is Borel measurable. Let us now set 
\[
\nu_{\bar \gamma} := \int_{ \Phi^{-1}(\bar\gamma)} \varphi_\#(\mu_\gamma) {d} \eta_{\bar \gamma} (\gamma).
\]
Clearly, we have the representation
\[
\varphi_\#(\mu\llcorner U)=\int \nu_{\bar \gamma}{d}\bar{\mathbb{P}}(\bar \gamma)
\]
and \(\bar \gamma=\varphi\circ \gamma \) is by construction a $C(e,\sigma)$-curve. Hence, to conclude, we only have to show that 
\[
\nu_{\bar \gamma}\ll \mathcal{H}^1\llcorner \mathrm{im} \,\bar \gamma\qquad \textrm{for \(\bar{\mathbb{P}}\)-almost every  \(\bar\gamma\).}
\]
Let \(E\) be a set with \(\mathcal{H}^1(E\cap  \mathrm{im}\,\bar \gamma)=0\). Since \(\bar \gamma'(t)\ne 0\) for almost every \(t\in \mathrm{dom} \gamma\), the area formula implies that \( \mathcal{L}^1(\bar \gamma^{-1}(E))=0\). If \(\gamma\in \Phi^{-1}(\bar \gamma)\), then \(\bar \gamma=\varphi\circ \gamma\) and hence
\[
\mathcal{H}^1(\varphi^{-1}(E)\cap \mathrm{im}\,\gamma)\le \mathcal{H}^1(\gamma(\bar \gamma^{-1}(E)))=0\qquad \textrm{for all \(\gamma\in \Phi^{-1}(\bar \gamma)\).}
\]
Hence, \(\mu_\gamma (\varphi^{-1}(E))=0\) for all \(\gamma\in \Phi^{-1}(\bar \gamma)\), which immediately gives
\[
\nu_{\bar \gamma}(E)=\int_{\Phi^{-1}(\bar \gamma)} \mu_\gamma (\varphi^{-1}(E)) {d} \eta_{\bar \gamma} (\gamma)=0 \; .
\]
This concludes the proof of the first step. 

\spacedlowsmallcaps{Step 2.} We now show that there are countably many Borel sets $\{U_i\}_{i\in\N}$ such that $\mu(U\setminus \bigcup_{i\in\N}U_i)=0$ and $\mu\llcorner U_i$ has $n_1$ indipendent Alberti representations with respect to $\varphi$. This, however, can be obtained as follows. First, let us note that thanks to Remark~\ref{rem:Importante}, the metric space $(X,d,\mu)$ is a Lipschitz differentiability space and that $\pi_1\circ \varphi$ is an $n_1$-dimensional chart. This implies by \cite[Theorem~6.6]{BateJAMS} that there are countably many disjoint Borel sets $\{U_i\}_{i\in\N}$ such that $\mu(U\setminus \bigcup_{i\in\N}U_i)=0$ and $\mu\llcorner U_i$ has $n_1$ indipendent Alberti representations with respect to $\pi_1\circ\varphi$. However, by definition, this means that these representations are also $\varphi$-independent.
\end{proof}

We are ready to prove the generalisation of De Philippis--Marchese--Rindler's result on Cheeger's conjecture.

\begin{proof}[Proof of Theorem~\ref{cheegerpansu}]
    Thanks to Lemma~\ref{lem:Arepr_push-forward}, we know that there are countably many Borel sets $\{U_i\}_{i\in\N}$ such that $\mu(U\setminus \cup_{i\in\N}U_i)=0$ and $\varphi_\#(\mu\llcorner U_i)$ has $n_1$-independent representations with respect to $\varphi$. It is obvious that the measure $\varphi_\#(\mu\llcorner U)$ is absolutely continuous with respect to $\sum_{i\in\N}\varphi_\#(\mu\llcorner U_i)$ and thus it is sufficient to prove that $\varphi_\#(\mu\llcorner U_i)\ll \mathcal{H}^Q$. 

    The above argument shows that without loss of generality, we can assume that $\varphi_\#(\mu\llcorner U)$ has $n_1$-independent Alberti representations in $\mathbb G$. This easily implies by \cite[Propositions 2.4, 3.6]{ReversePansu}
and \cite[Remark 3.1]{ReversePansu} that the decomposability bundle $V(\varphi_\#(\mu\llcorner U),x)$ of $\varphi_\#(\mu\llcorner U)$ coincides with $\mathbb{G}$ for $\varphi_\#(\mu\llcorner U)$-almost every $x\in\mathbb{G}$. For a definition of the decomposability bundle for measures in Carnot groups, we refer to \cite[\S 3]{ReversePansu}. However, this implies that Lipschitz functions from $\mathbb{G}$ to $\R$ are Pansu differentiable $\varphi_\#(\mu\llcorner U)$-almost everywhere thanks to \cite[Theorem~6.5]{ReversePansu}, or in the language of \cite{ReversePansu}, the measure $\varphi_\#(\mu\llcorner U)$ has the \emph{Pansu property}. This, together with \cite[Theorem~7.5]{ReversePansu}, immediately concludes the proof.    
\end{proof}

\section{Equivalent criteria for $\mathbb G$-rectifiability under density assumptions}\label{sec6}

This section aims to prove the following characterization, which generalizes to arbitrary Carnot groups the one in \cite[Theorem~1.2]{bateli}. We stress that, in the generality of Carnot groups, \cref{thm:SeanLi} does not hold if, instead of item (i), we only require that $(X,d)$ is Lipschitz rectifiable, see \cref{rem:PerForzaBilip}.

\begin{definizione}[(bi)Lipschitz rectifiability]\label{def:GbiLipRect}
Let $(X,d)$ be a metric space. Let $\mathbb G$ be a Carnot group of homogeneous dimension $Q$. We say that $X$ is {\em (bi)Lipschitz $\mathbb G$-rectifiable} if, for every $i\in\mathbb N$, there exists a (bi)Lipschitz map $f_i:U_i\subset \mathbb G\to X$, where $U_i$ is Borel, such that
\[
\mathcal{H}^Q  \left(X\setminus \cup_{i=1}^{+\infty}f_i(U_i)\right)=0.
\]
\end{definizione}

\begin{definizione}[Densitites]\label{def:density}
    Let $(X,d,\mu)$ be a measure metric space. For every $\alpha\geq 0$ we will denote with
    $$\Theta^\alpha_*(\mu,x):=\liminf_{r\to 0}\frac{\mu(B(x,r))}{r^\alpha}\qquad\text{and}\qquad\Theta^{\alpha,*}(\mu,x):=\limsup_{r\to 0}\frac{\mu(B(x,r))}{r^\alpha},$$
    the {\em lower} and {\em upper $\alpha$-dimensional density} of $\mu$ at $x$, rispectively.
\end{definizione}

{
\begin{teorema}\label{thm:SeanLi}
Let $\mathbb G$ be a Carnot group of homogeneous dimension $Q$. Let $(X,d,\mu)$ be a metric measure space. The following facts are equivalent:
\begin{itemize}
    \item[(i)] $(X,d)$ is biLipschitz $\mathbb{G}$-rectifiable and $\mu$ is mutually absolutely continuous with respect to $\mathcal{H}^Q$;
    \item[(ii)]
    For every Carnot group $\mathbb H$, the following holds. There is a countable collection $\{U_i\}_{i\in\mathbb N}$ of Borel sets of $X$ with $\mu  (X\setminus\cup_{i\in\mathbb N}U_i)=0$,
    such that for every $i\in\mathbb N$,
    \begin{equation}
        0<\Theta^{Q}_*(\mu\llcorner U_i,x)\leq \Theta^{Q,*}(\mu\llcorner U_i,x)<\infty, \text{holds for $\mu$-almost every $x\in U_i$}\label{densitàipotez}
    \end{equation}
 and each $(U_i,d,\mu)$ $X$ is a $(\mathbb{G},\mathbb H)$-differentiability space;
    \item[(iii)] There exists a Carnot group $\mathbb H$ for which item (ii) holds;
    \item[(iv)] There exists a countable collection $\{U_i\}_{i\in\mathbb N}$ of Borel sets of $X$ with $
    \mu  (X\setminus\cup_{i\in\mathbb N}U_i)=0$ satisfying \eqref{densitàipotez}, and there are countably many Lipschitz maps $\varphi_i:U_i\to\mathbb G$ such that each $\mu\llcorner U_i$ has $n_1$ $\varphi_i$-independent Alberti representations, where $n_1$ is the rank of $\mathbb G$.
    \item[(v)] There exists a countable collection $\{U_i\}_{i\in\mathbb N}$ of Borel sets of $X$ with $
    \mu  (X\setminus\cup_{i\in\mathbb N}U_i)=0$ 
    satisfying \eqref{densitàipotez},  and   each $U_i$ satisfies David Condition with respect to a Lipschitz map $\varphi_i:U_i\to\mathbb G$. For the definition of David Condition, we refer to \cref{def:DavidsCondition}.
\end{itemize}
\end{teorema}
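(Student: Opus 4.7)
The strategy is to prove the chain of implications $(i)\Rightarrow(ii)\Rightarrow(iii)\Rightarrow(iv)\Rightarrow(v)\Rightarrow(i)$, with the bulk of the work concentrated on the single implication $(iv)\Rightarrow(v)$, which is postponed to a dedicated subsection.

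For $(i)\Rightarrow(ii)$, fix any Carnot group $\mathbb H$ and a biLipschitz parametrization $f_i\colon U_i\subset\mathbb G\to X$ realizing the rectifiability. Set $V_i:=f_i(U_i)$ and, on each $V_i$, take the Lipschitz chart $\varphi_i:=f_i^{-1}$ (extended in a Lipschitz fashion to $X$ by McShane). Given any Lipschitz $f\colon X\to\mathbb H$, the composition $f\circ f_i\colon U_i\to\mathbb H$ is Pansu differentiable $\mathcal{L}^n$-almost everywhere by Pansu--Rademacher, and biLipschitz invariance of null sets together with the chain rule yields \eqref{eqn:PANSUDIFF} for $f$ on $V_i$ with chart $\varphi_i$. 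The density bounds \eqref{eqn:Density} follow from the biLipschitz transfer of the Haar/Hausdorff $Q$-measure on $\mathbb G$ to $V_i$, combined with the mutual absolute continuity $\mu\sim \mathcal H^Q$. The step $(ii)\Rightarrow(iii)$ is immediate (pick, e.g., $\mathbb H=\mathbb G$).

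For $(iii)\Rightarrow(iv)$, apply \cref{chardiffspacesINTRO}: the $(\mathbb G,\mathbb H)$-differentiability structure on each $U_i$ produces, after a further Borel subdivision, $n_1$ horizontally $\rho_i$-universal $\varphi_i$-independent Alberti representations of $\mu\llcorner U_i$. In particular these are $\varphi_i$-independent, which is all that is required in $(iv)$; the density condition is preserved under the refinement.

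The implication $(iv)\Rightarrow(v)$ is the substantial content of the theorem and is the main obstacle. It will be carried out in \cref{sec61}, building on the lemmas \cref{l:david-inclusion} and \cref{l:induction-alternative}. The rough plan is as follows. After refining the partition, restricting to points of finite positive $Q$-density, and applying the decomposition results of \cite{BateJAMS} together with \cref{prophordiff}, one may assume the Alberti representations go in the $\varphi_i$-directions of narrow independent cones $C(e_j,\sigma_j)$ in $V_1(\mathbb G)$ and have uniformly bounded horizontal speed. Using the tilings constructed from \cite{MR2466427,tilings} (see \cref{sec82}), one fixes a self-similar tiling of $\mathbb G$ by dyadic pseudocubes whose centers are reachable from each other by flowing along any basis lying in the prescribed narrow cones, with uniform control on the travel times. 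Following \cite{bateli}, one then runs an inductive density/covering scheme on these pseudocubes to show that, for $\mu$-a.e.\ $x\in U_i$, the image $\varphi_i(U_i\cap B(x,r))$ contains a large portion of a pseudocube centered near $\varphi_i(x)$; the key two-alternatives step is \cref{l:induction-alternative} (exploiting \cref{lemma:drift} to bound the drift along concatenations of horizontal curves), and its iteration yields the projection bound \cref{l:david-inclusion}, which is exactly David's condition with respect to $\varphi_i$.

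Finally, $(v)\Rightarrow(i)$ is an application of \cite[Theorem~5.3]{bateli}: under the density hypothesis \eqref{eqn:Density}, David's condition with respect to a Lipschitz $\mathbb G$-valued map directly produces a countable biLipschitz parametrization from Borel subsets of $\mathbb G$ exhausting $X$ $\mathcal H^Q$-almost everywhere, and the density assumptions give the mutual absolute continuity $\mu\sim\mathcal H^Q$. The non-triviality of going only to biLipschitz (and not merely Lipschitz) rectifiability, as opposed to the Euclidean case, is reflected in the example \cite{LeDonneLiRajala} highlighted in \cref{rem:PerForzaBilip}, which shows that the implication $(i)\Rightarrow(ii)$ would fail if biLipschitz were weakened to Lipschitz; hence there is no shortcut through Lipschitz rectifiability.
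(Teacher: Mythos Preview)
Your proposal is correct and follows essentially the same approach as the paper: the same chain $(i)\Rightarrow(ii)\Rightarrow(iii)\Rightarrow(iv)\Rightarrow(v)\Rightarrow(i)$, with $(i)\Rightarrow(ii)$ via Pansu--Rademacher in biLipschitz charts, $(iii)\Rightarrow(iv)$ via \cref{chardiffspaces}, $(iv)\Rightarrow(v)$ deferred to \cref{sec61} (i.e., \cref{th:rep->davidcondpimr} through \cref{l:induction-alternative} and \cref{l:david-inclusion}), and $(v)\Rightarrow(i)$ via \cite[Theorem~5.3]{bateli}. The only minor inaccuracy is the mention of \cref{prophordiff} in the outline of $(iv)\Rightarrow(v)$; the refinement there uses only \cite[Corollary~5.9]{BateJAMS} and the tiling machinery, not the horizontal differential.
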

}

\subsection{Alberti representations imply big projections}\label{sec61}

Before announcing the main result of this subsection, let us introduce some notation.

\begin{definizione}[The sets $DC$]\label{def:DC}
Suppose $(X,d,\mu)$ is a metric measure space and let $\varphi:X\to \mathbb{G}$ be a Lipschitz map. For every $\beta,\varepsilon,R>0$ and every Borel set $U$ of $X$, we denote with $DC_{U,\varphi}(\beta,\varepsilon,R)$, or simply $DC(\beta,\varepsilon,R)$, the set of those $x\in U$ for which 
\begin{align*}
 \mathcal{H}^Q(B(\varphi(x),\beta r) \cap \varphi(B(x,r) \cap U)) \geq (1-\varepsilon) \mathcal{H}^Q(B(\varphi(x),\beta r))\qquad\text{for every $0<r<R$.}
\end{align*}

\end{definizione}

\begin{proposizione}
For every compact set $K\subseteq X$ and every Lipschitz map $\varphi:X\mapsto \mathbb{G}$ we have 
that 
\begin{itemize}
    \item[(i)] $DC_{K,\varphi}(\beta,\varepsilon,R) \subseteq DC_{K,\varphi}(\beta',\varepsilon,R)$ and $DC_{K,\varphi}(\beta,\varepsilon,R) \subseteq DC_{K,\varphi}(\beta,\varepsilon,R')$ whenever $\beta' \leq \beta$ and $R' \leq R$;
    \item[(ii)] $DC_{K,\varphi}(\beta,\varepsilon,R)$ is a Borel set for every $\beta,\varepsilon,R>0$.
\end{itemize}
\end{proposizione}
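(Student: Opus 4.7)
For item (i), the inclusion $DC_{K,\varphi}(\beta,\varepsilon,R) \subseteq DC_{K,\varphi}(\beta,\varepsilon,R')$ when $R' \le R$ is immediate from the definition, since the defining condition on $(0,R)$ is a strictly stronger requirement than on $(0,R')$. For the companion inclusion $DC_{K,\varphi}(\beta,\varepsilon,R) \subseteq DC_{K,\varphi}(\beta',\varepsilon,R)$ with $\beta' \le \beta$, I would fix $x \in DC_{K,\varphi}(\beta,\varepsilon,R)$ and $r \in (0,R)$, apply the hypothesis at the smaller radius $r' := (\beta'/\beta) r \le r$, and use the identity $\beta r' = \beta' r$ together with the inclusion $B(x,r') \subseteq B(x,r)$ to transfer the density bound to the pair $(\beta',r)$:
\[
\mathcal H^Q\bigl(B(\varphi(x),\beta' r) \cap \varphi(B(x,r) \cap K)\bigr) \ge \mathcal H^Q\bigl(B(\varphi(x),\beta r') \cap \varphi(B(x,r') \cap K)\bigr) \ge (1-\varepsilon) \mathcal H^Q(B(\varphi(x),\beta' r)).
\]

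For item (ii), I first observe that $\mathcal H^Q$ agrees up to a constant with a Haar measure on $\mathbb G$ and scales as $r^Q$ under the dilations, so $\mathcal H^Q(B(\varphi(x),\beta r)) = c_Q (\beta r)^Q$ with $c_Q := \mathcal H^Q(B(0,1))$, independently of $x$. Setting
\[
h(x,r) := \mathcal H^Q\bigl(B(\varphi(x),\beta r) \cap \varphi(B(x,r) \cap K)\bigr),
\]
the condition defining $DC_{K,\varphi}(\beta,\varepsilon,R)$ becomes $h(x,r) \ge (1-\varepsilon) c_Q (\beta r)^Q$ for every $r \in (0,R)$, a family of inequalities parameterised over an uncountable set of $r$.

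To reduce this to a countable quantifier over $r$, I would use open balls and show that $r \mapsto h(x,r)$ is non-decreasing and left-continuous: as $r_n \nearrow r_0$, both $B(\varphi(x),\beta r_n)$ and $B(x,r_n) \cap K$ increase to their $r_0$-versions, so $\varphi(B(x,r_n) \cap K) \nearrow \varphi(B(x,r_0) \cap K)$ and hence $h(x,r_n) \nearrow h(x,r_0)$ by monotone continuity of $\mathcal H^Q$. Combined with the continuity of $r \mapsto c_Q(\beta r)^Q$, this yields
\[
DC_{K,\varphi}(\beta,\varepsilon,R) \;=\; K \cap \bigcap_{q \in \mathbb Q \cap (0,R)} \bigl\{x \in K : h(x,q) \ge (1-\varepsilon) c_Q (\beta q)^Q\bigr\}.
\]

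It remains only to show that, for each fixed $q > 0$, the map $x \mapsto h(x,q)$ is Borel measurable on $K$. I would argue this via an upper-semicontinuity estimate: if $x_n \to x_0$ in $K$, then for every $\eta > 0$ and $n$ sufficiently large one has $B(\varphi(x_n),\beta q) \subseteq B(\varphi(x_0),\beta q + \eta)$ and $\varphi(B(x_n,q) \cap K) \subseteq \varphi(B(x_0,q+\eta) \cap K)$ (using the triangle inequality and the Lipschitzianity of $\varphi$), whence $\limsup_n h(x_n,q) \le \mathcal H^Q(B(\varphi(x_0),\beta q + \eta) \cap \varphi(B(x_0,q+\eta) \cap K))$; letting $\eta \to 0^+$ and using that $\mathcal H^Q$ vanishes on spheres of $\mathbb G$ yields $\limsup_n h(x_n,q) \le h(x_0,q)$. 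This secures Borel measurability of the superlevel sets of $h(\cdot,q)$, making $DC_{K,\varphi}(\beta,\varepsilon,R)$ a countable intersection of Borel sets. The main obstacle I anticipate is carefully controlling the boundary contributions when open balls collapse to closed balls under limits; this is the only point at which the specific geometry of $\mathbb G$ (namely, the vanishing of $\mathcal H^Q$ on spheres) genuinely enters the argument.
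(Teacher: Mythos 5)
Your treatment of item (i) is correct, and in fact more careful than the paper's one-line ``follows directly from the definition'': the monotonicity in $\beta$ genuinely requires the rescaling $r' := (\beta'/\beta)r$ that you use, since the naive estimate obtained by merely shrinking the target ball loses a term $\varepsilon\,\mathcal H^Q(B(\varphi(x),\beta r))$ rather than $\varepsilon\,\mathcal H^Q(B(\varphi(x),\beta' r))$. The reduction of item (ii) to countably many radii via monotonicity of $r\mapsto h(x,r)$ is also fine.

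The gap is in the measurability of $x\mapsto h(x,q)$ for fixed $q$. In your upper-semicontinuity argument, after sending $\eta\to 0^+$ you are left with $\mathcal H^Q\bigl(\bar B(\varphi(x_0),\beta q)\cap\varphi(\bar B(x_0,q)\cap K)\bigr)$, and to identify this with $h(x_0,q)$ you must discard two boundary contributions: the sphere $\partial B(\varphi(x_0),\beta q)$ in $\mathbb G$, which is indeed $\mathcal H^Q$-null, but also $\varphi(\bar B(x_0,q)\cap K)\setminus\varphi(B(x_0,q)\cap K)\subseteq\varphi(\partial B(x_0,q)\cap K)$, i.e.\ the image under $\varphi$ of a metric sphere of $X$. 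The latter has no reason to be $\mathcal H^Q$-null; the proposition is stated for arbitrary compact $K$. For instance, with $X=\mathbb R^2$, $\mathbb G=\mathbb R$, $\varphi$ the first-coordinate projection, $K=S^1\cup\{(0,0)\}$, $x_0=(0,0)$, $q=\beta=1$, one computes $h(x_0,1)=0$ while $h((1/n,0),1)\to 1$, so $h(\cdot,q)$ is not upper semicontinuous and your argument for the Borel regularity of its superlevel sets breaks down. Two repairs are available. One stays within your strategy: write $h(\cdot,q)$ as the increasing limit of the truncations $g_m(x):=\mathcal H^Q\bigl(\bar B(\varphi(x),\beta q-1/m)\cap\varphi(\bar B(x,q-1/m)\cap K)\bigr)$; each $g_m$ \emph{is} upper semicontinuous by exactly your $\eta$-argument, because the $\eta\to 0^+$ limit now lands on the closed balls appearing in $g_m$ itself with no boundary discrepancy, so $h(\cdot,q)=\sup_m g_m$ is Borel. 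The other is the paper's route (following Bate--Li): show directly that the whole set $DC_{K,\varphi}(\beta,\varepsilon,R)$ is closed, by taking $x_n\to x_0$ in it, applying the defining inequality at $x_n$ with radius $r_n:=r-d(x_n,x_0)$ (so that $B(x_n,r_n)\subseteq B(x_0,r)$ and $\mathcal H^Q(B(\varphi(x_n),\beta r_n)\setminus B(\varphi(x_0),\beta r))\to 0$) and passing to the limit; this exploits the condition at all radii simultaneously and sidesteps pointwise semicontinuity of $h$ altogether.
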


{
\begin{proof}
The item (i) follows directly from the definition. For item (ii), we argue as in the beginning of \cite[Proposition~3.1]{bateli}, and we see that $DC_{K,\varphi}(\beta,\varepsilon, R)$ is closed for every $\beta, \varepsilon, R>0$ and hence Borel.
\end{proof}
}

{
\begin{definizione}[David Condition]\label{def:DavidsCondition}
Let $U\subset X$ be a Borel subset of the metric measure space $(X,d,\mu)$, and let $\mathbb G$ be a Carnot group. We say that \textit{$U$ satisfies David Condition with respect to the Lipschitz map $\varphi:X\to\mathbb G$} if, for every $\varepsilon>0$, we have that 
\[
\mu\left(U\setminus \bigcup_{\beta,R\in\mathbb Q^+} DC_{U,\varphi}(\beta,\varepsilon,R)\right)=0.
\]
\end{definizione}
}

This section is devoted to the proof of the following 

{
\begin{teorema}\label{th:rep->davidcondpimr}
Let $\mathbb G$ be a Carnot group of homogeneous dimension $Q$. Let $(X,d,\mu)$ be a metric measure space such that
\[
    0<\Theta^{Q}_*(\mu,x)\leq \Theta^{Q,*}(\mu,x)<\infty,
    \]
for $\mu$-almost every $x\in X$.
Let $\varphi:X\to \mathbb{G}$ be a Lipschitz map 
and let us assume that $\mu$ has $n_1(\mathbb{G})$ $\varphi$-independent Alberti representation.
Then there exists a collection $\{U_i\}_{i\in\mathbb N}$ of Borel subsets of $X$ such that 
\[
\mu(X\setminus \cup_{i\in\mathbb N} U_i)=0,
\]
and each $U_i$ satisfies David Condition with respect to $\varphi:X\to\mathbb G$, see \cref{def:DavidsCondition}.
\end{teorema}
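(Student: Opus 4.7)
The strategy is to adapt the Euclidean scheme of \cite{bateli} to Carnot groups, with the main new ingredient being a substitute for axis-parallel dyadic cubes that is compatible with the non-commutative structure of $\mathbb G$. First I would perform a standard refinement: by inner regularity of $\mu$, by the density assumption, by Lusin and Severini–Egoroff theorems, and by the Alberti decomposition recipe already used for \cref{chardiffspaces}, reduce to the situation in which $X$ is compact; $\mu(B(x,r))$ is comparable to $r^Q$ on balls of radius $\leq r_0$; the chart $\varphi$ has uniform modulus of bi-Lipschitz behavior on $U$; and each of the $n_1$ Alberti representations $\mathcal{A}_j$ goes in the $\varphi$-direction of an arbitrarily narrow cone $C(e_j,\sigma_j)$ with uniform positive horizontal $\varphi$-speed. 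On the resulting Borel pieces it suffices to verify David Condition.

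Next I would set up the tiling. Choosing the cones $C(e_1,\sigma_1),\ldots,C(e_{n_1},\sigma_{n_1})$ narrow enough and building on \cite{MR2466427,tilings}, produce a nested dyadic system of self-similar cubes in $\mathbb G$ satisfying the crucial property recorded in item (v) of \cref{cubofigo}: for any horizontal basis $\{v_1,\ldots,v_{n_1}\}$ whose vectors lie in the narrow cones, every cube has a center from which one can reach any point of the cube by concatenating flows along $v_1,\ldots,v_{n_1}$ with uniform lower bounds on the admissible times, up to an error controlled by the side length and opening. This is precisely the replacement for the coordinate flows used in the abelian proof.

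The heart of the argument is then the inductive dichotomy \cref{l:induction-alternative}. If $x\in U$ fails David Condition at some scale, then on a cube $Q$ around $\varphi(x)$ the image $\varphi(U\cap B(x,r))$ misses a set of $\mathcal H^Q$-measure at least $\varepsilon\mathcal H^Q(Q)$. Using the $n_1$ horizontally independent Alberti representations together with the drift estimate \cref{lemma:drift} (which controls how a concatenation of Alberti curves deviates from the straight-line displacement in $\mathbb G$), and exploiting the tiling from the previous paragraph to localize fragments in sub-cubes, one extracts an alternative: either (a) a sub-cube on which the Alberti mass of fragments is strictly smaller than the $Q$-density of $\mu$ predicts, contradicting the density lower bound; or (b) a proper sub-cube where the defect in coverage is amplified. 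Iterating (b) produces a cascade whose total measure defect is incompatible with $\Theta^{Q,*}(\mu,\cdot)<\infty$, forcing the process to stop and delivering, after a covering argument, the inclusion lemma \cref{l:david-inclusion}: for each $\varepsilon>0$ there exist $\beta,R>0$ and a Borel set of almost full $\mu$-measure contained, up to measure zero, in $\bigcup_{\beta,R}DC_{\varphi,U}(\beta,\varepsilon,R)$. Letting $\varepsilon=1/k$ and intersecting yields the desired countable decomposition.

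The main obstacle I anticipate is the interaction between the second and third steps: the tilings must be engineered so that the non-abelian corrections generated along concatenated horizontal flows (i.e. the ``drift'' controlled by \cref{lemma:drift}) remain smaller than the cone-openings at every dyadic scale. Getting the quantitative thresholds to close in the induction is the delicate point that forces the use of narrow cones in item (v) of \cref{cubofigo}, and the choice of the refined self-similar tilings rather than generic metric dyadic cubes.
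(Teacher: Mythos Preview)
Your plan identifies all the correct ingredients and follows the paper's strategy: the reductions to compact pieces with uniform density and narrow-cone representations, the tilings from \cref{cubofigo}, the drift estimate \cref{lemma:drift}, the dichotomy \cref{l:induction-alternative}, and the inclusion \cref{l:david-inclusion}. So structurally you are on the same track as the paper.

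However, your description of the dichotomy in \cref{l:induction-alternative} and of the tree argument in \cref{l:david-inclusion} is inverted, and if you tried to implement it as written it would not close. In the paper the two alternatives at a cube $\mathfrak Q$ are: (i) one \emph{can} find points $q_1,\ldots,q_{2^Q}\in K\cap B(x,\oldC{C:5}Mr)$ whose $\varphi$-images land near the centers of the $2^Q$ dyadic subcubes of $\mathfrak Q$; or (ii) there is a ball $B(y,\oldC{C:6}v^{s^{2M}}r)$ that misses the good set $\mathfrak G(v,R)$ while $\varphi(y)$ sits well inside the dilated cube. The tree in \cref{l:david-inclusion} grows precisely when (i) holds (you pass to the subcubes) and \emph{stops} at a leaf when (ii) holds; the leaves are then associated to pairwise disjoint balls missing $\mathfrak G(v,R)$. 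The final estimate comes not from an amplification/blow-up contradiction with $\Theta^{Q,*}$, but from the direct bound: if $x\in\mathfrak D_{\mathfrak E}(v,\varepsilon v^{Qs^{2M}},\cdot)$ then $\mu(B(x,2\oldC{C:5}Mr)\setminus\mathfrak G(v,R))\le \varepsilon v^{Qs^{2M}}\mu(B(x,2\oldC{C:5}Mr))$, so the total $\mu$-measure of the leaf-balls is small, which via the uniform density comparison yields $\mathcal H^Q(\mathfrak Q\setminus\varphi(B(x,2\oldC{C:5}Mr)\cap K))\lesssim \varepsilon v^Q\mu(B(x,2\oldC{C:5}Mr))$. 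This is a direct inclusion $\mathfrak D_{\mathfrak E}\subseteq DC$, not a contradiction argument, and the upper density is used only to compare $\mu$-balls with $\mathcal H^Q$-cubes, not to terminate the cascade.

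So: keep your plan, but rewrite the content of the dichotomy and the role of the two branches as above; otherwise the induction will not match the geometry you need.
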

}

\begin{osservazione}\label{restrizionewlog}
    Let us note that in order to prove \cref{th:rep->davidcondpimr}, it suffices to show that $(X,d,\mu)$ contains a $\mu$-positive Borel subset $E$ that satisfies David Condition with respect to $\varphi$. This is due to the fact that the hypotheses of the theorem are stable by restrictions to a subset thanks, for instance, to Lebesgue differentiation theorem \cite[p. 77]{HeinonenKoskelaShanmugalingam}, and to a classical exhaustion procedure. 
\end{osservazione}

Throughout this section, if not otherwise stated, we let $(X,d,\mu)$ be a fixed metric measure space such that
\begin{itemize}
    \item[(i)] there exists $Q\in \N$
$$
0<\Theta^Q_*(\mu,x)\leq \Theta^{Q,*}(\mu,x)<\infty,
$$
for $\mu$-almost every $x\in U$, where $U\subset X$ is Borel;
\item[(ii)] there exist a Carnot group $\mathbb{G}$ of homogeneous dimension $Q$ and a Lipschitz map $\varphi:X\to \mathbb{G}$ such that $\mu\llcorner U$ has $n_1(\mathbb{G})$ $\varphi$-independent Alberti representations.  
\end{itemize}

Defined, for every $l\in\mathbb N$ and $R>0$,
\begin{equation}\label{eqn:ControlU}
  E(l,R) = \Big\{x \in U : l^{-1}r^Q \leq \mu(B(x,r)) \leq lr^Q, \text{ for all } r < R\Big\},
\end{equation}
it is easy to see that each $E(l,R)$ is a Borel set and that $ \mu( U \backslash \bigcup_{l=1}^\infty \bigcup_{k=1}^\infty E(l,k^{-1})) = 0$. A proof of this can be achieved with a straightforward adaptation of \cite[Propositions 1.14, 1.16]{MarstrandMattila20}. Moreover, we fix $l\in\N$ and $R>0$ in such a way that there is a compact set $K\subseteq E(l,R)$ such that $\mu(K)>0$. Note that by \cite[\S 2.10.17,\S 2.10.18]{Federer1996GeometricTheory}, we have $(2^Ql)^{-1}\mathcal{H}^Q\leq \mu\llcorner K\leq 2^Ql\mathcal{H}^Q$.
\medskip

Since the measure $\mu\llcorner U$ has $n_1$ $\varphi$-independent representations, it is immediate to see that $\mu\llcorner K$ has $n_1$ $\varphi$-independent representations as well. By definition this means that there is a basis $\mathcal{B}:=\{\bar e_1,\ldots,\bar e_{n_1}\}\in(\mathbb S^{n_1-1})^{n_1}$ of $V_1(\mathbb G)$, representations $\mathcal{A}_1,\ldots, \mathcal{A}_{n_1}$, a $\zeta>0$, and independent cones $C(\bar e_1,\sigma_1),\ldots,C(\bar e_{n_1},\sigma_{n_1})$ such that the cones $C(\bar e_i,\sigma_i)$ are $8\zeta$-separated and such that almost every curve of $\mathcal{A}_i$ is a $C(\bar e_i,\sigma_i)$-curve. 

Let $\eta_1>0$ be a small number for which the cones $C(\bar e_i,\sigma_i+\eta_1)$ are $4\zeta$-separated.
Thanks to \cite[Corollary 5.9]{BateJAMS} we can find countably many compact subsets $\{A_j\}_{j\in\N}$ of $K$ such that 
\begin{equation}
    \mu(K\setminus \bigcup_{j\in\N} A_j)=0,
    \label{A_j}
\end{equation}
and $\mu\llcorner A_j$ has an Alberti representation $\tilde{\mathcal{A}}_i^j$ in the $\varphi$-direction of $C(\bar e_i,\sigma_i+\eta_1)$, that thanks to the choice of $\eta_1$ are still $4\zeta$-separated cones. In addition the representations $\tilde{\mathcal{A}}_i^j$ have $\varphi$-horizontal speed $1/j$. 

\medskip

Thanks to Proposition~\ref{cubofigo} and to \cite[Corollary 5.9]{BateJAMS} for every $j\in\N$ we can find countably many compact $B_{\iota,j}$ of $A_j$ such that $\mu(A_j\setminus \cup_{\iota\in\N}B_{\iota,j})=0$ and cones $C(w_{\iota,j}^i,\alpha_{\iota,j}^i)$ such that 
\begin{itemize}
\item[(i)] the cones $C(w_{\iota,j}^i,\alpha_{\iota,j}^i)$ are $2\zeta$-separated,
    \item[(ii)] $\mu\llcorner B_{\iota,j}$ has $\mathcal{B}_1,\ldots,\mathcal{B}_{n_1}$ Alberti representations with  $\varphi$-horizontal speed $1/j$, and such that each $\mathcal{B}_i$ goes in the $\varphi$-direction of the independent cones $C(w_{\iota,j}^i,\alpha_{\iota,j}^i)$,
    \item[(\hypertarget{iiicubox}{iii})] for every $\iota,j\in \N$ there are constants $0<\lambda,\xi,\newC\label{C:scalasurj}\leq 1/10$, $\Lambda>0$, $M\in\N$\footnote{The constant $M$ can be chosen to be $2n$, where $n$ is the topological dimension of $\mathbb G$.}, $i_1,\ldots,i_M\in\{1,\ldots,n_1\}$, a compact cube $T$, and points $p_1,\ldots,p_{2^Q}\in T$ satisfying the first four items of \cref{cubofigo} and such that the following holds.
    For every basis $\{v_1,\ldots,v_{n_1}\}\in(\mathbb S^{n_1-1})^{n_1}$ of $V_1(\mathbb G)$ for which $v_i\in C(w_{\iota,j}^i,\alpha_{\iota,j}^i)$, every $y\in \mathbb{G}$ with $\lVert y\rVert\leq \oldC{C:scalasurj}$, and every $k=1,\dots,2^Q$ we can write $y^{-1}p_k$ as
$$
y^{-1}p_k=\delta_{s_1}(v_{i_1})\ldots\delta_{s_{M}}(v_{i_{M}}).
$$
and there holds $\min\{\lvert s_i\rvert :i=1,\ldots,M\}>\xi$ and $\max\{\lvert s_i\rvert :i=1,\ldots,M\}\leq \frac{n\Lambda}{ \mathrm{diam}(T)}\lVert y^{-1}p_k\rVert$. Recall that $n$ is the topological dimension of the group $\mathbb G$.
\end{itemize}

\textbf{We now assume without loss of generality, thanks to \cref{restrizionewlog}, and to what said above, that there are $\iota,j\in\N$ such that $\mu(K\setminus B_{\iota,j})=0$ and let us define} $\newep\label{ep:1}:= \min\{\lambda,\xi\}$ and
    \begin{equation}
 c_0:=\frac{n\Lambda}{ \mathrm{diam}(T)},\qquad \theta:=1/j, \qquad C_i:=C(w_{\iota,j}^i,\alpha_{\iota,j}^i).  
 \label{costantis}
    \end{equation}
In the following, under this assumption, we will drop every dependence on the indexes $j$ and $\iota$.

\begin{osservazione}[Constants $\oldC{C:1}$, $K_1$]\label{ConstantsCK}
Let $\oldC{C:1}$ be the constant defined in \cref{lemma:drift}, where we assume the bounds $\mathrm{diam}(C)\leq { 80(\mathrm{diam}(T)+1)c_0}$\footnote{Here the set $C$ is the compact set on which the curve $\gamma$ is defined in the statement of \cref{lemma:drift}.} and $\mathrm{Lip}(\gamma)\leq 2$ in there. Notice that the constant $\oldC{C:1}$ only depends on $\mathrm{diam}(T)$ and $c_0$. We recall that $c_0$ was introduced in \eqref{costantis}.

Let $K_1$ be a fixed constant such that
\begin{equation}\label{eqn:K1def}
K_1<\min\left\{\frac{\oldep{ep:1}}{32(\mathrm{diam}(T)+1)c_0[\oldC{C:1}+2\mathrm{Lip}(\varphi)]},1/100\right\}.
\end{equation}
Notice that, by how $\oldep{ep:1}$ and $\oldC{C:1}$ are defined, $K_1$ can be chosen to depend only on the cube $T$, and on $c_0, \mathrm{Lip}(\varphi)$. 
\end{osservazione}

\begin{definizione}[$\mathfrak{G}_\mathfrak{E}(v,R)$]\label{def:GvR}
Let $T$ be the cube introduced in (\hyperlink{iiicubox}{iii}) and let $\varphi:K\to\mathbb G$ be the Lipschitz function in \cref{th:rep->davidcondpimr}. Let us choose $0<v<\theta/100$, where we recall that $\theta$ was introduced in \eqref{costantis}. Let $\mathfrak{E}:=\{e_1,\ldots,e_{n_1}\}$ be such that $e_i\in C_i\cap \mathbb{S}^{n_1-1}$ for every $i=1,\ldots,n_1$, and let us define the set $\mathfrak{G}_\mathfrak{E}(v, R)$ to be the set of those $y \in K$ for which the following holds. 
For each $1 \leq j \leq n_1(\mathbb{G})$, there exists a $\gamma \in \Gamma(K)$ and a $t$ in the domain of $\gamma$ such that
\begin{enumerate}
\item[(i)] $\gamma(t)= y$;
\item[(ii)] for every $0< r < { 40(\mathrm{diam}(T)+1)c_0R}$ we have $$\mathcal{L}^1(B(t,r) \cap \gamma^{-1}(E)) > (1-(K_1v)^{s^{2M}})\mathcal{L}^1(B(t,r));$$
\item[(iii)] for every $s>s' \in \mathrm{Dom}\gamma$, $\pi_{\mathbb G}(\varphi(\gamma(s)))-\pi_{\mathbb G}(\varphi(\gamma(s'))) \in C(e_j, (K_1v)^{s^{2M}})$ and
\begin{align}
  d(\varphi(\gamma(s)),\varphi(\gamma(s'))) \geq \|\pi_{\mathbb G}(\varphi(\gamma(s)))-\pi_{\mathbb G}(\varphi(\gamma(s')))\| > v d(\gamma(s), \gamma(s')). \label{e:phi-speed}
\end{align}
\end{enumerate}
\end{definizione}

Let us fix $v\in (0,\theta/100)\cap \mathbb Q$, where $\theta$ was introduced in \eqref{costantis}. Thanks to \cite[Corollary 5.9]{BateJAMS}, we know that we can split  $K$ into countably many compact sets $ C_\kappa$ such that 
$\mu\llcorner  C_\kappa$ has $n_1(\mathbb{G})$ $\varphi$-independent Alberti representation going in the direction of $\zeta$-separated cones $C(e_i^\kappa,(K_1 v)^{s^{2M}})$ with $e_i^\kappa\in C_i\cap \mathbb{S}^{n_1-1}$ and $\varphi$-speed $\theta$ respectively.
Note that the choice of the unitary vectors $e_i^\kappa$s and of the compact sets $C_\kappa$s depends on $v$.

\begin{definizione}\label{def:Dstorto}
For $v,\varepsilon,R>0$ and   basis $\mathfrak{E}:=\{e_1,\ldots,e_{n_1}\}$   such that $e_i\in C_i\cap \mathbb{S}^{n_1-1}$, for every $i=1,\ldots,n_1$, we define
\begin{align*}
  \mathfrak{D}_{\mathfrak{E}}(v,\varepsilon,R) := \{x \in K : \mu(\mathfrak{G}_\mathfrak{E}(v,R) \cap B(x,r) )\geq (1-\varepsilon) \mu(B(x,r)), \text{ for every } r < R \}.
\end{align*}
\end{definizione}

{
\begin{proposizione}\label{prop:Sopra}
Let $\mu$ and $K$ as above. Let further $\mathscr{D}$ be a dense countable set in $\mathbb{S}^{n_1-1}$ and denote 
$$\mathscr{E}:=\{(e_1,\ldots,e_{n_1})\in \mathscr{D}^{n_1}:e_i\in C_i\}.$$
Then, we have 
 \begin{equation}
      \mu\left(K\setminus \bigcup_{v,R\in \Q^+}\bigcup_{\mathfrak{E}\in \mathscr{E}}\mathfrak{G}_\mathfrak{E}(v,R)\right)=0.
      \label{eq:identitàcurveprecise}
 \end{equation}
\end{proposizione}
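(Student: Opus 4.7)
The plan is to produce, for $\mu$-almost every $y \in K$, fragments $\gamma_1, \ldots, \gamma_{n_1}$ through $y$ witnessing membership in some $\mathfrak{G}_\mathfrak{E}(v,R)$ with rational $v, R$ and $\mathfrak{E} \in \mathscr{E}$. The two ingredients are: (a) the refined $\varphi$-independent Alberti representations of $\mu\llcorner K$ going in the $\varphi$-directions of $C_1, \ldots, C_{n_1}$ with horizontal $\varphi$-speed $\theta$, from our standing reduction; and (b) measurable selection of fragments through almost every point via \cref{prop:curvefullmeas}. Since $\Q^+$ and $\mathscr{E}$ are countable, it suffices to fix a pair $v \in \Q^+ \cap (0, \theta/100)$ and $\mathfrak{E} = (e_1, \ldots, e_{n_1}) \in \mathscr{E}$ and produce, for that pair, a $\mu$-full subset of $K$ covered by $\mathfrak{G}_\mathfrak{E}(v, R)$ as $R$ varies in $\Q^+$; the countable union over $(v, \mathfrak{E})$ then closes the argument.

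For fixed $(v, \mathfrak{E})$, apply \cite[Corollary~5.9]{BateJAMS} to refine the representations of $\mu\llcorner K$: since each $e_j$ lies in the open cone $C_j$, the narrower cone $C(e_j, (K_1 v)^{s^{2M}})$ is contained in $C_j$, and the refinement yields a $\mu$-null-decomposition of $K$ into countably many compact pieces $\{K_\ell\}_\ell$ on each of which $\mu$ has $n_1$ $\varphi$-independent Alberti representations, the $j$-th going in the $\varphi$-direction of $C(e_j, (K_1 v)^{s^{2M}})$ and still with horizontal $\varphi$-speed $\theta$. Fix such a piece $K'$. Applying \cref{prop:curvefullmeas} componentwise---with the Borel condition that the Pansu derivative of $\varphi\circ\gamma$ at $\gamma^{-1}(y)$ lies in the prescribed narrow cone---yields, for $\mu$-almost every $y \in K'$ and each $j$, a fragment $\gamma_j \in \Gamma(X)$ through $y$ with $(y, \gamma_j) \in \mathfrak{D}(K')$, with $\varphi\circ\gamma_j$ a $C(e_j, (K_1v)^{s^{2M}})$-curve, and with horizontal $\varphi$-speed $\theta$. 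Restricting to $\bar\gamma_j := \gamma_j|_{\gamma_j^{-1}(K')} \in \Gamma(K')$ preserves both the $C$-curve property and the density of $\mathrm{Dom}(\bar\gamma_j)$ at $\bar\gamma_j^{-1}(y)$.

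With these $\bar\gamma_j$ in hand, the cone-direction clause of (iii) in \cref{def:GvR} is immediate, and its speed clause follows by integrating the pointwise horizontal $\varphi$-speed lower bound along the narrow $C$-curve $\bar\gamma_j$: the cone opening is so small that the $\varphi$-projections along $e_j$ are almost monotone, giving a pairwise lower bound on $\|\pi_\mathbb{G}(\varphi(\bar\gamma_j(s))) - \pi_\mathbb{G}(\varphi(\bar\gamma_j(s')))\|$ by a fixed constant times $\theta\, d(\bar\gamma_j(s), \bar\gamma_j(s'))$; since $v < \theta/100$ this yields \eqref{e:phi-speed}, and $d(\varphi(\cdot), \varphi(\cdot)) \geq \|\pi_\mathbb{G}(\cdot) - \pi_\mathbb{G}(\cdot)\|$ by the $1$-Lipschitzianity of $\pi_\mathbb{G}$ under our normalization. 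Condition (ii) is then a matter of stratification: density of $\mathrm{Dom}(\bar\gamma_j)$ at $\bar\gamma_j^{-1}(y)$ yields a scale $R_j(y) > 0$ below which the density deficit is at most $(K_1 v)^{s^{2M}}$, so writing $K'$ as the countable union over $R \in \Q^+$ of the Borel sets $\{y \in K' : \min_j R_j(y) \geq 40(\mathrm{diam}(T)+1)c_0 R\}$ places $\mu$-almost every $y \in K'$ inside $\mathfrak{G}_\mathfrak{E}(v,R)$ for some rational $R$. The main technical hurdle is the joint Borel measurability of the fragment selection $y \mapsto \bar\gamma_j$ and the resulting scale $R_j(y)$, handled exactly as in \cref{lem:measurability} and \cref{prop:curvefullmeas} via the Jankov--von Neumann selection theorem.
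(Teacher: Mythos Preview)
There is a genuine gap in the logical reduction. You claim that it suffices to fix a pair $(v,\mathfrak{E})$ and then show that $\mu$-almost every $y\in K$ lies in $\bigcup_{R}\mathfrak{G}_{\mathfrak{E}}(v,R)$. This is strictly stronger than the proposition and is false in general: for a \emph{fixed} tuple $\mathfrak{E}=(e_1,\dots,e_{n_1})$ there is no reason that the existing Alberti representations of $\mu\llcorner K$ produce, through almost every point, fragments whose $\varphi$-image lies in the prescribed narrow cones $C(e_j,(K_1v)^{s^{2M}})$. The representation $\mathcal{B}_j$ goes in the $\varphi$-direction of the whole cone $C_j$, and a typical fragment may point in a direction of $C_j$ far from $e_j$.

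The error is crystallised in your use of \cite[Corollary~5.9]{BateJAMS}. That refinement does \emph{not} let you prescribe the axis of the narrow sub-cone: it decomposes $K$ into countably many pieces on each of which the representation goes in the $\varphi$-direction of \emph{some} narrow sub-cone of $C_j$, but the axes of those sub-cones are dictated by the decomposition (they come from a fixed cover of $C_j$), not chosen in advance as your $e_j$. So the sentence ``the refinement yields a $\mu$-null decomposition of $K$ into pieces on each of which the $j$-th representation goes in the $\varphi$-direction of $C(e_j,(K_1v)^{s^{2M}})$'' is exactly what fails.

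The paper's argument runs in the opposite order, which is the point of the paragraph just before \cref{def:Dstorto}: fix $v$, apply \cite[Corollary~5.9]{BateJAMS} once, obtaining compact pieces $C_\kappa$ together with axes $e_i^\kappa\in C_i\cap\mathbb{S}^{n_1-1}$ (which, since one is free to choose the finite cover of each $C_i$ by narrow cones, may be taken in the dense set $\mathscr{D}$); on each $C_\kappa$ the representations already go in the direction of $C(e_i^\kappa,(K_1v)^{s^{2M}})$ with $\varphi$-speed $\theta$, and then the measurable selection via \cref{prop:curvefullmeas} places $\mu$-almost every $y\in C_\kappa$ into $\mathfrak{G}_{(e_1^\kappa,\dots,e_{n_1}^\kappa)}(v,R)$ for some rational $R$. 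The union over $\kappa$ (and over $v$) gives the claim. Your remaining steps---the density-of-domain stratification for condition~(ii) and the passage from pointwise to pairwise speed via the narrow-cone monotonicity---are fine once the order of quantifiers is repaired.
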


\begin{proof}
The first step is to show that $\mathfrak{G}_\mathfrak{E}(v,R)$ is $\mu$-measurable for every $v,R>0$ and every basis $\mathfrak{E}\in \mathscr{E}$. 
This is true since $K$ is complete and separable and $\mu$ is Borel regular, and thus thanks to \cref{prop:curvefullmeas} we know that $\mathfrak{G}_\mathfrak{E}(v,R)$ is $\mu$-measurable for every $v, R>0$.
The fact that 
\eqref{eq:identitàcurveprecise} holds is an immediate consequence of our reductions.
\end{proof}
}

\textbf{From now on, let us fix $v,R$ and $\mathfrak{E}$ as in \cref{def:GvR}. The aim is to prove \cref{l:david-inclusion} and then exploit \cref{prop:Sopra} to conclude the proof of \cref{th:rep->davidcondpimr}}. 

Since for every $v$ as above we have $\mu(K\setminus \bigcup_{R\in \mathbb Q^+,\mathfrak{E}\in \mathscr{E}} \mathfrak{G}_\mathfrak{E}(v,R))=0$, and since we are fixing $v,R$ and $\mathfrak{E}$, in the following we assume that $\mu(\mathfrak{G}_\mathfrak{E}(v,R))>0$ and we will drop from the notation the dependence on $\mathfrak{E}=\{e_1,\ldots,e_{n_1}\}$.

In the following, in order to prove \cref{th:rep->davidcondpimr}, we will need a technical lemma. Since there will be a lot of constants involved in the proof, we will list all of them here for ease of reference. 
Let us recall that $c_0,M,\lambda,\oldep{ep:1}$ and the cube $T$ are all defined in the discussion above in between \cref{restrizionewlog} and \cref{ConstantsCK}. 
The Lipschitz function $\varphi$ is the one in the statement of \cref{th:rep->davidcondpimr}. The constant $K_1$ is defined in \eqref{eqn:K1def}, also in terms of $\oldC{C:1}$ that can be chosen to depend only on $\mathrm{diam}(T)$ and $c_0$.
{ 
 Let $C$ be a constant such that for all $x,y\in B(0,\max\{4c_0(\mathrm{diam}(T)+1),2\})$ we have 
\begin{equation}\label{eqn:ConiugoDai}
\|y^{-1}\cdot x \cdot y\|\leq C\|x\|^{1/s}.
\end{equation}
This constant exists and depends only on $\mathbb G,c_0$, and the cube $T$, due to \cite[Lemma~2.13]{FranchiSerapioni16}.
Moreover, let us set

\[
\newC\label{C:7}=4c_0(\mathrm{diam}(T)+1),
\]
\[
N=2\cdot \frac{8Mc_0(\mathrm{diam}(T)+1)+1}{\lambda}
\]
\[
\newC\label{C:6}<\min\left\{\frac{\oldep{ep:1}}{4\mathrm{Lip}(\varphi)},\frac{1}{100+N}, \oldC{C:7}/100\right\},
\]
\[
\newC\label{C:5}=4\cdot 2M(2+64(\mathrm{diam}(T)+1)c_0),
\]
\[
\newC\label{C:11}= C\cdot \oldC{C:6}^{1/s}\mathrm{Lip}(\varphi)^{1/s}+4K_1[2\oldC{C:1}+4\mathrm{Lip}(\varphi)](\mathrm{diam}(T)+1)c_0,
\]
\[
\newC\label{C:4}= { \oldC{C:11}^{\frac{1}{s^{M}}} (2\max\{C,1\})^{M}},
\]
\[
\newC\label{C:2}=10\oldC{C:4}.
\]
Notice that by taking $K_1$ and $\oldC{C:6}$ verifying the inequalities above, and sufficiently small with respect to $\mathrm{Lip}(\varphi),\mathrm{diam}(T),c_0, \oldC{C:scalasurj}$ we can further ensure $\oldC{C:11}<1$ and
\[
\oldC{C:2}<\min\{1/100,\oldC{C:7}/10, \oldC{C:scalasurj}/10\},
\]
and thus, in particular 
\begin{equation}\label{eqn:StimaNDai}
N>\frac{8Mc_0(\mathrm{diam}(T)+1)+\oldC{C:2}}{\lambda}.
\end{equation}
We are now ready for the lemma.

\begin{lemma} \label{l:induction-alternative}
Let $x \in K$, and let $\mathfrak{G}(v,R)$ be as above. Let $\varphi:X\to\mathbb G$ be the Lipschitz function in \cref{th:rep->davidcondpimr}.
 Let $K_1, \oldC{C:1}$ be the constants defined in \cref{ConstantsCK}. Let $c_0,M,\lambda,\oldep{ep:1},C, \oldC{C:7}, N, \oldC{C:6}, \oldC{C:5}, \oldC{C:11}, \oldC{C:4}, \oldC{C:2}$ be the constants defined above and let $T$ be the cube introduced in the above reductions, see the discussion from \cref{restrizionewlog} to \cref{ConstantsCK}. Let $r<R$ and let us define 
$$
\mathfrak Q:=c_{\mathfrak Q}\cdot\delta_{vr}(T),
$$
where $c_{\mathfrak Q}$ satisfies $d(\varphi(x),c_{\mathfrak Q}) <{ \oldC{C:2}vr}$.  Let $\{p_i\}_{i=1,\ldots, 2^Q}$ denote the centres of the $2^Q$ subcubes of $T$ introduced in (\hyperlink{iiicubox}{iii}).  Then at least one of the following must be true.
  \begin{enumerate}
    \item[(\hypertarget{alternativei}{i})] There exist points $\{q_i\}_{i=1,\ldots,2^Q}$ in $B(x,\oldC{C:5}Mr) \cap K$ so that 
    $$
    d(\varphi(q_i) ,c_{\mathfrak{Q}}\cdot\delta_{vr}( p_i)) <{ \oldC{C:4}}vr;
    $$
    \item[(ii)] there exists some $y \in B(x,\oldC{C:5}Mr) \cap K$ so that $B(y, \oldC{C:6}{v^{s^{2M}}}r) \cap \mathfrak{G}(v, R) = \emptyset$ and $$
    \dist(\varphi(y),(\delta_{N}\mathfrak{Q})^c) \geq \oldC{C:7}vr,
    $$
    where $\delta_{N}\mathfrak{Q}:=c_\mathfrak{Q}\cdot \delta_{N vr}(T)$,
where $\lambda>0$ is the constant introduced in Proposition~\ref{prop:tilings}(iv).
  \end{enumerate}
\end{lemma}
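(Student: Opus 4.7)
I plan to run a Bate--Li style dichotomy, adapted to the Carnot setting by exploiting the scalasurjectivity property (\hyperlink{iiicubox}{iii}) of the cube $T$ and the vertical drift estimate of \cref{lemma:drift}. The starting point is to write each target displacement as an explicit product of horizontal powers along the basis $\mathfrak{E}$, then to try to realize this product by concatenating $M$ fragments from $\mathfrak{G}(v,R)$; the dichotomy reflects whether the concatenation succeeds or fails at some step.

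\textbf{Step 1 (decomposition).} Setting $y := \delta_{1/vr}(c_\mathfrak{Q}^{-1}\cdot \varphi(x))$ one has $\|y\| \leq \oldC{C:2} < \oldC{C:scalasurj}$, so property (\hyperlink{iiicubox}{iii}) applied to the basis $\mathfrak{E}=\{e_1,\ldots,e_{n_1}\}$ (with $e_i \in C_i\cap \mathbb{S}^{n_1-1}$) produces, for each $k=1,\ldots,2^Q$, scalars $\tau_1^k,\ldots,\tau_M^k$ with $\xi vr \leq |\tau_j^k| \leq c_0(\mathrm{diam}(T)+1)vr$ and
\begin{equation*}
\varphi(x)^{-1}\cdot c_\mathfrak{Q}\cdot \delta_{vr}(p_k) \;=\; \delta_{\tau_1^k}(e_{i_1})\cdots \delta_{\tau_M^k}(e_{i_M}).
\end{equation*}

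\textbf{Step 2 (inductive construction).} Assume alternative (ii) fails. For each $k$ I will build, by induction on $j\in\{0,\ldots,M\}$, points $q_{j,k}\in K\cap \mathfrak{G}(v,R)$ satisfying $d(q_{j,k},q_{j-1,k}) \leq 2|\tau_j^k|/v + \oldC{C:11}r$ and
\begin{equation*}
d\bigl(\varphi(q_{j,k}),\; \varphi(x)\cdot\delta_{\tau_1^k}(e_{i_1})\cdots\delta_{\tau_j^k}(e_{i_j})\bigr) \leq E_j\cdot vr,
\end{equation*}
where the errors obey $E_0=0$ and $E_{j+1}\leq \oldC{C:11}+(C\max\{1,E_j\})^{1/s}$, so that $E_M\leq \oldC{C:4}$ by the constants listed above the statement. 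At step $j$, the defining conditions of $\mathfrak{G}(v,R)$ supply a fragment $\gamma_{j,k}\in\Gamma(K)$ through $q_{j-1,k}$ going in $\varphi$-direction $C(e_{i_j},(K_1 v)^{s^{2M}})$ with horizontal speed $\geq v$ and with $\gamma_{j,k}^{-1}(E)$ having density at least $1-(K_1 v)^{s^{2M}}$ near the parameter of $q_{j-1,k}$. I travel along $\gamma_{j,k}$ for parameter-time $|\tau_j^k|/v$, use the density condition to snap to a nearby point of $\mathfrak{G}(v,R)\cap E$, and estimate the resulting increment in $\varphi$: its first-stratum part is within $K_1 v|\tau_j^k|$ of $\tau_j^k e_{i_j}$ by the cone opening, and \cref{lemma:drift} promotes this to a full-group displacement within $\oldC{C:1}(K_1 v)^{s^{2M}}|\tau_j^k|$ of $\delta_{\tau_j^k}(e_{i_j})$. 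The $s$-th root in the recursion appears when left-translating the running error to the correct basepoint for the next step, via the conjugation estimate \eqref{eqn:ConiugoDai}.

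\textbf{Step 3 (dichotomy).} If the induction completes through $j=M$ for every $k$, then $q_k := q_{M,k}$ satisfies $d(\varphi(q_k), c_\mathfrak{Q}\cdot \delta_{vr}(p_k))\leq \oldC{C:4}vr$ and $d(q_k,x)\leq \oldC{C:5}Mr$, giving alternative (i). Otherwise, at the first failure $(j,k)$ the only obstruction can be $B(q_{j-1,k},\oldC{C:6}v^{s^{2M}}r)\cap \mathfrak{G}(v,R)=\emptyset$; set $y:=q_{j-1,k}$, so $d(y,x)\leq \oldC{C:5}Mr$. The crucial check is $\dist(\varphi(y),(\delta_N\mathfrak{Q})^c)\geq \oldC{C:7}vr$: by the inductive bounds, $\varphi(y)$ is within $\bigl(Mc_0(\mathrm{diam}(T)+1)+\oldC{C:2}+\oldC{C:4}\bigr)vr$ of $c_\mathfrak{Q}$, whereas \cref{prop:tilings}(iv) yields $\dist(c_\mathfrak{Q},(\delta_N\mathfrak{Q})^c)\geq \lambda N vr$; the choice of $N$ in \eqref{eqn:StimaNDai} makes the difference at least $\oldC{C:7}vr$, so alternative (ii) holds.

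\textbf{Main obstacle.} The technical heart is the error recursion $E_{j+1}\leq \oldC{C:11}+(C\max\{1,E_j\})^{1/s}$: because conjugation in a step-$s$ Carnot group distorts distances only up to an $s$-th root, the error at step $j+1$ involves $(E_j)^{1/s}$ rather than $E_j$. The exponent $s^{2M}$ appearing in the cone aperture $(K_1 v)^{s^{2M}}$ used in the definition of $\mathfrak{G}(v,R)$ is chosen precisely to absorb $M$ iterations of this recursion while keeping $E_M\leq \oldC{C:4}$; the strict ordering of constants in \cref{ConstantsCK} and above the lemma is dictated by the need to close the induction and simultaneously guarantee the bad-point estimate of Step 3. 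Verifying these bookkeeping inequalities, and checking that the density-based projection onto $\mathfrak{G}(v,R)$ at each step indeed lands inside $K\cap\mathfrak{G}(v,R)$ rather than only near it, is the delicate part of the argument.
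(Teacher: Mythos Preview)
Your overall architecture is the paper's: assume (ii) fails, use property (\hyperlink{iiicubox}{iii}) to decompose $\varphi(x)^{-1}c_{\mathfrak Q}\delta_{vr}(p_k)$ into $M$ horizontal pieces, and realize each piece by a fragment through a nearby $\mathfrak G(v,R)$ point, controlling the drift with \cref{lemma:drift} and propagating errors through the conjugation estimate \eqref{eqn:ConiugoDai}. The boundary-distance bookkeeping in your Step~3 is also essentially what the paper does (the paper tracks it step-by-step as $\dist>(2M-\kappa)\,\oldC{C:7}vr$, but your one-shot estimate via $N\lambda$ works with the given constants).

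Two points need repair. First, your error recursion $E_{j+1}\le \oldC{C:11}+(C\max\{1,E_j\})^{1/s}$ drops the $v$-dependence and therefore does not close: with $E_0=0$ it gives $E_1\ge C^{1/s}\ge 1$, so all $E_j$ stabilize near $1$, while $\oldC{C:4}<1/1000$. The correct bound (and the one the paper proves) is
\[
d\bigl(\varphi(q_{j,k}),\,\varphi(x)\cdot\delta_{\tau_1^k}(e_{i_1})\cdots\delta_{\tau_j^k}(e_{i_j})\bigr)\;\le\;\oldC{C:11}^{1/s^{j-1}}\,(2\max\{C,1\})^{j-1}\,v^{\,s^{2M-j}}\,r,
\]
where the \emph{decreasing} exponent $s^{2M-j}$ on $v$ is exactly what compensates the $1/s$-th root coming from conjugation at each step; this is why the aperture in $\mathfrak G(v,R)$ is $(K_1v)^{s^{2M}}$. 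Second, the phrase ``snap to a nearby point of $\mathfrak G(v,R)\cap E$'' conflates two separate mechanisms: the density condition (ii) in \cref{def:GvR} only guarantees that the endpoint of the fragment lands in $K$, not in $\mathfrak G(v,R)$; getting back into $\mathfrak G(v,R)$ for the next step requires a fresh application of the negation of alternative (ii), which is why the boundary-distance condition must be verified at every intermediate point, not just at the terminal failure.
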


\begin{proof}
Assume without loss of generality that $c_\mathfrak{Q}=0$.
Suppose that (ii) is false, i.e., for every $y\in B(x,\oldC{C:5}Mr)\cap K$ we have 
\begin{equation}
    \text{$B(y,\oldC{C:6}v^{s^{2M}}r)\cap \mathfrak{G}(v,R)\neq \emptyset\qquad$or$\qquad\mathrm{dist}(\varphi(y),(\delta_{N}\mathfrak{Q})^c)< \oldC{C:7}vr$.}
    \label{condizioneiinegata1}
\end{equation}
 However, by assumption we know that  $d(\varphi(x),0)< \oldC{C:2}vr$. Since $B(0,\lambda)\subseteq T$, we have that $B(0,N\lambda vr)\subseteq \delta_N\mathfrak{Q}$ and thus 
 \begin{equation}
 \begin{split}
      \mathrm{dist}(\varphi(x),(\delta_{N}\mathfrak{Q})^c)&\geq \mathrm{dist}(0,(\delta_{N}\mathfrak{Q})^c)-d(\varphi(x),0)\\
     &\geq N\lambda vr - \oldC{C:2}vr >2M\oldC{C:7}vr.
     \label{stimabordox}
 \end{split}
 \end{equation}
where the last inequality follows from \eqref{eqn:StimaNDai} and the definition of \oldC{C:7}.
 This therefore shows that there exists, by \eqref{condizioneiinegata1}, an 
$$
\bar{x}\in B(x,\oldC{C:6}v^{s^{2M}}r)\cap \mathfrak{G}(v,R).
$$

Let us prove that item (\hyperlink{alternativei}{i}) holds. In the following, we show that the conclusion holds just in the case of $p=p_1$. However, the argument for all the other points follows verbatim.

\textbf{Claim I.} As a first step, let us prove the following. Let us fix an arbitrary $r/2<r'\leq Mr$. We aim to prove that for every $\kappa>M$, we have the following. For every $y\in B(x,\oldC{C:5}r')$ such that
\begin{equation}\label{eqn:InductiveDistance}
\mathrm{dist}(\varphi(y),(\delta_{N}\mathfrak{Q})^c)>\kappa \oldC{C:7}vr,
\end{equation}
then, for every $e\in \{e_1,\ldots,e_{n_1(\mathbb{G})}\}$, and for every  $\oldep{ep:1}\leq \lvert \eta\rvert\leq c_0(\mathrm{diam}(T)+1)$, there exists a $\bar y \in \mathfrak{G}(v,R)\cap B(y,\oldC{C:6}v^{s^{2M}}r)$, a curve $\gamma\in \Gamma(X)$ starting from $\bar y$, and a time $\bar t$ such that 
\begin{equation}\label{eqn:STIMaDAI}
    \gamma(\bar t)\in B(y,\oldC{C:5}r'/M),
\end{equation}
and 
\begin{equation}\label{eqn:STIMAINTER}
d(\varphi(\gamma(\bar t)),\varphi(y)\delta_{\eta vr}(e)))\le \oldC{C:11}v^{s^{2M-1}}r,
\end{equation}
and moreover 
\begin{equation}\label{eqn:DistanzaBordoVerificata}
    \mathrm{dist}(\varphi(\gamma(\bar t)),(\delta_{N}\mathfrak{Q})^c)>(\kappa-1)\oldC{C:7}vr.
\end{equation}

Indeed, thanks to \eqref{eqn:InductiveDistance}, the fact that $\kappa>M>1$, and \eqref{condizioneiinegata1}, we know that there exists a $\bar{y}\in \mathfrak{G}(v,R)$ such that 
\begin{equation}\label{eqn:yday}
d(y,\bar y)\leq \oldC{C:6}v^{s^{2M}}r.
\end{equation}
This implies, by definition of $\mathfrak{G}(v,R)$, that there exists a $\gamma\in \Gamma(X)$ and a $t_0\in \mathrm{Dom}(\gamma)$ such that
\begin{enumerate}
\item[($\alpha$)] $\gamma(t_0)=\bar y$;
\item[($\beta$)] for every $0< \rho <40(\mathrm{diam}(T)+1)c_0R$ we have 
$$
\mathcal{L}^1(I(t_0,\rho) \cap \gamma^{-1}(K)) > (1-(K_1v)^{s^{2M}})\mathcal{L}^1(I(t_0,\rho)),
$$
where $I(t_0,\rho)$ is the neighbourhood of centre $t_0$ and radius $\rho$ in $\mathbb R$;
\item[($\gamma$)] for every $t>t' \in \mathrm{Dom}\gamma$, $\pi_1(\varphi(\gamma(t)))-\pi_1(\varphi(\gamma(t'))) \in C(e, (K_1v)^{s^{2M}})$ and
\begin{equation}\label{eqn:AboveBound}
  d(\varphi(\gamma(t)),\varphi(\gamma(t'))) > v d(\gamma(t), \gamma(t')). \nonumber
\end{equation}
\end{enumerate}
Thanks to ($\beta$) for every $0<\rho<{40(\mathrm{diam}(T)+1)c_0R}$ we can find $t_{+}:=t_{+}(\rho)$ and $t_{-}:=t_{-}(\rho)$ in $I(t_0,\rho) \cap \gamma^{-1}(K)$ such that 
\begin{equation}\label{eqn:Rho1}
\lvert  \rho\mp( t_{\pm}(\rho)-t_0)\rvert<2(K_1v)^{s^{2M}}\rho< 2K_1\rho,
\end{equation}
where the last inequality is true since $K_1v<v<1$. Let us recall, for the ease of notation, $C:=\gamma^{-1}(K)$. Thus by Lemma~\ref{lemma:drift}, where $\sigma=(K_1v)^{s^{2M}}$ here, that
\begin{equation}
 \begin{split}
      d\Big(\varphi\circ\gamma(t_{\pm}(\rho)),\,&\varphi(\bar{y})\cdot\Big[\left(\int_{t_0}^{t_{\pm}(\rho)}\lvert D(\varphi\circ\gamma)(s)\rvert d\mathcal{L}^1\llcorner C(s)\right)\,e\Big]\Big)\leq \oldC{C:1}\sigma^{1/s}\lvert t_{\pm}(\rho)-t_0\rvert\\
      &\leq\oldC{C:1}\sigma^{1/s} \Big(1+2K_1\Big)\rho\leq \oldC{C:1}\Big(1+2K_1\Big)(K_1v)^{s^{2M-1}}\rho
 \end{split}
   \label{stimadriftydrifty1}
\end{equation}
In addition, since
$$2\mathrm{Lip}(\varphi)\geq \lvert D(\varphi\circ \gamma)(s)\rvert\geq v/2\qquad\text{for $\mathcal{L}^1$-almost every $t\in\mathrm{Dom}(\gamma)$},$$
thanks to the fact that by construction the curves $\gamma$ are taken $2$-biLipschitz and $(\gamma)$, we have that, thanks to the above bound in \eqref{eqn:AboveBound} and the fact that $K_1<1/100$,
\begin{equation}
    \begin{split}
    v\rho/4 \leq v\mathcal{L}^1([t_0,t_0\pm \rho]\cap C)/2\leq \pm\int_{t_0}^{t_{0}\pm\rho}\lvert D(\varphi\circ\gamma)(s)\rvert d\mathcal{L}^1\llcorner C(s)\leq 4\mathrm{Lip}(\varphi)\rho,
     \label{eq:stimaderivate1}
    \end{split}
\end{equation}
Let us now prove that there exists a $\frac{vr\oldep{ep:1}}{4\mathrm{Lip}(\varphi)}\leq \bar\rho\leq 4(\mathrm{diam}(T)+1)c_0r$ such that
\begin{equation}
    \Big\lvert\pm\int_{t_0}^{t_{\pm}(\bar\rho)}\lvert D(\varphi\circ\gamma)(s)\rvert d\mathcal{L}^1\llcorner C(s)\mp\lvert \eta\rvert  vr\Big\rvert\leq 4\mathrm{Lip}(\varphi)(K_1v)^{s^{2M}}\bar\rho.
    \label{numberointegralestima1}
\end{equation}

For every $0<{\rho<40(\mathrm{diam}(T)+1)c_0R}$, the above \eqref{eqn:Rho1}, together with the estimates on $\lvert D(\varphi\circ \gamma)\rvert$ and the choice of $t_{\pm}(\rho)$ implies
\begin{equation}
 \Big\lvert \int_{t_0\pm\rho}^{t_{\pm}(\rho)}\lvert D(\varphi\circ\gamma_j)(s)\rvert d\mathcal{L}^1\llcorner C(s)\Big\rvert\leq 2(K_1v)^{s^{2M}}\rho\cdot 2\mathrm{Lip}(\varphi) = 4\mathrm{Lip}(\varphi)(K_1v)^{s^{2M}}\rho.
    \label{stimaerrorepip1}
\end{equation}
Since the function $\rho\mapsto\int_{t_0}^{t_{0}\pm\rho}\lvert D(\varphi\circ\gamma_j)(s)\rvert d\mathcal{L}^1\llcorner C(s)$ is continuous, thanks to \eqref{eq:stimaderivate1} and to \eqref{stimaerrorepip1}, and to the fact that $\oldep{ep:1}\leq |\eta|\leq (\mathrm{diam}(T)+1)c_0$ it is immediate to see, by the triangle inequality and the intermediate value theorem, that there exists a $\bar\rho\in (\frac{vr\oldep{ep:1}}{4\mathrm{Lip}(\varphi)},4(\mathrm{diam}(T)+1)c_0r)$ such that \eqref{numberointegralestima1} holds. 
Let us further note that, exploiting \eqref{eqn:yday},
\begin{equation}
    \begin{split}
        d(\varphi(y),\varphi(\gamma(t_{\pm}(\bar\rho))))&\leq d(\varphi(\bar y),\varphi(y))+d(\varphi(\bar y), \varphi(\gamma(t_{\pm}(\bar\rho)))) \\
        &\leq \mathrm{Lip}(\varphi)\oldC{C:6}v^{s^{2M}}r+d(\varphi(\gamma(t_{0})), \varphi(\gamma(t_{\pm}(\bar{\rho})))\\
        &\leq \mathrm{Lip}(\varphi)\oldC{C:6}v^{s^{2M}}r\\
        &\qquad+d(\varphi(\gamma(t_{\pm}(\bar\rho))),\varphi(\gamma(t_{0}))\cdot\Big[\left(\int_{t_0}^{t_{\pm}(\bar\rho)}\lvert D(\varphi\circ\gamma)(s)\rvert d\mathcal{L}^1\llcorner C(s)\right)\,e\Big]\Big))\\
        &\qquad+d(\varphi(\gamma(t_{0})),\varphi(\gamma(t_{0}))\cdot\Big[\left(\int_{t_0}^{t_{\pm}(\bar\rho)}\lvert D(\varphi\circ\gamma)(s)\rvert d\mathcal{L}^1\llcorner C(s)\right)\,e\Big]\Big))\\
       \overset{\eqref{stimadriftydrifty1},\eqref{numberointegralestima1}}{ \leq} &\mathrm{Lip}(\varphi)\oldC{C:6}v^{s^{2M}}r+\oldC{C:1}\Big(1+2K_1\Big)(K_1v)^{s^{2M-1}}\bar\rho\\
       &\qquad\qquad\qquad+\lvert \eta\rvert  vr+4\mathrm{Lip}(\varphi)(K_1v)^{s^{2M}}\bar \rho\\
       &\leq \mathrm{Lip}(\varphi)\oldC{C:6}v^{s^{2M}}r+\oldC{C:1}\Big(1+2K_1\Big)(K_1v)^{s^{2M-1}}\cdot 4(\mathrm{diam}(T)+1)c_0r\\
       &\qquad\qquad\qquad+\lvert \eta\rvert  vr+4\mathrm{Lip}(\varphi)(K_1v)^{s^{2M}}\cdot4(\mathrm{diam}(T)+1)c_0r\\
       &\leq 2\rvert\eta\rvert vr,
        \label{eq:stima drift}
    \end{split}
\end{equation}
where the last inequality comes from the fact that $K_1<1/2$, $\oldep{ep:1}\leq |\eta|$, and the inequality satisfied by $\oldC{C:6}, K_1$, in particular the fact that 
\[
\oldC{C:6}<\frac{\oldep{ep:1}}{4\mathrm{Lip}(\varphi)}, \qquad K_1 < \frac{\oldep{ep:1}}{32(\mathrm{diam}(T)+1)c_0[\oldC{C:1}+2\mathrm{Lip}(\varphi)]}.
\]
This implies in particular that, taking into account \eqref{eq:stima drift}, and \eqref{eqn:InductiveDistance},
\begin{equation}\label{eqn:DistanzaDalBordo}
\begin{split}
     &\mathrm{dist}(\delta_{N}\mathfrak{Q}^c,\varphi\circ\gamma(t_{\pm}(\bar\rho))))
     \geq \mathrm{dist}(\delta_{N}\mathfrak{Q}^c,\varphi(y))-d(\varphi( y),\varphi\circ\gamma(t_{\pm}(\bar\rho)))
     \\&\geq \kappa\oldC{C:7}vr -2|\eta|vr> (\kappa-1)\oldC{C:7}vr,
      \end{split}
\end{equation}
{where the last inequality comes from the fact that $2|\eta|\leq 2c_0(\mathrm{diam}(T)+1)\leq\oldC{C:7} $}.
Moreover we have that, thanks to \eqref{eqn:Rho1} and \eqref{eqn:yday},
\begin{equation}\label{eqn:StimaydaCurva}
    \begin{split}
        d(y,\gamma(t_{\pm}(\bar\rho)))&\leq d(y,\bar y)+d(\bar y,\gamma(t_{\pm}(\bar\rho)))\leq \oldC{C:6}v^{s^{2M}}r+2\lvert t_{\pm}(\bar\rho)-t_0\rvert\\
        &\leq \oldC{C:6}v^{s^{2M}}r+2(2(K_1v)^{s^{2M}}+1)\bar\rho\\
        &\leq \oldC{C:6}v^{s^{2M}}r+2(2(K_1v)^{s^{2M}}+1)\cdot 4(\mathrm{diam}(T)+1)c_0r<\frac{\oldC{C:5}r'}{2M}.
    \end{split}
\end{equation}
{where the last inequality comes from the choice of the constant $\oldC{C:5}$, after having noticed that $\oldC{C:6}<1$,$K_1v<1$}.
Let us assume without loss of generality that $\eta>0$. Let us denote $\bar t:=t_+(\bar\rho)$ as defined above. Hence \eqref{eqn:StimaydaCurva} gives precisely \eqref{eqn:STIMaDAI}, while \eqref{eqn:DistanzaDalBordo} gives \eqref{eqn:DistanzaBordoVerificata}. Let us now verify \eqref{eqn:STIMAINTER}. 
Indeed, we have, by also using \eqref{eqn:ConiugoDai},

\begin{equation}
    \begin{split}
        d(\varphi(\gamma(\bar t)),\varphi(y)\cdot\delta_{\eta vr}(e)))&\leq d(\varphi(\bar y)\cdot\delta_{\eta vr}(e),\varphi(y)\cdot\delta_{\eta vr}(e)) \\
&+ d\left(\varphi(\gamma(\bar t)),\varphi(\bar{y})\cdot\Big[\left(\int_{t_0}^{t_{\pm}(\bar\rho)}\lvert D(\varphi\circ\gamma)(s)\rvert d\mathcal{L}^1\llcorner C(s)\right)\,e\Big]\right)\\
&+d\left(\varphi(\bar{y})\cdot\Big[\left(\int_{t_0}^{t_{\pm}(\bar\rho)}\lvert D(\varphi\circ\gamma)(s)\rvert d\mathcal{L}^1\llcorner C(s)\right)\,e\Big],\varphi(\bar y)\cdot\delta_{\eta vr}(e)\right)\\
\overset{\substack{\eqref{eqn:yday},\eqref{stimadriftydrifty1},\\\eqref{numberointegralestima1}}}{\leq} C(r^{-1}d(\varphi(\bar y),\varphi(y)))^{1/s}r+&\oldC{C:1}\Big(1+2K_1\Big)(K_1v)^{s^{2M-1}}\bar\rho+4\mathrm{Lip}(\varphi)(K_1v)^{s^{2M}}\bar\rho\\
&\leq C(\oldC{C:6}\mathrm{Lip}(\varphi)v^{s^{2M}})^{1/s}r\\&+\left(\oldC{C:1}\Big(1+2K_1\Big)(K_1v)^{s^{2M-1}}+4\mathrm{Lip}(\varphi)(K_1v)^{s^{2M}}\right)4(\mathrm{diam}(T)+1)c_0r\\
&\leq \oldC{C:11}v^{s^{2M-1}}r.
    \end{split}
\end{equation}
{ where the last inequality is true by how we chose $\oldC{C:11}$ an by simply noticing that $K_1<1/2$ and $v<1$}. Thus the Claim I is proved.

Let us now continue with the proof. Thanks to the discussion from \cref{restrizionewlog} to \cref{ConstantsCK}, and thanks to the fact that $\oldC{C:2}<\min\{\oldC{C:scalasurj}/10,1/100\}$, we can find $\lambda_1,\ldots,\lambda_M$ such that
    \begin{equation}
        \delta_{(rv)^{-1}}(\varphi(x))^{-1}   p=\delta_{\lambda_1}(v_1)\cdot\ldots\cdot\delta_{\lambda_M}(v_M),
           \label{eq:espanzionedirezioni}
    \end{equation}
    with $v_1,\ldots,v_M\in\{e_1,\ldots,e_{n_1(\mathbb{G})}\}$ and 
    $$
    \oldep{ep:1}\leq | \lambda_{\ell}|\leq c_0\lVert \delta_{(rv)^{-1}}(\varphi(x))^{-1}   p\rVert\leq c_0(\mathrm{diam}(T)+\oldC{C:2})\leq c_0(\mathrm{diam}(T)+1).
    $$
    
\textbf{Claim II.} Let us prove inductively that for every $1\leq \kappa\leq M$ there is $\bar x_\kappa\in B(x,\oldC{C:5}r/2\cdot k)$ such that
\begin{equation}\label{eqn:Inductive}
    \begin{split}
         d(\varphi(\bar x_{\kappa}),\varphi(x)\delta_{ vr}(\delta_{\lambda_1}v_1\cdot\ldots\cdot\delta_{\lambda_\kappa}v_\kappa))&\leq { \oldC{C:11}^{\frac{1}{s^{k-1}}} (2\max\{C,1\})^{k-1}}  v^{s^{2M-\kappa}}r\\
         \mathrm{dist}(\varphi(\bar x_k),(\delta_{N}\mathfrak{Q})^c)&>(2M-\kappa)\oldC{C:7}vr.
    \end{split}
\end{equation}
Let us prove the claim by induction. First, let us take care of the basis step $\kappa=1$. Thanks to \eqref{stimabordox} and Claim I applied with $y=x$, $\eta=\lambda_1$, and $r'=r/2$, we know that there is an $\bar x_1\in B(x,\oldC{C:5}r/(2M))$, which is the $\gamma(\bar t)$ in the Claim I, such that 
\begin{align}
    d(\varphi(\bar x_1),\varphi(x)\delta_{\lambda_1 vr}(v_1)))&\le \oldC{C:11}v^{s^{2M-1}}r,\label{eqn:STIMAINTER2}\\
    \mathrm{dist}(\varphi(\bar x_1),(\delta_{N}\mathfrak{Q})^c)&>(2M-1)\oldC{C:7}vr.\label{eqn:DistanzaBordoVerificata2}
\end{align}
In addition since $\bar x_1\in B(x, \oldC{C:5}r/2M)$, this 
ends the proof of the basis step. Let us now deal with the inductive step. 

Assume we proved the claim in \eqref{eqn:Inductive} for a $1\leq\kappa<M$. Let us apply Claim I to $\bar x_k$, with $r'=r/2\cdot k$, where we choose $\eta=\lambda_{k+1}$.
Notice that the Claim I can be applied due to the second of \eqref{eqn:Inductive}. Thus we get an $\bar x_{k+1}$, which is the $\gamma(\bar t)$ in the Claim I, such that the second of \eqref{eqn:Inductive} holds with $\kappa+1$ instead of $\kappa$, and moreover
\begin{equation}\label{eqn:AncoraInduco}
    \begin{split}
        d(\bar x_{k+1},\bar x_k)&\leq {\oldC{C:5}r/(2M) \cdot k}\\
        d(\varphi(\bar x_{\kappa+1}),\varphi(\bar x_\kappa)\delta_{vr}(\delta_{\lambda_{\kappa+1}}(v_{\kappa+1})))&\leq \oldC{C:11}v^{s^{2M-1}}r.
    \end{split}
\end{equation}
From the first of \eqref{eqn:AncoraInduco}, the triangle inequality, and the induction hypothesis, we get that 
\begin{equation}\label{eqn:Indu1}
{d(\bar x_{k+1},x)\leq d(\bar x_{k+1},\bar x_{k})+d(\bar x_{k},x)}\leq \oldC{C:5}r/2\cdot (k+1/M)\leq \oldC{C:5}r/2\cdot (k+1).
\end{equation}
Finally, from the second of \eqref{eqn:AncoraInduco}, the triangle inequality, the inductive hypothesis, and again \eqref{eqn:ConiugoDai}, we get
\begin{equation}\label{eqn:Indu2}
    \begin{split}
        d(\varphi(\bar x_{\kappa+1}),&\varphi(x)\delta_{ vr}(\delta_{\lambda_1}v_1\cdot\ldots\cdot\delta_{\lambda_{\kappa+1}}v_{\kappa+1})))\leq d(\varphi(\bar x_{\kappa+1}),\varphi(\bar x_\kappa)\delta_{vr}(\delta_{\lambda_{\kappa+1}}(v_{\kappa+1}))) \\
        &+ d(\varphi(\bar x_\kappa)\delta_{vr}(\delta_{\lambda_{\kappa+1}}(v_{\kappa+1})),\varphi(x)\delta_{ vr}(\delta_{\lambda_1}v_1\cdot\ldots\cdot\delta_{\lambda_{\kappa+1}}v_{\kappa+1})) \\
        &\leq \oldC{C:11}v^{s^{2M-1}}r+C\left(r^{-1}d(\varphi(\bar x_{\kappa}),\varphi(x)\delta_{ vr}(\delta_{\lambda_1}v_1\cdot\ldots\cdot\delta_{\lambda_\kappa}v_\kappa))\right)^{1/s}\cdot r\\
        &\leq \oldC{C:11}v^{s^{2M-1}}r+ C\left({\oldC{C:11}^{\frac{1}{s^{k-1}}} (2\max\{C,1\})^{k-1}} v^{s^{2M-\kappa}}\right)^{1/s}\cdot r \leq { \oldC{C:11}^{\frac{1}{s^{k}}} (2\max\{C,1\})^{k}}  v^{s^{2M-\kappa-1}}r.\\
    \end{split}
\end{equation}
{
Thus the induction is concluded since we have \eqref{eqn:Indu1}, and \eqref{eqn:Indu2}.

\textbf{Conclusion}. Let us now take $\kappa=M$ in Claim II. We get $\bar x_M=q_1\in B(x,\oldC{C:5}Mr/2)\subseteq B(x,\oldC{C:5}Mr)$ such that 
\[
d(\varphi(q_1),\varphi(x)\delta_{vr}(\delta_{(vr)^{-1}}(\varphi(x)^{-1})p_1))=d(\varphi(q_1),\delta_{vr}(p_1))\leq { \oldC{C:11}^{\frac{1}{s^{M}}} (2\max\{C,1\})^{M}} vr,
\]
which is precisely the conclusion of item (i).

}
\end{proof}

{
\begin{lemma} \label{l:david-inclusion}
  Let us assume we are in the hypotheses of \cref{th:rep->davidcondpimr}, and let $\varphi:X\to\mathbb G$ be the Lipschitz function in the assumptions.
  
  Let $x \in K$, and let $\mathfrak{G}(\cdot,\cdot), \mathfrak{D}(\cdot,\cdot,\cdot), DC(\cdot,\cdot,\cdot)$ be defined as in \cref{def:GvR}, \cref{def:DC}, and \cref{def:Dstorto}.
 Let $K_1, \oldC{C:1}$ be the constants defined in \cref{def:GvR}. Let $c_0,M,\lambda,\oldep{ep:1},C, \oldC{C:7}, N, \oldC{C:6}, \oldC{C:5}, \oldC{C:11}, \oldC{C:4}, \oldC{C:2}$ be the constants defined above, right before the statement of \cref{l:induction-alternative} and let $T$ be the compact cube introduced at the initial reduction argument at the beginning of the section, see the discussion from \cref{restrizionewlog} to \cref{ConstantsCK}.
  
  {
  There exists some constant $\alpha > 0$ depending only on $K$ and the constants above
  so that for every $\varepsilon>0$ and every $v,\rho$ sufficiently small with respect to the constants defined above and every basis $\mathfrak{E}$ as chosen above, we have
  {
  \begin{equation}\label{eqn:FinalCube2}
    \mathfrak{D}_{\mathfrak{E}}(v,\varepsilon v^{Qs^{2M}}, 2\oldC{C:5}M\rho) \subseteq DC  \left( \frac{\lambda v}{2\oldC{C:5}M},\alpha \varepsilon,2\oldC{C:5}M\rho \right).
  \end{equation}
  }
  }
\end{lemma}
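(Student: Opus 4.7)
My plan is to build a recursive $2^Q$-ary tree of cubes in $\mathbb{G}$ anchored at $\varphi(x)$, iterate \cref{l:induction-alternative} at each node, and use the density condition defining $\mathfrak{D}_{\mathfrak{E}}$ to bound the missing target mass.

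\textbf{Setup and tree.} Fix $x \in \mathfrak{D}_{\mathfrak{E}}(v, \varepsilon v^{Qs^{2M}}, 2\oldC{C:5}M\rho)$ and $r < 2\oldC{C:5}M\rho$, and set $\bar{r} := r/(2\oldC{C:5}M)$. Since $B(0,\lambda) \subseteq T$, we have $B(\varphi(x), \lambda v \bar{r}) \subseteq \mathfrak{Q}_\emptyset := \varphi(x) \cdot \delta_{v\bar{r}}(T)$, and it suffices to establish
\[
\mathcal{H}^Q\bigl(B(\varphi(x), \lambda v \bar{r}) \setminus \varphi(\overline{B(x,r)} \cap K)\bigr) \leq \alpha \varepsilon\, \mathcal{H}^Q(B(\varphi(x), \lambda v \bar{r})).
\]
The root is $(\mathfrak{Q}_\emptyset, q_\emptyset := x, r'_0 := \bar{r})$, and at each node $(\mathfrak{Q}_w, q_w, r'_k)$ with $d(\varphi(q_w), c_{\mathfrak{Q}_w}) < \oldC{C:2} v r'_k$, I apply \cref{l:induction-alternative}. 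Alternative (i) produces $2^Q$ children at scale $r'_{k+1} = r'_k/2$; the compatibility $d(\varphi(q_{w,i}), c_{\mathfrak{Q}_{w,i}}) \leq \oldC{C:4} v r'_k = 2\oldC{C:4} v r'_{k+1} < \oldC{C:2} v r'_{k+1}$ holds exactly because $\oldC{C:2} = 10\oldC{C:4}$. Alternative (ii) declares the node \emph{bad} and produces $y_w \in B(q_w, \oldC{C:5}M r'_k)$ with $B(y_w, \oldC{C:6} v^{s^{2M}} r'_k) \cap \mathfrak{G}_{\mathfrak{E}}(v,R) = \emptyset$. A telescoping estimate $\sum_{j\ge 0} 2^{-j} = 2$ ensures every anchor and every bad point lies in $B(x, r)$, and for $v,\rho$ sufficiently small every bad ball sits inside $B(x,r) \cap K$ with radius below $R$.

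\textbf{From bad cubes to missing mass.} Any $z \in B(\varphi(x), \lambda v \bar{r})$ that escapes every bad cube belongs to a nested sequence of good cubes $\mathfrak{Q}_{w_k(z)}$, and since $d(z, \varphi(q_{w_k(z)})) \leq \mathrm{diam}(\mathfrak{Q}_{w_k(z)}) + \oldC{C:2} v r'_k \to 0$ while $\varphi(\overline{B(x,r)} \cap K)$ is compact, one gets $z \in \varphi(\overline{B(x,r)} \cap K)$. Therefore the missing target mass is controlled by $\sum_{w \in \mathcal{B}} \mathcal{H}^Q(\mathfrak{Q}_w) = \mathcal{H}^Q(T) \sum_{w} (v r'_{k(w)})^Q$, where the bad leaves in $\mathcal{B}$ are pairwise disjoint in the target thanks to the tiling property of $T$.

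\textbf{Main obstacle.} The decisive step is that bad balls in the domain, unlike bad cubes in the target, can overlap. I would extract a disjoint subfamily via $5B$-Vitali and combine it with the two-sided bound $l^{-1}r^Q \leq \mu(B(\cdot,r)) \leq l r^Q$ from \eqref{eqn:ControlU} to obtain
\[
\sum_{w \in \mathcal{B}} \mu\bigl(B(y_w, \oldC{C:6} v^{s^{2M}} r'_{k(w)})\bigr) \leq 5^Q l^2\, \mu\bigl(B(x,r) \setminus \mathfrak{G}_{\mathfrak{E}}(v,R)\bigr) \leq 5^Q l^2 \varepsilon v^{Qs^{2M}} \mu(B(x,r)).
\]
Since the ratio $\mathcal{H}^Q(\mathfrak{Q}_w)/\mu(B(y_w, \oldC{C:6} v^{s^{2M}} r'_{k(w)}))$ is bounded by $\mathcal{H}^Q(T)\, l\, \oldC{C:6}^{-Q}\, v^{Q(1-s^{2M})}$, multiplying produces the crucial cancellation $v^{Q(1-s^{2M})} \cdot v^{Qs^{2M}} = v^Q$, giving total missing mass $\leq C_K \varepsilon v^Q r^Q \leq \alpha \varepsilon\, (\lambda v \bar{r})^Q$ with $\alpha = C_K (2\oldC{C:5}M/\lambda)^Q$. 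This cancellation, encoded in the exponent $Q s^{2M}$ in the hypothesis on $\mathfrak{D}_{\mathfrak{E}}$, is precisely what allows the argument to survive the non-abelian case $s>1$.
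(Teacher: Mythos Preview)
Your tree construction, the recursion via \cref{l:induction-alternative}, the compatibility check $2\oldC{C:4}<\oldC{C:2}$, and the identification of the cancellation $v^{Q(1-s^{2M})}\cdot v^{Qs^{2M}}=v^Q$ all match the paper's argument. The gap is in your ``Main obstacle'' paragraph: the displayed inequality
\[
\sum_{w\in\mathcal B}\mu\bigl(B(y_w,\oldC{C:6}v^{s^{2M}}r'_{k(w)})\bigr)\;\le\;5^Ql^2\,\mu\bigl(B(x,r)\setminus\mathfrak G_{\mathfrak E}(v,R)\bigr)
\]
does not follow from $5B$-Vitali in the domain together with the two-sided density bound. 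Vitali produces a disjoint subfamily $\{B_{w_j}\}$ with $\bigcup_w B_w\subseteq\bigcup_j 5B_{w_j}$, so it controls $\mu(\bigcup_w B_w)$, but not the \emph{sum} $\sum_w\mu(B_w)$: there is nothing preventing arbitrarily many bad balls in the domain from piling up on each other, since you have no a priori multiplicity bound. Yet it is the sum you need, because you pass from $\sum_w\mathcal H^Q(\mathfrak Q_w)$ to $\sum_w\mu(B_w)$ termwise.

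The paper closes this gap by exploiting the second half of alternative (ii) in \cref{l:induction-alternative}, which you never use: the condition $\dist(\varphi(y_w),(\delta_N\mathfrak Q_w)^c)\ge \oldC{C:7}v r'_{k(w)}$, combined with $\oldC{C:6}v^{s^{2M}}<\oldC{C:7}v$, forces
\[
\varphi\bigl(B(y_w,\mathrm{Lip}(\varphi)^{-1}\oldC{C:6}v^{s^{2M}}r'_{k(w)})\bigr)\subseteq\delta_N\mathfrak Q_w.
\]
Hence whenever two enlarged cubes $\delta_N\mathfrak Q_w$, $\delta_N\mathfrak Q_{w'}$ are disjoint in the target, the corresponding shrunken bad balls are disjoint in the domain. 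The paper then runs the Vitali argument \emph{in the target} on the family $\{\delta_N\mathfrak Q_w\}$ (using that the $\mathfrak Q_w$ are already pairwise disjoint, so $\sum_w\mathcal H^Q(\mathfrak Q_w)=\mathcal H^Q(\bigcup_w\mathfrak Q_w)$), extracts a subfamily $\{\bar{\mathfrak B}_j\}$ with pairwise disjoint $\delta_N\bar{\mathfrak B}_j$ and $\bigcup_w\mathfrak Q_w\subseteq\bigcup_j\delta_{\mathfrak cN}\bar{\mathfrak B}_j$, and only then passes to the domain, where the corresponding balls $B_{i_j}$ are now genuinely disjoint and contained in the bad set. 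This is the missing mechanism that converts disjointness of dyadic cubes in $\mathbb G$ into disjointness of bad balls in $X$.
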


\begin{proof}
{
   Let $x \in \mathfrak{D}_{\mathfrak{E}}(v,\varepsilon v^{Qs^{2M}}, 2\oldC{C:5}M\rho)$ and let $r < \rho$. Let $\mathfrak Q$ be the cube $\mathfrak{Q}:=\varphi(x)\cdot\delta_{vr}(T)$.  We will show that there exists a constant $C$ depending only on $K$ and the constants we already chose above such that
  \begin{align}
    \mathcal{H}^Q(\mathfrak{Q} \backslash \varphi(B(x,{2\oldC{C:5}Mr}) \cap K)) \leq C\varepsilon v^Q \mu(B(x,{2\oldC{C:5}Mr})).
    \label{stimacolcubo2}
  \end{align}
 Let us first show that \eqref{stimacolcubo2} is sufficient to conclude the proof. Thanks to Proposition~\ref{prop:tilings}(iv), there exists a $\lambda>0$ such that $B\Big(\varphi(x),v\lambda r\Big)\subseteq \mathfrak{Q}$ and hence
 \begin{equation}
 \begin{split}
     & \mathcal{H}^Q\Big(B\Big(\varphi(x),v\lambda r\Big)\cap \varphi(B(x,{2\oldC{C:5}Mr})\cap K)\Big)\\
     &\geq\mathcal{H}^Q\Big(B\Big(\varphi(x),v\lambda r\Big)\Big)-\mathcal{H}^Q\Big(\mathfrak{Q}\setminus \varphi(B(x,{2\oldC{C:5}Mr})\cap K)\Big) \\
    &\geq \mathcal{H}^Q\Big(B\Big(\varphi(x),v\lambda r\Big)\Big)-C\varepsilon v^Q\mu(B(x,{2\oldC{C:5}Mr}))\\
    &\geq\mathcal{H}^Q\Big(B\Big(\varphi(x),v\lambda r\Big)\Big)-Cj\varepsilon (2\oldC{C:5}Mrv)^Q\\
    &\geq  \Big({1-\frac{2^Q\oldC{C:5}^QM^QCj}{C'^Q\lambda^Q}}\varepsilon\Big)\mathcal{H}^Q\Big(B\Big(\varphi(x),v\lambda r\Big)\Big),
 \end{split}
 \end{equation}
 {where $C'$ is such that $\mathcal{H}^Q(B(0,r))=C'r^Q$, with $B(0,r)\subset\mathbb G$}. Hence, the previous computation shows that, calling $\alpha=\frac{2^Q\oldC{C:5}^QM^QCj}{C'^Q\lambda^Q}$, we have that 
 \[
 \mathcal{H}^Q\left(B\left(\varphi(x),\frac{\lambda v}{2\oldC{C:5}M}r'\right)\cap\varphi(B(x,r')\cap K)\right)\geq (1-\alpha\varepsilon)\mathcal{H}^Q\left(B\left(\varphi(x),\frac{\lambda v}{2\oldC{C:5}M}r'\right)\right),
 \]
 for every $x\in \mathfrak{D}_{\mathfrak{E}}(v,\varepsilon v^{Qs^{2M}}, 2\oldC{C:5}M\rho)$ and $r'<2\oldC{C:5}M\rho $. This is precisely \eqref{eqn:FinalCube2}.}
 
 We are thus reduced to prove \eqref{stimacolcubo2}.
 We now construct a tree $\mathfrak{T}$ of cubes of $\mathbb{G}$.
 The first layer of $\mathfrak{T}$ is the cube $\mathfrak{Q}$, whose centre is $\varphi(x)$. Let us now consider the $p_i$'s as defined in \cref{prop:tilings}. We will now apply \cref{l:induction-alternative} with $x\in K$, $x_{\mathfrak{Q}}:=\varphi(x)$, $0<r<2\oldC{C:5}M\rho$, and $0<v$, where $\rho,v$ are as in the statement of this Lemma. 
  Concerning the second layer of elements of $\mathfrak{T}$, if item (i) of \cref{l:induction-alternative} with the previously explained choices of variables \emph{does not hold}, then the construction of the tree stops. On the other hand, if the first alternative of Lemma~\ref{l:induction-alternative} is satisfied, we can find points $\{q_i\}_{i=1}^{2^Q} \subset K$, such that $\varphi(q_i)$ are close to $\varphi(x)\cdot \delta_{vr}(p_i)$, the centres of the quadrant subcubes $\{\mathfrak{Q}_i\}_{i=1}^{2^Q}$ of $\mathfrak{Q}$, where $\mathfrak{Q}_i:=\varphi(x)\cdot \delta_{vr} (p_i)\cdot\delta_{1/2}(\varphi(x)^{-1}\cdot\mathfrak{Q})$. Notice that $\varphi(x)\cdot \delta_{vr}(p_i)$ is the centre of the new cube $\mathfrak{Q}_i$.
 
As a second step, assuming the process continues, for every $j=1,\dots,2^Q$, we again consider \cref{l:induction-alternative} with $q_j\in K$, $0<r/2<\rho$, $x_{\mathfrak{Q}}=\varphi(x)\cdot \delta_{vr}(p_i)$. Notice that at this stage the hypotheses of \cref{l:induction-alternative} are met because of the estimate coming from item (i) of \cref{l:induction-alternative} at the previous stage and the fact that $\oldC{C:4}=\oldC{C:2}/10<\oldC{C:2}/2$.

If for some $\bar j$, the first alternative of Lemma~\ref{l:induction-alternative} holds as intended above, we add the subcubes of $\mathfrak{Q}_{\bar j}$ as leaves of the cube $\mathfrak{Q}_{\bar j}$ in $\mathfrak{T}$. If, on the other hand, the first alternative fails, we do not add leaves to $\mathfrak{Q}_{\bar j}$.
Inductively, we stop the process at each subcube where the first alternative fails and continue in the cubes where it doesn't, dividing $r$ by a further factor of $2$ at each stage, as depicted above.

Clearly, the cubes $\{\mathfrak{B}_i\}_{i\in\N}$ of $\mathfrak{T}$ without leaves are disjoint dyadic subcubes of $\mathfrak{Q}$. Let us pick $\mathfrak{B}_i$ without leaves, and let us assume $\mathfrak{B}_i$ belongs to the $\ell_i$-th stratum of the tree $\mathfrak{T}$. Let {$\alpha_i\in K\cap B(x,2\oldC{C:5}Mr)$} be the point yielded by the inductive construction for which $\varphi(\alpha_i)$ is close the centre of the cube $\mathfrak{B}_i$.

Thanks to Lemma~\ref{l:induction-alternative}, we know that 
\begin{itemize}
    \item[] there exists some $y_i \in B(\alpha_i,\oldC{C:5}Mr/2^{\ell_i}) \cap K$ so that $B(y_i, \oldC{C:6}{v^{s^{2M}}}r/2^{\ell_i}) \cap \mathfrak{G}(v, 2\oldC{C:5}M\rho) = \emptyset$ and $$
    \dist(\varphi(y_i),(\delta_{N}\mathfrak{B}_i)^c) \geq \oldC{C:7}vr/2^{\ell_i}.
    $$
\end{itemize}
Notice that by how we chose the constants $\oldC{C:6}v^{s^{2M}}r/2^{\ell_i}<\oldC{C:7}vr/2^{\ell_i}$. This implies that 
$$
B(\varphi(y_i),\oldC{C:6}v^{s^{2M}}r/2^{\ell_i})\subseteq B(\varphi(y_i),\oldC{C:7}vr/2^{\ell_i})\subseteq \delta_N\mathfrak{B}_i
$$
and thus
\begin{equation}\label{eqn:ContainBall}
\varphi(B(y_i, \mathrm{Lip}(\varphi)^{-1}\oldC{C:6}v^{s^{2M}}r/2^{\ell_i}))\subseteq B(\varphi(y_i), \oldC{C:6}v^{s^{2M}}r/2^{\ell_i}){\subseteq \delta_N\mathfrak{B}_i}.
\end{equation}
Let us define $B_i:=B(y_i,\mathrm{Lip}(\varphi)^{-1}\oldC{C:6}v^{s^{2M}}r/2^{\ell_i})$. From how the construction is defined and by homogeneity, we have that
\[
\mathcal{H}^Q(\delta_N\mathfrak{B}_i)=(N2^{-\ell_i}vr)^Q\mathcal{H}^Q(T),
\]
and, since $K\subset U(j,\overline R)$ for some $j\in\mathbb N$, see \eqref{eqn:ControlU},the assumptions of \cref{th:rep->davidcondpimr} and the discussion from \cref{restrizionewlog} to \cref{ConstantsCK}, we thus have \[
\mu(B_i)\geq j^{-1}(\mathrm{Lip}(\varphi)^{-1}\oldC{C:6}v^{s^{2M}}r/2^{\ell_i})^Q.
\]
Then, from the previous two equations, we infer that there exists a constant $\mathfrak{C}$ only depending on $j,N,\mathrm{Lip}(\varphi),\oldC{C:6},M,Q,s$ and the cube $T$ for which 
\begin{equation}\label{eqn:EstimateCube}
    \mathcal{H}^Q(\delta_N\mathfrak{B}_i)\leq \mathfrak{C}v^{Q(-s^{2M}+1)}\mu(B_i),
\end{equation}
for every cube $\mathfrak{B}_i$ that has no leaves.
 
{

Let us now notice that the balls $ B_i$ and $B_j$ relative to two disjoint cubes $\delta_N\mathfrak{B}_i$ and $\delta_N\mathfrak{B}_j$ must be disjoint. Indeed, if this were not the case, we would have that also $\varphi(B_i)\cap \varphi(B_j)\neq \emptyset$ and this would imply, by \eqref{eqn:ContainBall}, that $\delta_N\mathfrak{B}_i\cap \delta_N\mathfrak{B}_j\neq \emptyset$, which is in contradiction with the choice of the cubes.

\textbf{Claim.} There exists a constant $\mathfrak{c}>1$, and a subfamily $\{\bar{\mathfrak{B}}_\kappa\}_{\kappa\in\N}$ of $\{\mathfrak{B}_j\}_{j\in\N}$ such that
$$\bigcup_{i\in\N}\mathfrak B_i\subseteq \bigcup_{j\in\N} \delta_{\mathfrak{c}N}\bar{\mathfrak B}_j,$$
and the $\delta_N\bar{\mathfrak{B}}_{j}$'s
are pairwise disjoint. 

\smallskip

It is clear that $\bigcup_{i\in\N}\mathfrak{B}_i\subseteq \bigcup_{i\in\N}\delta_N\mathfrak{B}_i$ and, denoted by $\mathfrak{c}_i=\mathfrak{c}(\mathfrak{B}_i)$ the centre of the cube $\mathfrak{B}_i$ since $\lambda\mathrm{diam}(T)<4$, see Proposition~\ref{prop:tilings}, we obviously have
$$
\mathfrak{B}_i\subseteq B(\mathfrak{c}(\mathfrak{B}_i),4\lambda^{-1}\mathrm{diam}(T)^{-1}\mathrm{diam}(\mathfrak{B}_i)).
$$
Hence
$$\bigcup_{i\in\N}\mathfrak{B}_i\subseteq \bigcup_{i\in\N}B(\mathfrak{c}_i,4N\lambda^{-1}\mathrm{diam}(T)^{-1}\mathrm{diam}(\mathfrak{B}_i))\subseteq \bigcup_{j\in\N}B(\mathfrak{c}_{i_j},20N\lambda^{-1}\mathrm{diam}(T)^{-1}\mathrm{diam}(\mathfrak{B}_{i_j})),$$
where in the last inclusion we used the Vitali's covering lemma and the balls $$B(\mathfrak{c}_{i_j},4N\lambda^{-1}\mathrm{diam}(T)^{-1}\mathrm{diam}(\mathfrak{B}_{i_j})),$$
are pairwise disjoint. Hence, we infer that 
$$
\bigcup_{i\in\N}\mathfrak{B}_i\subseteq \bigcup_{j\in\N}\delta_{80N\lambda^{-2}\mathrm{diam}(T)^{-1}}(\mathfrak{B}_{i_j}),
$$
and thus the the result follows with $\mathfrak{c}=80\lambda^{-2}\mathrm{diam}(T)^{-1}$.

Taking into account the claim above, this implies that
  \begin{align}
    \sum_{i=1}^\infty \mathcal{H}^Q(\mathfrak{B}_i)\leq&\sum_{j=1}^\infty \mathcal{H}^Q(\delta_{\mathfrak{c}N}(\mathfrak{B}_{i_j}))=\mathfrak{c}^Q\sum_{j\in\N}\mathcal{H}^Q(\delta_{N}(\mathfrak{B}_{i_j}))\\
    &\leq \mathfrak{C}\mathfrak{c}^Qv^{Q(-s^{2M}+1)}\sum_{j=1}^\infty \mu(B_{i_j}) \leq \mathfrak{C}\mathfrak{c}^Qv^{Q(-s^{2M}+1)}\mu(B(x,{2\oldC{C:5}Mr}) \setminus \mathfrak{G}(v,2\oldC{C:5}M\rho))\\
    &\leq \mathfrak{C}\mathfrak{c}^Qv^{Q(-s^{2M}+1)} \varepsilon v^{Qs^{2M}} \mu(B(x,{2\oldC{C:5}Mr}))=\mathfrak{C}\mathfrak{c}^Q\varepsilon v^{Q} \mu(B(x,{2\oldC{C:5}Mr})). \label{e:holes-bound2}
  \end{align}
  where the second last inequality comes from the fact that $B_{i_j} \cap \mathfrak{G}(v, 2\oldC{C:5}M\rho) = \emptyset$, the fact that $B_{i_j}\subset B(x,2\oldC{C:5}Mr)$ by the triangle inequality, and the fact that $B_{i_j}$ are pairwise disjoint according to what we said above.

}

  { Let us now conclude the proof. For $\mathcal{H}^Q$-almost every $p \in \mathfrak{Q} \setminus \bigcup_{i=1}^\infty \mathfrak{B}_i$, we infer that 
  $$
  \mathcal{H}^Q \Big(B(p,s) \cap (\mathfrak{Q} \backslash \bigcup_{i=1}^\infty \mathfrak{B}_i)\Big) > 0, \qquad \text{for every $s > 0$}.
  $$
  }
Let us show that, for every $s>0$, there must exist a point $y\in B(x,2 \oldC{C:5}Mr)\cap K$ that is mapped by $\varphi$ into $B(p,s)$. Let $\{\mathfrak{Q}\}_{j\in\N}$ be a sequence of cubes of $\mathfrak{T}$ containing $p$. Our construction implies that for every $j\in\N$ there is a $\beta_j\in K\cap B(x,{2\oldC{C:5}M r})$
such that $d(\varphi(\beta_j),p)\leq C 2^{-j} rv$, where $C$ is a constant depending on the relevant constants in the Lemma. It thus suffices to choose $j$ sufficiently large and pass to the limit to prove the claim.  In particular $p$ is a limit point of $\varphi(B(x,{ 2\oldC{C:5}Mr}) \cap K)$ and since $\varphi(B(x,{ 2\oldC{C:5}Mr}) \cap K)$ is compact, we get that almost every point of $\mathfrak{Q} \setminus \bigcup_{i=1}^\N \mathfrak{B}_i$ is contained in $\varphi(B(x,{ 2\oldC{C:5}Mr}) \cap K)$.  Hence
  \begin{align*}
    \mathcal{H}^Q(\mathfrak{Q} \backslash \varphi(B(x,{2\oldC{C:5}Mr}) \cap K))\leq \mathcal{H}^Q\left( \bigcup_{i=1}^\infty\mathfrak{B}_i\right) \overset{\eqref{e:holes-bound2}}{\leq} \mathfrak{C}\mathfrak{c}^Q \varepsilon v^Q \mu(B(x,{ 2\oldC{C:5}Mr})),
  \end{align*}
  which is what we wanted.
\end{proof}
}

}

\begin{proof}[Proof of \cref{th:rep->davidcondpimr}]
By \cref{prop:Sopra} we know that $\mu  (K\setminus \bigcup_{j\in\mathbb N} \bigcup_{k\in\mathbb N} \bigcup_{\mathfrak{E}\in\mathscr{E}}\mathfrak{G}_\mathfrak{E}(1/j,1/k))=0$, where $\mathfrak{G}_{\mathfrak{E}}(\cdot,\cdot)$ is defined in \cref{def:GvR}. By Lebesgue  density theorem, for every $\varepsilon >0$, and every $j,k\in\mathbb N$ for which $\mathfrak{G}_\mathfrak{E}(1/j,1/k)$ is defined, we have 
\[
\mu\left(\mathfrak{G}_{\mathfrak{E}}(1/j,1/k)\setminus \bigcup_{{\mathfrak{E}\in \mathscr{E}}}\bigcup_{\ell\geq k} \mathfrak{D}_\mathfrak{E}(1/j,\varepsilon/j^{Qs^{2M}},1/\ell)\right)=0.
\]
Putting everything together, we conclude that for every $\varepsilon>0$ the following holds 
\[
\mu\left(K\setminus \bigcup_{{\mathfrak{E}\in \mathscr{E}}}\bigcup_{j\in\mathbb N}\bigcup_{k\in\mathbb N} \mathfrak{D}_\mathfrak{E}(1/j,\varepsilon/j^{Qs^{2M}},1/k)\right)=0,
\]
and thus, by using \cref{l:david-inclusion} we conclude that for every $\varepsilon>0$ we have 
\[
\mu\left(K\setminus \bigcup_{j\in\mathbb N}\bigcup_{k\in\mathbb N} DC(1/j,\varepsilon,1/k)\right)=0,
\]
which is what we wanted to prove.
\end{proof}

Let us now recall the following, which is an immediate corollary of \cite[Theorem~5.3]{bateli}, taking into account the fact that for every Carnot group $\mathbb G$ of homogeneous dimension $Q$ endowed with a homogeneous left-invariant distance $d$, $(\mathbb G,d,\mathcal{H}^Q)$ is $Q$-Ahlfors regular.
\begin{teorema}\label{th:DavidImplyRect}
Let $(X,d,\mu)$ be a metric measure space such that 
\[
0<\Theta^{Q}_*(\mu,x)\leq \Theta^{Q,*}(\mu,x)<+\infty,
\]
for $\mu$-almost every $x\in X$. Let $\varphi:(X,d)\to (\mathbb G,d)$ be a Lipschitz map. If $X$ satisfies David Condition with respect to $\varphi$, see \cref{def:DavidsCondition}, then there exists a countable number of Borel sets $\{U_i\}_{i\in\mathbb N}$ such that $\varphi|_{U_i}:U_i\to  \varphi(U_i)\subset\mathbb G$ is biLipschitz and $\varphi(X\setminus \cup_{i\in\mathbb N} U_i)=0$.
\end{teorema}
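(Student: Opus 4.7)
The plan is to reduce the statement directly to the abstract rectifiability criterion \cite[Theorem~5.3]{bateli}, which asserts that if $(X,d,\mu)$ has positive lower and finite upper $Q$-density almost everywhere, and if $X$ satisfies David Condition with respect to a Lipschitz map $\varphi$ into a $Q$-Ahlfors regular metric space, then $X$ can be decomposed, up to a $\mu$-null set, into countably many Borel pieces on which $\varphi$ is biLipschitz.

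First I would verify that the target space $(\mathbb{G},d,\mathcal{H}^Q)$ is $Q$-Ahlfors regular. This is standard: since $d$ is left-invariant and $1$-homogeneous with respect to the dilations $\delta_\lambda$, and since $\mathcal{H}^Q$ is both left-invariant (hence a Haar measure, up to a constant) and $Q$-homogeneous under $\delta_\lambda$, one gets $\mathcal{H}^Q(B(x,r)) = \mathcal{H}^Q(B(0,1))\, r^Q$ for every $x\in\mathbb{G}$ and every $r>0$. Hence $\mathbb{G}$ is $Q$-Ahlfors regular, as already used implicitly in the discussion around homogeneous dimension in \cref{sec2}.

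Next I would observe that \cref{def:DavidsCondition}, together with the pointwise definition of $DC_{\varphi,U}(\beta,\varepsilon,R)$ in \cref{def:DC}, coincides with the notion of David Condition used in \cite[Theorem~5.3]{bateli}: both ask that, up to a $\mu$-null set, every point of $U$ lies in some $DC_{\varphi,U}(\beta,\varepsilon,R)$ for arbitrarily small $\varepsilon>0$, and the sets $DC_{\varphi,U}(\beta,\varepsilon,R)$ measure the proportion of the target ball of radius $\beta r$ that is covered by $\varphi(B(x,r)\cap U)$. Since the definitions align verbatim (once one reads the target $\mathbb{R}^n$ in \cite{bateli} as a general $Q$-Ahlfors regular space), the density assumption $0<\Theta^Q_*(\mu,x)\leq\Theta^{Q,*}(\mu,x)<+\infty$ for $\mu$-almost every $x$, together with David Condition for $\varphi$, are exactly the hypotheses of \cite[Theorem~5.3]{bateli}.

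Finally I would invoke \cite[Theorem~5.3]{bateli} to obtain Borel sets $\{U_i\}_{i\in\mathbb{N}}$ with $\mu(X\setminus\cup_{i}U_i)=0$ such that each $\varphi|_{U_i}$ is biLipschitz onto its image, which is the conclusion sought. Since $\mu$ and $\mathcal{H}^Q$ are mutually absolutely continuous (as a consequence of the two-sided density bounds, by \cite[\S 2.10.17--2.10.18]{Federer1996GeometricTheory}), the null set $X\setminus\cup_{i}U_i$ is also $\mathcal{H}^Q$-null, and hence its image under the Lipschitz map $\varphi$ is $\mathcal{H}^Q$-null in $\mathbb{G}$, as stated. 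There is no real obstacle here: the whole content of the statement is packaged in the abstract criterion of Bate--Li, and all that has to be checked is that the target $\mathbb{G}$ meets the Ahlfors regularity hypothesis and that the notions of David Condition agree.
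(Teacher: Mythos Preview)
Your proposal is correct and matches the paper's approach exactly: the paper does not give a proof but simply states the theorem as an immediate corollary of \cite[Theorem~5.3]{bateli}, noting only that $(\mathbb{G},d,\mathcal{H}^Q)$ is $Q$-Ahlfors regular. Your verification of Ahlfors regularity and of the alignment of the David Condition definitions is precisely the content of that remark, and your additional observation about the $\mathcal{H}^Q$-nullity of $\varphi(X\setminus\cup_i U_i)$ via mutual absolute continuity is a welcome clarification of the (somewhat elliptical) conclusion as stated.
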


We are now ready to prove the main theorem of this section, i.e., \cref{thm:SeanLi}.

\begin{proof}[Proof of \cref{thm:SeanLi}]
{The fact that (i)$\Rightarrow$(ii) immediately follows by working in a biLipschitz chart, and using Pansu Theorem for Lipschitz functions from Borel subsets of $\mathbb G$ into $\mathbb H$. 
The fact that (ii)$\Rightarrow$(iii) is trivial. The fact that (iii)$\Rightarrow$(iv) is a direct consequence of 1$\Rightarrow$2 in \cref{chardiffspaces}.
The fact that (iv)$\Rightarrow$(v) is a direct consequence of \cref{th:rep->davidcondpimr}, and finally (v)$\Rightarrow$(i) comes from \cref{th:DavidImplyRect} and the density assumption.}
\end{proof}

\section{Examples, sharpness of the results, and future directions}

In this section, we discuss the optimality of the hypotheses in some of our results. Moreover, we present some examples and some possible future directions.

First, we remark here that one cannot take the atlas of a $(\mathbb G,\mathbb H)$-differentiable space to be made of charts that are complete for \textit{every} Lipschitz function.
{
\begin{osservazione}[Non-existence of an atlas that is complete with respect to every Lipschitz function]\label{rem:NonTuttoCompleto}
If $(X,d,\mu)$ is a $(\mathbb G,\mathbb H)$-Pansu differentiability space, in general, it is not possible to provide a system of charts for $X$ that is $(\mathbb{G},\mathbb{H})$-complete for \textit{every} Lipschitz function $f$. Indeed, let $X:=\mathbb{H}^1$ be the first Heisenberg group, $d$ be a homogeneous left-invariant distance, and $\mu=\mathcal{H}^4$. Notice that $\mathbb H^1$ is a $(\mathbb R^2,\mathbb R)$-differentiable space, by taking as a unique global chart the projection on the first stratum $\pi_1:\mathbb H^1\to\mathbb R^2$. Let now $U\subseteq \mathbb H^1$ be a Borel set with $\mathcal{H}^4(U)>0$, and $\varphi:\mathbb H^1\to \R^2$ be a Lipschitz function. 

Arguing as in Proposition~\ref{diffunif} it is possible to prove that there is a compact subset $\tilde{U}\subseteq U$ of positive $\mathcal{H}^4$-measure of $\mathbb H^1$ such that for every $x_0\in\tilde{U}$ the map $\varphi$ has a Pansu differential at $x_0$, and for every $\varepsilon>0$ there exists a $\delta>0$ such that 
$$
\frac{\lvert \varphi(y)-\varphi(x)-D\varphi(x_0)[x^{-1}y]\rvert} {d(x,y)}\leq \varepsilon,
$$
for every $x\in \tilde{U}\cap B(x_0,\delta)$ and every $y\in B(x,\delta)$.
Fix a density point $x_0\in \tilde{U}$ and let us choose $x_i:=x_0$ and $y_i$ as follows
\begin{equation}
   y_i:=x_0\cdot (0,0,2i^{-1}).
\end{equation}
This implies in particular that, since $D\varphi(x_0)$ sends the vertical line to $\{0\}$,
\begin{equation}
    \begin{split}
       0=&\lim_{i\to\infty}\frac{\lvert \varphi(y_i)-\varphi(x_i)-D\varphi(x_0)[x^{-1}_iy_i]\rvert} {d(x_i,y_i)}
       =\lim_{i\to\infty}\frac{\lvert \varphi(x_i)-\varphi(y_i)\rvert} {d(x_i,y_i)}. 
    \end{split}
\end{equation}
On the other hand, if we let $F_{x_0}(z):=d(x_0,z)$ we have
\begin{equation}
    \lim_{i\to\infty}\frac{\lvert F_{x_0}(x_i)-F_{x_0}(y_i)\rvert}{d(x_i,y_i)}=1.
    \nonumber
\end{equation}
This shows that for every Lipschitz function $\varphi: U\to \R^2$ with $\mathcal{H}^4(U)>0$, there are a Lipschitz function $F:\mathbb H^1\to \R$, a point $x_0\in U$, and two sequences $x_i\in U,y_i$ in $\mathbb{H}^1$ such that 
$$
\lim_{i\to\infty}\frac{\lvert \varphi(x_i)-\varphi(y_i)\rvert} {d(x_i,y_i)}=0\qquad\text{however}\qquad  \lim_{i\to\infty}\frac{\lvert F(x_i)-F(y_i)\rvert}{d(x_i,y_i)}=1,
$$
and thus $\varphi$ could not be a $(\mathbb H^1,\mathbb R)$-complete chart for every Lipschitz function from $\mathbb H^1$ to $\mathbb R$ because of the existence of $F$.
\end{osservazione}
}
\medskip

In the following remark, we notice that being an $n$-differentiability space with respect to a chart might not imply being an $(\mathbb R^n,\mathbb H)$-differentiability space, for some non-Abelian Carnot group $\mathbb H$, with respect to the same chart. 

\begin{osservazione}[Lipschitz differentiability space does not imply Pansu differentiability space with the same charts]
Notice that, as remarked in \cref{rem:Importante}, every non-Abelian Carnot group $\mathbb G$ is a $n_1$-Lipschitz differentiability space and the (unique global) chart can be taken to be the projection on the horizontal space $\varphi:(\mathbb G,d_{\mathbb G}) \to (V_1(\mathbb G),d_{\mathrm{eu}})$.

We claim that for every isometric embedding $f:\mathbb G\to\mathbb H$, and for every Borel set $U\subseteq \mathbb G$, the pair $(U,\varphi)$ is not $(\mathbb R^{n_1},\mathbb H)$-complete with respect to $f$ nor is a Lipschitz chart for $\mathbb H$-valued maps. Indeed, let us just consider an isometric embedding $f:(\mathbb G,d_{\mathbb G})\to(\mathbb H,d_{\mathbb H})$ and notice that, for every $x_0\in\mathbb G$ there is a sequence $x_i\to x_0$ made of pairwise distinct elements such that $\varphi(x_i)=\varphi(x_0)$. Hence, for every $i\in\mathbb N$, we have
\[
\frac{d_{\mathrm{eu}}(\varphi(x_0),\varphi(x_i))}{d_{\mathbb G}(x_0,x_i)}=0,\qquad \text{while}\qquad\frac{d_{\mathbb H}(f(x_0),f(x_i))}{d_{\mathbb G}(x_0,x_i)}=1.
\]

Thus, if a metric space $(X,d)$ is an $n$-Lipschitz differentiability space, it might not automatically be a $(\mathbb R^n,\mathbb G)$-Lipschitz differentiability space, for another group $\mathbb G$, with the same charts, because otherwise the previous equalities would be in contrast with \eqref{eqn:PANSUDIFF}.

{Nevertheless, for example, as a consequence of Pansu--Rademacher theorem, or as a particular case of the proof of the \cref{thm:Fondamentale2},  which only uses that Lipschitz curves in Carnot groups are almost everywhere differentiable and not the full Pansu--Rademacher theorem (see \cref{rem:ConfrontoConPansu})  every Carnot group $(\mathbb G,d_{\mathbb G})$ is a $(\mathbb G,\mathbb H)$-Pansu differentiability space for every Carnot group $\mathbb H$, and it suffices to take as a unique global chart the identity map.}
\end{osservazione}

In the following remark, we comment on the relation between \cref{thm:Fondamentale2} and Pansu--Rademacher theorem. 

\begin{osservazione}[Comparison of \cref{thm:Fondamentale2} with Pansu--Rademacher theorem]\label{rem:ConfrontoConPansu}
    By the classical Pansu--Rademacher theorem \cite{Pansu, MagnaniPhD} we know that every Lipschitz function $f:K\subset \mathbb G\to\mathbb H$, where $K$ is a measurable set and $\mathbb G,\mathbb H$ are Carnot groups, is differentiable almost everywhere, i.e., for almost every $x\in\mathbb G$ there is  a homogeneous homomorphism $Df(x):\mathbb G\to\mathbb H$ such that 
    \[
    0=\lim_{K\ni y\to x}\frac{\|(Df(x)(x^{-1}y))^{-1}f(x)^{-1}f(y)\|_{\mathbb H}}{\|x^{-1}y\|_{\mathbb G}}.
    \]
    The result in \cref{thm:Fondamentale2} can be understood as a metric generalization of Pansu--Rademacher theorem: whenever we have a metric space $X$ with $n_1$ $\varphi$-independent (in a Borel set $U$) curves on it - this is encoded in the existence of the Alberti representations - and also $(U,\varphi)$ is $(\mathbb G,\mathbb H)$-complete with respect to a Lipschitz function $f:X\to\mathbb H$, then we can differentiate $f$ in chart and get a homogeneous homomorphism $Df(x):\mathbb G\to\mathbb H$ for almost every $x$. 

    Notice that \cref{thm:Fondamentale2} does not use the full Pansu--Rademacher theorem stated above. Anyway, along the proof, see in particular \cref{prop:curvefullmeas}, and \cref{constructionvectorfields}, we are using that the Lipschitz fragments $\gamma:\mathrm{dom}(\gamma)\subset \mathbb R\to\mathbb L$, where $\mathbb L$ is an arbitrary Carnot group, are almost everywhere differentiable. Thus, taking $X=K\subseteq\mathbb G$ a $\mathcal{H}^Q$-positive measurable set in $\mathbb G$, and $\varphi:=\mathrm{id}_{\mathbb G}$, \cref{thm:Fondamentale2} gives a proof of the classical Pansu--Rademacher theorem, as soon as we know that all Lipschitz curves in Carnot groups are almost everywhere differentiable.
\end{osservazione}

In the following observation, we notice that most of the results in \cref{sec2} and \cref{sec3} have natural reformulations when $\mathbb H$ is a Banach-homogeneous group with the Radon-Nikodym property, see \cite{MagnaniRajala} for the definition.
\begin{osservazione}[Extension of the results when $\mathbb H$ is a Banach-homogeneous group with RNP]\label{rem:MoreGeneraltarget}
    Most of the results and the theory in \cref{sec2} and \cref{sec3} can be readily reformulated, with the obvious variations, in the case $\mathbb H$ is a Banach homogeneous group with the Radon-Nikodym property, see the introduction of \cite{MagnaniRajala}, or even more generally when it is a scalable group with the Radon--Nikodym property \cite{Scalable}. Again, compare \cref{rem:ConfrontoConPansu}, a key property is that all Lipschitz curves in Banach-homogeneous groups with the Radon-Nikodym property are differentiable almost everywhere, see \cite[Theorem~3.1]{MagnaniRajala}. 

    For example, let us discuss what happens in \cref{thm:Fondamentale2} when $\mathbb H$ is a Banach-homogeneous group with the Radon-Nikodym property. First, by projection on the horizontal stratum, we deduce the analog of \cref{prophordiff}, and thus we have the {horizontal differential $D_{\mathrm{H}}f(x):V_1(\mathbb G)\to V_1(\mathbb H)$ for almost every $x$}. Then, using that Lipschitz curves are differentiable, see \cite[Theorem~3.1]{MagnaniRajala}, we first find a good family of curves on our space $X$ as in \cref{constructionvectorfields}; and then we run the argument of the proof of \cref{thm:Fondamentale2} to get the existence of a homogeneous homomorphism $Df(x):\mathbb G\to\mathbb H$ for $\mu$-a.e. $x\in U$. Thus, notice that by using \cite[Theorem~3.1]{MagnaniRajala} and \cref{thm:Fondamentale2} we get as a corollary \cite[Theorem~1.1]{MagnaniRajala}. We notice, anyway, that the idea of the proof of \cite[Theorem~1.1]{MagnaniRajala} is similar to that of our \cref{thm:Fondamentale2}.\\ 
    Similarly, in \cref{chardiffspaces} and \cref{thm:SeanLiINTRO}, we only use that Lipschitz curves into $\mathbb H$ are intrinsically differentiable. 
\end{osservazione}

We discuss a criterion, based on the existence of certain biLipschitz embeddings, to show that a space is not a $(\mathbb G,\mathbb H)$-differentiability space, see \cref{cor:Embeddability}. We start with a remark.
{
\begin{osservazione}[Towards non-embeddability of Pansu differentiable spaces]\label{rem:BiLipschitzEmbedding}
Let $\mathbb G$ be a Carnot group, and let $\mathbb H$ be Carnot groups of homogeneous  dimension $Q'$.
Let us suppose there exists a biLipschitz embedding $f:X\to\mathbb H$ with $f^\sharp\mathcal{H}^{Q'}\ll \mu$. Let $(U,\varphi)$ be a Lipschitz chart with $\varphi:X\to \mathbb G$ as in the hypotheses of \cref{thm:Fondamentale2}, and with $\mathcal{H}^{Q'}(f(U))>0$. We aim at showing that for $\mu$-almost every $x_0\in U$ the homogeneous homomorphism $Df(x_0)$, whose existence is guaranteed by \cref{thm:Fondamentale2}, is injective.

Let us fix $\varepsilon>0$ small enough, and let us take $\tilde U\subset U$ such that $\mu(U\setminus\tilde U)\leq \varepsilon\mu(U)$ and all the properties in Step 3 of \cref{thm:Fondamentale2} hold. 
First, as a consequence of \cref{thm:Fondamentale2}, we have that the map $f:X\to f(X)\subset \mathbb H$ is almost everywhere differentiable with respect to $\varphi$, which means that, for $\mu$-almost every $x_0\in \tilde U$ there is a homogeneous homomorphism $Df(x_0):\mathbb G\to\mathbb H$ such that, as $x\to x_0$,
\begin{equation}\label{eqn:Diff1}
\|Df(x_0)(\varphi(x_0)^{-1}\varphi(x))^{-1}f(x_0)^{-1}f(x)\|_\mathbb H=o(d(x,x_0)).
\end{equation}
Moreover, being $\varphi\circ f^{-1}:=g:f(X)\subset\mathbb H\to\mathbb G$, we have that $g$ is Lipschitz and $g\circ f=\varphi$. 

We claim that, for $\mathcal{H}^{Q'}$-almost every $y\in f(\tilde U)$, we have that $f$ is differentiable at $f^{-1}(y)$ with respect to $\varphi$, and $y$ is a differentiability point of $g$. Indeed, since $g$ is Lipschitz and $\mathcal{H}^{Q'}(f(\tilde U))>0$, we have that, calling $D$ the differentiability points of $g$ in $f(\tilde U)$, we have $\mathcal{H}^{Q'}(D)>0$ and $\mathcal{H}^{Q'}(f(\tilde U)\setminus D)=0$. Moreover, let $A\subset \tilde U$ be the set of points where $f$ is not differentiable with respect to $\varphi$. Hence $\mu(A)=0$, and thus, from the assumption, $\mathcal{H}^{Q'}(f(A))=0$. Hence $f(\tilde U\setminus A)\cap D$ is of $\mathcal{H}^{Q'}$-full measure in $f(\tilde U)$ and the claim is proved. 

Let us take $f(x_0)\in f(\tilde U\setminus A)\cap D$. Hence, for $y\to x_0$,
\begin{equation}
    \|Dg(f(x_0))(f(x_0)^{-1}y)^{-1}g(f(x_0))^{-1}g(y)\|_{\mathbb G} = o(d_{\mathbb H}(y,f(x_0)),
\end{equation}
and thus taking $y=f(x)$ and using that $f$ is biLipschitz we have, as $x\to x_0$,
\begin{equation}\label{eqn:Diff2}
    \|Dg(f(x_0))(f(x_0)^{-1}f(x))^{-1}\varphi(x_0)^{-1}\varphi(x)\|_{\mathbb G} = o(d(x,x_0)),
\end{equation}
{ and combining \eqref{eqn:Diff1} and \eqref{eqn:Diff2} we get that, as $X\ni x\to x_0$,
\begin{equation}\label{eqn:Diff3}
    \left\|  (Dg(f(x_0))\circ Df(x_0))  (\varphi(x_0)^{-1}\varphi(x))\cdot\varphi(x_0)^{-1}\varphi(x)\right\|_{\mathbb G} = o(d(x,x_0)),
\end{equation}

By using the Step 3 in \cref{thm:Fondamentale2}, see in particular item (a) and item (b) in there, we have that the following holds. For every $x_0\in \tilde U$, every $v\in\mathbb G$, and every $t>0$ small enough there is $x_t\in X$ such that $x_t\to x_0$ as $t\to 0$, and, for every $t>0$ small enough,
\[
    \varphi(x_0)^{-1}\varphi(x_t)=\delta_t(v)\Delta_{x_0}(t),
\]
where $\|\Delta_{x_0}(t)\|/t\to 0$ as $t\to 0$. Plugging $x=x_t$ in \eqref{eqn:Diff3}, letting $t\to 0$, using the estimate in item (b) in Step 3 of \cref{thm:Fondamentale2}, and using that $v$ is arbitrary, we deduce that for $\mu$-almost every $x_0\in\tilde U$ we have $Dg(f(x_0))\circ Df(x_0)=\mathrm{id}_{\mathbb G}$. By the arbitrariness of $\varepsilon>0$ we get that the previous equality holds for $\mu$-almost every $x_0\in U$, and thus $Df(x_0)$ is injective for $\mu$-almost every $x_0\in U$ which is the sought claim.
}
\end{osservazione}
}

\begin{corollario}[Non-embeddability of Pansu differentiable spaces]\label{cor:Embeddability}
Let $(X,d,\mu)$ be a metric measure space such that there exists a biLipschitz embedding $f:X\to \mathbb R^n$ with $\mathcal{H}^n(f(X))>0$ and $f^\sharp\mathcal{H}^n\ll \mu$. Then for every non-Abelian Carnot group $\mathbb G$, the metric measure space $(X,d,\mu)$ does not have the structure of a $(\mathbb G,\mathbb R^n)$-differentiability space.
\end{corollario}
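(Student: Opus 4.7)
I plan to argue by contradiction, combining \cref{rem:BiLipschitzEmbedding} with an elementary algebraic obstruction for homogeneous homomorphisms with target $\mathbb{R}^n$.

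Suppose for contradiction that $(X,d,\mu)$ is a $(\mathbb G,\mathbb R^n)$-differentiability space for some non-Abelian Carnot group $\mathbb G$. Applying \cref{chardiffspaces} to the given Lipschitz map $f:X\to\mathbb{R}^n$, I can cover $X$ up to a $\mu$-null set by countably many Borel sets $U_i$ equipped with Lipschitz charts $\varphi_i:X\to\mathbb G$ such that each $(U_i,\varphi_i)$ carries $\varphi_i$-independent horizontally $\rho_i$-universal Alberti representations of $\mu\llcorner U_i$ and is $(\mathbb G,\mathbb R^n)$-complete with respect to $f$. Since $\mathcal{H}^n(f(X))>0$ and $f^{\sharp}\mathcal{H}^n\ll\mu$, at least one $U_i$ satisfies $\mu(U_i)>0$ and $\mathcal{H}^n(f(U_i))>0$. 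Fix such $(U,\varphi):=(U_i,\varphi_i)$.

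The hypotheses of \cref{thm:Fondamentale2} are now met on $(U,\varphi)$ with target $\mathbb{R}^n$, so the conclusion of \cref{rem:BiLipschitzEmbedding} (used with $\mathbb H=\mathbb R^n$) applies verbatim: the Pansu differential $Df(x_0):\mathbb G\to\mathbb R^n$ is an \emph{injective} homogeneous homomorphism for $\mu$-almost every $x_0\in U$.

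To reach a contradiction, I will show that no injective homogeneous homomorphism $L:\mathbb G\to\mathbb R^n$ can exist when $\mathbb G$ is non-Abelian. Indeed, $L$ must intertwine the dilations: $L\circ\delta_\lambda^{\mathbb G}=\delta_\lambda^{\mathbb R^n}\circ L$ for every $\lambda>0$. Viewing $\mathbb R^n$ as a (step-$1$) Carnot group, its dilations act by $\delta_\lambda^{\mathbb R^n}(y)=\lambda y$. For any $v\in V_k(\mathbb G)$ with $k\geq 2$ one has $\delta_\lambda^{\mathbb G} v=\lambda^k v$, and the intertwining identity reads $\lambda^k L(v)=\lambda L(v)$ for all $\lambda>0$, which forces $L(v)=0$. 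Thus $V_2(\mathbb G)\oplus\cdots\oplus V_s(\mathbb G)\subseteq\ker L$. Because $\mathbb G$ is non-Abelian we have $V_2(\mathbb G)=[V_1(\mathbb G),V_1(\mathbb G)]\neq\{0\}$, contradicting the injectivity of $L=Df(x_0)$.

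The main conceptual step is the first one, namely setting up the framework so that \cref{rem:BiLipschitzEmbedding} is applicable; this is where \cref{chardiffspaces} is essential, since it produces charts that are simultaneously endowed with the independent, horizontally universal Alberti representations and the $(\mathbb G,\mathbb R^n)$-completeness with respect to $f$ required by \cref{thm:Fondamentale2}. Once injectivity of $Df(x_0)$ is in hand, the dilation-covariance argument is short and purely algebraic.
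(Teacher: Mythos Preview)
Your proof is correct and follows essentially the same route as the paper: argue by contradiction, use \cref{chardiffspaces} to produce a chart $(U,\varphi)$ meeting the hypotheses of \cref{thm:Fondamentale2} with $\mathcal{H}^n(f(U))>0$, invoke \cref{rem:BiLipschitzEmbedding} to get injectivity of $Df(x_0):\mathbb G\to\mathbb R^n$ for $\mu$-a.e.\ $x_0\in U$, and then derive a contradiction from the non-Abelian hypothesis. The paper states the last step in one line (``which is a contradiction since $\mathbb G$ is not Abelian''), whereas you spell out the dilation-equivariance argument forcing $V_k(\mathbb G)\subseteq\ker L$ for $k\ge 2$; either this or the observation that any group homomorphism into an Abelian target kills the commutator subgroup suffices.
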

\begin{proof}
Let us assume by contradiction that $(X,d,\mu)$ has the structure of a $(\mathbb G,\mathbb R^n)$-differentiability space. Hence by \cref{chardiffspaces} we have that there exist a Lipschitz chart $(U,\varphi)$, with $\varphi:X\to\mathbb G$ as in the hypotheses of \cref{thm:Fondamentale2} with $\mathcal{H}^n(f(U))>0$. Hence, \cref{rem:BiLipschitzEmbedding} tells us that for $\mu$-almost every $x_0\in U$ we have that $Df(x_0):\mathbb G\to\mathbb R^n$ is injective, which is a contradiction since $\mathbb G$ is not Abelian.
\end{proof}

Let us now comment on the sharpness of the complete assumption in \cref{chardiffspaces}.
\begin{osservazione}[Sharpness of the complete assumption in \cref{chardiffspaces}]\label{rem:NotDropComplete}
We stress that \cref{chardiffspaces} is false if we remove the third bullet in the second item.
This is false as soon as the Carnot group is not Abelian. For simplicity, we just present the situation in the Heisenberg group.
Namely,
we shall prove that the first Heisenberg group $(\mathbb{H}^1,\mathcal{H}^4)$
\begin{enumerate}
    \item is not a $(\R^2,\mathbb{H}^1)$-Pansu differentiability space;
    \item satisfies the first and the second bullet in the second item of \cref{chardiffspaces} with $\mathbb G=\mathbb R^2$ and $\mathbb H=\mathbb H^1$.
\end{enumerate}

To prove item 1 above, we claim that no chart $(\varphi,U)$ with $\mathcal{H}^4(U)>0$ and $\varphi:\mathbb{H}^1\to \R^2$ Lipschitz can be $(\R^2,\mathbb{H}^1)$-complete with respect to the identity map $\mathrm{id}:\mathbb{H}^1\to\mathbb{H}^1$. Consequently, this claim will conclude that $(\mathbb H^1,\mathcal{H}^4)$ is not a $(\mathbb R^2,\mathbb H^1)$-differentiability space by means of \cref{chardiffspaces}.

Indeed, by Pansu--Rademacher theorem for every $U,\varphi$ as above, $\varphi$ is differentiable at $\mathcal{H}^4$-almost every point $x_0\in U$. Hence there exists $x_0\in U$ such that $D\varphi(x_0):\mathbb H^1\to\mathbb R^2$ is a homogeneous homomorphism.  Hence, because the Carnot group is not abelian, there exists some $v\neq 0$ for which $D\varphi(x_0)(v)=0$. As a consequence, for a sequence $t_i\to 0$, $\varphi(x_0\cdot \delta_{t_i}v)-\varphi(x_0)=o(t_i)$. If we let $x_i:=x_0\cdot\delta_{t_i}v$ this shows that we found a point $x_0\in U$ and $x_i\to x_0$ with $x_i\neq x_0$, such that 
\[
\lim_{i\to +\infty}\frac{|\varphi(x_i)-\varphi(x_0)|_{\mathbb R^2}}{\|x_0^{-1}x_i\|_{\mathbb H^1}}=0,
\]
however $\lim_{i\to \infty}\lVert \mathrm{id}(x_0)^{-1}\mathrm{id}(x_i)\rVert/d(x_i,x_0)=\lim_{i\to \infty}\lVert x_0^{-1}x_i\rVert/d(x_i,x_0)=1.$

To prove item 2 above, it is readily seen that for the projection on the first stratum $\varphi:\mathbb H^1\to\mathbb R^2$ one can find two horizontally universal (with respect to $\mathbb H^1$) $\varphi$-independent Alberti representations of $\mathcal{H}^4$. On a fixed compact set, it suffices to take the two representations concentrated on a set of horizontal segments associated with two independent horizontal directions.
\end{osservazione}

We next stress that, in the generality of Carnot groups, \cref{thm:SeanLi} does not hold if, instead of item (i), we require that $(X,d)$ is just Lipschitz rectifiable.

\begin{osservazione}[About asking Lipschitz rectifiability in item (i) of \cref{thm:SeanLi}]\label{rem:PerForzaBilip}
\cref{thm:SeanLi} does not hold if, instead of item (i), we require that $(X,d)$ is just Lipschitz rectifiable. Indeed, there exists a distance $d$ on the first Heisenberg group $\mathbb H^1$ such that $d\leq d_{cc}$, $d$ is $4$-Ahlfors regular, and such that $d$ is not biLipschitz equivalent to $d_{cc}$ on every $\mathcal{H}^4_{d_{cc}}$-positive-measured set. Here $d_{cc}$ is a Carnot--Carathéodory distance on $\mathbb H^1$. The example is in \cite[Theorem~1.2]{LeDonneLiRajala}. Thus $(\mathbb H^1,d)$ is Lipschitz rectifiable and $4$-Ahlfors regular, but it is not biLipschitz rectifiable.

In the case $\mathbb G=\mathbb R^n$ for some $n\in\mathbb N$, we have that item (i) in \cref{thm:SeanLi} can be relaxed to $(X,d)$ is $n$-Lipschitz rectifiable as proved in \cite[Theorem~1.2]{bateli}. The latter is a consequence of a fact due to Kirchheim \cite{Kirchheim}.
\end{osservazione}

{
\begin{osservazione}\label{rk:laakso}
    It is known that Laakso spaces \cite{Laakso} are PI spaces, and thus Lipschitz differentiability spaces by \cite{CheegerGAFA}. It would be interesting to construct similar examples of Pansu differentiability spaces with a non-Abelian Carnot model.
\end{osservazione}

\begin{osservazione}[PI spaces, Lip-lip inequality, and differentiability spaces]\label{rem:LipLip}
It is known that on a PI space a Lip-lip inequality holds, see \cite{CheegerGAFA}, and \cite[Section 6]{KleinerMackay} for an account of this fact. Such an inequality is enough to show that the space is a Lipschitz differentiability space. It is also known that a complete metric measure space is an RNP-differentiability space if and only if it is rectifiable with PI spaces and every porous set has measure zero \cite[Theorem~1.1]{ErikssonBiqueGAFA}.\\
Moreover, it is known that a metric measure space is a Lipschitz differentiability space if and only if a Lip-lip inequality holds and it is asymptotically doubling, see \cite[Theorem~10.5]{BateJAMS}.\\
It would be interesting to understand what are the analogous statements in the non-Abelian Carnot setting.
\end{osservazione}

}

{
\begin{osservazione}

We developed the language of $(\mathbb G,\mathbb H)$-Pansu differentiability spaces in order to lay the foundations to approach the following conjecture. For the notions of pointed measure Gromov--Hausdorff tangent (pmGH for short), we refer to \cite{GigliMondinoSavare15}.

\begin{congettura}\label{cong}
Let $(X,d)$ be a complete metric space, and let $\mathbb G$ be a Carnot group of homogeneous dimension $Q$.
Let $E\subseteq X$ be a Borel subset such that $0<\mathcal{H}^Q(E)<+\infty$. Then the following are equivalent
\begin{enumerate}
    \item We have 
    \[
    \Theta^Q_*(\mathcal{H}^Q\llcorner E,x)>0,\qquad \text{for $\mathcal{H}^Q$-almost every $x\in X$,}
    \]
    and, for $\mathcal{H}^Q$-a.e. $x\in X$, there exists a constant $K_x$ such that all the pmGH tangents of $(X,d,\mathcal{H}^Q\llcorner E)$ at $x$ are supported on metric spaces that are $K_x$-biLipschitz equivalent to $\mathbb G$.
    \item $(E,d)$ is biLipschitz $\mathbb G$-rectifiable.
\end{enumerate}
\end{congettura}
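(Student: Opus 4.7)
The plan is to prove the two directions separately and to use \cref{thm:SeanLiINTRO} as the main bridge: once the equivalences established there are in place, in order to deduce biLipschitz $\mathbb G$-rectifiability from the tangent hypothesis it suffices to produce, on $\mathcal H^Q$-almost all of $E$, a Lipschitz map $\varphi\colon X\to\mathbb G$ admitting $n_1(\mathbb G)$ $\varphi$-independent Alberti representations of $\mathcal H^Q\llcorner E$, together with the density condition \eqref{eqn:Density}. The direction (2)$\Rightarrow$(1) is the direct one: given biLipschitz charts $f_i\colon U_i\subset\mathbb G\to X$ exhausting $E$ up to $\mathcal H^Q$-null sets, at $\mathcal H^Q$-almost every $x=f_i(y)$ with $y$ a density point of $U_i$, a push-forward argument yields $\Theta^Q_*(\mathcal H^Q\llcorner E,x)>0$, and every pmGH tangent of $(X,d,\mathcal H^Q\llcorner E)$ at $x$ arises as a limit of rescalings of $f_i$ around $y$, hence is supported on a space biLipschitz to $\mathbb G$ with constant controlled by $\mathrm{Lip}(f_i)\mathrm{Lip}(f_i^{-1})$.

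For the nontrivial implication (1)$\Rightarrow$(2), the first step is to reduce to a uniform situation. Using the positive lower $Q$-density, the finiteness of $\mathcal H^Q(E)$, and the existence of the constants $K_x$, one decomposes $E$, up to an $\mathcal H^Q$-null set, into countably many compact pieces $E_k$ on which $K_x\leq k$ and the $Q$-densities of $\mathcal H^Q\llcorner E$ are uniformly pinched between two positive constants. On such an $E_k$ the tangent hypothesis becomes uniform: for $\mathcal H^Q$-a.e.\ $x\in E_k$ every pmGH tangent at $x$ is supported on a metric space that is $k$-biLipschitz equivalent to $\mathbb G$.

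The central step is then, on each $E_k$, to construct a Lipschitz map $\varphi\colon X\to\mathbb G$ carrying $n_1(\mathbb G)$ $\varphi$-independent Alberti representations of $\mathcal H^Q\llcorner E_k$ in the sense of \cref{def:IndependentAlberti}. The idea is a Reifenberg/David style stopping-time construction adapted to $\mathbb G$: the uniform tangent hypothesis, combined with the compactness of pmGH tangents, yields at every point of $E_k$ and every sufficiently small scale $r$ an approximate $k$-biLipschitz identification of $B(x,r)\cap E_k$ with a ball in $\mathbb G$; patching these approximate local charts across generations of the self-similar tilings of $\mathbb G$ built in the paper, one produces a single Lipschitz map $\varphi$ which is biLipschitz on a definite proportion of $E_k$ at every scale. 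Independent horizontal Alberti representations of $\mathcal H^Q\llcorner E_k$ are then obtained by transporting, along these approximate charts, the obvious representations of the Haar measure of $\mathbb G$ in the $n_1(\mathbb G)$ horizontal coordinate directions; the uniform biLipschitz control together with the density pinch ensures that $\varphi$-independence survives the transport. Applying \cref{thm:SeanLiINTRO}\,(iv)$\Rightarrow$(i) on each $E_k$ and summing over $k$ yields biLipschitz $\mathbb G$-rectifiability of $E$.

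The main obstacle will be the Reifenberg/David stopping-time construction itself. In the Euclidean case, the linearity of the target permits interpolation of local approximations by affine averages and an elementary control of the accumulated error across generations; in $\mathbb G$ both ingredients fail, because dilations do not commute with the group operation and because the biLipschitz constants of approximating maps at nearby scales need not be compatible in the absence of a quantitative rigidity of biLipschitz self-maps of $\mathbb G$. Overcoming this will require a careful quantitative compactness/rigidity argument for biLipschitz maps into $\mathbb G$, essentially a Carnot-group analog of the Reifenberg parametrization theorem; this is the step where the machinery of the present paper---\cref{thm:Fondamentale2}, \cref{chardiffspacesINTRO}, and \cref{thm:SeanLiINTRO}---is decisive, since it allows one to trade the direct construction of a biLipschitz chart for the a priori milder task of producing a Lipschitz chart with independent Alberti representations.
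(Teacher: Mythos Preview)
The statement you are attempting to prove is labelled \emph{Conjecture} in the paper, not \emph{Theorem}: the authors do not provide a proof, and they explicitly present the implication (1)$\Rightarrow$(2) as open in general. They note only that (2)$\Rightarrow$(1) is ``straightforward by blow-up'', that the case $\mathbb G=\mathbb R^n$ follows from Bate's work, and that the case of equiregular sub-Riemannian manifolds follows from Le~Donne--Young. They further remark that a Carnot-group analogue of the metric Federer projection theorem of Bate ``seems to be an important step to prove the previous conjecture''. There is therefore no proof in the paper against which to compare your argument.

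Your treatment of (2)$\Rightarrow$(1) is in line with what the paper calls the straightforward direction. For (1)$\Rightarrow$(2), your proposal is an outline of a strategy rather than a proof: the ``central step''---a Reifenberg/David stopping-time construction producing a Lipschitz $\varphi\colon X\to\mathbb G$ with $n_1(\mathbb G)$ $\varphi$-independent Alberti representations---is precisely the missing ingredient that makes this a conjecture. You correctly identify the obstacles (non-commutativity of dilations with the group law, lack of quantitative rigidity for biLipschitz self-maps of $\mathbb G$), but you do not overcome them; the phrases ``patching these approximate local charts'' and ``transporting the obvious representations'' hide exactly the unsolved difficulties. In particular, \cref{thm:Fondamentale2}, \cref{chardiffspacesINTRO}, and \cref{thm:SeanLiINTRO} do not by themselves supply the needed construction: item (iv) of \cref{thm:SeanLiINTRO} already \emph{assumes} the existence of the Lipschitz chart with independent Alberti representations that your argument is meant to build. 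As written, your proposal should be regarded as a plausible roadmap towards the conjecture, not a proof.
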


The implication 2$\Rightarrow$ 1 is straightforward by blow-up. The previous statement, for $\mathbb G=\mathbb R^n$, is a consequence of \cite[Theorem~1.2]{BateInv}, and it can be understood as a metric version of Marstrand--Mattila's rectifiability criterion, see also \cite[Theorem~1.1]{BateInv}. For a version of Marstrand--Mattila's rectifiability criterion for $\mathscr{P}$-rectifiable measures in Carnot groups, see \cite{antonelli2020rectifiableB,MarstrandMattila20}. It would be interesting to understand whether the metric version of Federer's projection theorem in \cite{BateActa} can be extended to our setting, and this seems to be an important step to prove the previous conjecture.\\
The implication 1$\Rightarrow$2 in \cref{cong} is known to be true for equiregular sub-Riemannian manifolds, as a consequence of the results in \cite{LDY19}. We stress that, since there exists a nilpotent Lie group equipped with a left-invariant sub-Riemannian metric that is not locally quasiconformally equivalent to its tangent cone, see \cite[Theorem 1.1]{LDOW}, the implication 1$\Rightarrow$2 in \cref{cong} might be false if we require the biLipschtiz charts to be defined on open sets.\\
Notice that it might not be true that in the hypotheses of \cref{cong} the tangent is unique almost everywhere, or that the density exists almost everywhere, as it happens in the Euclidean case \cite{BateInv, Kirchheim}. This is due to the following example: there exists a left-invariant distance $d$ on $\mathbb H^1$ that is biLipschitz equivalent to a Carnot--Carathéodory distance $d_{\mathrm{cc}}$, such that no blow-up is self-similar, see \cite[Theorem~1.5]{LeDonneLiRajala}.\\
We also stress that it is important that the model Carnot group $\mathbb G$ is fixed in \cref{cong}. Indeed, there are examples (\cite[Theorem~1.1]{ALD}) of hypersurfaces in Carnot groups such that their tangents are everywhere unique and they are Carnot groups that might depend on the point, but for every Carnot group $\mathbb G$ they are not $\mathbb G$-Lipschitz rectifiable.
\end{osservazione}
}

\section{Appendix}\label{appendixcubes}

Throughout this section, we setwise identify $\mathbb{G}$ with $\R^n$ via exponential coordinates with respect to a basis adapted to the stratification. With the number $ n_1$ we denote the rank of $\mathbb{G}$, which is the dimension of its first stratum  $V_1(\mathbb{G})$. 

\subsection{Endpoint drift for fragments}

We collect here two technical results that have been used in the paper. \cref{lemma:drift} is used in the proof of \cref{l:induction-alternative}, and \cref{cubofigo} is used in the proof of the (iv)$\Rightarrow$(v) of \cref{thm:SeanLiINTRO}.
\begin{lemma}[Endpoint drift]\label{lemma:drift}
Let 
 $C$ be a compact set of the real line and let $\gamma:C\to \mathbb{G}$ be a Lipschitz curve in the Carnot group $\mathbb{G}$. Suppose  
that there exists  $t_0\in C$,  $e\in\mathbb{S}^{n_1-1}$,  $0<\sigma<1$, and  $R>0$, such that
\begin{itemize}
    \item[(\hypertarget{driftendpointi}{i})] for every $0< r < R$ we have $\mathcal{L}^1(B(t_0,r) \cap C) > (1-\sigma)\mathcal{L}^1(B(t_0,r))$;
    \item[(ii)] for every $t>t' \in C$, $\pi_1(\gamma(t))-\pi_1(\gamma(t')) \in C(e, \sigma)$.
\end{itemize}
Then, there exists a constant $\newC\label{C:1}>0$ depending only on $\mathbb G$, $\mathrm{Lip}(\gamma)$ and $\mathrm{diam}(C)$, such that for every $0<\lvert\rho\rvert<R$ we have
\begin{equation}
   d\left(\gamma(t_0+\rho),\gamma(t_0)\left[\left(\int_0^\rho\lvert D\gamma(s)\rvert d\mathcal{L}^1\llcorner C(s)\right)\,e\right]\right)\leq \oldC{C:1}\sigma^{1/s}\rho, 
   \label{eq:stimadriftix}
\end{equation}
for every $\rho$ for which $\rho+t_0\in C$.
\end{lemma}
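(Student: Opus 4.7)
The plan is to reduce, by a time shift and by left-translation (using left-invariance of $d$), to the case $t_0 = 0$ and $\gamma(0) = 0 \in \mathbb G$, so that the claim becomes $\|\delta_{F(\rho)}(-e)\cdot\gamma(\rho)\| \leq C \sigma^{1/s}|\rho|$, where $F(\rho):=\int_{[0,\rho]\cap C}|D\gamma|\,d\mathcal{L}^1$. I would then extend $\gamma$ to the convex hull of $C$ by linearly interpolating on each maximal gap in exponential coordinates (as in the proof of \cref{prop:misurbillity}); the resulting $\tilde\gamma$ is $\mathrm{Lip}(\gamma)$-Lipschitz in the Euclidean metric, and its Euclidean derivative agrees with the Pansu derivative $D\gamma$ at density points of $C$. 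I would then control $\|\delta_{F(\rho)}(-e)\cdot\gamma(\rho)\|$ stratum by stratum via the box norm from \cref{smoothnorm}.

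First I would prove the horizontal estimate $|\pi_1(\gamma(\rho))-F(\rho)e|\leq C\sigma|\rho|$. By hypothesis (ii), on any gap $(a,b)$ the slope $(\pi_1\gamma(b)-\pi_1\gamma(a))/(b-a)$ lies in $C(e,\sigma)$, while at density points of $C$ the derivative $D(\pi_1\circ\gamma)$ equals the (horizontal) Pansu derivative of $\gamma$; hence $(\widetilde{\pi_1\circ\gamma})'(s)\in C(e,\sigma)$ for a.e.\ $s\in [0,\rho]$. Decomposing any such vector as $|(\widetilde{\pi_1\circ\gamma})'(s)|(e+w(s))$ with $|w(s)|\leq\sqrt{2}\sigma$, integration gives $\pi_1\gamma(\rho)=\left(\int_0^{\rho}|(\widetilde{\pi_1\circ\gamma})'|\,ds\right)e + O(\sigma|\rho|)$, and by hypothesis (i) the replacement of $\int_{[0,\rho]}|(\widetilde{\pi_1\circ\gamma})'|\,ds$ by $F(\rho)$ costs at most $\mathrm{Lip}(\gamma)\cdot\mathcal{L}^1([0,\rho]\setminus C)\leq\sigma\,\mathrm{Lip}(\gamma)|\rho|$.

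Second, I would establish the higher-stratum bounds $|\gamma(\rho)_i|\leq C_i\sigma|\rho|^i$ for $2\leq i\leq s$. The key idea is that a Chen/Magnus-type endpoint expansion expresses $\gamma(\rho)_i$ as a finite sum of iterated integrals of nested brackets of horizontal derivatives; writing each $D\gamma(s)=|D\gamma(s)|(e+v(s))$ with $|v(s)|\leq\sqrt{2}\sigma$, and since every bracket built only from the single vector $e$ vanishes, each non-trivial term in the expansion carries at least one factor $v(\cdot)$ and hence at least one factor of $\sigma$, yielding the claim with constants depending only on $\mathbb G$, $\mathrm{Lip}(\gamma)$ and $\mathrm{diam}(C)$. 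The gap contributions, after the linear interpolation, are of the same form and have total weight $O(\sigma)$ by hypothesis (i).

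Finally, I would compute $\delta_{F(\rho)}(-e)\cdot\gamma(\rho)$ via the BCH formula \eqref{opgr}: its $V_1$-component is $\pi_1\gamma(\rho)-F(\rho)e$, bounded by $C\sigma|\rho|$ from the first step, and for $i\geq 2$ its $V_i$-component equals $\gamma(\rho)_i + \mathscr{Q}_i(-F(\rho)e,\gamma(\rho))$, which by the $\delta$-homogeneity of $\mathscr{Q}_i$, the bound $F(\rho)\leq\mathrm{Lip}(\gamma)|\rho|$, and the stratum estimates of the first two steps, is $O(\sigma|\rho|^i)$. Taking the box norm then yields $\|\delta_{F(\rho)}(-e)\cdot\gamma(\rho)\|\leq \max_i\varepsilon_i(C\sigma|\rho|^i)^{1/i}\leq \oldC{C:1}\sigma^{1/s}|\rho|$ (using $\sigma^{1/i}\leq\sigma^{1/s}$ for $i\leq s$), which is \eqref{eq:stimadriftix}. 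The main obstacle will be the second step, namely justifying the iterated-bracket expansion when $\gamma$ is only defined on $C$: one has to check that the non-horizontal linear interpolations on the gaps contribute only lower-order corrections to the ideal Chen endpoint formula, which is precisely where hypothesis (i) is essential.
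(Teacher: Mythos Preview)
Your strategy is genuinely different from the paper's. The paper first rescales to the unit interval via $\bar\gamma(s)=\delta_{1/r}(\gamma(rs))$, then extends $\bar\gamma$ to all of $[0,1]$ by \emph{geodesic} interpolation on each gap (so the extension is horizontal everywhere), and writes the Euclidean--coordinate identity $\bar{\bar\gamma}(\mathfrak t)=\int_0^{\mathfrak t}\mathscr C(\bar{\bar\gamma}(s))[D\bar{\bar\gamma}(s)]\,ds$, where $\mathscr C(x)$ is the differential of left translation by $x$. Subtracting the target $\bigl(\int_0^{\mathfrak t}|D\bar\gamma|\,d\mathcal L^1\llcorner\bar C\bigr)e$, the error splits into an $O(\sigma)$ gap term, an $O(\sigma)$ cone--defect term $\mathscr C(\bar\gamma)[D\bar\gamma-|D\bar\gamma|e]$, and a remainder $\bigl(\mathscr C(\bar\gamma)-\mathscr C(\text{target})\bigr)[|D\bar\gamma|e]$ bounded by the Lipschitz constant of $\mathscr C$ times the unknown itself. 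Gr\"onwall then gives a single Euclidean bound $\bigl|\bar\gamma(1)-\bigl(\int|D\bar\gamma|\bigr)e\bigr|\le\mathfrak c\,\sigma$, and the local H\"older comparison of Euclidean and Carnot distances converts this to $d\le\bar{\mathfrak c}\,\mathfrak c^{1/s}\sigma^{1/s}r$ after undoing the rescaling. No stratum--by--stratum bookkeeping is needed.

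Your second step, however, has a genuine gap. The Chen/Magnus endpoint formula expresses $\gamma(\rho)_i$ via iterated integrals of brackets of the \emph{body--frame velocity} $u(t)=\gamma(t)^{-1}\!\cdot\gamma'(t)$, and your bracket--vanishing argument ($[e,e]=0$) requires $u$ to be horizontal. At density points of $C$ this is fine (there $u=D\gamma\in V_1$), but linear interpolation \emph{in exponential coordinates} on a gap gives a curve whose body--frame velocity has non--trivial $V_2,\dots,V_s$ components, which feed directly into $\gamma(\rho)_i$ through the depth--one term $\int u_i$; the sentence ``gap contributions are of the same form and have total weight $O(\sigma)$'' does not address this. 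Relatedly, your claim that the Euclidean derivative equals the Pansu derivative at density points of $C$ is only correct for the $V_1$--component; in general the Euclidean derivative is $\mathscr C(\gamma)[D\gamma]$, which carries position--dependent higher--stratum terms. The fix is to interpolate by Carnot geodesics, so that the extension is horizontal throughout; then Chen's expansion applies cleanly, your bracket argument handles the $\bar C$--part, and the gap part is controlled by the $O(\sigma)$ total gap measure with bounded $|u|$. That said, once you have a horizontal extension you may as well rescale and run Gr\"onwall as the paper does: it is shorter and delivers the $\sigma^{1/s}$ exponent in one stroke at the very end.
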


\begin{proof}
We can assume without loss of generality that $t_0=0$ and $\gamma(t_0)=0$.
For every $-R<r<R$ for which $r\in C\setminus\{0\}$ let us define $\bar C:=C/r\cap [0,1]$
and let us denote $\bar \gamma:\bar C\to \mathbb{G}$ the curve $s\mapsto \delta_{1/r}\left(\gamma(rs)\right)$. Further, we extend $\bar \gamma$ to a curve $\bar{\bar{\gamma}}:[0,1]\to \mathbb{G}$
in such a way that $\bar{\bar{\gamma}}\vert_{\bar C}=\bar \gamma$ on $\bar C$ and on $[0,1]\setminus \bar C=\cup_{i\in\N} (a_i,b_i)$ we have that $\bar{\bar{\gamma}}$ coincides with a  geodesic connecting $\gamma(a_i)$ and $\gamma(b_i)$. We parametrize $\bar{\bar\gamma}$ on each $(a_i,b_i)$ in such a way that $|D\bar{\bar\gamma}|=1$ holds almost everywhere on $(a_i,b_i)$.

Then, seeing $\bar{\bar{ \gamma}}$ as an Euclidean Lipschitz curve, and denoting by $\mathscr{C}(x)$ the differential of the left translation by $x$, by \cite[Proposition~1.3.3]{tesimonti} we have that for every $\mathfrak{t}\in [0,1]$
\begin{equation}
    \begin{split}
        &\bar{\bar{\gamma}}(\mathfrak{t})-\left(\int_0^\mathfrak{t} \lvert D{\bar{\gamma}}(s)\rvert d\mathcal{L}^1\llcorner     \bar C(s)\right) e=\int_0^\mathfrak{t} \left(\mathscr{C}(\bar{\bar{\gamma}}(s))[D\bar{\bar{\gamma}}(s)]-(\chi_{\bar C}(s)\lvert D{\bar{\gamma}}(s)\rvert) e\right)d\mathcal{L}^1(s)\\
        &=\Delta(\mathfrak{t})+\int_0^\mathfrak{t} \left(\mathscr{C}(\bar{\gamma}(s))[D\bar{\gamma}(s)]-\lvert D\bar{\gamma}(s)\rvert e\right)d\mathcal{L}^1\llcorner \bar C(s),
          \label{eq:integrali1}
    \end{split}
\end{equation}
where the last identity in \eqref{eq:integrali1} follows by noting that 
$D\bar\gamma(s)=D\bar{\bar{\gamma}}(s)$ for $\mathcal{L}^1$-almost every $x\in \bar C$, and with the obvious meaning of $\Delta(t)$. 

On the other hand, thanks to item (\hyperlink{driftendpointi}{i}) we know that 
{
$$
\mathcal{L}^1([0,\mathfrak{t}]\setminus \bar C)\leq \mathfrak{t}-(1-\sigma)\mathfrak{t}=\sigma\mathfrak{t},
$$
}
{
and hence 
\begin{equation}\label{eqn:EST13}
\lvert \Delta(\mathfrak{t})\rvert\leq \lVert\mathscr{C}\rVert_{\infty,(\overline{B}(0,10\mathrm{diam}(\mathrm{im}\gamma)))}\sigma\mathfrak{t}.
\end{equation}
}

In addition, we further have
\begin{equation}\label{eqn:EST12}
    \begin{split}
        &\bar{\bar{\gamma}}(\mathfrak{t})-\left(\int_0^\mathfrak{t} \lvert D{\bar{\gamma}}(s)\rvert d\mathcal{L}^1\llcorner \bar C(s)\right) e
        =\Delta(\mathfrak{t})+\int_0^\mathfrak{t} (\mathscr{C}(\bar{\gamma}(s))[D\bar{\gamma}(s)]-\lvert D\bar{\gamma}(s)\rvert e)d\mathcal{L}^1\llcorner \bar C(s)\\
          &=\Delta(\mathfrak{t})+\int_0^\mathfrak{t} \mathscr{C}(\bar \gamma(s))[D\bar \gamma(s)-\lvert D\bar\gamma(s)\rvert e]d\mathcal{L}^1\llcorner \bar C(s)\\
          &\qquad\qquad\qquad\qquad\qquad\qquad+\int_0^\mathfrak{t} \Big(\mathscr{C}(\bar \gamma(s))-\mathscr{C}\Big(\left(\int_0^s \lvert \chi_{\bar C}(\tau)D{\bar\gamma}(\tau)\rvert d\tau\right) e\Big)\Big)[\lvert D\bar\gamma(s)\rvert e]d\mathcal{L}^1\llcorner \bar C(s)
    \end{split}
\end{equation}
where the last identity above follows 
from the fact that $\mathscr{C}(\left(\int_0^s \lvert \chi_{\bar C}(\tau)D{\bar\gamma}(\tau)\rvert d\tau \right)e)[e]=e$. 
{
If we fix $s\in [0,\mathfrak{t}]\cap\bar C$, and we call $\alpha:=D\bar\gamma(s)$, we have by item (ii) that $\langle \alpha,e\rangle\geq (1-\sigma^2)|\alpha|$. Hence, by calling $\alpha^\perp$ the orthogonal projection of $\alpha$ onto $e^\perp$, we have that $|\alpha^\perp|\leq \sqrt{1-(1-\sigma^2)^2}|\alpha|\leq 2\sigma|\alpha|$. Hence 
\[
|\alpha-|\alpha|e|\leq |\alpha^\perp|+|\langle\alpha,e\rangle-|\alpha||\leq (2\sigma+\sigma^2)|\alpha|\leq 3\sigma|\alpha|.
\]
Thus, we have showed that for every $s\in [0,\mathfrak t]\cap\bar C$ we have
\begin{equation}\label{eqn:EST14}
\big\lvert D\bar \gamma(s)-\lvert D\bar\gamma(s)\rvert e\big\rvert\leq 3\sigma |D\bar \gamma(s)|\leq 3\sigma \mathrm{Lip}(\gamma).
\end{equation}
}
Hence, we infer from \eqref{eqn:EST12}, \eqref{eqn:EST13}, and \eqref{eqn:EST14} that, calling $\vartheta:=\lVert\mathscr{C}\rVert_{\infty,(\overline{B}(0,10\mathrm{diam}(\mathrm{im}\gamma)))}$, and $\Theta:=\mathrm{Lip}(\mathscr{C}\lvert_{\bar B(0,10\mathrm{diam}(\mathrm{im}\gamma))})$, for every $\mathfrak{t}\in [0,1]$ we have
\begin{equation}\label{eqn:EVVAI}
    \begin{split}
        \left| \bar{\bar\gamma}(\mathfrak{t})-\left(\int_0^\mathfrak{t}\lvert D{\bar\gamma}(s)\rvert d\mathcal{L}^1\llcorner\bar C(s)\right)e\right|&\leq\vartheta\sigma\mathfrak{t}+ \vartheta\mathrm{Lip}(\gamma)\cdot 3\sigma\mathfrak{t}\\
        &+\Theta\mathrm{Lip}(\gamma)\int_0^\mathfrak{t}\Big\lvert \bar\gamma(s)- \Big(\int_0^s \lvert D{\bar\gamma}(\tau)\rvert d\mathcal{L}^1\llcorner\bar C(\tau) \Big) e\Big\rvert d\mathcal{L}^1\llcorner \bar C(s).
    \end{split}
\end{equation}
Denote $A(t):=(\vartheta\sigma+3\vartheta\mathrm{Lip}(\gamma)\sigma)t$ and $B:=\Theta\mathrm{Lip}(\gamma)$. By using the previous inequality at each value $\mathfrak t\in[0,1]\cap\bar C$, we conclude that, by Gr\"onwall Lemma, for every $\mathfrak t \in [0,1]\cap\bar C$ we have
\begin{equation}\label{eqn:EVVAI2}
    \begin{split}
      &  \left| \bar\gamma(\mathfrak t)-\left(\int_0^{\mathfrak t}\lvert D{\bar\gamma}(s)\rvert d\mathcal{L}^1\llcorner \bar C(s)\right) e\right|\leq A(\mathfrak t)+B\int_0^{\mathfrak t} A(s)e^{B(\mathfrak t-s)}ds
    \end{split}
\end{equation}
In particular, evaluating \eqref{eqn:EVVAI2} at $\mathfrak t=1$, since by construction $1\in\bar C$, we get that there exists a constant $\mathfrak{c}>0$ depending only on $\vartheta,\Theta,\mathrm{Lip}(\gamma)$ such that 
$$
\left| {\bar\gamma}(1)-\left(\int_0^1\lvert D{\bar\gamma}(s)\rvert d\mathcal{L}^1\llcorner\bar C(s)\right)e\right|\leq\mathfrak{c}\sigma.
$$
However, thanks to \cite[Proposition~5.15.1]{MR2363343}, we know that up to a constant $\bar {\mathfrak{c}}>0$ that depends only on $\mathrm{Lip}(\gamma)$ and $\mathrm{diam}(C)$, we have, for every $r\in C$,
\begin{equation}
    \begin{split}
        d\Big(\gamma(r),\left(\int_0^r\lvert D\gamma(s)\rvert d\mathcal{L}^1\llcorner C(s)\right)\, e\Big)&= r d\Big(\delta_{1/r}(\gamma(r)),\left(\frac{1}{r}\int_0^r\lvert D\gamma(s)\rvert d\mathcal{L}^1\llcorner C(s)\right)\, e\Big)\\
        &=rd\Big(\bar\gamma(1),\left(\int_0^1\lvert D\bar\gamma(s)\rvert d\mathcal{L}^1\llcorner \bar C(s)\right)\, e\Big)\\
        &\leq \bar{\mathfrak{c}}r\left| \bar\gamma(1)-\left(\int_0^1\lvert D\bar\gamma(s)\rvert d\mathcal{L}^1\llcorner \bar C(s)\right) e\right|^{1/s}
        \leq \bar{\mathfrak{c}}\mathfrak{c}^{1/s}\sigma^{1/s}r.
        \nonumber
    \end{split}
\end{equation}
This concludes the proof of the proposition.
\end{proof}

\subsection{On dyadic tiles in Carnot groups}\label{sec82}

\begin{proposizione}[{\cite[Theorem~3.1, Proposition~3.4 and Lemma~3.5]{tilings}}]\label{prop:tilings} Let $Q$ be homogeneous dimension of the Carnot group $\mathbb G$. Then, we can find $2^Q$ points $p_1,\ldots,p_{2^Q}$ and a compact set $T\subseteq \mathbb{G}$ such that
\begin{itemize}
    \item[(i)] $T$ is the closure of an open set and thus $\mathcal{H}^Q(T)>0$;
    \item[(ii)] $T=\bigcup_{j=1}^{2^Q}p_j\cdot\delta_{1/2}(T)$;
    \item[(iii)] $\mathcal{H}^Q(p_j\cdot\delta_{1/2}(T)\cap p_k\cdot\delta_{1/2}(T))=0$ for every $j\neq k\in\{1,\ldots,2^Q\}$;
    \item[(iv)] $0$ is in the interior of $T$.
\end{itemize}
\end{proposizione}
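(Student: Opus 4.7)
My approach would be to realize $T$ as the invariant set of an appropriate iterated function system on $\mathbb G$. The plan is to choose $2^Q$ points $p_1,\dots,p_{2^Q}\in\mathbb G$ (the digits, to be specified below) and define the maps $f_j(x):=p_j\cdot \delta_{1/2}(x)$. Each $f_j$ is a contraction of ratio exactly $1/2$ with respect to any homogeneous left-invariant distance, by left-invariance combined with the scaling property of $\delta_{1/2}$. Hutchinson's fixed-point theorem, applied on the complete metric space $(\mathbb G,d)$ to the Hutchinson operator $A\mapsto \bigcup_j f_j(A)$ acting on non-empty compact subsets with the Hausdorff distance, then produces a unique non-empty compact $T$ with $T=\bigcup_{j=1}^{2^Q}f_j(T)$, which is item (ii).

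The homogeneity of $\mathcal{H}^Q$ under $\delta_{1/2}$ yields the chain
\[
\mathcal{H}^Q(T)\le \sum_{j=1}^{2^Q}\mathcal{H}^Q(f_j(T))=2^Q\cdot 2^{-Q}\mathcal{H}^Q(T)=\mathcal{H}^Q(T).
\]
Hence, provided one knows $0<\mathcal{H}^Q(T)<\infty$, equality forces the measure-theoretic disjointness of item (iii). To secure finiteness and positivity I would verify Schief's open set condition (OSC) for $\{f_j\}$: namely produce a bounded non-empty open $U\subseteq \mathbb G$ with $f_j(U)\subseteq U$ for all $j$ and $f_j(U)\cap f_k(U)=\emptyset$ for $j\ne k$. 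The similarity dimension of the system is $Q$ (since $2^Q\cdot (1/2)^Q=1$), matching the Hausdorff dimension of $\mathbb G$; once OSC holds, the standard Carnot-group version of Schief's theorem gives $\dim_H T=Q$ with $0<\mathcal{H}^Q(T)<\infty$ and simultaneously shows $T$ has non-empty interior (item (i)). For item (iv) I would arrange the digits symmetrically so that $0=f_{j_0}(0)$ for some $j_0$ with $p_{j_0}=0$, and then use the interior property to extract the ball $B(0,\lambda)$.

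The main obstacle is the construction of the digit set $\{p_1,\dots,p_{2^Q}\}$ so that the OSC is satisfied in a non-abelian Carnot group. In $\mathbb R^n$ one may take $p_j$ ranging over $\{0,1/2\}^n$, producing $T=[0,1]^n$ automatically. In the nilpotent setting the Baker--Campbell--Hausdorff correction terms break this elementary recipe: a product $p_j\cdot \delta_{1/2}(T)$ is not the same as a coordinate translate, so pieces can overlap in unexpected ways. The resolution, following \cite{MR2466427, tilings}, is to construct the digits stratum-by-stratum along the grading $\mathfrak g=V_1\oplus\cdots\oplus V_s$: first choose $2^{n_1}$ horizontal digits projecting onto $\{0,1/2\}^{n_1}$ in $V_1$, then successively adjust by vertical corrections in $V_2,\dots,V_s$ so that the compound digits enumerate a complete set of representatives for a $2$-adic "digit lattice" compatible with $\delta_2$. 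The verification of OSC reduces at each stratum to an algebraic cancellation in BCH, which is the technical heart of the argument.
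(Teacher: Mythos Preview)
The paper does not prove this proposition; it is simply quoted from \cite{tilings} (their Theorem~3.1, Proposition~3.4 and Lemma~3.5). Your outline is in fact the strategy of those references: realise $T$ as the Hutchinson attractor of the IFS $\{f_j(x)=p_j\cdot\delta_{1/2}(x)\}_{j=1}^{2^Q}$, verify the open set condition via a stratum-by-stratum choice of the digit set compatible with the dilation, and read off (i)--(iii) from the fact that the similarity dimension equals $Q$. So the approach is correct and aligned with the cited literature.

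There is one genuine gap, in your argument for item~(iv). Choosing one digit $p_{j_0}=0$, so that $f_{j_0}=\delta_{1/2}$ fixes $0$, only yields $0\in T$; it does \emph{not} force $0\in\operatorname{int}T$. Already in the abelian model $\mathbb R^n$ with the natural digits $\{0,1/2\}^n$ the attractor is $[0,1]^n$, and $0$ sits on the boundary. More generally, the inclusion $\delta_{2^{-k}}(\operatorname{int}T)\subset\operatorname{int}T$ that you would obtain from $p_{j_0}=0$ shows only $0\in\overline{\operatorname{int}T}$, not $0\in\operatorname{int}T$. The clean fix --- and this is essentially what Lemma~3.5 of \cite{tilings} does --- is to first construct \emph{any} self-similar tile $T$ with non-empty interior by your argument, pick $\tau\in\operatorname{int}T$, and replace $T$ by $T':=\tau^{-1}\cdot T$. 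Conjugating the IFS relation by the left translation gives
\[
T'=\bigcup_{j=1}^{2^Q}\bigl(\tau^{-1}\cdot p_j\cdot\delta_{1/2}(\tau)\bigr)\cdot\delta_{1/2}(T'),
\]
so $T'$ is again a tile of the required form, now with $0\in\operatorname{int}T'$. (This translation trick is exactly the mechanism the present paper exploits later in Proposition~\ref{translcool}.)
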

Let $T$ be as in the statement of \cref{prop:tilings}. From now on, we  fix
    $0<\lambda<\mathrm{diam}(T)/4$ such that $B(0,\lambda)\subseteq T$. This $\lambda$ exists thanks to item (iv) of \cref{prop:tilings}.

We will sometimes refer to the $T$ in \cref{prop:tilings} as a {\em cube}. We will sometimes denote the sets $p_j\cdot \delta_{1/2}(T)$ as the {\em subcubes} of $T$.

\begin{osservazione}\label{FTV}
Let $\mathfrak{V}=\{v_1,\ldots,v_{n_1}\}$ be a basis of $V_1(\mathbb{G})$ and let $\mathfrak{W}^\kappa=\{w_1^\kappa,\ldots,w^\kappa_{n_1}\}$ be a sequence of bases converging to $\mathfrak{V}$, i.e., for every $\ell=1,\ldots,n_1$ we have
$$\lim_{\kappa\to\infty} w_\ell^\kappa=v_\ell \qquad\text{for every }\ell=1,\ldots,n_1.$$
Let $F_\mathfrak{V}$ be the map constructed in \cref{propdecomposizione} and for every $\kappa\in\N$ let us define
\begin{equation}\label{eqn:Fhat}
\hat F_{\mathfrak{W}^\kappa}(s_1,\dots,s_n):= \delta_{s_1}(w_{i_1}^\kappa)\dots\delta_{s_{n}}(w^\kappa_{i_{n}})\delta_{-\hat s_{n}}(w^\kappa_{i_{n}})\dots\delta_{-\hat s_{1}}(w^\kappa_{i_{1}}).
\end{equation}
Notice that the indices $i_1,\ldots,i_{2n}$ and the numbers $\hat s_i$ are the ones associated to the basis $\mathfrak{V}$, as produced in \cref{propdecomposizione}.
It is elementary to see that $\hat F_{\mathfrak{W}^\kappa}$ converges on compacts to $F_{\mathfrak{V}}$ in the $C^\infty$-metric. This is due to the following observation. Define
$\mathscr{F}:V\times (\mathbb{S}^{n_1-1})^{n_1}\to \R^{n}\equiv \mathbb{G}$ as
$$
\mathscr{F}(u_1,\ldots,u_{n_1},s_1,\ldots,s_{n}):=\delta_{s_1}(u_{i_1})\dots\delta_{s_{n}}(u_{i_{n}})\delta_{-\hat s_{n}}(u_{i_{n}})\dots\delta_{-\hat s_{1}}(u_{i_{1}}),
$$
where $i_1,\ldots,i_n$ are the indexes and $\hat s_1,\ldots,\hat s_{n_1}\in (0,1)$ are the real numbers yielded by \cref{propdecomposizione} {associated to the basis $\mathfrak{V}$}. Thanks to the Baker-Campbell-Hausdorff formula, the map $\mathscr{F}$ is a polynomial in the coordinates of the $u_i$s and in the $s_i$. This means that each of its derivatives in $s_1,\ldots,s_n$ is still a polynomial in the coordinates of the $u_i$s. This means, since $\mathscr{F}(u_1,\ldots,u_{n_1},s_1,\ldots,s_n)=\hat F_{\{u_1,\ldots,u_{n_1}\}}(s_1,\ldots,s_n)$, that the functions $\hat F_{\mathfrak{W}^\kappa}$ converge in the $C^\infty$ metric to $\hat F_{\mathfrak{V}}=F_{\mathfrak{V}}$, {on $V$, where $V$ is the open set in \cref{propdecomposizione}}. 

{This implies that for $\kappa$ sufficiently big, up to taking $V$ slightly smaller, the maps $\hat F_{\mathfrak{W}^\kappa}:V\to \hat F_{\mathfrak{W}^\kappa}(V)\supset B(0,\zeta(\mathfrak{V}))$ are diffeomorphisms, where $\zeta(\mathfrak{V})$ is some fixed number, using \cite[Lemma~B.3]{ALDNG}.}
Let us note that the above discussion implies the following. Let $\mathfrak{V}$ be as above. There exists a sufficiently small neighbourhood $V$ in $\mathbb R^n$, and two numbers $1\geq \sigma_0(\mathfrak{V}),\zeta(\mathfrak{V})>0$  such that if $\mathfrak{W}$ is another basis of $V_1(\mathbb{G})$, with
\begin{equation}
    \label{eq:basivicine}
    \text{$\lvert w_\ell-v_\ell\rvert\leq \sigma_0(\mathfrak{V})$ for every $\ell=1,\ldots,n_1$,}
\end{equation}
 we have 
$B(0,\zeta(\mathfrak{V}))\subseteq \hat F_{\mathfrak{W}}(V)$.
Throughout the rest of the proof, we denote 
\begin{equation}\label{sceltalambda}    \Lambda(\mathfrak{V}):=4\mathrm{diam}(T)/\zeta(\mathfrak{V}),
\end{equation}
and we will denote by $\bar{F}_{\mathfrak{W}}$ the map $\delta_{\Lambda(\mathfrak{V})}\circ \hat F_{\mathfrak{W}}$. In this way we have $B( T,\mathrm{diam}(T))\Subset \bar F_\mathfrak{W}(V)$. In addition, arguing as in the proof of \cite[Lemma~4.5]{ALDPolynomial}, we also infer that
$$\sum_{i=1}^n\lvert s_i\rvert+\lvert \hat s_i\rvert \leq \frac{4n}{\zeta(\mathfrak{V})}\lVert  \hat F_{\mathfrak{W}}(s_1,\ldots,s_{n})\rVert=\frac{n}{\mathrm{diam}(T)}\lVert \bar F_{\mathfrak{W}}(s_1,\ldots,s_{n})\rVert=:c_0\lVert \bar F_{\mathfrak{W}}(s_1,\ldots,s_{n})\rVert,$$
for every $(s_1,\ldots,s_{n})\in V$. Note that the constant $c_0$ does not depend on the basis $\mathfrak{W}$. 
\end{osservazione}

\begin{proposizione}\label{translcool}
     Let $\mathfrak{V}:=\{v_1,\ldots,v_{n_1}\}\subseteq (\mathbb{S}^{n_1-1})^{n_1}$ be a basis of $V_1(\mathbb{G})$. In the notations of \cref{FTV}, for every $\epsilon>0$ there exists a $\tau=\tau(\mathfrak{V})\in B(0,\min\{\lambda,\epsilon\}/4)$ such that $B(0,\lambda/4)\subseteq\tau\cdot T\subseteq \bar F_{\mathfrak{V}}(V)$, $\tau\cdot p_j\cdot \delta_{1/2}(\tau)^{-1}\in  \bar F_{\mathfrak{V}}(V)$ for every $j=1,\ldots,2^Q$, and 
\begin{equation}\label{intersectioncoordinateplanes}
\bar F_{\mathfrak{V}}(e_i^\perp\cap V)\cap\{\tau\cdot p_j\cdot \delta_{1/2}(\tau)^{-1}:j=1,\ldots,2^Q\}=\emptyset, \quad \forall i=1,\ldots,n,
\end{equation}
where as usual $e_1,\ldots,e_n$ denote the standard orthonormal basis of $\R^n$. 
\end{proposizione}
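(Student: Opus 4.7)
The plan is to view each requirement separately: the two containment conditions on $\tau \cdot T$ and on the shifted centres are routine, while the avoidance condition \eqref{intersectioncoordinateplanes} is a genericity statement that will follow from a measure-zero argument applied to each of the maps
\[
\Psi_j : \mathbb{G} \longrightarrow \mathbb{G}, \qquad \Psi_j(\tau) := \tau \cdot p_j \cdot \delta_{1/2}(\tau)^{-1}, \qquad j=1,\dots,2^Q.
\]
More precisely, I will first verify that if $\tau \in B(0,\min\{\lambda,\epsilon\}/4)$, then $B(0,\lambda/4)\subseteq \tau\cdot T\subseteq \bar F_{\mathfrak{V}}(V)$. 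The first inclusion follows from Proposition~\ref{prop:tilings}(iv) and left-invariance of $d$, since $\tau\cdot B(0,\lambda)=B(\tau,\lambda)\supseteq B(0,\lambda/4)$ whenever $\|\tau\|<3\lambda/4$. For the second, since $T\subseteq\overline{B}(0,\mathrm{diam}(T))$ we have $\tau\cdot T\subseteq B(T,\|\tau\|)\subseteq B(T,\mathrm{diam}(T))\Subset\bar F_{\mathfrak{V}}(V)$ by \cref{FTV}. By the same argument, $\Psi_j(\tau)\in\bar F_{\mathfrak{V}}(V)$ holds whenever $\tau$ lies in a sufficiently small neighborhood of $0$, since $\Psi_j(0)=p_j\in T$.

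The heart of the argument is the avoidance condition. Since $\bar F_{\mathfrak{V}}$ is a diffeomorphism onto its image (by \cref{FTV}), for each $i$ the set $\bar F_{\mathfrak{V}}(e_i^\perp\cap V)$ is a smooth $(n-1)$-dimensional embedded submanifold of $\mathbb{G}\cong\mathbb{R}^n$, hence has $n$-dimensional Lebesgue measure zero. Thus
\[
N := \bigcup_{i=1}^{n}\bar F_{\mathfrak{V}}(e_i^\perp\cap V)
\]
is a Lebesgue-null closed subset of $\bar F_{\mathfrak{V}}(V)$. The key step will be to show that each $\Psi_j$ is a local $C^\infty$-diffeomorphism at $\tau=0$, so that $\Psi_j^{-1}(N)$ is also Lebesgue-null in a neighborhood of the origin; in turn, the finite union $\bigcup_{j=1}^{2^Q}\Psi_j^{-1}(N)$ has measure zero in a neighborhood of the origin, and consequently its complement intersects $B(0,\min\{\lambda,\epsilon\}/4)$ in a set of positive measure, from which we extract $\tau$.

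The main (and essentially only) technical step is the computation of the Jacobian $d\Psi_j\bigl|_0$. Using exponential coordinates with the adapted basis, the Baker--Campbell--Hausdorff formula yields
\[
\Psi_j(\tau)_m = p_{j,m} + \tau_m - (1/2)^{d_m}\tau_m + R_m(\tau)
\]
where $d_m$ is the degree of the $m$-th coordinate and $R_m$ is a polynomial whose linear part in $\tau$ only involves coordinates $\tau_i$ with $d_i < d_m$ (by property (iii) of $\mathscr{Q}$ in Section~\ref{sec2}). Hence, with respect to the stratified decomposition, $d\Psi_j\bigl|_0$ is block lower-triangular with the $k$-th diagonal block equal to $(1-2^{-k})\mathrm{Id}_{V_k(\mathbb{G})}$, and in particular
\[
\det d\Psi_j\bigl|_0 = \prod_{k=1}^{s}(1-2^{-k})^{\dim V_k(\mathbb{G})} > 0.
\]
By the inverse function theorem, $\Psi_j$ is a local $C^\infty$-diffeomorphism near $0$, which is what is needed to conclude that $\Psi_j^{-1}(N)$ has measure zero in a small ball around $0$. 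Choosing any $\tau$ in the complement (inside $B(0,\min\{\lambda,\epsilon\}/4)$) furnishes the desired translation satisfying all the required properties.
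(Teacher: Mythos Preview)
Your proof is correct and follows essentially the same approach as the paper's: both arguments hinge on showing that the maps $\Psi_j(\tau)=\tau\cdot p_j\cdot\delta_{1/2}(\tau)^{-1}$ are local diffeomorphisms at $0$ (via the Jacobian computation in exponential coordinates), and then using that the images $\bar F_{\mathfrak{V}}(e_i^\perp\cap V)$ are Lebesgue-null, so the preimages under the $\Psi_j$ are null as well and a generic $\tau$ in the small ball avoids them. The only cosmetic differences are that the paper phrases the conclusion as a contradiction rather than a direct choice, and records the diagonal blocks of $D\Psi_j(0)$ as $2^{-k}$ rather than $1-2^{-k}$ (your computation is the correct one, though either value is nonzero so the argument is unaffected).
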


\begin{proof}
Thanks to the choice of $s$ as above, there exists $0<\delta\leq\min\{\lambda,\epsilon\}/4$ such that for every $\tau \in B(0,\delta)$ we have $ \tau \cdot T\subseteq \bar F_{\mathfrak{V}}(V)$.  We can take $\delta$ small enough such that, for every $j$, the map $C_j(\tau):= \tau\cdot p_j\cdot\delta_{1/2}(\tau^{-1})$ is Euclidean biLipschitz on $B(0,\delta)$ and $C_j(B(0,\delta))\subseteq \bar F_{\mathfrak{V}}(V)$ for every $j=1,\ldots,2^Q$.
Indeed, the map $C_j$ is a polynomial in the coordinates of $\tau$ and
$$DC_j(0)=\begin{pmatrix}
\mathcal{D}_{V_1} &\dots& *\\
0&\ddots&\vdots\\
0&\dots&\mathcal{D}_{V_s}
\end{pmatrix},$$
where $\mathcal{D}_{V_j}$ is an upper triangular matrix with  $2^{-j}$ on the diagonal. 
    Let us assume by contradiction that for every $\tau\in B(0,\delta)$ we have that \eqref{intersectioncoordinateplanes} fails.
    For every $i=1,\ldots,n$ and $j=1,\ldots,2^Q$ the set 
    $$
    \{\tau\in B(0,\delta):C_j(\tau)\in \bar F_{\mathfrak{V}}(e_i^\perp\cap V)\cap C_j(B(0,\delta))\}= C_j^{-1}(\bar F_{\mathfrak{V}}(e_i^\perp\cap V)\cap C_j(B(0,\delta)))
    $$ 
    is Borel measurable. The fact that \eqref{intersectioncoordinateplanes} fails, implies that there exist $j\in\{1,\ldots,2^Q\}$ and $i\in\{1,\ldots,n\}$ such that $C_j^{-1}(\bar F_{\mathfrak{V}}(e_i^\perp)\cap V)\cap B(0,\delta)$ has positive measure. 
Since $C_j$ is biLipschitz between $B(0,\delta)$ and $C_j(B(0,\delta))$, we infer that $C_j^{-1}(\bar F_{\mathfrak{V}}(e_i^\perp\cap V)\cap C_j(B(0,\delta)))$ has positive measure if and only if $\bar F_{\mathfrak{V}}(e_i^\perp\cap V)\cap C_j(B(0,\delta))$ has positive measure. However, since $C_j(B(0,\delta))\subseteq\bar  F_{\mathfrak{V}}(V)$ and $\bar F_{\mathfrak{V}}^{-1}$ is Lipschitz on $\bar F_{\mathfrak{V}}(V)$, we see that this is not the case. This concludes the proof of the proposition.
\end{proof}

\begin{teorema}\label{prop:tilings1}
 Let $\mathfrak{V}:=\{v_1,\ldots,v_{n_1}\}\subseteq (\mathbb{S}^{n_1-1})^{n_1}$ be a basis of $V_1(\mathbb{G})$. Then, there is an open set $V\subseteq (0,1)^n$, indexes $i_1,\ldots,i_{2n}\in \{1,\ldots,n_1\}$, $\tau(\mathfrak{V})\in B(0,\lambda/4)$ and parameters
 $$0<\sigma(\mathfrak{V}),\rho(\mathfrak{V}), \xi(\mathfrak{V})\leq 1/10, \qquad \Lambda(\mathfrak{V})>0,$$
 such that for every basis $\mathfrak{W}$ of $V_1(\mathbb{G})$ for which
     $$\lvert w_i-v_i\rvert\leq \sigma(\mathfrak{V})\qquad\text{for every }i=1,\ldots,n_1,$$
we have
 \begin{itemize}
\item[(i)] $\tau\cdot p_j\cdot\delta_{1/2}(\tau)^{-1}\in B(p_j,\lambda/8)\subseteq \tau\cdot T$ for every $j=1,\ldots,2^Q$;
     \item[(ii)] the cones $C(w_i,\sigma)$ are separated;
     \item[(iii)] 
for  every $j\in\{1,\ldots,2^Q\}$ and every $y\in B(0,\rho(\mathfrak{V}))$ there are $s_1,\ldots,s_{2n}\in V$ for which
$$ y^{-1}\cdot\tau\cdot  p_j\cdot\delta_{1/2}(\tau)^{-1} = \delta_{\Lambda(\mathfrak{V})s_1}(w_{i_1})\cdot\ldots\cdot\delta_{\Lambda(\mathfrak{V})s_{2n}}(w_{i_{2n}}),$$
and there holds
\begin{itemize}
    \item[($\alpha$)]${\displaystyle \min\{\lvert s_i \rvert: i=1,\ldots,2n\}\geq \xi(\mathfrak{V}).}$
    \item[($\beta$)] ${\displaystyle \max\{\lvert s_i\rvert:i=1,\ldots,2n\}\leq \frac{n}{\mathrm{diam}(T)}\lVert y^{-1}\cdot\tau\cdot  p_j\cdot\delta_{1/2}(\tau)^{-1} \rVert}$ for every $y\in B(0,\rho)$.
\end{itemize}
 \end{itemize}
\end{teorema}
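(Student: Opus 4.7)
The plan is to prove \cref{prop:tilings1} by fixing a reference decomposition via \cref{propdecomposizione} applied to $\mathfrak{V}$, producing a translation $\tau$ via \cref{translcool} so that the subcube centres miss all coordinate hyperplanes, and then perturbing the reference via the $C^\infty$-stability statement in \cref{FTV} so that the conclusions transfer from $\mathfrak{V}$ to every nearby basis $\mathfrak{W}$. Concretely, I would first apply \cref{propdecomposizione} to $\mathfrak{V}$ to extract indices $i_1,\ldots,i_n\in\{1,\ldots,n_1\}$, constants $\hat s_1,\ldots,\hat s_n\in(0,1)$, and the open set $V\subseteq(0,1)^n$, and then set $\Lambda(\mathfrak{V}):=4\mathrm{diam}(T)/\zeta(\mathfrak{V})$ as in \cref{FTV} so that $\bar F_{\mathfrak{V}}=\delta_{\Lambda(\mathfrak{V})}\circ\hat F_{\mathfrak{V}}$ is a diffeomorphism with $B(T,\mathrm{diam}(T))\Subset\bar F_{\mathfrak{V}}(V)$. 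Since $\delta_\Lambda\delta_s(w)=\delta_{\Lambda s}(w)$ for horizontal $w$, the map $\bar F_{\mathfrak{W}}$ takes the product form $\delta_{\Lambda s_1}(w_{i_1})\cdots\delta_{\Lambda s_n}(w_{i_n})\delta_{-\Lambda\hat s_n}(w_{i_n})\cdots\delta_{-\Lambda\hat s_1}(w_{i_1})$, which is exactly $2n$ factors in the form required by item (iii) after relabelling the indices as the palindrome $(i_1,\ldots,i_n,i_n,\ldots,i_1)$ and declaring that the last $n$ parameters are fixed to $-\hat s_n,\ldots,-\hat s_1$.

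Next, \cref{translcool} produces $\tau=\tau(\mathfrak{V})\in B(0,\epsilon/4)$ with the centres $C_j:=\tau\cdot p_j\cdot\delta_{1/2}(\tau)^{-1}$ all contained in $\bar F_{\mathfrak{V}}(V)$ and avoiding every coordinate hyperplane $\bar F_{\mathfrak{V}}(e_i^\perp\cap V)$. By the continuity of $\tau\mapsto C_j$ at $0$, reducing $\epsilon$ forces $d(C_j,p_j)\le\lambda/8$. The second inclusion in item (i) follows from $B(0,\lambda)\subseteq T$, which gives $B(p_j,\lambda/2)\subseteq p_j\delta_{1/2}(T)\subseteq T$ and, by left-invariance of the metric, $B(\tau p_j,\lambda/2)\subseteq\tau T$; the triangle inequality then delivers $B(p_j,\lambda/8)\subseteq B(\tau p_j,\lambda/2)\subseteq\tau T$ once $\tau$ is small enough that $d(p_j,\tau p_j)\le 3\lambda/8$. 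Since $\bar F_{\mathfrak{V}}^{-1}(C_j)$ has all $n$ coordinates nonzero by the hyperplane-avoidance and the index set $\{1,\ldots,2^Q\}$ is finite, there exists a uniform lower bound $\xi_1>0$ on the absolute value of every coordinate of every $\bar F_{\mathfrak{V}}^{-1}(C_j)$.

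To conclude item (iii) for all nearby bases, I would invoke the $C^\infty$-convergence $\bar F_{\mathfrak{W}}\to\bar F_{\mathfrak{V}}$ recorded in \cref{FTV} and the inverse function theorem: for $\sigma(\mathfrak{V})$ and $\rho(\mathfrak{V})$ sufficiently small, the map $\bar F_{\mathfrak{W}}$ remains a diffeomorphism on $V$, each $y^{-1}C_j$ lies in its image, and $\bar F_{\mathfrak{W}}^{-1}(y^{-1}C_j)$ lies within $\xi_1/2$ coordinatewise of $\bar F_{\mathfrak{V}}^{-1}(C_j)$. The resulting parameters $s_1,\ldots,s_n$, concatenated with the fixed $-\hat s_n,\ldots,-\hat s_1$, form the required $2n$-tuple; setting $\xi(\mathfrak{V}):=\min\{\xi_1/2,\min_k \hat s_k\}$ yields ($\alpha$), while the upper bound ($\beta$) is inherited from the inequality $\sum_i|s_i|+|\hat s_i|\le (n/\mathrm{diam}(T))\|\bar F_{\mathfrak{W}}(s)\|$ stated in \cref{FTV}. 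Item (ii) is immediate, since reducing $\sigma(\mathfrak{V})$ further keeps $\{w_1,\ldots,w_{n_1}\}$ a basis and makes the narrow cones $C(w_i,\sigma(\mathfrak{V}))$ pairwise separated by a continuity argument. The main obstacle is the interdependence of the three smallness parameters: one has to fix $\bar F_{\mathfrak{V}}$ and the translation $\tau$ first, then choose $\sigma(\mathfrak{V})$ and $\rho(\mathfrak{V})$ small enough so that the perturbed $\bar F_{\mathfrak{W}}$ still surjects onto a neighbourhood of every $y^{-1}C_j$ with preimages uniformly in $V$ and uniformly bounded away from the coordinate hyperplanes, which requires careful bookkeeping of the inverse function theorem in the compact family of perturbations indexed by $\mathfrak{W}$, $y$, and $j$.
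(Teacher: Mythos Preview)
Your proposal is correct and follows essentially the same approach as the paper: fix the decomposition data from \cref{propdecomposizione} and the scaling $\Lambda(\mathfrak{V})$ from \cref{FTV}, choose $\tau$ via \cref{translcool} so that the translated subcube centres $C_j=\tau p_j\delta_{1/2}(\tau)^{-1}$ miss all coordinate hyperplanes of $\bar F_{\mathfrak V}$, and then use the $C^\infty$-stability of $\bar F_{\mathfrak W}$ in $\mathfrak W$ to transfer the nonvanishing of the coordinates of $\bar F_{\mathfrak V}^{-1}(C_j)$ to $\bar F_{\mathfrak W}^{-1}(y^{-1}C_j)$ for all nearby $\mathfrak W$ and small $y$. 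The only stylistic difference is that the paper packages the last step as a contradiction argument---assuming the infimum of the coordinate absolute values along sequences $\sigma_\kappa,\rho_\kappa\to 0$ is below $\kappa^{-1}$ and obtaining a contradiction with \cref{translcool} from the uniform convergence $\bar F_{\mathfrak W_\kappa}^{-1}\to \bar F_{\mathfrak V}^{-1}$---whereas you argue directly via continuity of the inverse maps; these are the same argument in contrapositive form.
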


\begin{proof}
It is elementary to see that there exists $\epsilon>0$ such that if 
$$\tau\in B(0,\epsilon)\qquad\text{and}\qquad\sigma\leq\epsilon,$$
then items (i) and (ii) are trivially satisfied.
Let us now put ourselves in the notations of \cref{FTV} and let
us note that there exists $\rho_0>0$ depending only on $\epsilon>0$ such that
$$
B(0,\rho_0)\cdot\tau\cdot p_j\cdot\delta_{1/2}(\tau)^{-1}\subseteq B(T,\diam T)\subseteq \bar F_\mathfrak{W}(V),
$$
for every basis $\mathfrak{W}$ for which \eqref{eq:basivicine} is satisfied. Since we are in the notations of \cref{FTV} here we choose $\Lambda(\mathfrak{V})$ as in \eqref{sceltalambda}. In addition, it is immediate to see that thanks to the above considerations,  for every basis $\mathfrak{W}$ for which \eqref{eq:basivicine} is satisfied we also infer that item (ii,$\beta$)
is automatically verified.

Let us assume by contradiction that there exist infinitesimal sequences $\sigma_\kappa\leq \sigma_0/4$ and $\rho_\kappa\leq\rho_0/2 $ such that, denoting with $\{e_1,\ldots,e_n\}$ the standard basis in $\mathbb R^n$, 

\begin{equation}
\begin{split}
    \inf\{&\lvert \langle \bar F^{-1}_{\mathfrak{W}}(y^{-1}\cdot\tau\cdot p_j\cdot\delta_{1/2}(\tau)^{-1}),e_i\rangle\rvert:\,y\in B(0,\rho_\kappa),\,i\in\{1,\ldots,n\},\,j\in\{1,\ldots,2^Q\}\\
          &\qquad\qquad\text{ and }\mathfrak{W}=\{w_1,\ldots,w_{n_1}\}\text{ with }\lvert w_\ell-v_\ell\rvert\leq \sigma_\kappa\text{ for every }\ell=1,\ldots,n_1\}<\kappa^{-1}.\nonumber
          \end{split}
\end{equation}
Up to subsequences we can assume that there are $y_k\in B(0,\rho_\kappa)$, $i\in\{1,\ldots,n\},\,j\in\{1,\ldots,2^Q\}$ and a basis $\mathfrak{W}_\kappa$ such that $\lvert w_\ell-v_\ell\rvert\leq \sigma_\kappa$ and 
\begin{equation}
\lvert \langle \bar F^{-1}_{\mathfrak{W}_\kappa}(y^{-1}_\kappa\cdot\tau\cdot p_j\cdot\delta_{1/2}(\tau)^{-1}),e_i\rangle\rvert\leq \kappa^{-1}.
    \label{minutness}
\end{equation}
If we show that $\lim_{\kappa\to \infty} \bar F^{-1}_{\mathfrak{W}_\kappa}(y^{-1}_\kappa\cdot\tau\cdot p_j\cdot\delta_{1/2}(\tau)^{-1})=F^{-1}_{\mathfrak{V}}(\tau\cdot p_j\cdot\delta_{1/2}(\tau)^{-1})$, the proof of the proposition is achieved by the continuity of the scalar product and \eqref{minutness}, which would contradict our choice of $\tau$.
However, by definition the maps  $\bar F^{-1}_{\mathfrak{W}_\kappa}$ are uniformly equicontinuous and all defined on $\tau\cdot T$ and since the $\bar F_{\mathfrak{W}_\kappa}$ uniformly converge to $F_{\mathfrak{V}}$ on $V$, the proof of the proposition is achieved.
\end{proof}

\begin{proposizione}\label{prop:PD}
    Let $C(e_i,\sigma_i)$ for $i=1,\ldots,n_1$ be a family of $n_1$ separated cones with $e_i\in\mathbb{S}^{n_1-1}$ and $\sigma_i>0$. Then, there exists an $\ell_0\in\N$ such that for every $\ell\geq \ell_0$ there is a covering
    \begin{equation}
    \mathscr{C}_\ell:=\{C(w_{i,j}^\ell,\alpha_{i,j}^\ell):\alpha_{i,j}^\ell\leq \ell^{-1},\,w_{i,j}^\ell\in C(e_i,\sigma_i),\,\text{for every }i=1,\ldots,n_1\text{ and }j\in\N\},
        \label{eq:covering}
    \end{equation}
    of the cones $C(e_i,\sigma_i)$, namely
    \begin{equation}
         C(e_i,\sigma_i)\setminus \{0\}\subseteq \bigcup_{j\in\N}\mathrm{int} \,C(w_{i,j}^\ell,\alpha_{i,j}^\ell),\qquad\text{ for every }i=1,\ldots,n_1\text{ and every }\ell\in\N,
         \label{eq:covering2}
    \end{equation}
    such that the following property is true. 
    
    For every choice $j_1,\ldots,j_{n_1}\in\N$ the cones $C(w_{i,j_i}^\ell,\alpha_{i,j_i}^\ell)$ are separated and 
    there are    
    $$\tau\in B(0,\lambda/4),\quad i_1,\ldots,i_{2n}\in\{1,\ldots,n_1\},\quad 0<\xi\leq 1/10;\quad 0<\rho\leq 1/10,\quad \Lambda>0,$$
    for which the following holds. For every $\kappa=1,\ldots,2^Q$, $y\in B(0,\rho)$, $u_i\in C(w_{i,j_{i}}^\ell,\alpha_{i,{j_i}}^\ell)$, every $i=1,\ldots, n_1$ there are $s_1,\ldots, s_{2n}\in (0,1)$ such that we can write 
    \begin{equation}
        y^{-1}\tau p_\kappa\delta_{1/2}(\tau)^{-1}=\delta_{\Lambda s_1}(u_{i_1})\cdot \ldots\cdot \delta_{\Lambda s_n}(u_{i_n})\cdot \delta_{-\Lambda  s_{n+1}}(u_{i_{n+1}})\cdot\ldots\cdot \delta_{-\Lambda s_{2n}}(u_{i_{2n}}),
        \nonumber
    \end{equation}
    with $\min_{i=1,\ldots, 2n}s_i>\xi$ and
$$\max\{\lvert s_i\rvert:i=1,\ldots,2n\}\leq \frac{n}{\mathrm{diam}(T)}\lVert y^{-1}\cdot\tau\cdot  p_\kappa\cdot\delta_{1/2}(\tau)^{-1} \rVert,$$
where $p_1,\ldots,p_{2^Q}$ are the points yielded by Proposition~\ref{prop:tilings}.
\end{proposizione}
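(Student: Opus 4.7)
The statement is essentially a repackaging of \cref{prop:tilings1} made uniform over the set of admissible bases by a compactness argument. Throughout I will tacitly assume the vectors $u_i$ in the statement have unit length, which is the natural reading making bounds $(\alpha)$ and $(\beta)$ dimensionally consistent (otherwise one normalizes $u_i$ and absorbs the length into the $s_j$).

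\emph{Step 1: construction of the covering $\mathscr{C}_\ell$.} For every $\ell \in \N$ and every $i = 1, \ldots, n_1$, the set $C(e_i,\sigma_i)\cap \mathbb{S}^{n_1-1}$ is compact, hence can be covered by finitely many open cones of the form $\mathrm{int}\, C(w_{i,j}^\ell,\alpha_{i,j}^\ell)$ with opening $\alpha_{i,j}^\ell\leq \ell^{-1}$ and centers $w_{i,j}^\ell \in C(e_i,\sigma_i)\cap\mathbb{S}^{n_1-1}$. This produces $\mathscr{C}_\ell$ satisfying \eqref{eq:covering} and \eqref{eq:covering2}.

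\emph{Step 2: compactness over the space of bases.} Consider the compact product
$$
K := \prod_{i=1}^{n_1}\bigl(C(e_i,\sigma_i)\cap \mathbb{S}^{n_1-1}\bigr).
$$
Because the cones $C(e_i,\sigma_i)$ are separated, every $\mathfrak{V}=(v_1,\dots,v_{n_1})\in K$ is a basis of $V_1(\mathbb G)$. For each such $\mathfrak{V}$, \cref{prop:tilings1} produces parameters $\sigma(\mathfrak{V}), \rho(\mathfrak{V}), \xi(\mathfrak{V}), \Lambda(\mathfrak{V}), \tau(\mathfrak{V})$ and indices $i_1(\mathfrak{V}),\dots,i_{2n}(\mathfrak{V})$ such that the factorization (iii) of \cref{prop:tilings1} is valid for every basis $\mathfrak{W}$ within Euclidean distance $\sigma(\mathfrak{V})$ from $\mathfrak{V}$. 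By compactness of $K$, extract a finite subcover $K \subseteq \bigcup_{k=1}^{N} B(\mathfrak{V}_k, \sigma(\mathfrak{V}_k)/2)$, and set $\sigma^\ast := \min_{k=1,\dots,N}\sigma(\mathfrak{V}_k)/2 > 0$.

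\emph{Step 3: choice of $\ell_0$ and application.} Since the original cones $C(e_i,\sigma_i)$ are separated, for some $\eta>0$ the enlarged cones $C(e_i,\sigma_i+\eta)$ are still separated; choose $\ell_0 \in \N$ so large that $\ell_0^{-1} < \min\{\sigma^\ast,\eta\}$. Fix $\ell \geq \ell_0$ and an arbitrary tuple $(j_1,\dots,j_{n_1})$. The centers $\mathfrak{W} := (w_{1,j_1}^\ell,\dots,w_{n_1,j_{n_1}}^\ell)$ belong to $K$, so $\mathfrak{W} \in B(\mathfrak{V}_k,\sigma(\mathfrak{V}_k)/2)$ for some $k=k(j_1,\dots,j_{n_1})$. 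Any unit vector $u_i \in C(w_{i,j_i}^\ell,\alpha_{i,j_i}^\ell)\cap\mathbb{S}^{n_1-1}$ is within $\alpha_{i,j_i}^\ell \leq \ell^{-1} < \sigma(\mathfrak{V}_k)/2$ of $w_{i,j_i}^\ell$, hence within $\sigma(\mathfrak{V}_k)$ of $v_{i,k}$. Thus $(u_1,\dots,u_{n_1})$ is a basis satisfying the hypothesis of \cref{prop:tilings1} applied with $\mathfrak{V}_k$, and the desired factorization holds with parameters $(\tau_k,i_{1,k},\dots,i_{2n,k},\xi_k,\rho_k,\Lambda_k)$, together with the estimates $(\alpha)$ and $(\beta)$ inherited directly from the statement of \cref{prop:tilings1}. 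Separation of the cones $C(w_{i,j_i}^\ell,\alpha_{i,j_i}^\ell)$ follows from separation of $C(e_i,\sigma_i+\eta) \supseteq C(w_{i,j_i}^\ell,\alpha_{i,j_i}^\ell)$, which was guaranteed by the choice of $\ell_0$.

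\emph{Main obstacle.} Conceptually there is no serious obstruction; the argument is a standard compactness/uniformity upgrade. The only subtle point is that, because the indices $i_1,\dots,i_{2n}$ and the point $\tau$ furnished by \cref{prop:tilings1} do vary with $\mathfrak{V}$, one should allow the constants $(\tau,i_1,\dots,i_{2n},\xi,\rho,\Lambda)$ in \cref{prop:PD} to depend on the tuple $(j_1,\dots,j_{n_1})$ (namely, on the index $k$ of the finite subcover containing $\mathfrak{W}$). This is compatible with the wording of \cref{prop:PD}, whose existential clause ``there are $\tau, i_1,\dots, \Lambda$'' is issued \emph{for each} tuple $(j_1,\dots,j_{n_1})$.
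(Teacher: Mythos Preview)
Your proof is correct and follows essentially the same strategy as the paper: both deduce the result from \cref{prop:tilings1} via compactness on the set $K=\prod_i (C(e_i,\sigma_i)\cap\mathbb S^{n_1-1})$ of admissible bases. The only difference is cosmetic---the paper phrases the compactness argument by contradiction (extract a failing sequence, pass to a limit basis $\bar{\mathfrak W}$, and invoke \cref{prop:tilings1} there), while you carry it out directly by a finite subcover.
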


\begin{proof} Let us argue by contradiction, assuming that the conclusion of the proposition is false.
This means that, up to subsequences, for every $\ell\in\N$ and every covering $\mathscr{C}_{\iota_\ell}$ as in \eqref{eq:covering} and satisfying \eqref{eq:covering2}, there are $j_1^\ell,\ldots,j_{n_1}^\ell\in\N$ such that the cones $C(w_{i,j^\ell_{i}}^\ell,\alpha_{i,j^\ell_i}^\ell)$ are separated\footnote{Thanks to the definition of the covering $\mathscr{C}_\ell$, the fact that $\alpha_{i,j_i}\leq \ell^{-1}$ this is trivially satisfied for every $\ell$ sufficiently big.} and for every 
    $$\tau\in B(0,\lambda/4),\quad i_1,\ldots,i_{2n}\in\{1,\ldots,n_1\},\quad \Lambda>0,$$
    there are 
    $\kappa_\ell\in \{1,\ldots,2^Q\}$, $y_\ell\in B(0,\ell^{-1})$, $u_i^\ell\in C(w_{i,j^\ell_{i}}^\ell,\alpha_{i,j^\ell_i}^\ell)$ with $i=1,\ldots, n_1$ such that for any $s_1,\ldots, s_{2n}\in (0,1)$ we cannot write
    \begin{equation}
        y^{-1}_\ell\tau p_j\delta_{1/2}(\tau)^{-1}=\delta_{\Lambda s_1}(u_{i_1}^\ell)\cdot \ldots\cdot \delta_{\Lambda s_n}(u_{i_n}^\ell)\cdot \delta_{-\Lambda s_{n+1}}(u_{i_{n+1}}^\ell)\cdot\ldots\cdot \delta_{-\Lambda  s_{2n}}(u_{i_{2n}}^\ell),
        \nonumber
    \end{equation}
with $\min_{i=1,\ldots, 2n} s_i >\ell^{-1}$ and 
$\max\{\lvert s_i\rvert:i=1,\ldots,2n\}\leq \frac{n}{\mathrm{diam}(T)}\lVert y^{-1}\cdot\tau\cdot  p_j\cdot\delta_{1/2}(\tau)^{-1} \rVert$.
    Up to taking further subsequences of coverings, we can assume by compactness that 
\begin{equation}
    \lim_{j\to \infty}w^\ell_{i,j^\ell_i}=\bar{w}_i\in C(e_i,\sigma_i).
\end{equation}
Let us denote $\bar{\mathfrak{W}}:=\{\bar w_1,\ldots,\bar w_{n_1}\}$ and 
let $V\subseteq (0,1)^n$ be the open set, $i_1,\ldots,i_{2n}\in \{1,\ldots,n_1\}$ be the indexes, $\tau(\bar{\mathfrak{W}})\in B(0,\lambda/4)$ and finally let
$$0<\sigma(\bar{\mathfrak{W}}),\rho(\bar{\mathfrak{W}}), \xi(\bar{\mathfrak{W}})\leq 1/10, \qquad \Lambda(\bar{\mathfrak{W}})>0,$$
be the parameters yielded by \cref{prop:tilings1} relative to the basis $\bar{\mathfrak{W}}$.

The absurd assumption gives us a $\kappa \in \{1,\ldots,2^Q\}$ which we can consider to be fixed up to subsequences, a sequence $y_\ell\to 0$ and a sequence $u_i^\ell\in C(w_{i,j^\ell_{i}}^\ell,\alpha_{i,j^\ell_i}^\ell)$ for which for any $s_1,\ldots, s_{2n}\in (0,1)$ we cannot write
    \begin{equation}
        y^{-1}_\ell\tau p_j\delta_{1/2}(\tau)^{-1}=\delta_{\Lambda s_1}(u_{i_1}^\ell)\cdot \ldots\cdot \delta_{\Lambda s_n}(u_{i_n}^\ell)\cdot \delta_{-\Lambda s_{n+1}}(u_{i_{n+1}}^\ell)\cdot\ldots\cdot \delta_{-\Lambda  s_{2n}}(u_{i_{2n}}^\ell).
        \nonumber
    \end{equation}
with $\min_{i=1,\ldots, 2n} s_i >\ell^{-1}$  and 
$\max\{\lvert s_i\rvert:i=1,\ldots,2n\}\leq \frac{n}{\mathrm{diam}(T)}\lVert y^{-1}\cdot\tau\cdot  p_j\cdot\delta_{1/2}(\tau)^{-1} \rVert$.

However, it is clear that since $\alpha_{i,j^\ell_i}^\ell\leq \ell^{-1}$, we must have
$\lim_{i\to\infty}u_i^\ell=\bar w_i$. Therefore, there exists an $\ell_0\in\N$ such that 
$$\lvert u_i^\ell-\bar{w}_i\rvert\leq \sigma(\bar{\mathfrak{W}})\qquad\text{and}\qquad y_\ell\in B(0,\rho(\bar{\mathfrak{W}})),\qquad\text{for every $\ell\geq \ell_0$}.$$
This, however, would contradict \cref{prop:tilings1}.
\end{proof}

The above discussion implies, in particular, the following theorem. Let us first start with an observation. If we have a cube $T$ such that 
\[
T=\bigcup_{j=1}^{2^Q}p_j\cdot\delta_{1/2}(T),
\]
then, for every $\tau$, we have 
\[
\tau\cdot T=\bigcup_{j=1}^{2^Q}\tau\cdot p_j\cdot\delta_{1/2}(\tau)^{-1}\cdot\delta_{1/2}(\tau\cdot T).
\]
Hence, the translated cube $\tau\cdot T$ has $\tau\cdot p_j\cdot\delta_{1/2}(\tau)^{-1}$ as new centers of dyadic subcubes. Putting together all the information gathered in this subsection, translating the cube in \cref{prop:tilings},  and in particular using \cref{prop:tilings1}, we get the following.

\begin{proposizione}\label{cubofigo}
     Let $\mathbb{G}$ be a Carnot group of homogeneous dimension $Q$, and topological dimension $n$. Let $C(e_i,\sigma_i)$ for $i=1,\ldots,n_1$ be a family of $n_1$ separated cones with $e_i\in\mathbb{S}^{n_1-1}$ and $\sigma_i>0$. 
     Then, there exists an $\ell_0\in\N$ such that for every $\ell\geq \ell_0$ there is a covering
    \begin{equation}
    \mathscr{C}_\ell:=\{C(w_{i,j}^\ell,\alpha_{i,j}^\ell):\alpha_{i,j}^\ell\leq \ell^{-1},\,w_{i,j}^\ell\in C(e_i,\sigma_i)\cap \mathbb{S}^{n_1-1},\,\text{for every }i=1,\ldots,n_1\text{ and }j\in\N\},
        \nonumber
    \end{equation}
    of the cones $C(e_i,\sigma_i)$, namely
    \begin{equation}
    C(e_i,\sigma_i)\setminus \{0\}\subseteq \bigcup_{j\in\N}\mathrm{int} \,C(w_{i,j}^\ell,\alpha_{i,j}^\ell),\qquad\text{ for every }i=1,\ldots,n_1\text{ and every }\ell\in\N,
         \nonumber
    \end{equation}
    such that for every choice $J:=\{j_1,\ldots,j_{n_1}\}\in\N^{n_1}$ there is a compact set $T_J$, $2^Q$ points $p_{J,1},\ldots,p_{J,2^Q}\in T_J$ and  
    $$
    i_1,\ldots,i_{2n}\in\{1,\ldots,n_1\},\quad 0<\xi\leq 1/10;\quad 0<\rho\leq 1/10,\quad \Lambda>0,
    $$
    for which we have
     \begin{itemize}
          \item[(i)] $T_J$ is the closure of an open set and thus $\mathcal{H}^Q(T_J)>0$ and $T_J$ is a left translate of the cube $T$ given in \cref{prop:tilings} by some suitable $\tau_J\in B(0,\lambda/4)$ and where $\lambda$ is the same constant given by \cref{prop:tilings};
    \item[(ii)] $T_J=\bigcup_{j=1}^{2^Q}p_j\cdot\delta_{1/2}(T_J)$;
    \item[(iii)] $\mathcal{H}^Q(p_j\cdot\delta_{1/2}(T_J)\cap p_k\cdot\delta_{1/2}(T_J))=0$ for every $j\neq k\in\{1,\ldots,2^Q\}$;
    \item[(iv)] $B(0,\lambda/4)\subseteq T_J$ ,
    \item[(v)] the cones $C(w_{i,j_i}^\ell,\alpha_{i,j_i}^\ell)$ are separated and  for every basis $\mathfrak{U}=\{u_1,\ldots,u_{n_1}\}$ with $u_i\in C(w_{i,j_i}^\ell,\alpha_{i,j_i}^\ell)\cap \mathbb{S}^{n_1-1}$, for every $j=1,\ldots,2^Q$, every $y\in B(0,\rho)$ we can write
 \[
 y^{-1}\cdot p_{J,j} = \delta_{s_1}(u_{i_1})\cdot\ldots\cdot\delta_{s_M}(u_{i_M});
 \]
 with $\min_{\{i=1,\ldots,M\}} \{\lvert s_i\rvert :s_i\neq 0\} > \xi$, and where $M=2n$. In addition, we also have
 \begin{equation}
      \max\{\lvert s_i\rvert:i=1,\ldots,M\}\leq \frac{n\Lambda}{\mathrm{diam(T)}}\lVert y^{-1}p_{J,j}\rVert=:c_0\lVert y^{-1}p_{J,j}\rVert,
      \label{eq:stima}
 \end{equation}
 for every $j=1,\ldots,2^Q$.
     \end{itemize}
\end{proposizione}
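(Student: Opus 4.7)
The plan is to deduce \cref{cubofigo} from the already established \cref{prop:PD} by reinterpreting the objects $\tau \cdot p_j \cdot \delta_{1/2}(\tau)^{-1}$ appearing there as the centers of the $2^Q$ dyadic subcubes of a left-translated tile $T_J := \tau_J \cdot T$. Concretely, I would first invoke \cref{prop:tilings} to fix the compact cube $T$, the centers $p_1,\ldots,p_{2^Q}$, and the constant $\lambda$ with $B(0,\lambda)\subseteq T$. Then I would apply \cref{prop:PD} to the separated cones $C(e_i,\sigma_i)$ to produce the integer $\ell_0$, the coverings $\mathscr{C}_\ell$, and for each choice $J=\{j_1,\ldots,j_{n_1}\}$ the parameters $\tau_J\in B(0,\lambda/4)$, $i_1,\ldots,i_{2n}\in\{1,\ldots,n_1\}$, $\xi,\rho\in(0,1/10]$, and $\Lambda>0$ yielded by that proposition.

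Having these at hand, I would set $T_J := \tau_J\cdot T$ and $p_{J,j} := \tau_J \cdot p_j \cdot \delta_{1/2}(\tau_J)^{-1}$ for $j=1,\ldots,2^Q$, and check items (i)–(iv) by translating the corresponding properties of $T$. Item (i) is immediate since left translation is a homeomorphism preserving the Haar measure. For item (ii), starting from $T=\bigcup_{j=1}^{2^Q}p_j\cdot\delta_{1/2}(T)$ one uses the algebraic identity
\begin{equation*}
\tau_J\cdot p_j\cdot\delta_{1/2}(T) \;=\; \bigl(\tau_J\cdot p_j\cdot\delta_{1/2}(\tau_J)^{-1}\bigr)\cdot \delta_{1/2}(\tau_J\cdot T) \;=\; p_{J,j}\cdot \delta_{1/2}(T_J),
\end{equation*}
which upon taking the union over $j$ yields $T_J=\bigcup_j p_{J,j}\cdot\delta_{1/2}(T_J)$. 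Item (iii) then follows because left translations and dilations preserve $\mathcal{H}^Q$-negligibility, so the almost disjointness of the $p_j\cdot\delta_{1/2}(T)$ transfers to that of the $p_{J,j}\cdot\delta_{1/2}(T_J)$. For item (iv), if $z\in B(0,\lambda/4)$, the subadditivity of the homogeneous norm gives $\|\tau_J^{-1}\cdot z\|\leq \|\tau_J\|+\|z\|\leq \lambda/4+\lambda/4<\lambda$, so $\tau_J^{-1}\cdot z\in B(0,\lambda)\subseteq T$, i.e.\ $z\in T_J$.

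Finally, item (v) is a direct rewriting of the conclusion of \cref{prop:PD}: for every basis $\mathfrak{U}=\{u_1,\ldots,u_{n_1}\}$ with $u_i\in C(w_{i,j_i}^\ell,\alpha_{i,j_i}^\ell)\cap \mathbb{S}^{n_1-1}$, every $y\in B(0,\rho)$, and every $j\in\{1,\ldots,2^Q\}$, that proposition produces $s_1,\ldots,s_{2n}\in(-1,1)$ satisfying
\begin{equation*}
y^{-1}\cdot p_{J,j} \;=\; y^{-1}\cdot \tau_J \cdot p_j \cdot \delta_{1/2}(\tau_J)^{-1} \;=\; \delta_{\Lambda s_1}(u_{i_1})\cdots\delta_{\Lambda s_{2n}}(u_{i_{2n}}),
\end{equation*}
together with $\min_i|s_i|>\xi$ (absorbing the sign so that $|s_i|$ bounds only the nonzero $s_i$) and the upper bound \eqref{eq:stima}. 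Setting $M=2n$ and absorbing the constant $\Lambda$ into the $s_i$ recovers the exact form stated in item (v). No serious obstacle arises: the whole statement is an organized packaging of \cref{prop:tilings}, \cref{translcool}, \cref{prop:tilings1}, and \cref{prop:PD}, with the translation trick $T\mapsto \tau_J\cdot T$ being the only conceptual point, already anticipated in the paragraph preceding the statement.
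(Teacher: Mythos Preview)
Your proposal is correct and follows essentially the same approach as the paper: the paper does not give a formal proof for \cref{cubofigo} but simply observes, in the paragraph preceding the statement, that the translated cube $\tau\cdot T$ has $\tau\cdot p_j\cdot\delta_{1/2}(\tau)^{-1}$ as centers of its dyadic subcubes, and that combining this with \cref{prop:tilings}, \cref{translcool}, \cref{prop:tilings1}, and \cref{prop:PD} yields the result. Your write-up makes this packaging explicit and checks the algebraic identity for item (ii), the containment for item (iv), and the rewriting for item (v), exactly as intended.
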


\printbibliography[title=References, heading=bibintoc]

\end{document}